\documentclass{report}

\usepackage{arxiv}

\usepackage[utf8]{inputenc} 
\usepackage[T1]{fontenc}    
\usepackage{url}            
\usepackage{booktabs}       
\usepackage{amsfonts}       
\usepackage{nicefrac}       
\usepackage{amsmath,amssymb,amsthm,mathtools}
\usepackage{microtype}      
\usepackage{graphicx}
\usepackage{tikz}
\usepackage{float}
\usepackage{hyperref}       
\graphicspath{{./images/}}
\newcommand{\R}{\mathbb{R}}

\newcommand{\Span}{\operatorname{span}}
\newcommand{\Tan}{\mathrm{Tan}} 

\newcommand{\dimBoxCross}{\dim_{\boxtimes}}
\newcommand{\Eff}{\operatorname{Eff}}
\newcommand{\dimBl}{\overline{\dim}_B}

\newtheorem{theorem}{Theorem}[chapter]
\newtheorem{proposition}[theorem]{Proposition}
\newtheorem{lemma}[theorem]{Lemma}
\newtheorem{corollary}[theorem]{Corollary}

\newtheorem{definition}[theorem]{Definition}

\newtheorem{remark}[theorem]{Remark}
\newtheorem{example}[theorem]{Example}

\makeatletter
\renewcommand{\@makechapterhead}[1]{%
  \vspace*{35\p@}%
  {\parindent \z@ \raggedright \normalfont
    \ifnum \c@secnumdepth >\m@ne
      \huge\bfseries \thechapter\quad
    \fi
    \huge \bfseries #1\par\nobreak
    \vskip 30\p@
  }}
\renewcommand{\@makeschapterhead}[1]{%
  \vspace*{35\p@}%
  {\parindent \z@ \raggedright \normalfont
    \huge \bfseries #1\par\nobreak
    \vskip 30\p@
  }}
\makeatother

\title{Point-dimension theory (part II):\\ The point-cross dimension}
\author{Nadir Maaroufi\\
International University of Rabat, TICLab, Sala Al Jadida, 11100, Morocco\\
\texttt{nadir.maaroufi@uir.ac.ma}}

\begin{document}
\maketitle

\begin{abstract}
We introduce the \emph{Point-Cross Dimension}, a new pointwise invariant designed
to measure the directional organization of a set at a single point. Whereas the
Point-Extended Box Dimension quantifies local dispersion and covering
complexity, the Point-Cross Dimension isolates a complementary layer: the
coexistence of independent effective directions through the same germ. The
construction assigns weights to admissible directional probes and aggregates
them over projectively independent channels, thereby turning the elementary
intuition of a cross into a flexible local dimension theory.

This viewpoint separates phenomena that classical isotropic dimensions often
collapse. A point may have small local box dispersion while carrying several
independent directional channels. Conversely, large local covering complexity
need not reflect genuine directional independence. We develop the theory in
three successive layers. The first is a point-vector dimension, which records
exact local directions. The second is a point-tangential dimension, which
replaces exact directions by Bouligand effective directions. The third is the
Point-Cross Dimension, which weights these effective projective channels by the
point-extended box complexity detected along admissible probes. We establish
the basic structural properties of these invariants and compute the resulting
Point-Cross Dimension on a range of model configurations, including finite
crosses, fractal coordinate frames, oscillatory germs, self-similar curves,
Sierpiński-type carpets, Cantor dusts, and infinite-rank outlook examples.

The final part of the paper establishes comparison principles between the
directional and dispersive layers of the theory. We show that Point-Cross
richness yields point-box largeness only when it is spatialized across scales,
for instance through product-grid mechanisms, and that exact agreement requires
both spatial lower bounds and pointwise covering control. In this sense, the
Point-Cross Dimension provides a local calculus for distinguishing, and then
reconnecting, directionality and dispersion at the point level.
\end{abstract}
\begin{flushright}
\emph{Bref nous avons tous fait cette faute de confondre borne inférieure et minimum ;
mais il y a un profit certain, quoique d’un autre ordre, à constater avec quelle
facilité nous errons, et qu’il suffit d’avoir donné un aspect géométrique à une
erreur classique et ancienne pour que personne ne la reconnaisse plus.}

\medskip
Henri Lebesgue\footnote{H. Lebesgue, \emph{Sur la méthode de Carl Neumann},
\emph{Journal de Mathématiques Pures et Appliquées}, série 9, tome 16 (1937),
205--217, pp.~205--206.}
\end{flushright}
\clearpage
\tableofcontents
\clearpage


\chapter{Introduction and motivations}

In two previous works devoted to the \emph{point-dimension theory} \cite{pointbox,pointcsf}, we developed a general approach aimed at redefining dimension at the scale of a point. This reflection is rooted in a dual heritage, both \emph{mathematical} and \emph{historical-philosophical}: it prolongs a long-standing inquiry into the nature of dimension (from Euclid to Claude Tricot), while addressing the limitations of classical definitions when confronted with locally complex structures. One of our findings was to propose a notion of dimension independent of any specific metric or measure, by adopting the more general framework of \emph{topological vector spaces}. Although our approach is based on the topology of point sets, it differs from classical inductive topological dimensions. Within this framework, we unified in a single definition a wide variety of cases, ranging from finite and countable sets to classical fractals and infinite-dimensional objects. The motivation developed here is phenomenological. The invariants it selects, however, are defined and studied by entirely standard means, and every result below stands on its proof alone.

This second part is a direct continuation of that program. In Part~I, the Point-Extended Box Dimension provided a pointwise reading of local dispersion and entropy. The present work develops the missing directional layer: it captures, at a single point, the coexistence of several independent effective directions (as in crossings and transverse interactions), which is precisely the role of the Point-Cross Dimension. We also include a comparison with existing directional and tangential notions of dimension, in order to clarify the precise novelty of the Point-Cross construction.

The \emph{Point-Extended Box Dimension} introduced in these works allowed us to conceive dimension in two complementary ways. First, it can be read as a quantification of the organizational complexity of points within a set. Second, it can be read as a comparative measure of set entropies. The present article extends this program with a genuinely \emph{directional} layer. Its central mathematical question is the following: how can one detect, at a single point, the simultaneous activation of several independent directions when classical isotropic dimensions collapse such interactions to a max-type value?

The remainder of this introduction presents the motivations behind this new step. From a philosophical perspective, we argued earlier that a phenomenological approach is fruitful for clarifying the concept of dimension: setting aside preconceived notions and attending closely to how dimensional experience is constituted. Mathematically, this perspective suggests that dimension should be treated as a strictly pointwise property. Under this light, it becomes natural to regard the intersection point of a cross as directionally richer than a regular point on a single branch. The challenge is to formalize this intuition rigorously.

The introduction deliberately records the conceptual path leading to the formal construction, while the subsequent chapters are mathematically self-contained.

History played a decisive role here. The idea of the cross had been present from the start, but remained dormant for lack of a precise mathematical framework. The breakthrough came from a renewed reading of Grassmann’s original work, his formalization of dimension, and our reflections on Kakeya-type problems. Finally, although the present paper is developed in Euclidean space, the
framework applies there to arbitrary subsets, including smooth, singular,
oscillatory, and fractal configurations.

\section{Philosophical Motivation}

The notion of dimension is notoriously elusive, as Poincaré himself remarked in \cite{poincare1903}. As we previously emphasized in \cite{pointbox}, several prominent mathematicians, among them Bolzano \cite{Johnson}, Poincaré \cite{poincare1903}, and Urysohn \cite{ury1}, explicitly recognized the philosophical character of the concept. Poincaré, a key figure of conventionalism, argued that intuitive notions can only be properly understood by analyzing how they are constituted in the human mind. Following this line, we adopted a phenomenological perspective, inspired by Edmund Husserl, in order to clarify the essence of dimension. This led us to ask: is there a universal mode in which the idea of dimension presents itself to collective consciousness within empirical space? In a phenomenological language, this amounts to varying the ways in which extension is traversed and probed, in order to isolate what remains invariant across them: the structural core that a notion of dimension ought to capture. This is, in Husserl's sense, an eidetic variation. Such a question ultimately guided the reconception of dimension at the point level, giving rise to the Point-Extended Box Dimension \cite{pointbox,pointcsf}.

Echoing Husserl's critique of objectivism in \cite{husserl_crisis}, we seek to
return dimension to its original relation with lived experience. The point is
not to deny mathematical objectivity, but to recall that mathematical idealities
such as dimension arise from sensible experience and are then progressively
stabilized through idealization and formalization. Yet this origin can become
obscured: once an ideal construction acquires a familiar geometric form, it may
be treated as self-sufficient and detached from the acts of intuition that first
gave it meaning. Such philosophical forgetfulness produces conceptual rigidity:
notions that are historically constituted become treated as unquestionable.
Dimension exemplifies this resistance to reevaluation.

Such rigidity, we argue, prevents us from rediscovering certain depths of the concept. For this reason, we chose to reconsider dimension \emph{from the point}, at its most elementary level, where intuition directly meets geometry. A simple but fruitful question arises: how does a cross, the intersection of two line segments, appear to consciousness when interpreted in terms of dimension? Once one understands dimension pointwise, and that a line consists of points arranged along a direction, it becomes immediately evident that the intersection point of two lines is \emph{richer}, \emph{more complex}, and \emph{directionally structured} than any other point of either line (Figure~\ref{fig:intro-cross-intuition} summarizes this founding intuition). The desire to treat this intersection point differently from regular points is thus entirely natural.

\begin{figure}[h!]
\centering
\begin{tikzpicture}[scale=1.05, every node/.style={font=\small}]
    \draw[line width=1.2pt,blue!70!black] (-2.2,0) -- (2.2,0);
    \draw[line width=1.2pt,teal!70!black] (0,-2.2) -- (0,2.2);

    \filldraw[fill=red!80!black,draw=white,line width=0.3pt] (0,0) circle (2.3pt);
    \filldraw[fill=blue!70!black,draw=white,line width=0.3pt] (-1.3,0) circle (2.1pt);

    \draw[->,thin,blue!70!black] (-2.0,-0.9) -- (-1.33,-0.03);
    \node[below left,align=left,color=blue!70!black] at (-2.0,-0.9) {$x_{\mathrm{reg}}$: 1 direction};

    \draw[->,thin,red!70!black] (1.2,1.5) -- (0.03,0.03);
    \node[above right,align=left,color=red!70!black] at (1.2,1.5) {$x_{\times}$: 2 independent directions};
\end{tikzpicture}
\caption{Foundational cross intuition: an ordinary point on a branch carries one local direction, while the intersection point carries two independent directions.}
\label{fig:intro-cross-intuition}
\end{figure}

This pre-philosophical and pre-mathematical intuition\footnote{That is, a raw perception preceding formal theorization.} motivates the need for a local redefinition of dimension capable of objectifying this perceived directional richness. Our notion of \emph{pointwise directional dimension} is precisely designed to formalize this lived intuition, thereby reconnecting mathematics with its phenomenological source.

A word on the status of what precedes. The phenomenological analysis is not offered as a foundation for the mathematics, and none of the theorems of this paper depends on it: each is established by proof and would stand unchanged if the motivation were deleted. Its role is different and, we believe, legitimate. Among the many dimension-like invariants one could define on a set, phenomenology helps select those that track a genuine structural feature of extension, here its dispersion and its directionality, rather than an artefact of a particular covering scheme. It answers why these invariants, not why these theorems are true. The latter is the business of the proofs alone.

\section{Mathematical Motivation}

Let us begin by briefly reviewing some elements of our developments concerning the notion of dimension \cite{pointbox,pointcsf}. We agree with Claude Tricot’s preference for Box over Hausdorff dimension \cite{tricot}, but not merely for the practical reason he gave (usability of the former versus technical difficulty of the latter). Our stance is conceptual: dimension should be grounded in topology rather than measure. First, point-set topology has a broader scope. In practice, most measures are constructed on the basis of a predefined topology on the underlying set. Second, there is a persistent confusion between size and dimension (e.g., “zero measure” versus “zero dimension”). More generally, measure-based notions tend to collapse negligible sets to dimension zero. Third, having argued for the relevance of a genuinely pointwise notion of dimension, it is natural to build dimension as a point theory. See \cite{pointbox} for further discussion. Although the usual derivation of Box dimension proceeds via a measure-based construction (e.g., the Minkowski “sausage”), it can be reformulated entirely in topological terms. For these reasons, and in line with our previous work, we adopt the \emph{Point-Extended Box Dimension} as our baseline, while noting that the construction we introduce here can, in principle, be adapted to other frameworks as well (Hausdorff, Assouad, etc.).

\paragraph{The starting point.}
One key advantage of a pointwise conception of dimension is the ability to compute, with precision, the dimension of non-homogeneous sets. In this pointillist view, relativity plays a central role, as emphasized in \cite{pointbox,pointcsf}. Consider a planar disk in $\mathbb{R}^3$, intersected at its center by a line perpendicular to the plane of the disk (see Figure~\ref{fig:disk-line-transverse}). Relative to the disk, interior points have dimension $2$. Relative to the line, points on the line have dimension $1$. But what should be the dimension of the intersection point with respect to the union of the two supports? Conceptually, it is reasonable to assign the value $3$ at that point, reflecting the coexistence of two independent directions from the tangent plane of the disk and one normal direction carried by the line. Yet classical (isotropic) dimensions still return the \emph{maximum} of the component dimensions, namely $\max\{2,1\}=2$, thereby flattening the additional one-dimensional structure present at the crossing.

\begin{figure}[h!]
\centering
\begin{tikzpicture}[scale=0.98, every node/.style={font=\small}, line cap=round, line join=round]
    \fill[blue!12] (0,0) ellipse (2.8 and 1.1);
    \draw[blue!65!black,thick] (0,0) ellipse (2.8 and 1.1);

    \draw[red!75!black,line width=1.3pt] (0,-2.2) -- (0,2.2);

    \filldraw[fill=red!85!black,draw=white,line width=0.3pt] (0,0) circle (2.2pt);

    \draw[->,blue!75!black,thick] (0,0) -- (2.1,0.0) node[right] {$v_1$};
    \draw[->,blue!75!black,thick] (0,0) -- (-0.95,0.85) node[above left] {$v_2$};
    \draw[->,red!75!black,thick] (0,0) -- (0,1.75) node[right] {$v_3$};

    \node[blue!65!black] at (2.75,-1.42) {disk: $2$D};
    \node[red!75!black] at (-1.45,-1.45) {transverse line: $1$D};
    \draw[->,thin] (3,1) -- (0.08,0.08);
    \node[align=left] at (4.0,1.35) {$x_{\times}:3$ directions};
\end{tikzpicture}
\caption{Disk--line transverse intersection in $\mathbb{R}^3$: the disk contributes two in-plane directions and the line contributes one transverse direction. At $x_{\times}$ the classical max rule gives $2$, while the directional reading captures $3$.}
\label{fig:disk-line-transverse}
\end{figure}

Let us ask the question differently: what is the pointwise dimension of a geometric $n$-axis frame, i.e., the union of $n$ coordinate axes in $\mathbb{R}^n$? All classical dimensions (Hausdorff, box, packing, Assouad), as well as their local counterparts, yield the value $1$ everywhere for this set. This is conceptually unsatisfactory: how can an $n$-axis frame that spans $\mathbb{R}^n$ fail to manifest the $n$-dimensional information it is supposed to carry anywhere, not even at a single point? Our guiding principle is the opposite: what characterizes an $n$-dimensional coordinate system should be the existence of one point (the origin) where the $n$ directions are co-active, hence where the pointwise dimension equals $n$. In particular, it is natural to require that the unique crossing point of the axes carry dimension $n$.

\paragraph{Geometric graphs as an elementary testing ground.}
The $n$-axis frame and the elementary cross are not isolated curiosities. They are the
simplest instances of a natural class: embedded geometric graphs, whose local structure is
given by finitely many straight edge germs meeting at nodes. This class provides a first
calibration test for any pointwise directional theory, because the expected answer at a
node is unambiguous: it should be the rank of the span of the incident directions. In
particular, the origin of the $n$-axis frame should carry value $n$, while the crossing
point of two independent segments should carry value $2$. We verify this explicitly below
for the point-vector dimension, where the computation reduces to elementary linear algebra
on the incident edge directions. Abstract discrete graphs, by contrast, do not come with
ambient directions. Assigning such directions is a separate problem that is outside the
scope of the present work.

\paragraph{Pointwise product principle and transverse additivity.}
Another way of looking at things is to ask how to localize the Cartesian product principle at the dimensional level. More precisely, for smooth sets (e.g., $C^1$ embedded submanifolds) $A\subset\mathbb{R}^m$ and $B\subset\mathbb{R}^n$, one expects the product rule
\[
\dim(A\times B)=\dim(A)+\dim(B).
\]
By contrast, for rougher sets, one typically has only inequality-type bounds rather than an exact equality. Nevertheless, the reasonable pointwise analogue for smooth configurations says: if, at a single point, $k$ structures are carried by mutually transverse effective directions, then geometric complexity at that point should add, each directional “channel” contributes, and the pointwise dimension equals their sum. This pointwise reading of the product principle makes explicit what classical, isotropic notions miss: the interaction of directions at the germ. It motivates introducing a dimension sensitive to local multiplicativity, so that the Cartesian rule “product = sum of dimensions” manifests precisely where it matters most, at the intersection of effective directions. In short, this opens the way to a notion of local multiplicativity of geometric complexity.

\paragraph{Directional richness beyond the max rule.}
Another motivation is that classical geometric dimensions on $\mathbb{R}^n$ (Hausdorff, box/Minkowski, packing, Assouad) obey a max rule on finite unions,
\[
\dim(A\cup B)=\max\{\dim A,\dim B\},
\]
which flattens the local interaction at crossings. A genuinely pointwise notion, sensitive to directional richness, should be able to reveal a dimensional gain exactly there. Thus, when two structures intersect, the extra complexity created by their local interaction is reduced to the larger of the two parts, with no acknowledgment of any additive contribution at the crossing point. By contrast, Grassmann’s linear formula
\[
\dim(U+V)=\dim U+\dim V-\dim(U\cap V)
\]
suggests that some form of additivity (or at least a principled exceedance of the max) is natural whenever independent components interact. One of the central outcomes of the present work is precisely to exit this max-rule regime at the point level. What is missing, therefore, is a notion of dimension that is locally sensitive to directional richness at intersections, capable of revealing a pointwise dimensional gain precisely where isotropic notions settle for the maximum. Our approach is designed to meet this need.
\paragraph{Intersections, genericity, and the local product problem.}
The problem of intersections has a long and subtle history in geometric measure theory.
A particularly illuminating contribution is due to Mattila, who studied the Hausdorff
dimension and capacities of intersections of sets in Euclidean space \cite{Mattila1984}.
For sets \(A,B\subset\mathbb R^n\) of Hausdorff dimensions \(s\) and \(t\), Mattila's
results show that, under suitable regularity, measure, or density assumptions, the
generic behaviour of intersections is governed by the codimension principle
\[
\dim(A\cap f(B)) \approx s+t-n,
\]
where \(f\) ranges over appropriate families of isometries or similarities. This is not a
universal identity for fixed sets: special correlations, large intersection phenomena, or
non-transverse configurations may force the intersection to be much larger or smaller.
The word ``generic'' is therefore essential: it means that the expected Grassmann-type
formula is recovered only after varying the relative position of the sets, typically by
translations, rotations, and sometimes dilations.

A key idea in Mattila's approach is to convert the intersection problem into a problem
in the product space. Namely, one considers
\[
A\times B\subset\mathbb R^n\times\mathbb R^n,
\]
and the condition that a point of \(A\) coincides with a transformed point of \(B\) is
encoded by a diagonal-type constraint
\[
x=f(y).
\]
Thus \(A\cap f(B)\) is recovered from a slice of \(A\times B\) by a submanifold of
codimension \(n\). This explains the appearance of the term \(n\) in the formula
\(s+t-n\): imposing equality of two points in \(\mathbb R^n\) imposes \(n\) scalar
constraints.

The present work is motivated by a complementary question. Instead of using the
Cartesian product as an auxiliary ambient space from which intersections are recovered
by diagonal slicing, we ask whether a remnant of the Cartesian product principle can be
detected directly at the germ of a crossing point. In other words, when several
structures meet at a point, can the local geometry itself carry a product-like
directional signature? The Point-Cross viewpoint answers this by measuring the
independent directional channels that are actually active at the point. Thus
genericity is not imposed as an external hypothesis. Rather, transverse, overlapping,
and saturated configurations become different local regimes detected by the invariant.
In this sense, Mattila's theory reveals a Grassmann-type law for generic intersections,
whereas the Point-Cross dimension seeks a Grassmann-type calculus for local
directional coexistence at a point.
\paragraph{Dispersion vs.\ directionality.}
As our historical reading of Grassmann suggests, what is missing to capture the phenomenon above is a genuinely local account of directionality alongside density. Classical fractal dimensions are exquisitely tuned to multiscale dispersion (or density): they count how many units are needed to cover a set across scales, thereby summarizing its volumetric spread. Yet density and directionality are not the same pointwise feature. In highly isotropic configurations (e.g., a full cube), density already encodes direction, so the two aspects coincide numerically. But many geometries decouple them: one may observe substantial local mass with poor directional structure, or, conversely, thin mass distributed across several independent directions. Any notion that conflates these aspects will routinely flatten directional richness into a max-type verdict, missing precisely the local interaction of independent directions. This motivates treating density and directionality as separate pointwise ingredients.
The former is classically measured through isotropic covering complexity, whereas the latter concerns the organization and coexistence of effective local channels.
When several such directions are simultaneously active at a point, their contributions may legitimately accumulate, revealing a pointwise effect that isotropic laws erase.
In short, we adopt the following guiding principle: \textbf{dimension} is governed jointly by \textbf{dispersion (density)} and \textbf{directionality}.

\paragraph{From isotropic coverings to directional covering elements: Tricot's crosses.}
A particularly suggestive intermediate step toward the present viewpoint already appears in Tricot's work.
In Section~10.7 of \cite{tricot}, Tricot introduces coverings of curves by crosses
\(X_\varepsilon(x)\), each cross being formed by two segments of length \(2\varepsilon\),
centered at \(x\), with prescribed orientations. Although each individual cross has zero area,
the union
\[
X_\varepsilon=\bigcup_{x\in\Gamma}X_\varepsilon(x)
\]
along a curve \(\Gamma\) creates a sausage of positive area, and Tricot proves that this area
is comparable to the area of the usual Minkowski sausage. He also observes that the argument is
not restricted to simple curves, but extends to arc-connected sets. A related use of axis-aligned
crosses appears later in his treatment of graph dimensions.

This is conceptually important, because it shows that the classical Minkowski covering procedure
need not remain geometrically isotropic: meaningful dimensional information can already be extracted
from direction-sensitive covering elements. However, in Tricot's framework, these crosses still
serve to recover a classical box/Minkowski-type dimension. What remains implicit, from our
perspective, is the distinction between two genuinely different aspects of geometric complexity:
\emph{dispersion}, which box-type procedures are designed to measure, and \emph{directionality},
which is already suggested by the use of oriented segments and crosses, but is not yet isolated
as an autonomous pointwise component. The Point-Cross program starts precisely from this
conceptual separation.

\paragraph{Existing directional and tangential dimensions.}
Among the existing approaches that explicitly inject direction or tangency into local dimension theory, two main lines are particularly relevant here.
A first line is represented by directional versions of classical fractal dimensions, such as cone-type constructions in the spirit of \cite{Falconer2003} and Tricot’s directional dimension \cite{tricot}, where one evaluates a Hausdorff, or box-type complexity after restricting attention to one orientation at a time, and then aggregates over orientations, typically by taking a supremum.
A second line is represented by the tangential dimensions of Guido and Isola \cite{GuidoIsolaI,GuidoIsolaII}, which are defined through scaling behaviour, tangent metric spaces in the sense of Gromov, or tangent measures, and are designed to detect oscillatory multifractal behaviour at a point.

Related but conceptually different are the direction-set results of Koike and Paunescu \cite{KoikePaunescu}, where one studies the dimension of sets of limiting directions \(D(A)\subset S^{n-1}\) for subanalytic germs and their invariance under bi-Lipschitz homeomorphisms.
This framework is not aimed at constructing a pointwise directional dimension in the sense pursued here, nor at combining simultaneous transverse contributions at the same point.

These approaches are important and highly relevant, but they do not address the same problem as the present work.
Operationally, they remain either mono-directional, scaling-based, or focused on direction sets themselves:
they either measure complexity along one direction at a time, study oscillatory scaling through tangent objects, or analyze the size of the set of limiting directions itself.
What is still missing, from our perspective, is a genuinely pointwise directional calculus capable of
(i) isolating exact local directions at the germ,
(ii) passing from exact local directions to asymptotically accessible tangent directions,
and
(iii) combining several simultaneous transverse contributions in a calibrated additive way at the same point.
In particular, none of the above notions appears, to the best of our knowledge, to provide a framework assigning the pointwise value \(2\) to the transverse intersection point of two line segments in the specific directional sense sought here.

This gap motivates the constructions developed below.
Namely, the point-vector dimension \(\dim_{\mathrm{Pvec}}\), the point-tangential dimension \(\dim_{\mathrm{Ptan}}\), and the Point-Cross dimension \(\dim_{\times}\) are introduced precisely to furnish such a local directional hierarchy.
To the best of our knowledge, the three notions \(\dim_{\mathrm{Pvec}}\), \(\dim_{\mathrm{Ptan}}\), and \(\dim_{\times}\), as well as the directional hierarchy they form,
\[
\dim_{\mathrm{Pvec}}
\;\leadsto\;
\dim_{\mathrm{Ptan}}
\;\leadsto\;
\dim_{\times},
\]
differ from existing directional notions in the precise sense summarized in
Section~\ref{sec:related-directional-dimensions} and
Table~\ref{tab:comparison-directional-notions}.

\section{Historical Motivation}

In \cite{pointbox}, we proposed a historical and conceptual reading of how the notion of dimension has been shaped by different mathematical traditions. In that context, we distinguished two broad families of definitions (see Figure~1 in \cite{pointbox}): measured dimensions and topological dimensions. The distinction should not be understood as an exhaustive classification, but as a useful heuristic for the present discussion. On the one hand, measured dimensions quantify extent or scaling behavior, as in constructions based on coverings, contents, or measures (e.g., Minkowski/box, Hausdorff). On the other hand, topological dimensions are inductive and relational: a $3$-dimensional body has a $2$-dimensional boundary, a $2$-dimensional object has a $1$-dimensional boundary. More generally, dimension is determined via inductive boundary conditions and refinements of open covers (Urysohn–Menger–Čech, Lebesgue covering dimension). In this sense, many standard notions of dimension emphasize either quantitative occupation under refinement or the way a set sits, separates, and is bounded topologically.

Our reflection is rooted in this long-standing conceptual genealogy. In this vein, we considered it essential to examine in detail those mathematicians who made decisive contributions to the very idea of dimension, such as Bolzano, Cantor, and Poincaré (see \cite{pointbox}). In the same spirit, Grassmann’s analysis is crucial: alongside Bolzano, he was among the first to articulate a rigorous theory of dimension. Not only did Grassmann push beyond dimension~3, he also proposed the first general dimensional relation, the Grassmann formula: $\dim(U+V)=\dim U+\dim V-\dim(U\cap V)$. We therefore re-examined his foundational work \cite{flament}, arguing that his project was not merely, as one might assume, an algebraic formalization of the Cartesian coordinate system introduced by Descartes, but a far more radical ambition. In the preface to the first edition of the \emph{Ausdehnungslehre} (1844), Grassmann declares his aim to develop a purely abstract science, free from spatial intuition, creating a branch of mathematics of which geometry is only a particular application. His goal was to formalize an arithmetic of spatial magnitudes in which, for instance, the sum of points corresponds to their center of gravity, the product of two points generates the segment joining them, and the product of three points generates the plane they span.

 \paragraph{The blind spot of measured dimensions.}
A close reading of Grassmann shows that his entry point to \textit{dimension} belongs to the measured family, yet in a pre-metric, extensive, and oriented sense. In Grassmann’s framework, the exterior product $\wedge$ does not compute a scalar quantity. Rather, it generates new objects of higher grade, bivectors, trivectors, and higher $k$-vectors, whose role is to capture oriented $k$-dimensional extension independently of any metric. It thereby becomes the central operator that encodes dimensional order: the area of a parallelogram vanishes exactly when its generating vectors are collinear, i.e., when \(u\wedge v=0\). More generally, the \(k\)-volume of the parallelepiped spanned by \(u_1,\dots,u_k\) is zero if and only if \(u_1\wedge\cdots\wedge u_k=0\), equivalently iff the vectors are linearly dependent. In this sense, Grassmann naturally fits within the measured branch (see Figure~1 in \cite{pointbox}): once a metric (via an inner product) is fixed, the Gram determinant quantifies the objects \(\wedge\) creates. However, and this is crucial, \textit{directional independence} is the true cornerstone of Grassmann’s conception: dimensional order is produced by composing independent directions before any metric enters. 

This conceptual shift, from measure to directional organization, is not stated explicitly in Grassmann’s writings, but it lies at the heart of his theoretical framework. Remarkably, his abstract approach removes reliance on metric quantification while preserving its structural logic of extent. This analysis reveals the blind spot of standard measured dimensions: they privilege quantitative content (measures or coverings) and thus tend to overlook local directionality as a primary driver of dimension, the very aspect our Cross framework is designed to make explicit. More specifically, existing measured dimensions excel at quantifying dispersion/density, but they register raw directionality only when it is already encoded by density (isotropic cases). By contrast, Grassmann’s intuition, composing to generate a higher order, suggests that, at the local level, a point where several independent oriented structures coexist should carry additional structure. In other words, there is a missing pointwise notion of dimension that separates density from generalized directionality and realizes a local product principle: when \(k\) effective, mutually transverse directions are active at the same germ, their directional contributions should accumulate in a calibrated way. This is precisely the role of the Point-Cross Dimension introduced below. In summary, our Point–Cross Dimension extends the Grassmannian intuition by localising it in a flexible manner: it makes visible, at the point level, the directional organization that dispersion measures alone cannot capture. The definition we propose rests on a complementary insight: local directional richness can be as relevant as, and sometimes more decisive than, spatial density alone.

\paragraph{Kakeya sets as a guiding extreme.}
\textit{Kakeya}--Besicovitch sets are subsets of Euclidean space that contain a unit line
segment in every direction. They show that a set may carry a maximally rich directional
structure while having very small, or even zero, Lebesgue measure. This illustrates the
need to distinguish spatial dispersion from directional organization. From the point of
view developed here, Kakeya-type configurations are not used as an application, but as a
conceptual test case: a satisfactory pointwise directional theory should be able to
separate the density of a germ from the independent directions it mobilizes.\footnote{Peano-type fillings provide a further motivation for the same directional viewpoint. They will be treated separately.}

\section{Mathematical roadmap and scope}

To make the structure explicit, we deliberately develop the theory in three successive layers:
\begin{enumerate}
    \item \textbf{Linear skeleton.} We first introduce the point-vector dimension \(\dim_{\mathrm{Pvec}}\), a Grassmann-type pointwise notion that records how many linearly independent directions are locally realized at a point.
    \item \textbf{Directional geometric refinement.} We then pass to the Bouligand tangent cone and introduce on this tangent object the point-tangential dimension \(\dim_{\mathrm{Ptan}}\). Equivalently, this quantity is the point-vector dimension computed on the Bouligand tangent cone, which allows us to control directional germs beyond the coarse linear rank.
    \item \textbf{Point-Cross synthesis.} Finally, we go beyond the point-tangential dimension by quantifying, for each effective direction, the local complexity actually detected by admissible probes through the point-extended box dimension of the corresponding traces. The Point-Cross Dimension is then obtained by optimally combining these directional contributions over linearly independent effective directions.
\end{enumerate}

Accordingly, the long conceptual parts and the technical parts serve complementary roles: the former explain the genesis and scope of the framework, while the latter provide definitions, statements, and proofs that are mathematically self-contained. Unless explicitly stated otherwise, the formal development is carried out in affine spaces modelled on $\R^n$.

\chapter{Local directional hierarchy: point-vector and point-tangential dimensions}
This chapter develops the first two levels of the directional hierarchy. The point-vector dimension isolates exact local segment directions, while the point-tangential dimension passes to the Bouligand tangent cone and detects asymptotically accessible directions. Together, these two invariants provide the local directional scaffold on which the Point-Cross construction will rest.

\section{Point-vector dimension (coarse directional rank)}
\label{subsec:vecdim}

\subsection{Linear viewpoint}

As discussed above, the Point-Cross program is partly inspired by Grassmann’s approach to dimension. The first step is therefore to localize vector dimension at a point. More precisely, we ask how many independent directions a set can realize from a given point, since such directions are exactly what allow one to form nonzero exterior products, such as areas, volumes, and higher-dimensional magnitudes:
\[
  v_1\wedge\cdots\wedge v_k
  \;\in\;
  \bigwedge^{k}\!\R^{n},
\]
which translates to
\[
  v_1\wedge\cdots\wedge v_k\neq0
  \iff
  \{v_1,\dots,v_k\}\;\text{ is linearly independent}.
\]
Thus, relative to a fixed set $A$, the first directional quantity we attach to $x$ is the maximal length of a non-vanishing wedge, i.e.\ the
greatest number of linearly independent vectors issuing from $x$ and supported by $A$. Grassmann’s guiding intuition that dimension records degrees of freedom therefore provides a discrete combinatorial backbone for this first pointwise directional invariant: only the linear independence of finitely many directions is involved, with no metric, measure, or limiting process.

\subsection{Definitions and examples}

\begin{definition}[Free vectors at a point]
Let $E$ be an affine space modelled on $\R^n$, $A\subset E$, and $x\in E$. A family $(v_1,\dots,v_k)$ is free at $x$ relative to $A$ if:
\begin{enumerate}
  \item it is linearly independent in $\R^n$,
  \item there exists $\delta>0$ such that the one-sided segment germs
        \(
          \{x+t\,v_i\;:\;0<t<\delta\}
        \subset A
        \)
        for every $i=1,\dots,k$.
\end{enumerate}
\end{definition}
\begin{definition}[Point-vector (Grassmann) dimension]
Let $E$ be an affine space modelled on $\R^n$, let $A\subset E$, and let $x\in E$.
The \emph{point-vector dimension of $A$ at $x$} is
\[
  \dim_{\mathrm{Pvec}}\{x\}_A
   \;:=\;
   \max\Bigl\{\,k\in\{1,\dots,n\}\;:\;
       \exists\,v_1,\dots,v_k
       \text{ free at }x\text{\ relative\ to\ }A\Bigr\}.
\]

\(
  \dim_{\mathrm{Pvec}}\{x\}_A
\)
is the maximal length of a non-vanishing exterior product
\(
  v_1\wedge\cdots\wedge v_k
\)
supported inside~$A$ at~$x$.
\end{definition}
We use the convention that the maximum of the empty set is $0$. Equivalently, if no nonzero one-sided segment germ is supported in $A$ at $x$, then $\dim_{\mathrm{Pvec}}\{x\}_A=0$.
\begin{remark}[Why the condition $0<t<\delta$]
\leavevmode

\emph{(One-sided vs.\ two-sided germs).}
We require a one-sided segment $\{\,x+t\,v:0<t<\delta\,\}\subset A$ rather than a
symmetric germ $0<|t|<\delta$. This avoids demanding the simultaneous presence of
both directions $v$ and $-v$, and thus accommodates punctured or oriented
configurations (e.g., a half–line from $x$, a cross missing one branch). A
direction is counted by \(\operatorname{Dir}_A(x)\) as soon as a single outgoing
ray is realized in $A$.

\emph{(Excluding the base point).}
Using $0<t<\delta$ instead of $0\le t<\delta$ does not force $x\in A$.
This is useful for punctured sets where $x\notin A$ but $A$ still contains a
nontrivial ray issuing from $x$. Allowing $t=0$ would add the (often irrelevant)
constraint $x\in A$ without changing the local geometric content of the germ.
\end{remark}
\begin{example}
This first definition is already sufficient to handle crosses.
\begin{itemize}
    \item Two perpendicular segments $l_1$ and $l_2$ crossing at $x$
        (\emph{the “cross’’}):
        $\dim_{\mathrm{Pvec}}\{x\}_{l_1\cup l_2}=2$
        because the horizontal and vertical directions are realized by local segments.
  \item Two distinct affine planes $P_1,P_2\subset\mathbb{R}^3$ meeting along a line:
        \(
        \dim_{\mathrm{Pvec}}\{x\}_{\,P_1\cup P_2}=3
        \)
        at every point $x$ on that line.
        
\item The union of $n$ axes $\ell_1,\dots,\ell_n$ in $\mathbb{R}^n$ forms an $n$-dimensional frame precisely when its incident directions span $\mathbb{R}^n$. Equivalently, at their common intersection point $O$ (the origin),
\[
\dim_{\mathrm{Pvec}}\{O\}_{\,\ell_1\cup\cdots\cup\ell_n}=n.
\]

\end{itemize}
\end{example}

\subsection{Structural properties}
\begin{proposition}[Basic properties of the point-vector dimension]
\label{prop:pvec-basic-properties}
Let $A\subset E$ (affine space modelled on $\R^n$) and $x\in E$. Then:
\begin{enumerate}
\item \textbf{Bounds.} $0\le \dim_{\mathrm{Pvec}}\{x\}_A\le n$.
\item \textbf{Monotonicity.} If $A\subset B$, then
      \[
      \dim_{\mathrm{Pvec}}\{x\}_A\le \dim_{\mathrm{Pvec}}\{x\}_B.
      \]
      \item \textbf{Locality (germ dependence).} For every neighbourhood $U$ of $x$,
      \[\dim_{\mathrm{Pvec}}\{x\}_A=\dim_{\mathrm{Pvec}}\{x\}_{A\cap U}
      \leq\dim_{\mathrm{Pvec}}\{x\}_{\overline{A}}.\]
In particular, if $x\notin\overline{A}$ then $\dim_{\mathrm{Pvec}}\{x\}_A=0$.
\item \textbf{Affine invariance.} For every affine isomorphism $\Phi:E\to E$,
      \[
      \dim_{\mathrm{Pvec}}\{\Phi(x)\}_{\Phi(A)}=\dim_{\mathrm{Pvec}}\{x\}_A.
      \]

\end{enumerate}
\end{proposition}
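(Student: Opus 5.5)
The plan is to unfold the definition and treat each item as a statement about free families. It is convenient to write $\mathcal F_A(x)$ for the set of finite families that are free at $x$ relative to $A$. Then $\dim_{\mathrm{Pvec}}\{x\}_A$ is the maximal length of a member of $\mathcal F_A(x)$, with the convention $\max\emptyset=0$. Each item then reduces to an inclusion or a bijection between such sets of families, and the statement about maxima follows at once.

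\emph{Bounds.} The definition takes the maximum over a subset of $\{1,\dots,n\}$, or else the value is $0$ by convention, so $0\le \dim_{\mathrm{Pvec}}\{x\}_A\le n$. The upper bound also reflects the fact that a linearly independent family in $\R^n$ has at most $n$ vectors.

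\emph{Monotonicity.} If $A\subset B$, every segment germ $\{x+tv_i:0<t<\delta\}\subset A$ also lies in $B$. Hence $\mathcal F_A(x)\subset\mathcal F_B(x)$, and the maximum can only increase.

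\emph{Locality.} Let $U$ be a neighbourhood of $x$. Since $A\cap U\subset A$, monotonicity gives $\dim_{\mathrm{Pvec}}\{x\}_{A\cap U}\le\dim_{\mathrm{Pvec}}\{x\}_A$. For the reverse inequality, take a family $(v_1,\dots,v_k)$ free at $x$ relative to $A$, with witness $\delta$. Choose $r>0$ such that the ball $B(x,r)\subset U$ for some norm on $\R^n$, and set
\[
\delta'=\min\Bigl\{\delta,\ \tfrac{r}{\max_i\|v_i\|}\Bigr\}.
\]
The maximum is positive because linearly independent vectors are nonzero. For $0<t<\delta'$ we have $x+tv_i\in A\cap B(x,r)\subset A\cap U$. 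The family therefore remains free relative to $A\cap U$ with witness $\delta'$, which proves the equality. The inequality $\dim_{\mathrm{Pvec}}\{x\}_A\le\dim_{\mathrm{Pvec}}\{x\}_{\overline A}$ is monotonicity applied to $A\subset\overline A$.

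For the consequence, suppose $x\notin\overline A$. Pick an open neighbourhood $U$ of $x$ with $U\cap A=\emptyset$. Then $\dim_{\mathrm{Pvec}}\{x\}_A=\dim_{\mathrm{Pvec}}\{x\}_{\emptyset}$. This last value is $0$, because no nonempty family can be free relative to the empty set: its germs are nonempty sets that would have to lie in $\emptyset$.

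\emph{Affine invariance.} Write $\Phi(y)=\Phi(x)+L(y-x)$, where $L\in GL(\R^n)$ is the linear part of $\Phi$. Take $(v_1,\dots,v_k)$ free at $x$ relative to $A$ with witness $\delta$. Then
\[
\Phi(x+tv_i)=\Phi(x)+tLv_i\in\Phi(A)\quad\text{for }0<t<\delta .
\]
Moreover $(Lv_1,\dots,Lv_k)$ is linearly independent because $L$ is injective. So the image family is free at $\Phi(x)$ relative to $\Phi(A)$, which gives $\dim_{\mathrm{Pvec}}\{\Phi(x)\}_{\Phi(A)}\ge\dim_{\mathrm{Pvec}}\{x\}_A$. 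Applying the same argument to the affine isomorphism $\Phi^{-1}$, at the point $\Phi(x)$ and the set $\Phi(A)$, gives the reverse inequality.

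None of these steps poses a real obstacle. The only point that needs a little care is the locality equality, where the segment parameter must be shrunk so that all the germs fit inside $U$ simultaneously. Since there are finitely many $v_i$ and all are nonzero, a single $\delta'$ as above does this. One should also note that locality uses only that $U$ is a neighbourhood of $x$, and makes no use of whether $x$ belongs to $A$.
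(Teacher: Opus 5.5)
Your proof is correct and follows the paper's argument item by item. Bounds come from linear independence together with the empty-maximum convention, monotonicity from the inclusion of free families, locality from shrinking the common segment parameter (you make this explicit with a norm ball), and affine invariance from transporting free families by the linear part $L$ and then applying the same argument to $\Phi^{-1}$.
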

\begin{proof}
(1) Every free family is linearly independent in the $n$-dimensional model vector space, so its length is at most $n$. The lower bound follows from the convention that the maximum of the empty set is $0$.\\
(2) Any free family in $A$ is free in $B$.\\
(3) Let $U$ be a neighbourhood of $x$. If $(v_i)$ is free at $x$ in $A$, then, after shrinking the common parameter $\delta>0$, all segments $\{x+t v_i:0<t<\delta\}$ lie in $U$, hence in $A\cap U$. Thus the same family is free in $A\cap U$. The converse follows from $A\cap U\subset A$. The inequality follows from monotonicity since $A\subset \overline{A}$. If $x\notin\overline{A}$, choose a neighbourhood $U$ of $x$ disjoint from $A$. Locality gives $\dim_{\mathrm{Pvec}}\{x\}_A=\dim_{\mathrm{Pvec}}\{x\}_{\varnothing}=0$.\\
(4) Let $\Phi:E\to E$ be an affine isomorphism, with linear part $L:\R^n\to\R^n$ invertible.
Assume $(v_1,\dots,v_k)$ are free at $x$ relative to $A$. Then:

\smallskip
\emph{(Independence)} If $\sum_{i=1}^k \alpha_i\,L v_i=0$, applying $L^{-1}$ gives
$L^{-1}\!\left(\sum_i \alpha_i\,L v_i\right)=\sum_i \alpha_i\,v_i=0$, hence
$\alpha_1=\cdots=\alpha_k=0$. Thus $(L v_1,\dots,L v_k)$ is linearly independent.

\smallskip
\emph{(Local support)} By freeness, there exists $\delta>0$ such that
$\{x+t\,v_i:\,0<t<\delta\}\subset A$ for each $i$. Applying $\Phi$ yields
\[
\Phi(x+t\,v_i)\;=\;\Phi(x)+t\,L v_i \;\in\; \Phi(A)
\quad\text{for }0<t<\delta,
\]
so the star segments along $L v_i$ at $\Phi(x)$ lie in $\Phi(A)$.

\smallskip
Hence $(L v_1,\dots,L v_k)$ is free at $\Phi(x)$ relative to $\Phi(A)$, and
$\dim_{\mathrm{Pvec}}\{\Phi(x)\}_{\Phi(A)}\ge \dim_{\mathrm{Pvec}}\{x\}_A$.
Applying the same argument to $\Phi^{-1}$ gives the reverse inequality, so
$\dim_{\mathrm{Pvec}}\{\Phi(x)\}_{\Phi(A)}=\dim_{\mathrm{Pvec}}\{x\}_A$.

\end{proof}

\subsection{Set-level extension and closure behavior}

\begin{definition}[Point-vector dimension of a set]
Let $E$ be an affine space modelled on $\R^n$ and $A\subset E$.
\[
  \dim_{\mathrm{Pvec}}(A)
   \;:=\;\sup_{x\in\overline{A}}
  \dim_{\mathrm{Pvec}}\{x\}_A
   \]
with the convention $\dim_{\mathrm{Pvec}}(\varnothing)=0$.
\end{definition}
\begin{lemma}[Supremum is a maximum]
Let $A\subset E$ and assume $\overline{A}\neq\varnothing$. Then
\[
  \dim_{\mathrm{Pvec}}(A)
  := \sup_{x\in\overline{A}} \dim_{\mathrm{Pvec}}\{x\}_A
  \;=\; \max_{x\in\overline{A}} \dim_{\mathrm{Pvec}}\{x\}_A.
\]
\end{lemma}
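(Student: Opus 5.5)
The plan is to use that the values $\dim_{\mathrm{Pvec}}\{x\}_A$ lie in the finite set $\{0,1,\dots,n\}$; a supremum over a nonempty index set of a function taking values in a finite set is always attained. No topology (compactness of $\overline{A}$, semicontinuity) is needed, which is convenient because $\overline{A}$ may be unbounded and $x\mapsto\dim_{\mathrm{Pvec}}\{x\}_A$ need not be semicontinuous in any useful sense.

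First, by the Bounds item of Proposition~\ref{prop:pvec-basic-properties}, for every $x\in\overline{A}$ we have $\dim_{\mathrm{Pvec}}\{x\}_A\in\{0,1,\dots,n\}$. Here I would note that the definition takes a maximum over $k\in\{1,\dots,n\}$ (with empty max $=0$), so the value is genuinely an integer and not merely a real number in $[0,n]$. Hence the image set
\[
S:=\bigl\{\dim_{\mathrm{Pvec}}\{x\}_A : x\in\overline{A}\bigr\}\subset\{0,1,\dots,n\}
\]
is finite. It is nonempty because $\overline{A}\neq\varnothing$.

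Second, a nonempty finite subset of $\mathbb{R}$ has a largest element $m$, and $\sup S=m$. Choosing $x_0\in\overline{A}$ with $\dim_{\mathrm{Pvec}}\{x_0\}_A=m$ shows that the supremum is attained, hence it is a maximum.

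There is no real obstacle. The only points to be careful about are integrality of the pointwise values, which comes directly from the definition, and the nonemptiness hypothesis, which rules out $\sup\varnothing$. The empty case is handled separately by the stated convention $\dim_{\mathrm{Pvec}}(\varnothing)=0$.
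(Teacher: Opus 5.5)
Your proposal is correct and follows the paper's own argument: the pointwise values form a nonempty subset of $\{0,1,\dots,n\}$ (nonempty because $\overline{A}\neq\varnothing$), so the set has a largest element and the supremum is attained. Your added remarks on integrality coming from the definition, and on not needing compactness or semicontinuity, are accurate but not needed beyond what the paper uses.
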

\begin{proof}
The set of values $\{\dim_{\mathrm{Pvec}}\{x\}_A:\ x\in\overline{A}\}$ is a nonempty
subset of $\{0,1,\dots,n\}$, hence has a largest element.
\end{proof}

\begin{remark}[Equivalent formulation]
Since $\dim_{\mathrm{Pvec}}\{x\}_A=0$ for $x\notin\overline{A}$, we may equivalently write
\[
\dim_{\mathrm{Pvec}}(A)=\max_{x\in E}\dim_{\mathrm{Pvec}}\{x\}_A.
\]
\end{remark}
We now identify a simple condition under which the equality $\dim_{\mathrm{Pvec}}(A)=\dim_{\mathrm{Pvec}}(\overline{A})$ holds.
\begin{definition}[Local saturation by segments at a point]
Let $E$ be an affine space modelled on $\mathbb{R}^n$, $A\subset E$, and
$x\in\overline{A}$.  
We say that $A$ is \emph{locally saturated by segments at $x$} if for every
nonzero direction $v\in\mathbb{R}^n\setminus\{0\}$ and every $\delta>0$ such that
\[
\{\,x+t\,v:\ 0<t<\delta\,\}\subset \overline{A},
\]
there exists $\delta'\in(0,\delta]$ such that
\[
\{\,x+t\,v:\ 0<t<\delta'\,\}\subset A.
\]
\end{definition}
\begin{lemma}[Pointwise equality under local saturation]
\label{lem:pointwise-closure}
Let $A\subset E$ and $x\in\overline A$. If $A$ is locally saturated by segments at $x$, then
\[
\dim_{\mathrm{Pvec}}\{x\}_A
\;=\;
\dim_{\mathrm{Pvec}}\{x\}_{\overline{A}}.
\]
\end{lemma}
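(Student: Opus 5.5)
The equality splits into two inequalities. The inequality $\dim_{\mathrm{Pvec}}\{x\}_A\le\dim_{\mathrm{Pvec}}\{x\}_{\overline{A}}$ costs nothing: it is the monotonicity statement of Proposition~\ref{prop:pvec-basic-properties}(2) applied to $A\subset\overline{A}$, and it is also recorded explicitly in item~(3) of that proposition.

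For the reverse inequality, set $k:=\dim_{\mathrm{Pvec}}\{x\}_{\overline{A}}$. If $k=0$ there is nothing to prove. Otherwise choose a family $(v_1,\dots,v_k)$ free at $x$ relative to $\overline{A}$. This gives a single $\delta>0$ with $\{x+tv_i:0<t<\delta\}\subset\overline{A}$ for every $i$. Each $v_i$ is nonzero because the family is linearly independent. We may therefore apply local saturation at $x$ to each direction $v_i$ separately, with the parameter $\delta$. This produces $\delta_i'\in(0,\delta]$ such that $\{x+tv_i:0<t<\delta_i'\}\subset A$.

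Next put $\delta':=\min_i\delta_i'>0$; this minimum is positive because there are finitely many indices. All $k$ segment germs of length $\delta'$ then lie in $A$. Linear independence does not depend on the set, so $(v_1,\dots,v_k)$ is free at $x$ relative to $A$. Hence $\dim_{\mathrm{Pvec}}\{x\}_A\ge k$, which is the reverse inequality.

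There is essentially no obstacle. The only points to watch are that the definition of freeness requires one common $\delta$ for all $i$, which is why we take the finite minimum of the $\delta_i'$, and that saturation is stated only for nonzero directions, which independence guarantees. The hypothesis $x\in\overline{A}$ is needed only so that the saturation definition applies at $x$.
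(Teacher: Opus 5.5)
Your proposal is correct and follows essentially the same argument as the paper. You get one inequality from monotonicity under $A\subset\overline{A}$. For the other, you apply local saturation direction by direction to a maximal free family for $\overline{A}$ and take the finite minimum of the resulting radii, so that the same family is free relative to $A$.
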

\begin{proof}
By the monotonicity property in Proposition~\ref{prop:pvec-basic-properties}, since
$A\subset\overline A$, one has
\(
\dim_{\mathrm{Pvec}}\{x\}_A
\le
\dim_{\mathrm{Pvec}}\{x\}_{\overline{A}}
\).

Let $k=\dim_{\mathrm{Pvec}}\{x\}_{\overline{A}}$.
If $k=0$, the reverse inequality is immediate. Assume now that $k\ge1$.
By the definition of the point-vector dimension, there exist $k$ linearly independent vectors
$v_1,\dots,v_k\in\mathbb{R}^n$ and $\delta>0$ such that, for each $i$,
\[
\{\,x+t\,v_i:\ 0<t<\delta\,\}\subset \overline{A}.
\]
By local saturation, for each $i$ there exists $\delta_i\in(0,\delta]$ such that
\[
\{\,x+t\,v_i:\ 0<t<\delta_i\,\}\subset A.
\]
Setting $\delta'=\min_i \delta_i$, the family $(v_1,\dots,v_k)$ is free at $x$
relative to $A$. Hence
\[
\dim_{\mathrm{Pvec}}\{x\}_A \ge k.
\]
Together with the inequality recalled above, this yields
\(
\dim_{\mathrm{Pvec}}\{x\}_A
=
\dim_{\mathrm{Pvec}}\{x\}_{\overline{A}}
\).
\end{proof}
\begin{corollary}[Equality of point-vector dimension under local saturation]
Let $A\subset E$.  
If $A$ is locally saturated by segments at every point
$x\in\overline{A}$, then
\[
\dim_{\mathrm{Pvec}}(A)
\;=\;
\dim_{\mathrm{Pvec}}(\overline{A}).
\]
\end{corollary}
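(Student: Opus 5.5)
The plan is to compare the two suprema directly, using Lemma~\ref{lem:pointwise-closure} pointwise together with the locality property of Proposition~\ref{prop:pvec-basic-properties}. Note first that $\overline{\overline{A}}=\overline{A}$, so both set-level quantities are suprema over the same index set:
\[
\dim_{\mathrm{Pvec}}(A)=\sup_{x\in\overline A}\dim_{\mathrm{Pvec}}\{x\}_A,\qquad
\dim_{\mathrm{Pvec}}(\overline A)=\sup_{x\in\overline A}\dim_{\mathrm{Pvec}}\{x\}_{\overline A}.
\]
The degenerate case $A=\varnothing$ is handled by the convention. In that case $\overline A=\varnothing$, and both sides equal $0$.

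For nonempty $A$, fix an arbitrary $x\in\overline A$. By hypothesis, $A$ is locally saturated by segments at $x$. Lemma~\ref{lem:pointwise-closure} therefore gives $\dim_{\mathrm{Pvec}}\{x\}_A=\dim_{\mathrm{Pvec}}\{x\}_{\overline A}$. The two families of values indexed by $x\in\overline A$ thus coincide term by term, and so their suprema coincide. By the lemma that the supremum is a maximum, both are in fact maxima of the same finite-valued function on $\overline A$, so no limiting issue arises.

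No step here is a real obstacle; the argument is a direct pointwise application of Lemma~\ref{lem:pointwise-closure}. The only point needing care is to check that the supremum in the definition of $\dim_{\mathrm{Pvec}}(\overline A)$ runs over $\overline{\overline A}=\overline A$, and not over some larger set where saturation is not assumed. This holds because closure is idempotent. One might also note that the inequality $\dim_{\mathrm{Pvec}}(A)\le\dim_{\mathrm{Pvec}}(\overline A)$ holds without any hypothesis, by monotonicity. Saturation is needed only for the reverse inequality, and it is used exactly at points of $\overline A$, which is precisely where it is assumed.
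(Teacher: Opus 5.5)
Your proposal is correct and is essentially the paper's argument: both suprema run over $\overline{A}$, and Lemma~\ref{lem:pointwise-closure} makes the pointwise values agree term by term, so the suprema coincide. The locality property you announce at the outset is never actually used. Your separate treatment of $A=\varnothing$ is a harmless addition that the paper leaves implicit.
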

\begin{proof}
By definition,
\[
\dim_{\mathrm{Pvec}}(A)
=
\sup_{x\in\overline{A}} \dim_{\mathrm{Pvec}}\{x\}_A,
\qquad
\dim_{\mathrm{Pvec}}(\overline{A})
=
\sup_{x\in\overline{A}} \dim_{\mathrm{Pvec}}\{x\}_{\overline{A}}.
\]
By Lemma~\ref{lem:pointwise-closure}, local saturation at each
$x\in\overline{A}$ implies
\(
\dim_{\mathrm{Pvec}}\{x\}_A
=
\dim_{\mathrm{Pvec}}\{x\}_{\overline{A}}
\).
Taking the supremum over $\overline{A}$ yields the result.
\end{proof}

\subsection{Grassmann-type formulas for unions and intersections}

\begin{definition}[Direction set at a point]
\label{def:dir-at-point}
Let $A\subset E$ and $x\in E$. We define
\[
\operatorname{Dir}_A(x)
:=
\Bigl\{v\in\R^n\setminus\{0\}:\ \exists\,\delta>0,\ \{x+t\,v:0<t<\delta\}\subset A\Bigr\}.
\]
\end{definition}
We use the convention \(\operatorname{Span}(\varnothing)=\{0\}\).

\begin{proposition}[Point-vector dimension via the direction set]
\label{prop:pvec-via-dir}
Let $A\subset E$ and $x\in E$. Then
\[
\dim_{\mathrm{Pvec}}\{x\}_A
=
\dim\Span\bigl(\operatorname{Dir}_A(x)\bigr).
\]
Here, by convention, \(\dim\Span(\varnothing)=0\).
\end{proposition}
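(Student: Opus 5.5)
The plan is to prove the equality by two inequalities, writing $k:=\dim_{\mathrm{Pvec}}\{x\}_A$ and $d:=\dim\Span(\operatorname{Dir}_A(x))$. Both inequalities reduce to elementary linear algebra, once we note that a free family is exactly a finite linearly independent subfamily of $\operatorname{Dir}_A(x)$ sharing a common germ length $\delta$.

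\emph{Step 1 ($k\le d$).} If $k=0$ there is nothing to prove. Otherwise choose $v_1,\dots,v_k$ free at $x$ relative to $A$. By condition (2) of freeness, each $v_i$ has a segment germ $\{x+tv_i:0<t<\delta\}\subset A$. Each $v_i$ is nonzero, since it belongs to a linearly independent family. Hence $v_i\in\operatorname{Dir}_A(x)$. The $v_i$ are therefore $k$ linearly independent vectors inside $\Span(\operatorname{Dir}_A(x))$, which forces $d\ge k$.

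\emph{Step 2 ($d\le k$).} If $\operatorname{Dir}_A(x)=\varnothing$, then $d=0$ by the stated convention. Otherwise $\Span(\operatorname{Dir}_A(x))$ is a subspace of $\R^n$ spanned by the set $\operatorname{Dir}_A(x)$. By the standard extraction of a basis from a spanning set in finite dimension, there exist $w_1,\dots,w_d\in\operatorname{Dir}_A(x)$ that are linearly independent. For each $i$, membership gives some $\delta_i>0$ with $\{x+tw_i:0<t<\delta_i\}\subset A$. Setting $\delta:=\min_i\delta_i>0$, which is legitimate since there are finitely many $i$, all germs with parameter $\delta$ lie in $A$. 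Thus $(w_1,\dots,w_d)$ is free at $x$ relative to $A$. Since $1\le d\le n$, the value $d$ is admissible in the maximum defining $\dim_{\mathrm{Pvec}}$, so $k\ge d$.

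The only point needing care is Step 2. The definition of freeness demands a single common $\delta$, whereas $\operatorname{Dir}_A(x)$ allows each direction its own $\delta$. This is resolved by passing to a finite basis and taking the minimum, exactly as in the proof of Lemma~\ref{lem:pointwise-closure}. One must also check that the empty-set conventions on both sides match: $\max\varnothing=0$ and $\dim\Span(\varnothing)=0$.
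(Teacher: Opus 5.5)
Your proof is correct and follows the paper's argument essentially verbatim. For the first inequality, every free family lies in $\operatorname{Dir}_A(x)$. For the reverse inequality, a basis of $\Span(\operatorname{Dir}_A(x))$ is extracted from $\operatorname{Dir}_A(x)$ itself and given the common radius $\delta=\min_i\delta_i$, which yields a free family of length $\dim\Span(\operatorname{Dir}_A(x))$.
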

\begin{proof}
If $(v_1,\dots,v_k)$ is free at $x$ relative to $A$, then each $v_i\in\operatorname{Dir}_A(x)$. Since the $v_i$ are linearly independent,
$k\le \dim\Span(\operatorname{Dir}_A(x))$. Taking the maximum over all free families gives
\[
\dim_{\mathrm{Pvec}}\{x\}_A\le \dim\Span\bigl(\operatorname{Dir}_A(x)\bigr).
\]
Conversely, set $m:=\dim\Span(\operatorname{Dir}_A(x))$. If $m=0$, there is nothing to prove. If $m\ge1$, choose linearly independent vectors $v_1,\dots,v_m\in\operatorname{Dir}_A(x)$ spanning this subspace.
For each $i$, pick $\delta_i>0$ such that
$\{x+t\,v_i:0<t<\delta_i\}\subset A$.
With $\delta:=\min_i\delta_i>0$, the family $(v_1,\dots,v_m)$ is free at $x$ relative to $A$.
Hence $m\le \dim_{\mathrm{Pvec}}\{x\}_A$, which yields
\[
\dim\Span\bigl(\operatorname{Dir}_A(x)\bigr)
\le
\dim_{\mathrm{Pvec}}\{x\}_A.
\]
Combining both inequalities proves the claim.
\end{proof}

\begin{proposition}[Intersection identity and union lower bound]
\label{prop:pvec-grassmann-lower}
Let $A,B\subset E$ and $x\in E$. Set
\[
U_x:=\Span\bigl(\operatorname{Dir}_A(x)\bigr),
\qquad
V_x:=\Span\bigl(\operatorname{Dir}_B(x)\bigr).
\]
Then:
\begin{enumerate}
\item \textbf{Intersection.}
\[
\operatorname{Dir}_{A\cap B}(x)=\operatorname{Dir}_A(x)\cap\operatorname{Dir}_B(x),
\]
hence
\[
\dim_{\mathrm{Pvec}}\{x\}_{A\cap B}
=
\dim\Span\bigl(\operatorname{Dir}_A(x)\cap\operatorname{Dir}_B(x)\bigr)
\le
\min\!\bigl\{\dim_{\mathrm{Pvec}}\{x\}_A,\ \dim_{\mathrm{Pvec}}\{x\}_B\bigr\}.
\]
\item \textbf{Union (Grassmann-type lower bound).}
\[
\operatorname{Dir}_A(x)\cup\operatorname{Dir}_B(x)
\subset
\operatorname{Dir}_{A\cup B}(x),
\]
therefore
\[
\dim_{\mathrm{Pvec}}\{x\}_{A\cup B}
\ge
\dim(U_x+V_x)
=
\dim_{\mathrm{Pvec}}\{x\}_A
+
\dim_{\mathrm{Pvec}}\{x\}_B
-
\dim(U_x\cap V_x).
\]
\end{enumerate}
\end{proposition}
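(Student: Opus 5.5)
The plan is to reduce everything to set-theoretic statements about direction sets and then invoke Proposition~\ref{prop:pvec-via-dir} together with the linear Grassmann formula.

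\textbf{Intersection.} I would prove the set identity by double inclusion.
If $v\in\operatorname{Dir}_{A\cap B}(x)$, then for some $\delta>0$ the germ $\{x+tv:0<t<\delta\}$ lies in $A\cap B$. Hence it lies in both $A$ and $B$, so $v\in\operatorname{Dir}_A(x)\cap\operatorname{Dir}_B(x)$.
Conversely, suppose $v$ lies in both direction sets, with radii $\delta_A$ and $\delta_B$. Taking $\delta:=\min\{\delta_A,\delta_B\}>0$ gives a common germ contained in $A\cap B$.
This step, taking the minimum of the two radii, is the only place any care is needed: it works because one-sided germs shrink monotonically in $\delta$.
Proposition~\ref{prop:pvec-via-dir} applied to $A\cap B$ then gives the stated equality. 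For the inequality, note that $\operatorname{Dir}_A(x)\cap\operatorname{Dir}_B(x)\subset\operatorname{Dir}_A(x)$ implies the span of the intersection is contained in $U_x$, so its dimension is at most $\dim U_x=\dim_{\mathrm{Pvec}}\{x\}_A$. The same argument with $B$ gives the bound by $\dim_{\mathrm{Pvec}}\{x\}_B$, hence by the minimum. The empty case is covered by the convention $\operatorname{Span}(\varnothing)=\{0\}$.

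\textbf{Union.} The inclusion $\operatorname{Dir}_A(x)\cup\operatorname{Dir}_B(x)\subset\operatorname{Dir}_{A\cup B}(x)$ is immediate, since any germ lying in $A$, or lying in $B$, lies in $A\cup B$.
Taking spans, $\operatorname{Span}$ of a union of sets equals the sum of their spans, so $U_x+V_x\subset\operatorname{Span}(\operatorname{Dir}_{A\cup B}(x))$. Proposition~\ref{prop:pvec-via-dir} then yields $\dim_{\mathrm{Pvec}}\{x\}_{A\cup B}\ge\dim(U_x+V_x)$.
The last equality is the classical Grassmann formula for finite-dimensional subspaces of $\R^n$, namely $\dim(U_x+V_x)=\dim U_x+\dim V_x-\dim(U_x\cap V_x)$. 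Substituting $\dim U_x=\dim_{\mathrm{Pvec}}\{x\}_A$ and $\dim V_x=\dim_{\mathrm{Pvec}}\{x\}_B$, again by Proposition~\ref{prop:pvec-via-dir}, gives the stated formula.

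I do not expect a genuine obstacle; everything else is bookkeeping. One remark belongs in the write-up to prevent confusion: $U_x\cap V_x$ may strictly contain $\operatorname{Span}(\operatorname{Dir}_A(x)\cap\operatorname{Dir}_B(x))$. The union formula is stated with $\dim(U_x\cap V_x)$, which is exactly what the linear Grassmann formula requires, so no comparison between these two subspaces is needed.
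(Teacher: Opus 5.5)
Your proposal is correct and follows the paper's proof essentially step by step: double inclusion using the minimum of the two radii, Proposition~\ref{prop:pvec-via-dir} to pass from direction sets to spans, and the classical Grassmann formula for $U_x+V_x$. Your closing remark is also accurate: $U_x\cap V_x$ may strictly contain $\Span(\operatorname{Dir}_A(x)\cap\operatorname{Dir}_B(x))$, which is exactly the point the paper takes up in its subsequent remark on when the union formula is exact.
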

\begin{proof}
For (1), a nonzero $v$ belongs to $\operatorname{Dir}_{A\cap B}(x)$ iff there exists $\delta>0$
with $\{x+t\,v:0<t<\delta\}\subset A\cap B$. This implies simultaneous membership in
$\operatorname{Dir}_A(x)$ and $\operatorname{Dir}_B(x)$. Conversely, if $v$ belongs to both direction sets, take the minimum of the two admissible radii.
The dimension identity follows from Proposition~\ref{prop:pvec-via-dir}. The inequality is immediate.

For (2), if $v\in\operatorname{Dir}_A(x)$ (or $\operatorname{Dir}_B(x)$), then the corresponding segment is
contained in $A\cup B$, so $v\in\operatorname{Dir}_{A\cup B}(x)$.
Taking spans gives $U_x+V_x\subset\Span(\operatorname{Dir}_{A\cup B}(x))$.
Applying dimensions and Proposition~\ref{prop:pvec-via-dir} yields
\(
\dim_{\mathrm{Pvec}}\{x\}_{A\cup B}\ge\dim(U_x+V_x)
\).
The final identity is the classical Grassmann formula for subspaces.
\end{proof}

\begin{corollary}[Global Grassmann-type lower bound for unions]
\label{cor:pvec-global-union-lower}
Let $A,B\subset E$. For each $x\in E$, define
\[
U_x:=\Span\bigl(\operatorname{Dir}_A(x)\bigr),
\qquad
V_x:=\Span\bigl(\operatorname{Dir}_B(x)\bigr).
\]
Then
\[
\dim_{\mathrm{Pvec}}(A\cup B)
\ge
\sup_{x\in E}
\Bigl(
\dim_{\mathrm{Pvec}}\{x\}_A
+
\dim_{\mathrm{Pvec}}\{x\}_B
-
\dim(U_x\cap V_x)
\Bigr).
\]
In particular,
\[
\dim_{\mathrm{Pvec}}(A\cup B)
\ge
\max\bigl\{\dim_{\mathrm{Pvec}}(A),\dim_{\mathrm{Pvec}}(B)\bigr\}.
\]
\end{corollary}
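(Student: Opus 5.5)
The plan is to derive the global bound directly from the pointwise union bound in Proposition~\ref{prop:pvec-grassmann-lower}(2), and then to deduce the max-rule inequality as a special case by choosing the point well.

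\textbf{Step 1: the main inequality.} Fix $x\in E$. By Proposition~\ref{prop:pvec-grassmann-lower}(2),
\[
\dim_{\mathrm{Pvec}}\{x\}_{A\cup B}\ge \dim_{\mathrm{Pvec}}\{x\}_A+\dim_{\mathrm{Pvec}}\{x\}_B-\dim(U_x\cap V_x).
\]
Using the Remark on the equivalent formulation, $\dim_{\mathrm{Pvec}}(A\cup B)=\max_{y\in E}\dim_{\mathrm{Pvec}}\{y\}_{A\cup B}$. Hence the left-hand side above is bounded by $\dim_{\mathrm{Pvec}}(A\cup B)$. Taking the supremum over $x\in E$ gives the first claim.

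One detail needs care: the Remark is stated for a general set, but it relies on the Supremum-is-a-maximum Lemma, which assumes a nonempty closure. If $A\cup B=\varnothing$, every term is $0$ and the inequality is trivial, so this case can be disposed of separately.

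\textbf{Step 2: the max-rule consequence.} Assume without loss of generality that $\dim_{\mathrm{Pvec}}(A)\ge\dim_{\mathrm{Pvec}}(B)$, and that $A\neq\varnothing$; otherwise both sets are empty and there is nothing to prove. By the Lemma, choose $x_0\in\overline{A}$ attaining $\dim_{\mathrm{Pvec}}(A)=\dim_{\mathrm{Pvec}}\{x_0\}_A$.

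At $x_0$, the quantity in the supremum equals $\dim(U_{x_0}+V_{x_0})$ by the Grassmann identity. Since $U_{x_0}\subset U_{x_0}+V_{x_0}$, this is at least $\dim U_{x_0}$. By Proposition~\ref{prop:pvec-via-dir}, $\dim U_{x_0}=\dim_{\mathrm{Pvec}}\{x_0\}_A=\dim_{\mathrm{Pvec}}(A)$. Combining this with Step~1 yields the inequality.

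A cleaner alternative is to note that $A\subset A\cup B$. Monotonicity from Proposition~\ref{prop:pvec-basic-properties} then gives the same conclusion pointwise, and taking the max over $E$ finishes it. I would present the Grassmann route, since it ties in with the stated formula, and mention monotonicity as a check.

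\textbf{Main obstacle.} There is no real obstacle here. The only point requiring attention is bookkeeping over domains: the definition takes the supremum over $\overline{A\cup B}$, whereas the corollary takes it over all of $E$. This is reconciled by the fact, from the Locality property, that points outside the closure contribute $0$.
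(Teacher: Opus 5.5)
Your proof is correct, and Step 1 is exactly the paper's argument: apply Proposition~\ref{prop:pvec-grassmann-lower}(2) at each $x\in E$, then take the supremum over $E$. For the ``in particular'' part, the paper uses plain monotonicity ($A,B\subset A\cup B$), which is your suggested alternative; your Grassmann route through a maximizing point $x_0\in\overline A$ is also valid. One small correction: if $A=\varnothing$, $B$ need not be empty, only $\dim_{\mathrm{Pvec}}(B)=0$, but the maximum is then $0$ and there is still nothing to prove.
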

\begin{proof}
By Proposition~\ref{prop:pvec-grassmann-lower}(2), for every $x\in E$,
\[
\dim_{\mathrm{Pvec}}\{x\}_{A\cup B}
\ge
\dim_{\mathrm{Pvec}}\{x\}_A
+
\dim_{\mathrm{Pvec}}\{x\}_B
-
\dim(U_x\cap V_x).
\]
Taking the supremum over $x\in E$ gives the first inequality.
The second one follows from monotonicity, since
$A\subset A\cup B$ and $B\subset A\cup B$.
\end{proof}

The point-vector dimension therefore behaves like a genuine local rank: intersections correspond to common exact local directions, while unions produce a Grassmann-type additive lower bound, with possible strict gain through local patching.

\begin{remark}[When the union formula is exact]
In general, the lower bound in Proposition~\ref{prop:pvec-grassmann-lower}(2) may be strict.
Indeed, a direction may belong to $\operatorname{Dir}_{A\cup B}(x)$ without belonging to
$\operatorname{Dir}_A(x)$ or to $\operatorname{Dir}_B(x)$ separately
(a local patching phenomenon). For instance, in $E=\R$ at $x=0$, set
\[
A:=\{t>0:\sin(1/t)\ge 0\},
\qquad
B:=\{t>0:\sin(1/t)\le 0\}.
\]
Then $(0,\delta)\subset A\cup B$ for every $\delta>0$, so $1\in\operatorname{Dir}_{A\cup B}(0)$,
whereas $1\notin\operatorname{Dir}_A(0)\cup\operatorname{Dir}_B(0)$ because neither $A$ nor $B$
contains a full interval $(0,\delta)$.

If one has the no-patching condition at $x$:
\[
\operatorname{Dir}_{A\cup B}(x)=\operatorname{Dir}_A(x)\cup\operatorname{Dir}_B(x),
\]
then
\[
\dim_{\mathrm{Pvec}}\{x\}_{A\cup B}
=
\dim_{\mathrm{Pvec}}\{x\}_A
+
\dim_{\mathrm{Pvec}}\{x\}_B
-
\dim(U_x\cap V_x).
\]
If, in addition,
\[
U_x\cap V_x
=
\Span\bigl(\operatorname{Dir}_A(x)\cap\operatorname{Dir}_B(x)\bigr),
\]
then the intersection term may be rewritten as a point-vector dimension and one obtains
\[
\dim_{\mathrm{Pvec}}\{x\}_{A\cup B}
=
\dim_{\mathrm{Pvec}}\{x\}_A
+
\dim_{\mathrm{Pvec}}\{x\}_B
-
\dim_{\mathrm{Pvec}}\{x\}_{A\cap B}.
\]
The additional span equality is not automatic: it requires the common subspace
$U_x\cap V_x$ to be generated by exact local directions belonging to both sets. It is automatic
in clean affine or linear configurations.
\end{remark}
\subsection{Piecewise-linear complexes and geometric graphs}
\label{subsec:pl-graphs}

Embedded piecewise-linear $1$-complexes provide a particularly transparent
calibration class for the point-vector dimension. Locally, they consist of finitely many straight edge germs meeting
at nodes, and the computation of $\dim_{\mathrm{Pvec}}$ involves no covering argument,
measure, or limiting process. It reduces to elementary linear algebra on the directions of
the incident edges.

\begin{definition}[Geometric graph]
\label{def:geometric-graph}
A \emph{geometric graph} in $E$ (an affine space modelled on $\R^n$) is a locally
finite union
\[
G=\bigcup_{i\in I}[a_i,b_i],\qquad a_i\neq b_i,
\]
of closed straight segments (the \emph{edges}) such that any two distinct edges meet,
if at all, in a common endpoint of both segments. These endpoints are the \emph{nodes}
(or vertices) of $G$. We write $V(G)$ for the node set. A point $x\in G$ that is not a
node lies in the relative interior of a unique edge.
\end{definition}

Since a locally finite union of closed subsets of the ambient affine space is closed, a geometric graph is closed. Hence $G=\overline G$. Consequently,
no additional directions can appear by passing from $G$ to $\overline G$ in this class.

\begin{definition}[Incident directions]
\label{def:incident-directions}
Let $G$ be a geometric graph and $x\in G$. An \emph{incident direction} at $x$ is a
unit vector $u\in\R^n$ such that some edge emanates from $x$ along $u$, that is,
\[
\{x+t\,u:0<t<\delta\}\subset G
\]
for some $\delta>0$. We write $\mathcal U_G(x)$ for the finite set of incident
directions at $x$.
\end{definition}

Because the edges are straight, a one-sided segment germ is supported in $G$ at $x$
exactly along the incident rays. Hence
\[
\operatorname{Dir}_G(x)=\bigcup_{u\in\mathcal U_G(x)}\R_{>0}\,u,
\qquad
\Span\bigl(\operatorname{Dir}_G(x)\bigr)=\Span\,\mathcal U_G(x).
\]

\begin{proposition}[Node computation]
\label{prop:node-computation}
Let $G$ be a geometric graph and let $x\in G$. Then
\[
\dim_{\mathrm{Pvec}}\{x\}_G=\dim\Span\,\mathcal U_G(x),
\]
the rank of the family of incident edge directions at $x$. In particular:
\begin{enumerate}
\item if $x$ lies in the relative interior of an edge, then the two incident rays are
opposite and $\dim_{\mathrm{Pvec}}\{x\}_G=1$,
\item if $x$ is a pendant node, then the single incident edge gives
$\dim_{\mathrm{Pvec}}\{x\}_G=1$,
\item in general, the value equals the linear rank of the incident directions.
\end{enumerate}
\end{proposition}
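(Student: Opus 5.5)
The plan is to reduce everything to Proposition~\ref{prop:pvec-via-dir}, which gives $\dim_{\mathrm{Pvec}}\{x\}_G=\dim\Span(\operatorname{Dir}_G(x))$. What remains is to justify the identity $\Span(\operatorname{Dir}_G(x))=\Span\,\mathcal U_G(x)$ stated just before the proposition. The item-by-item specializations are then immediate linear algebra.

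\textbf{Step 1: local structure of $G$ near $x$.} By local finiteness, some ball $B(x,r)$ meets only finitely many edges $[a_1,b_1],\dots,[a_m,b_m]$. Edges not containing $x$ are closed and at positive distance from $x$, so after shrinking $r$ only edges through $x$ remain. For each such edge there are two cases. Either $x$ is an endpoint, say $x=a_j$, and the edge contributes the ray germ along $u_j=(b_j-a_j)/|b_j-a_j|$. Or $x$ is interior, in which case it contributes the two opposite ray germs $\pm u_j$. Hence
\[
G\cap B(x,r)=\bigcup_{u\in\mathcal U_G(x)}\{x+tu:0\le t<r\}\cap G,
\]
a finite star of rays, and $\mathcal U_G(x)$ is finite.

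\textbf{Step 2: identifying $\operatorname{Dir}_G(x)$.} The inclusion $\bigcup_u\R_{>0}u\subset\operatorname{Dir}_G(x)$ is immediate, since positive rescaling preserves ray germs. For the converse, let $v\in\operatorname{Dir}_G(x)$. The segment $\{x+tv:0<t<\delta\}$ lies in the finite union of rays from Step~1. I expect this to be the only step needing care. The points $x+tv$ for $t$ in $(0,\min(\delta,r/|v|))$ form uncountably many points, so some single ray $x+\R_{\ge0}u$ contains at least two of them, and in fact just one such point already suffices. Then $x+tv=x+su$ with $t,s>0$, so $v$ is a positive multiple of $u$. Thus $\operatorname{Dir}_G(x)=\bigcup_{u\in\mathcal U_G(x)}\R_{>0}u$, and therefore $\Span\operatorname{Dir}_G(x)=\Span\,\mathcal U_G(x)$. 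The main formula follows from Proposition~\ref{prop:pvec-via-dir}.

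\textbf{Step 3: special cases.} If $x$ lies in the relative interior of an edge, then by definition of a geometric graph no other edge passes through $x$, since edges meet only at common endpoints. So $\mathcal U_G(x)=\{u,-u\}$ and the rank is $1$. If $x$ is pendant, then $\mathcal U_G(x)=\{u\}$ and the rank is again $1$. Item~(3) is a restatement of the main formula.
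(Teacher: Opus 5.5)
Your argument is correct and follows the paper's route: you reduce to Proposition~\ref{prop:pvec-via-dir} via the identity $\Span(\operatorname{Dir}_G(x))=\Span\,\mathcal U_G(x)$, then read off the ranks of $\{u,-u\}$ and $\{u\}$. The only difference is that the paper takes that identity as evident from the straightness of the edges, whereas your Steps~1--2 justify it through local finiteness. As you note yourself, a single point $x+tv$ lying on a ray already forces $v\in\R_{>0}u$, so the remark about uncountably many points is unnecessary.
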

\begin{proof}
By the identity above,
\[
\Span(\operatorname{Dir}_G(x))=\Span\,\mathcal U_G(x).
\]
Proposition~\ref{prop:pvec-via-dir} gives
\[
\dim_{\mathrm{Pvec}}\{x\}_G
=\dim\Span(\operatorname{Dir}_G(x))
=\dim\Span\,\mathcal U_G(x).
\]
The first two cases are the corresponding ranks when the incident set is respectively
$\{u,-u\}$ and $\{u\}$.
\end{proof}

\begin{remark}[Projective count]
Although incident directions are oriented rays, only their linear span enters the
computation. Opposite rays $u$ and $-u$ contribute the same line, so
$\dim_{\mathrm{Pvec}}\{x\}_G$ depends only on the associated projective directions
$[u]\in\mathbb P^{n-1}$. Thus a marked point in the interior of a straight segment is
not special: its value is still $1$, and no direction is double-counted. The one-sided
convention $0<t<\delta$ also treats pendant nodes correctly, since a boundary node
contributes its single available ray.
\end{remark}

\begin{example}[Frames, crosses, coplanar branches]
\leavevmode
\begin{itemize}
\item \emph{$n$-axis frame.} Let
\[
G=\bigcup_{i=1}^n\bigl([0,e_i]\cup[0,-e_i]\bigr)\subset\R^n.
\]
At the common node $O$, one has
$\mathcal U_G(O)=\{\pm e_1,\dots,\pm e_n\}$, which spans $\R^n$. Hence
$\dim_{\mathrm{Pvec}}\{O\}_G=n$, while every other point of $G$ has value $1$.
\item \emph{Plane cross.} A planar cross, subdivided at its intersection point so that
the centre is a node, has two independent local directions at the centre. Therefore
$\dim_{\mathrm{Pvec}}\{x\}_G=2$ at the centre and $1$ away from it.
\item \emph{Coplanar branches in $\R^3$.} Three edges issuing from $x$ but lying in a
common plane give $\dim_{\mathrm{Pvec}}\{x\}_G=2$, not $3$. The incident rank is the rank
of the plane they span, so coplanar directions realize only a $2$-frame.
\end{itemize}
\end{example}

At the set level, if $G\neq\varnothing$, interior edge points contribute only $1$, while
vertices contribute the rank of their incident directions. Hence the global invariant is
attained at a node:
\[
\dim_{\mathrm{Pvec}}(G)=\max_{p\in V(G)}\dim\Span\,\mathcal U_G(p),
\]
the largest local directional rank realized at a vertex. For the empty graph the value is
$0$ by convention.

\begin{remark}[Extrinsic nature of the graph invariant]
The formula above concerns a fixed geometric graph already embedded in the affine space
$E$. It should not be read as an intrinsic dimension of an abstract discrete graph, nor as
a statement about the minimal Euclidean dimension in which such a graph can be embedded.
It only records the largest local rank of the incident directions in the chosen embedding.
Thus a value $n$ means that some node carries $n$ independent ambient directions. It does
not imply that the abstract graph has intrinsic dimension $n$, nor that all global
embedding or realization questions for the graph are solved in $\mathbb R^n$.
\end{remark}

\begin{remark}[Exactness on straight graphs]
\label{rem:graph-collapse}
On a geometric graph every incident direction is \emph{exact}: it is carried by a
genuine straight sub-segment. Thus the point-vector dimension has no hidden limiting
content to recover on such objects. It is completely determined by elementary linear
algebra on the incident edge directions, as stated in Proposition~\ref{prop:node-computation}.
This makes embedded PL $1$-complexes a basic calibration class for the point-vector
construction itself.
\end{remark}

\begin{remark}[Rectilinear limitation]
\label{rem:pvec-rectilinear-limitation}
The point-vector dimension is deliberately \emph{rectilinear}: a direction $v$ is
counted only when an actual straight segment germ
$\{x+t\,v:0<t<\delta\}$ is supported in $A$. Consequently, curved branches issuing
from $x$ may be invisible to $\dim_{\mathrm{Pvec}}$ as soon as they contain no
nontrivial straight sub-segment.

For instance, in $E=\R^2$, the parabolic arc
\[
A=\{(t,t^2):0<t<\delta\}
\]
has $\operatorname{Dir}_A(0)=\varnothing$, hence
$\dim_{\mathrm{Pvec}}\{0\}_A=0$, although it is a one-dimensional branch with the
well-defined tangent direction $(1,0)$ at the origin. Similarly, the curved cross
\[
A=\{(t,t^2):|t|<\delta\}\ \cup\ \{(t^2,t):|t|<\delta\}
\]
has $\dim_{\mathrm{Pvec}}\{0\}_A=0$, while its two branches have the independent tangent
directions $(1,0)$ and $(0,1)$.

This illustrates the limitation of exact segment germs and motivates the passage, in the next
section, from directions literally contained in the set to first-order directions.
\end{remark}

\section{Point-tangential dimension}
\label{subsec:tandim}

\subsection{Why a tangential layer is needed}

The point-vector dimension introduced above provides a first, purely Grassmannian
description of local directionality: it counts the maximal number of linearly
independent directions that are \emph{exactly} realized by actual local segments
contained in the set.
This point of view is natural and conceptually fundamental, but it is also rigid.
Indeed, many sets possess a clear first-order geometric direction near a point
without containing any nontrivial straight segment in that direction.

A natural next step is therefore to pass from \emph{exactly realized directions}
to \emph{asymptotically attainable directions}.
For this purpose, we rely on the classical tangent cone of Bouligand, which captures
the first-order directional structure of a set through blow-up limits.
The point-tangential dimension introduced below is not the Bouligand cone itself.
It is a pointwise dimensional invariant extracted from that cone.
Its role is to transform tangent directional information into a local directional rank,
thereby providing a tangential completion of \(\dim_{\mathrm{Pvec}}\).
In the architecture of the present work, this tangential layer plays an intermediate role:
it is more flexible than the point-vector rank, but still remains unweighted.
It records which independent directions are present at first order,
without yet measuring the effective geometric contribution carried by each of them.

Although Bouligand tangent cones are classical objects, the point-tangential
dimension considered here uses them only via the rank
\(\dim\Span(\Tan_x(A))\). Its role is to provide the intermediate level of a
local directional hierarchy: it passes beyond exact segment directions, while
remaining prior to the weighted Point-Cross contribution introduced later.
\begin{remark}[Conceptual position of the point-tangential dimension]
The point-tangential dimension should not be understood as an intrinsic invariant in the
classical differential-topological sense.
Rather, it is a first-order \emph{extrinsic} invariant of the embedded set:
it measures how many independent tangent directions are locally mobilized in the ambient space.
In this respect, it may be viewed as an extension of the classical tangent-space viewpoint
beyond the smooth category: instead of describing a set through coordinate charts,
it records the directional rank of its local embedding.
This explains why it distinguishes, for instance, a curve-like corner from a smooth curve point,
while still agreeing with the usual tangent dimension on smooth submanifolds.
\end{remark}
\subsection{Bouligand tangent cone}

\begin{definition}[Bouligand tangent cone]
Let $A\subset \mathbb{R}^n$ and let $x\in \mathbb{R}^n$.
The \emph{Bouligand tangent cone} (also called the \emph{contingent cone}, see \cite{bouligand}) of $A$ at $x$ is
\[
\Tan_x(A)
:=
\left\{
v\in \mathbb{R}^n
\;:\;
\exists\, x_k\in A,\ x_k\to x,\ \exists\, \lambda_k\downarrow 0,\ 
\frac{x_k-x}{\lambda_k}\to v
\right\}.
\]
If $x\notin\overline{A}$, then no sequence $(x_k)\subset A$ can converge to $x$, hence
$\Tan_x(A)=\varnothing$.
\end{definition}

\begin{remark}[Geometric meaning]
A vector $v\in\Tan_x(A)$ means that the set $A$ can be approached from $x$
along a sequence of points whose first-order normalized displacement converges to $v$.
Thus $\Tan_x(A)$ records the asymptotically accessible directions at $x$.
Unlike the point-vector dimension, it does not require actual segments inside $A$.
\end{remark}

To connect the Bouligand tangent cone with the pointwise directional framework introduced above, we now define a localized tangent model associated with \(\Tan_x(A)\).
This model will allow us to interpret the point-tangential dimension as a point-vector dimension on a canonical tangent object.
\begin{definition}[Local tangent model]
Let $A\subset\mathbb{R}^n$ and $x\in\mathbb{R}^n$.
We define the local tangent model of $A$ at $x$ by
\[
\Tan_x^{\mathrm{loc}}(A)
:=
x+\bigl((\Tan_x(A)\cap B(0,1))\setminus\{0\}\bigr).
\]
Equivalently,
\[
\Tan_x^{\mathrm{loc}}(A)
=
\{x+t\,u:\ 0<t<1,\ u\in\Tan_x(A)\cap S^{n-1}\}.
\]
If $\Tan_x(A)=\varnothing$ or $\Tan_x(A)=\{0\}$, then
\[
\Tan_x^{\mathrm{loc}}(A)=\varnothing.
\]
\end{definition}
Since \(\Tan_x(A)\) is a cone, one has
\[
\Tan_x(A)\cap S^{n-1}\neq\varnothing
\]
whenever \(\Tan_x(A)\) contains a nonzero vector.

Recall that \(\operatorname{Dir}_B(x)\) denotes the direction set at \(x\) relative to \(B\), i.e.\ the directions realized by exact local segments in \(B\) (see Definition~\ref{def:dir-at-point}).
\begin{lemma}[Direction set of the local tangent model]
\label{lem:dir-local-tangent-model}
Let $A\subset\mathbb{R}^n$ and $x\in\mathbb{R}^n$. Then, with $\operatorname{Dir}$ as in Definition~\ref{def:dir-at-point},
\[
\operatorname{Dir}_{\Tan_x^{\mathrm{loc}}(A)}(x)
=
\Tan_x(A)\setminus\{0\}.
\]

\end{lemma}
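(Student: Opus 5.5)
The plan is to prove the two inclusions directly from the definitions, using only that $\Tan_x(A)$ is a cone. I first record this property: if $v\in\Tan_x(A)$ and $s>0$, then $sv\in\Tan_x(A)$. This follows by replacing $\lambda_k$ with $\lambda_k/s$ in the defining sequence. I also treat the degenerate case separately. If $\Tan_x(A)\subset\{0\}$, then $\Tan_x^{\mathrm{loc}}(A)=\varnothing$, so $\operatorname{Dir}_{\varnothing}(x)=\varnothing=\Tan_x(A)\setminus\{0\}$.

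\emph{Inclusion $\supseteq$.} Let $v\in\Tan_x(A)\setminus\{0\}$ and set $u:=v/|v|$. By the cone property $u\in\Tan_x(A)\cap S^{n-1}$. Now take $\delta:=1/|v|$. For $0<t<\delta$ the point $x+tv=x+(t|v|)u$ has $0<t|v|<1$, so it lies in $\Tan_x^{\mathrm{loc}}(A)$ by the second description of the local tangent model. Hence $v\in\operatorname{Dir}_{\Tan_x^{\mathrm{loc}}(A)}(x)$.

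\emph{Inclusion $\subseteq$.} Let $v\neq0$ with $\{x+tv:0<t<\delta\}\subset\Tan_x^{\mathrm{loc}}(A)$. Fix one $t_0\in(0,\delta)$. Then $x+t_0v=x+su$ for some $s\in(0,1)$ and some $u\in\Tan_x(A)\cap S^{n-1}$. This gives $t_0v=su$, so $v=(s/t_0)u$ is a positive multiple of an element of the cone. The cone property then gives $v\in\Tan_x(A)$, and $v\neq0$ by assumption.

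I expect no serious obstacle; the argument is a straightforward unwinding of definitions. The step needing the most care is verifying the cone property for all scalings $s>0$ with the paper's definition, in particular checking that $\lambda_k/s\downarrow0$ still holds and that the same points $x_k\to x$ still work. I also need to confirm that the two descriptions of $\Tan_x^{\mathrm{loc}}(A)$ agree, which itself rests on the cone property: $w\in\Tan_x(A)\cap B(0,1)\setminus\{0\}$ is written as $w=|w|\,(w/|w|)$. I would include this identification explicitly before the main argument.
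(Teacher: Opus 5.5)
Your proof is correct and follows essentially the same route as the paper: both inclusions are read off from the definitions, using only the cone property of $\Tan_x(A)$. The only difference is cosmetic. The paper works directly with the ball description $x+\bigl((\Tan_x(A)\cap B(0,1))\setminus\{0\}\bigr)$, so it never needs to identify it with the unit-sphere description, and it takes $\delta=\min(1,1/\|v\|)$ where your $\delta=1/\|v\|$ already suffices.
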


\begin{proof}
We first prove
\[
\operatorname{Dir}_{\Tan_x^{\mathrm{loc}}(A)}(x)
\subset
\Tan_x(A)\setminus\{0\}.
\]
Let \(v\in \operatorname{Dir}_{\Tan_x^{\mathrm{loc}}(A)}(x)\). By definition, there exists \(\delta>0\) such that
\[
\{x+t\,v:0<t<\delta\}\subset \Tan_x^{\mathrm{loc}}(A).
\]
Hence, for every \(0<t<\delta\),
\[
t\,v\in (\Tan_x(A)\cap B(0,1))\setminus\{0\}.
\]
In particular, \(t\,v\in \Tan_x(A)\) for all \(0<t<\delta\). Since \(\Tan_x(A)\) is a cone, it is stable under multiplication by positive scalars. Therefore
\[
v=\frac1t(tv)\in \Tan_x(A).
\]
Moreover, \(v\neq 0\), since directions are by definition nonzero. Hence
\[
v\in \Tan_x(A)\setminus\{0\}.
\]

We now prove the reverse inclusion.
\[
\Tan_x(A)\setminus\{0\}
\subset
\operatorname{Dir}_{\Tan_x^{\mathrm{loc}}(A)}(x).
\]
Let \(v\in \Tan_x(A)\setminus\{0\}\). Since \(\Tan_x(A)\) is a cone, one has \(t\,v\in \Tan_x(A)\) for every \(t>0\).
Choose $\delta:=\min\left(1,\frac1{\|v\|}\right)>0$.
Then, for every \(0<t<\delta\), one has \(\|t\,v\|=t\|v\|<1\), hence \(t\,v\in (\Tan_x(A)\cap B(0,1))\setminus\{0\}\).
Therefore
\[
x+t\,v\in \Tan_x^{\mathrm{loc}}(A)\qquad \forall\,0<t<\delta,
\]
which means exactly that $v\in \operatorname{Dir}_{\Tan_x^{\mathrm{loc}}(A)}(x)$. The two inclusions prove the claim.
\end{proof}
\subsection{Point-tangential dimension}

\begin{definition}[Point-tangential dimension]
Let $A\subset \mathbb{R}^n$ and let $x\in \mathbb{R}^n$.
We define the \emph{point-tangential dimension} of $A$ at $x$ by
\[
\dim_{\mathrm{Ptan}}\{x\}_{A}
:=
\dim \Span\!\bigl(\Tan_x(A)\bigr),
\]
where, by convention, $\Span(\varnothing)=\{0\}$.
\end{definition}

\begin{remark}
In particular, if $x\notin\overline{A}$ then $\Tan_x(A)=\varnothing$ and therefore
$\dim_{\mathrm{Ptan}}\{x\}_{A}=0$.
\end{remark}

\begin{remark}[Interpretation]
The quantity $\dim_{\mathrm{Ptan}}\{x\}_{A}$ measures the number of linearly independent
directions that are present in the first-order geometry of $A$ at $x$.
It is an \emph{unweighted tangent rank}:
it counts independent tangent directions, but does not yet distinguish between
a robust branch and a direction supported only by a very thin or sparse part of the set.
\end{remark}

\begin{proposition}[Point-tangential dimension as point-vector dimension of the tangent model]
\label{prop:ptan-as-pvec}
Let $A\subset\mathbb{R}^n$ and $x\in\mathbb{R}^n$. Then
\[
\dim_{\mathrm{Ptan}}\{x\}_{A}
=
\dim_{\mathrm{Pvec}}\{x\}_{\Tan_x^{\mathrm{loc}}(A)}.
\]
\end{proposition}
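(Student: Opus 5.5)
The plan is to combine Proposition~\ref{prop:pvec-via-dir} with Lemma~\ref{lem:dir-local-tangent-model}, reducing the statement to an identity between spans. Applying Proposition~\ref{prop:pvec-via-dir} to the set $\Tan_x^{\mathrm{loc}}(A)$ at the point $x$ gives
\[
\dim_{\mathrm{Pvec}}\{x\}_{\Tan_x^{\mathrm{loc}}(A)}
=
\dim\Span\bigl(\operatorname{Dir}_{\Tan_x^{\mathrm{loc}}(A)}(x)\bigr),
\]
and Lemma~\ref{lem:dir-local-tangent-model} identifies the direction set on the right with $\Tan_x(A)\setminus\{0\}$. The right-hand side therefore becomes $\dim\Span(\Tan_x(A)\setminus\{0\})$.

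It then remains to check that
\[
\Span\bigl(\Tan_x(A)\setminus\{0\}\bigr)=\Span\bigl(\Tan_x(A)\bigr),
\]
with the paper's convention $\Span(\varnothing)=\{0\}$. I would split into three cases.

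\emph{Case $x\notin\overline A$.} Here $\Tan_x(A)=\varnothing$, so both sides equal $\{0\}$ and both dimensions are $0$. This agrees with the earlier remark that $\dim_{\mathrm{Ptan}}\{x\}_A=0$, and with the emptiness of the local tangent model.

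\emph{Case $\Tan_x(A)=\{0\}$.} This can happen only when $x\in\overline A$, in which case the constant sequence or any approximating sequence yields $0\in\Tan_x(A)$. Removing $0$ leaves the empty set, whose span is $\{0\}$ by convention, which matches $\Span\{0\}=\{0\}$.

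\emph{General case.} Adding or removing the zero vector never changes a linear span, so the identity holds.

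Combining these with the definition $\dim_{\mathrm{Ptan}}\{x\}_A=\dim\Span(\Tan_x(A))$ gives the claim.

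I do not expect a genuine obstacle. The only delicate point is the degenerate bookkeeping: the set $\Tan_x(A)$ may be empty or equal to $\{0\}$, and one must make sure the conventions for the empty span and the empty maximum line up on both sides. The substantive work, namely the cone property used to transfer rays into exact segments of the local model, is already done in Lemma~\ref{lem:dir-local-tangent-model}.
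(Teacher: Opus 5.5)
Your proposal is correct and follows the paper's own proof: Proposition~\ref{prop:pvec-via-dir} combined with Lemma~\ref{lem:dir-local-tangent-model}, then the observation that deleting the zero vector does not change a linear span. The three-case bookkeeping is harmless but unnecessary, because under the convention $\Span(\varnothing)=\{0\}$ the identity $\Span(S\setminus\{0\})=\Span(S)$ already holds for every $S\subset\mathbb{R}^n$, including $S=\varnothing$ and $S=\{0\}$.
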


\begin{proof}
By Proposition~\ref{prop:pvec-via-dir},
\[
\dim_{\mathrm{Pvec}}\{x\}_{\Tan_x^{\mathrm{loc}}(A)}
=
\dim \Span\!\bigl(\operatorname{Dir}_{\Tan_x^{\mathrm{loc}}(A)}(x)\bigr).
\]
By Lemma~\ref{lem:dir-local-tangent-model},
\[
\operatorname{Dir}_{\Tan_x^{\mathrm{loc}}(A)}(x)
=
\Tan_x(A)\setminus\{0\}.
\]
Hence
\[
\dim_{\mathrm{Pvec}}\{x\}_{\Tan_x^{\mathrm{loc}}(A)}
=
\dim \Span\!\bigl(\Tan_x(A)\setminus\{0\}\bigr).
\]
Since removing the zero vector does not change the linear span,
\[
\Span\!\bigl(\Tan_x(A)\setminus\{0\}\bigr)=\Span(\Tan_x(A)).
\]
Therefore
\[
\dim_{\mathrm{Pvec}}\{x\}_{\Tan_x^{\mathrm{loc}}(A)}
=
\dim \Span(\Tan_x(A)).
\]
By definition of the point-tangential dimension, $\dim \Span(\Tan_x(A))
=
\dim_{\mathrm{Ptan}}\{x\}_{A}$.
This proves the proposition.
\end{proof}
Thus, the point-vector dimension and the point-tangential dimension should be viewed as two successive levels of local directional analysis.
The former detects actual straight germs contained in the set, whereas the latter records all asymptotically accessible directions.
Equivalently, the point-tangential dimension is the point-vector dimension of the local tangent model.
This immediately yields the following comparison.

\begin{proposition}[Point-vector rank is bounded by tangential rank]
\label{prop:pvec-le-tandim}
Let $A\subset \mathbb{R}^n$ and $x\in \mathbb{R}^n$.
Then
\[
\dim_{\mathrm{Pvec}}\{x\}_{A}\le \dim_{\mathrm{Ptan}}\{x\}_{A}.
\]
\end{proposition}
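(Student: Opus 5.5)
The plan is to reduce the inequality to an inclusion of direction sets and then pass to spans. By Proposition~\ref{prop:pvec-via-dir}, $\dim_{\mathrm{Pvec}}\{x\}_A=\dim\Span(\operatorname{Dir}_A(x))$. By definition, $\dim_{\mathrm{Ptan}}\{x\}_A=\dim\Span(\Tan_x(A))$. It therefore suffices to prove the inclusion
\[
\operatorname{Dir}_A(x)\subset \Tan_x(A).
\]
Monotonicity of linear span then gives $\Span(\operatorname{Dir}_A(x))\subset\Span(\Tan_x(A))$, and the inequality of dimensions follows.

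To prove the inclusion, I take $v\in\operatorname{Dir}_A(x)$ and fix $\delta>0$ with $\{x+tv:0<t<\delta\}\subset A$. Then I choose the explicit sequence $\lambda_k:=\delta/(k+1)$, so that $\lambda_k\downarrow 0$ and $0<\lambda_k<\delta$, and set $x_k:=x+\lambda_k v$. Each $x_k$ lies in $A$, we have $x_k\to x$, and
\[
\frac{x_k-x}{\lambda_k}=v \quad\text{for every } k.
\]
The normalized displacements are therefore constant, hence convergent to $v$, and so $v\in\Tan_x(A)$.

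The empty case needs a separate word. If $\operatorname{Dir}_A(x)=\varnothing$, the left side is $0$ by convention, and the inequality is trivial.

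There is no real obstacle. The only point needing care is that the Bouligand cone uses points of $A$ itself, not of $\overline A$. This is exactly why the one-sided segment germ, which lies in $A$ and excludes $t=0$, is the right witness; $x\in A$ is not needed. One could alternatively argue via Proposition~\ref{prop:ptan-as-pvec} and monotonicity of $\dim_{\mathrm{Pvec}}$ applied to $A\cap B(x,1)$ versus the local tangent model, but that comparison is not literally an inclusion of sets. The direct inclusion of direction sets above is cleaner, and I would use it.
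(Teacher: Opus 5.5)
Your proof is correct and is essentially the paper's argument. In both, the key step is that an exact segment germ $\{x+tv:0<t<\delta\}\subset A$ yields points $x+\lambda_k v\in A$ whose difference quotient is constantly $v$, so $v\in\Tan_x(A)$. The only difference is packaging: you state this as the inclusion $\operatorname{Dir}_A(x)\subset\Tan_x(A)$ and invoke Proposition~\ref{prop:pvec-via-dir}, whereas the paper applies the same step to each vector of a free family and then maximizes over free families.
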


\begin{proof}
Let $(v_1,\dots,v_k)$ be a free family at $x$ relative to $A$.
Then, for each $i\in\{1,\dots,k\}$, there exists $\delta_i>0$ such that
\[
x+t\,v_i\in A
\qquad\forall\, t\in(0,\delta_i).
\]
Choose any sequence $\lambda_m\downarrow 0$ such that $\lambda_m<\delta_i$ for all $m$ large enough.
For each $i$, define
\[
x_m^{(i)}:=x+\lambda_m v_i\in A.
\]
Then
\[
\frac{x_m^{(i)}-x}{\lambda_m}=v_i,
\]
so \(v_i\in \Tan_x(A)\) for every \(i\).
Since the family \((v_1,\dots,v_k)\) is linearly independent, we obtain
\[
k\le \dim\Span(\Tan_x(A))=\dim_{\mathrm{Ptan}}\{x\}_{A}.
\]
Taking the maximum over all free families at \(x\) relative to \(A\) yields
\[
\dim_{\mathrm{Pvec}}\{x\}_{A}\le \dim_{\mathrm{Ptan}}\{x\}_{A}.
\]
\end{proof}
\begin{remark}[Why the inequality may be strict]
The inequality in Proposition~\ref{prop:pvec-le-tandim} may be strict.
Indeed, a set may admit a well-defined tangent direction at a point without containing
any nontrivial straight segment in that direction.
Thus the point-tangential dimension is genuinely more flexible than the point-vector dimension:
it records asymptotically accessible directions, whereas the point-vector dimension only detects
directions realized by actual local segments.
In this sense, \(\dim_{\mathrm{Ptan}}\) is the tangential completion of \(\dim_{\mathrm{Pvec}}\).
\end{remark}

\subsection{Basic properties and point-tangential dimension of a set}

\begin{proposition}[Basic properties]
\label{prop:basic-tandim}
Let $A,B\subset \mathbb{R}^n$ and let $x\in \mathbb{R}^n$.
Then:
\begin{enumerate}
    \item \textbf{Bounds:}
    \[
    0\le \dim_{\mathrm{Ptan}}\{x\}_{A}\le n.
    \]

    \item \textbf{Monotonicity:}
    if $A\subset B$, then
    \[
    \dim_{\mathrm{Ptan}}\{x\}_{A}\le \dim_{\mathrm{Ptan}}\{x\}_{B}.
    \]

    \item \textbf{Dependence on the germ:}
    for every neighborhood $U$ of $x$,
    \[
    \Tan_x(A)=\Tan_x(A\cap U),
    \qquad
    \dim_{\mathrm{Ptan}}\{x\}_{A}=\dim_{\mathrm{Ptan}}\{x\}_{A\cap U}.
    \]

    \item \textbf{Dependence on the closure:}
    \[
    \Tan_x(A)=\Tan_x(\overline{A}),
    \qquad
    \dim_{\mathrm{Ptan}}\{x\}_{A}=\dim_{\mathrm{Ptan}}\{x\}_{\overline{A}}.
    \]

    \item \textbf{\(C^1\)-diffeomorphism invariance:}
    if $\Phi:\mathbb{R}^n\to\mathbb{R}^n$ is a \(C^1\)-diffeomorphism, then
    \[
    \Tan_{\Phi(x)}(\Phi(A))=D\Phi(x)\,\Tan_x(A),
    \]
    and therefore
    \[
    \dim_{\mathrm{Ptan}}\{\Phi(x)\}_{\Phi(A)}
    =
    \dim_{\mathrm{Ptan}}\{x\}_{A}.
    \]
\end{enumerate}
\end{proposition}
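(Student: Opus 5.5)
The plan is to argue item by item, working throughout at the level of the cone $\Tan_x(A)$ and passing to spans and dimensions only at the end. Once two cones are shown equal, or one contained in the other, the corresponding statement about $\dim_{\mathrm{Ptan}}$ follows by taking spans.

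\textbf{(1) Bounds.} $\Span(\Tan_x(A))$ is a linear subspace of $\R^n$, so its dimension lies in $\{0,\dots,n\}$. The convention $\Span(\varnothing)=\{0\}$ covers the case $x\notin\overline A$.

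\textbf{(2) Monotonicity.} Any defining sequence $x_k\in A$ is also a sequence in $B$, so $\Tan_x(A)\subset\Tan_x(B)$. Spans are monotone, hence so are their dimensions.

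\textbf{(3) Dependence on the germ.} By (2), $\Tan_x(A\cap U)\subset\Tan_x(A)$. Conversely, if $x_k\in A$ and $x_k\to x$, then $x_k\in U$ for all large $k$, because $U$ is a neighbourhood of $x$. Discarding finitely many terms keeps the same limit of $(x_k-x)/\lambda_k$, so the reverse inclusion holds.

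\textbf{(4) Dependence on the closure.} The inclusion $\Tan_x(A)\subset\Tan_x(\overline A)$ is (2). For the reverse, take $y_k\in\overline A$ and $\lambda_k\downarrow0$ with $(y_k-x)/\lambda_k\to v$. Choose $x_k\in A$ with $\|x_k-y_k\|<\lambda_k/k$. Then $x_k\to x$ and
\[
\Bigl\|\frac{x_k-x}{\lambda_k}-\frac{y_k-x}{\lambda_k}\Bigr\|<\frac1k\to0,
\]
so $v\in\Tan_x(A)$. This diagonal approximation, with the error made small relative to $\lambda_k$, is the only place where a quantitative choice is needed.

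\textbf{(5) $C^1$-invariance.} This is the main step. Let $L=D\Phi(x)$, which is invertible. Take $v\in\Tan_x(A)$ witnessed by $x_k\to x$ and $\lambda_k\downarrow0$. By differentiability of $\Phi$ at $x$,
\[
\Phi(x_k)-\Phi(x)=L(x_k-x)+o(\|x_k-x\|).
\]
Since $\|x_k-x\|=O(\lambda_k)$ (because $(x_k-x)/\lambda_k$ converges), dividing by $\lambda_k$ gives
\[
\frac{\Phi(x_k)-\Phi(x)}{\lambda_k}\to Lv.
\]
Moreover $\Phi(x_k)\in\Phi(A)$ and $\Phi(x_k)\to\Phi(x)$, so $L\,\Tan_x(A)\subset\Tan_{\Phi(x)}(\Phi(A))$. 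Only differentiability at the single point $x$ is used here.

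For the reverse inclusion, apply the same argument to $\Phi^{-1}$, which is $C^1$ with $D\Phi^{-1}(\Phi(x))=L^{-1}$. This gives $L^{-1}\Tan_{\Phi(x)}(\Phi(A))\subset\Tan_x(A)$, hence equality of the cones.

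Finally, $\Span(L\,C)=L\,\Span(C)$ for any cone $C$, and the invertible linear map $L$ preserves dimension. The case $x\notin\overline A$ is consistent: then $\Phi(x)\notin\overline{\Phi(A)}$ since $\Phi$ is a homeomorphism, and both cones are empty.
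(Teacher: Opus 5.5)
Your proof is correct and follows essentially the same route as the paper: you work item by item at the level of the cones, use a diagonal approximation in (4), and in (5) use the first-order expansion of $\Phi$ at $x$ together with the same argument applied to $\Phi^{-1}$. The differences are cosmetic: the paper takes $\|x_k-y_k\|\le\lambda_k^2$ where you take $\lambda_k/k$, and it treats the case $x_k=x$ (which forces $v=0$) separately, whereas your little-$o$ formulation of the remainder covers that case automatically.
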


\begin{proof}
(1) is immediate, since \(\Span(\Tan_x(A))\) is a vector subspace of \(\mathbb{R}^n\).

(2) If \(A\subset B\), every sequence realizing a tangent vector for \(A\) also realizes
one for \(B\), hence
\[
\Tan_x(A)\subset \Tan_x(B).
\]
Passing to linear spans gives
\[
\dim_{\mathrm{Ptan}}\{x\}_{A}\le \dim_{\mathrm{Ptan}}\{x\}_{B}.
\]

(3) Let \(U\) be a neighborhood of \(x\).
If \(v\in \Tan_x(A)\), there exist \(x_k\in A\) and \(\lambda_k\downarrow 0\) such that
\[
x_k\to x,
\qquad
\frac{x_k-x}{\lambda_k}\to v.
\]
Since \(x_k\to x\), one has \(x_k\in U\) for \(k\) large enough, so \(x_k\in A\cap U\)
eventually. Hence \(v\in \Tan_x(A\cap U)\). The reverse inclusion is immediate from
\(A\cap U\subset A\). Thus
\[
\Tan_x(A)=\Tan_x(A\cap U),
\]
and taking spans yields
\[
\dim_{\mathrm{Ptan}}\{x\}_{A}=\dim_{\mathrm{Ptan}}\{x\}_{A\cap U}.
\]

(4) The inclusion
\[
\Tan_x(A)\subset \Tan_x(\overline{A})
\]
is immediate from \(A\subset \overline{A}\).
Conversely, let \(v\in \Tan_x(\overline{A})\). Then there exist \(y_k\in \overline{A}\)
and \(\lambda_k\downarrow 0\) such that
\[
y_k\to x,
\qquad
\frac{y_k-x}{\lambda_k}\to v.
\]
For each \(k\), choose \(x_k\in A\) such that
\[
\|x_k-y_k\|\le \lambda_k^2.
\]
Then
\[
\frac{x_k-x}{\lambda_k}
=
\frac{y_k-x}{\lambda_k}
+
\frac{x_k-y_k}{\lambda_k}.
\]
The first term tends to \(v\), while
\[
\left\|\frac{x_k-y_k}{\lambda_k}\right\|
\le \lambda_k\to 0.
\]
Hence
\[
\frac{x_k-x}{\lambda_k}\to v,
\]
so \(v\in \Tan_x(A)\). Therefore
\[
\Tan_x(A)=\Tan_x(\overline{A}),
\]
and again taking spans gives
\[
\dim_{\mathrm{Ptan}}\{x\}_{A}=\dim_{\mathrm{Ptan}}\{x\}_{\overline{A}}.
\]

(5) Let \(\Phi:\mathbb{R}^n\to\mathbb{R}^n\) be a \(C^1\)-diffeomorphism.
We first show that
\[
D\Phi(x)\,\Tan_x(A)\subset \Tan_{\Phi(x)}(\Phi(A)).
\]
Let \(v\in \Tan_x(A)\). Then there exist \(x_k\in A\) and \(\lambda_k\downarrow 0\) such that
\[
x_k\to x,
\qquad
\frac{x_k-x}{\lambda_k}\to v.
\]
If \(x_k=x\) for infinitely many \(k\), then the convergence of \((x_k-x)/\lambda_k\) forces \(v=0\), and the conclusion is immediate.
Otherwise, after discarding finitely many terms, we may assume \(x_k\neq x\).
By differentiability of \(\Phi\) at \(x\),
\[
\Phi(x_k)=\Phi(x)+D\Phi(x)(x_k-x)+r_k,
\qquad
\text{with }
\frac{\|r_k\|}{\|x_k-x\|}\to 0.
\]
Dividing by \(\lambda_k\), we get
\[
\frac{\Phi(x_k)-\Phi(x)}{\lambda_k}
=
D\Phi(x)\frac{x_k-x}{\lambda_k}
+
\frac{r_k}{\lambda_k}.
\]
Since
\[
\frac{\|r_k\|}{\lambda_k}
=
\frac{\|r_k\|}{\|x_k-x\|}
\cdot
\frac{\|x_k-x\|}{\lambda_k},
\]
and \(\frac{x_k-x}{\lambda_k}\to v\), the second factor is bounded, while the first tends to \(0\).
Hence
\[
\frac{r_k}{\lambda_k}\to 0.
\]
Therefore
\[
\frac{\Phi(x_k)-\Phi(x)}{\lambda_k}\to D\Phi(x)v,
\]
which proves that
\[
D\Phi(x)v\in \Tan_{\Phi(x)}(\Phi(A)).
\]
Thus
\[
D\Phi(x)\,\Tan_x(A)\subset \Tan_{\Phi(x)}(\Phi(A)).
\]

Applying the same argument to the \(C^1\)-diffeomorphism \(\Phi^{-1}\) yields
\[
D\Phi(x)\,\Tan_x(A)=\Tan_{\Phi(x)}(\Phi(A)).
\]
Finally, since \(D\Phi(x)\) is a linear isomorphism,
\[
\dim \Span(\Tan_{\Phi(x)}(\Phi(A)))
=
\dim \Span(D\Phi(x)\Tan_x(A))
=
\dim \Span(\Tan_x(A)).
\]
Hence
\[
\dim_{\mathrm{Ptan}}\{\Phi(x)\}_{\Phi(A)}
=
\dim_{\mathrm{Ptan}}\{x\}_{A}.
\]
\end{proof}
\begin{remark}
The first four properties of \(\dim_{\mathrm{Ptan}}\) follow from the corresponding
stability properties of the Bouligand tangent cone.
This should be contrasted with the point-vector dimension:
for \(\dim_{\mathrm{Pvec}}\), passing to the closure may create new exact local directions,
which is why an additional local saturation hypothesis was needed there.
By contrast, \(\dim_{\mathrm{Ptan}}\) is built from asymptotically accessible directions,
so the passage to the closure is already encoded in the tangent-cone formalism.
The \(C^1\)-invariance further shows that \(\dim_{\mathrm{Ptan}}\) is stable under smooth
local changes of coordinates.
\end{remark}

\begin{remark}
The point-tangential dimension is insensitive to passing to dense subsets in a way that
resembles box-type dimensions.
For instance,
\[
\dim_{\mathrm{Ptan}}\{x\}_{\mathbb{Q}\cap[0,1]}
=
\dim_{\mathrm{Ptan}}\{x\}_{[0,1]}
=1
\qquad \forall x\in[0,1],
\]
whereas
\[
\dim_{\mathrm{Pvec}}\{x\}_{\mathbb{Q}\cap[0,1]}=0.
\]
This illustrates the passage from exact local segments to asymptotically accessible directions.
\end{remark}

\begin{definition}[Point-tangential dimension of a set]
\label{def:set-ptan}
Let $A\subset \mathbb{R}^n$.
We define the \emph{point-tangential dimension} of $A$ by
\[
\dim_{\mathrm{Ptan}}(A)
:=
\sup_{x\in\overline A}\dim_{\mathrm{Ptan}}\{x\}_{A},
\]
with the convention $\dim_{\mathrm{Ptan}}(\varnothing)=0$.
If $\overline A\neq\varnothing$, this supremum is in fact a maximum, since the values of
$\dim_{\mathrm{Ptan}}\{x\}_{A}$ belong to $\{0,1,\dots,n\}$.
Moreover, since $\dim_{\mathrm{Ptan}}\{x\}_{A}=0$ for $x\notin\overline A$, one may equivalently write
\[
\dim_{\mathrm{Ptan}}(A)=\max_{x\in\mathbb{R}^n}\dim_{\mathrm{Ptan}}\{x\}_{A},
\]
and this equivalent formulation remains valid for $A=\varnothing$.
\end{definition}

\begin{remark}[Immediate global consequences]
The pointwise properties established above admit immediate set-level counterparts through
Definition~\ref{def:set-ptan}. Their proofs amount simply to taking the maximum over
$x\in\mathbb{R}^n$. In particular, one obtains the bounds $0\le \dim_{\mathrm{Ptan}}(A)\le n$,
monotonicity under inclusion, invariance under closure, $C^1$-diffeomorphism invariance, and the comparison
\[
\dim_{\mathrm{Pvec}}(A)\le \dim_{\mathrm{Ptan}}(A).
\]
We shall use these set-level consequences freely in the sequel. In particular, unlike
$\dim_{\mathrm{Pvec}}$, the passage to the closure requires no separate discussion here, since it is
already built into the pointwise tangential formalism.
\end{remark}

\subsection{Compatibility with finite unions: a local Grassmann formula}

One of the main motivations of the present article is that local directionality may increase
at intersections.
At the point-tangential level, this phenomenon already appears through a precise Grassmann-type
formula for finite unions.

\begin{proposition}[Finite unions and local Grassmann formula]
\label{prop:ptan-union-grassmann}
Let $A,B\subset \mathbb{R}^n$ and let $x\in \mathbb{R}^n$.
Then
\[
\Tan_x(A\cup B)=\Tan_x(A)\cup \Tan_x(B),
\]
and therefore
\[
\Span\bigl(\Tan_x(A\cup B)\bigr)
=
\Span(\Tan_x(A))+\Span(\Tan_x(B)).
\]
Consequently,
\[
\dim_{\mathrm{Ptan}}\{x\}_{A\cup B}
=
\dim_{\mathrm{Ptan}}\{x\}_{A}
+
\dim_{\mathrm{Ptan}}\{x\}_{B}
-
\dim\!\bigl(\Span(\Tan_x(A))\cap \Span(\Tan_x(B))\bigr).
\]
In particular,
\[
\max\bigl\{\dim_{\mathrm{Ptan}}\{x\}_{A},\,\dim_{\mathrm{Ptan}}\{x\}_{B}\bigr\}
\le
\dim_{\mathrm{Ptan}}\{x\}_{A\cup B}
\le
\min\bigl\{n,\,
\dim_{\mathrm{Ptan}}\{x\}_{A}+\dim_{\mathrm{Ptan}}\{x\}_{B}\bigr\}.
\]
\end{proposition}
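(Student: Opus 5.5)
The plan is to prove the set identity $\Tan_x(A\cup B)=\Tan_x(A)\cup\Tan_x(B)$ first; everything else then follows by linear algebra.

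\textbf{Step 1: the inclusion $\supset$.} This is immediate from monotonicity of the Bouligand cone. Since $A\subset A\cup B$ and $B\subset A\cup B$, we get $\Tan_x(A)\cup\Tan_x(B)\subset\Tan_x(A\cup B)$, exactly as in the proof of Proposition~\ref{prop:basic-tandim}(2).

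\textbf{Step 2: the inclusion $\subset$.} Let $v\in\Tan_x(A\cup B)$, realized by $x_k\in A\cup B$ with $x_k\to x$, $\lambda_k\downarrow0$ and $(x_k-x)/\lambda_k\to v$. By pigeonhole, infinitely many $x_k$ lie in $A$ or infinitely many lie in $B$; say $A$. Along that subsequence we still have $x_{k_j}\to x$, $\lambda_{k_j}\downarrow 0$, and the quotients converge to $v$, so $v\in\Tan_x(A)$. This pigeonhole is the only genuinely non-formal point. It is exactly where the tangential layer differs from $\dim_{\mathrm{Pvec}}$, where local patching (Remark on exactness of the union formula) can break equality. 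I expect no real obstacle here, beyond noting that passing to a subsequence preserves every defining condition of $\Tan_x$.

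\textbf{Step 3: spans.} For arbitrary subsets $S,T\subset\R^n$ one has $\Span(S\cup T)=\Span S+\Span T$, with the convention $\Span(\varnothing)=\{0\}$ covering the case $x\notin\overline A$ or $x\notin\overline B$. Applying this to the identity of Steps 1--2 gives the span equality. The classical Grassmann formula $\dim(U+V)=\dim U+\dim V-\dim(U\cap V)$, with $U=\Span\Tan_x(A)$ and $V=\Span\Tan_x(B)$, then yields the displayed dimension identity, by definition of $\dim_{\mathrm{Ptan}}$.

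\textbf{Step 4: the bounds.} The lower bound $\max\{\dim U,\dim V\}\le\dim(U+V)$ holds because $U,V\subset U+V$; alternatively it follows from monotonicity. The upper bound follows from the formula, since $\dim(U\cap V)\ge0$ gives $\dim(U+V)\le\dim U+\dim V$, while $U+V\subset\R^n$ gives $\dim(U+V)\le n$.
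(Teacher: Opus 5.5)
Your proposal is correct and follows the paper's proof essentially step for step: monotonicity gives one inclusion, a pigeonhole extraction of a subsequence lying in a single set gives the other, and taking spans plus the classical Grassmann formula gives the identity, with the bounds read off from $\dim(U+V)\ge\max\{\dim U,\dim V\}$ and $\dim(U+V)\le\min\{n,\dim U+\dim V\}$. Your explicit remark that the convention $\Span(\varnothing)=\{0\}$ covers the cases $x\notin\overline A$ or $x\notin\overline B$ is a small addition that the paper leaves implicit.
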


\begin{proof}
Let \(v\in \Tan_x(A\cup B)\).
Then there exist \(x_k\in A\cup B\) and \(\lambda_k\downarrow 0\) such that
\[
\frac{x_k-x}{\lambda_k}\to v.
\]
One of the two sets contains infinitely many terms of the sequence. Passing to that subsequence,
we obtain
\[
v\in \Tan_x(A)
\qquad\text{or}\qquad
v\in \Tan_x(B).
\]
Hence
\[
\Tan_x(A\cup B)\subset \Tan_x(A)\cup \Tan_x(B).
\]
The reverse inclusion is immediate, since \(A\subset A\cup B\) and \(B\subset A\cup B\).
Therefore
\[
\Tan_x(A\cup B)=\Tan_x(A)\cup \Tan_x(B).
\]

Taking linear spans yields
\[
\Span\bigl(\Tan_x(A\cup B)\bigr)
=
\Span\bigl(\Tan_x(A)\cup \Tan_x(B)\bigr)
=
\Span(\Tan_x(A))+\Span(\Tan_x(B)).
\]
Applying the classical Grassmann formula for vector subspaces,
\[
\dim(U+V)=\dim U+\dim V-\dim(U\cap V),
\]
with
\[
U:=\Span(\Tan_x(A)),
\qquad
V:=\Span(\Tan_x(B)),
\]
we obtain
\[
\dim \Span\bigl(\Tan_x(A\cup B)\bigr)
=
\dim \Span(\Tan_x(A))
+
\dim \Span(\Tan_x(B))
-
\dim\!\bigl(\Span(\Tan_x(A))\cap \Span(\Tan_x(B))\bigr).
\]
By definition of the point-tangential dimension, this is exactly
\[
\dim_{\mathrm{Ptan}}\{x\}_{A\cup B}
=
\dim_{\mathrm{Ptan}}\{x\}_{A}
+
\dim_{\mathrm{Ptan}}\{x\}_{B}
-
\dim\!\bigl(\Span(\Tan_x(A))\cap \Span(\Tan_x(B))\bigr).
\]

The lower bound follows immediately from the fact that
\[
\dim(U+V)\ge \max\{\dim U,\dim V\},
\]
while the upper bound follows from
\[
\dim(U+V)\le \dim U+\dim V
\qquad\text{and}\qquad
\dim(U+V)\le n.
\]
\end{proof}

\begin{remark}[Interpretation]
The previous formula shows that, unlike classical isotropic dimensions on finite unions,
the point-tangential dimension does not obey a max-rule at a crossing point.
Instead, it follows a local Grassmann principle:
the tangential dimension of a union is obtained by adding the two local tangent ranks
and subtracting their overlap.

\begin{itemize}
    \item The dimensional gain at an intersection is therefore governed precisely by the
    transverse independence of the two tangent structures.

    \item In particular, when the tangent spans are transverse, the local tangential
    dimensions add. When they overlap, the overlap is exactly subtracted.

    \item The same qualitative phenomenon persists for finite unions of several
    sets and is proved by the same argument as in the case of two sets: one extracts
    a subsequence contained in a single component and then passes to the sum of the
    corresponding tangent spans. We do not record the general formula here, since
    the two-set case is already the most explicit and geometrically transparent one.
\end{itemize}
\end{remark}

\begin{remark}[Why the overlap is not an intersection dimension]
\label{rem:tan-no-intersection-identity}
The overlap term in Proposition~\ref{prop:ptan-union-grassmann} should not be rewritten
as $\dim_{\mathrm{Ptan}}\{x\}_{A\cap B}$. Indeed, Bouligand tangent cones do not
commute with intersections. Monotonicity only gives
\[
\Tan_x(A\cap B)\subset \Tan_x(A)\cap \Tan_x(B),
\]
and this inclusion may be strict.

The reason is that a vector in $\Tan_x(A)$ and in $\Tan_x(B)$ may be realized by two
different approximating sequences, one in $A$ and one in $B$, with no corresponding
sequence in $A\cap B$. For example, in $\mathbb{R}^2$, let
\[
A=\{(t,0):t\in\mathbb{R}\},
\qquad
B=\{(t,t^2):t\in\mathbb{R}\},
\qquad
x=(0,0).
\]
Then $A\cap B=\{x\}$, so $\dim_{\mathrm{Ptan}}\{x\}_{A\cap B}=0$, whereas
$\Span(\Tan_x(A))=\Span(\Tan_x(B))=\mathbb{R}(1,0)$, so
\[
\dim\bigl(\Span(\Tan_x(A))\cap\Span(\Tan_x(B))\bigr)=1.
\]
Thus the subtracted term in the Grassmann formula is genuinely an overlap of tangent
spans, not the point-tangential dimension of the intersection.
\end{remark}

\begin{corollary}[Global union inequalities]
\label{cor:global-ptan-union}
Let $A,B\subset \mathbb{R}^n$.
Then
\[
\max\{\dim_{\mathrm{Ptan}}(A),\dim_{\mathrm{Ptan}}(B)\}
\le
\dim_{\mathrm{Ptan}}(A\cup B)
\le
\min\{n,\dim_{\mathrm{Ptan}}(A)+\dim_{\mathrm{Ptan}}(B)\}.
\]
\end{corollary}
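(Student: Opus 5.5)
The plan is to deduce both inequalities pointwise from Proposition~\ref{prop:ptan-union-grassmann} and then take maxima, using the set-level formulation of Definition~\ref{def:set-ptan}, namely $\dim_{\mathrm{Ptan}}(C)=\max_{x\in\mathbb{R}^n}\dim_{\mathrm{Ptan}}\{x\}_C$. This version is valid for every subset $C$, including the empty set, so no case distinction on closures is needed.

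\emph{Lower bound.} By monotonicity (Proposition~\ref{prop:basic-tandim}(2)), for every $x$ one has $\dim_{\mathrm{Ptan}}\{x\}_A\le \dim_{\mathrm{Ptan}}\{x\}_{A\cup B}\le \dim_{\mathrm{Ptan}}(A\cup B)$. Taking the maximum over $x$ gives $\dim_{\mathrm{Ptan}}(A)\le\dim_{\mathrm{Ptan}}(A\cup B)$, and the same argument applies to $B$.

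\emph{Upper bound.} Fix $x\in\mathbb{R}^n$. By the pointwise upper bound in Proposition~\ref{prop:ptan-union-grassmann},
\[
\dim_{\mathrm{Ptan}}\{x\}_{A\cup B}\le \min\bigl\{n,\ \dim_{\mathrm{Ptan}}\{x\}_A+\dim_{\mathrm{Ptan}}\{x\}_B\bigr\}.
\]
Each pointwise term is bounded by its global maximum, so the right-hand side is at most $\min\{n,\dim_{\mathrm{Ptan}}(A)+\dim_{\mathrm{Ptan}}(B)\}$. This bound does not depend on $x$, so taking the maximum over $x$ on the left concludes the proof.

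There is no genuine obstacle. The only point needing care is that the pointwise maxima for $A$ and $B$ may be attained at different points, so the upper bound is an inequality rather than an equality: the global Grassmann formula is lost. The argument never needs a common maximizer, because we only bound each summand by its own supremum.
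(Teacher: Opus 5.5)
Your proof is correct and follows the paper's argument: the lower bound comes from monotonicity under the inclusions $A,B\subset A\cup B$, and the upper bound comes from taking the maximum over $x\in\mathbb{R}^n$ in the pointwise estimate of Proposition~\ref{prop:ptan-union-grassmann}. Bounding each pointwise summand by its own global maximum is exactly the step the paper leaves implicit.
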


\begin{proof}
The lower bound follows immediately from monotonicity, since
\[
A\subset A\cup B
\qquad\text{and}\qquad
B\subset A\cup B.
\]
The upper bound is obtained by taking the maximum over $x\in\mathbb{R}^n$ in the
pointwise estimate of Proposition~\ref{prop:ptan-union-grassmann}.
\end{proof}

\begin{remark}[Why there is no exact global Grassmann formula]
The exact local identity of Proposition~\ref{prop:ptan-union-grassmann} does not globalize to a
clean identity. Indeed, the points where $\dim_{\mathrm{Ptan}}(A)$ and $\dim_{\mathrm{Ptan}}(B)$ are attained
may be different, so there is in general no single point at which one can simultaneously read the
relevant overlap term. The previous two-sided estimate is therefore the natural global substitute,
while the finer global organization is better captured later by the tangential strata.
\end{remark}

\begin{example}[Curvilinear crossing phenomenon]
At the tangent level, a crossing may already create a dimensional gain,
even when the branches are not straight.
Let \(A\) and \(B\) be two \(C^1\) curve germs in \(\mathbb{R}^2\) meeting at a point \(x\),
and assume that their tangent lines at \(x\) are distinct.
Then
\[
\dim_{\mathrm{Ptan}}\{x\}_{A}=1,
\qquad
\dim_{\mathrm{Ptan}}\{x\}_{B}=1,
\]
while
\[
\Span(\Tan_x(A))\cap \Span(\Tan_x(B))=\{0\},
\]
so
\[
\dim_{\mathrm{Ptan}}\{x\}_{A\cup B}=2.
\]
Thus, even for a curvilinear crossing, the point-tangential dimension detects the
simultaneous activation of two independent tangent directions.

For example, one may take
\[
A=\{(t,t^2):t\ge 0\},
\qquad
B=\{(t,-t):t\ge 0\},
\qquad
x=(0,0).
\]
Then
\[
\Span(\Tan_x(A))=\mathbb{R}(1,0),
\qquad
\Span(\Tan_x(B))=\mathbb{R}(1,-1),
\]
hence
\[
\dim_{\mathrm{Ptan}}\{0\}_{A}=1,
\qquad
\dim_{\mathrm{Ptan}}\{0\}_{B}=1,
\qquad
\dim_{\mathrm{Ptan}}\{0\}_{A\cup B}=2.
\]
This is an unweighted precursor of the Point-Cross phenomenon.
\end{example}
\subsection{Failure of upper semicontinuity in general}

Since the function
\[
x\longmapsto \dim_{\mathrm{Ptan}}\{x\}_{A}
\]
takes values in the finite set \(\{0,\dots,n\}\) and is defined from a local tangent
object, one might naturally expect it to be upper semicontinuous. Equivalently, one
might expect the superlevel sets
\[
\bigl\{x\in \overline A:\dim_{\mathrm{Ptan}}\{x\}_{A}\ge k\bigr\}
\]
to be closed in \(\overline A\). The following example shows that this expectation is
false, even for a closed subset of \(\mathbb R^2\). The obstruction is that two independent tangent
directions present at nearby points may collapse into a single tangent direction at the
limiting point. This also explains why the later organization into tangential strata
cannot be replaced by a general upper-semicontinuity principle.
\begin{proposition}[Failure of upper semicontinuity in general]
\label{prop:usc-tandim}
There exist a closed set $A\subset \mathbb{R}^2$ and a sequence $(x_k)\subset A$
with $x_k\to x\in A$ such that
\[
\limsup_{k\to\infty} \dim_{\mathrm{Ptan}}\{x_k\}_{A}> \dim_{\mathrm{Ptan}}\{x\}_{A}.
\]
\end{proposition}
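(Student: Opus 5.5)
We construct an explicit closed set in $\mathbb{R}^2$ in which small corners with two independent tangent directions accumulate at a point where only one tangent direction survives. Take $x=0$ and $x_k=(2^{-k},0)$, and let
\[
A:=\bigl([0,1]\times\{0\}\bigr)\ \cup\ \bigcup_{k\ge1}S_k,
\qquad
S_k:=\{(2^{-k},s):0\le s\le 4^{-k}\}.
\]
Here $S_k$ is a short vertical segment standing on the horizontal axis at $x_k$. Its length $4^{-k}$ is chosen so that it is negligible compared with its distance $2^{-k}$ from the origin.

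\emph{Closedness.} The union of the $S_k$ is locally finite away from $0$. The only possible new limit points are limits of points of $S_k$ with $k\to\infty$, and these converge to $0\in A$. Hence $A$ is closed, and $x_k\in A$, $x_k\to 0\in A$.

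\emph{The points $x_k$.} In a small ball $U$ around $x_k$ that misses the other $S_j$, the set $A\cap U$ is a union of a horizontal segment through $x_k$ and the vertical segment $S_k$ issuing from $x_k$. By Proposition~\ref{prop:pvec-le-tandim}, or directly, $(1,0)$ and $(0,1)$ both lie in $\Tan_{x_k}(A)$. So $\dim_{\mathrm{Ptan}}\{x_k\}_A=2$ for every $k$, and the $\limsup$ equals $2$.

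\emph{The point $0$.} This is the only step with any content: we must show $\Tan_0(A)\subset\mathbb{R}\times\{0\}$. Let $y_m\in A$ with $y_m\to0$, $\lambda_m\downarrow0$ and $y_m/\lambda_m\to v$. Every point $y=(a,b)\in A$ with $a\le 1/2$ satisfies $0\le b\le a^2$. Indeed, either $b=0$, or $y\in S_k$, in which case $a=2^{-k}$ and $b\le4^{-k}=a^2$. Hence $|b_m|/\lambda_m\le a_m\cdot(a_m/\lambda_m)$. Since $a_m/\lambda_m$ is bounded (it converges to $v_1$) and $a_m\to0$, we get $b_m/\lambda_m\to0$. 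Thus $v_2=0$. Therefore $\Tan_0(A)\subset\mathbb{R}\times\{0\}$; it contains $(1,0)$ via the axis, so $\dim_{\mathrm{Ptan}}\{0\}_A=1$.

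This gives $\limsup_k\dim_{\mathrm{Ptan}}\{x_k\}_A=2>1=\dim_{\mathrm{Ptan}}\{0\}_A$. The choice of the quadratic bound $b\le a^2$ (the cusp-type envelope) is what makes the vertical directions at the $x_k$ collapse into the single horizontal tangent at the limit point.
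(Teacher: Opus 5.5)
Your proof is correct, but it uses a different example from the paper's. The paper takes $A=L\cup\Gamma$ with $L=\{(t,0):t\ge0\}$ and $\Gamma=\{(t,t^2\sin(1/t)):t>0\}$, and lets $x_k=(1/(k\pi),0)$ be the points where the oscillating graph crosses the axis with slope $-(-1)^k$. There, rank $2$ at $x_k$ comes from two transverse curve tangents. Rank $1$ at the origin comes from $t_j^2\sin(1/t_j)/\lambda_j=(t_j/\lambda_j)\,t_j\sin(1/t_j)\to0$ together with the union formula $\Tan_0(L\cup\Gamma)=\Tan_0(L)\cup\Tan_0(\Gamma)$.

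Your spikes of length $4^{-k}$ at distance $2^{-k}$ exploit the same collapse mechanism, under literally the same quadratic envelope: your $0\le b\le a^2$ mirrors the paper's $|t^2\sin(1/t)|\le t^2$. In both cases the transverse structure near $x_k$ lives at scale $o(\|x_k\|)$. Your version is more elementary. The directions at $x_k$ are exact segment directions, so neither a derivative computation nor the union formula is needed, and the origin is handled in one line.

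As a bonus, your set also shows that $\dim_{\mathrm{Pvec}}$ fails to be upper semicontinuous, and, via $\dim_{\mathrm{Pvec}}\le\dim_{\times}\le\dim_{\mathrm{Ptan}}$, so does $\dim_{\times}$. It is essentially the small-cross construction the paper itself uses later in Proposition~\ref{prop:pcd-not-usc}.

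What the paper's choice buys instead is that its set is a union of only two differentiable graphs over $[0,\infty)$, each of tangential rank $1$ at every point. It therefore shows that upper semicontinuity already fails for two branches that cross infinitely often, without adjoining infinitely many extra pieces.
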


\begin{proof}
Set
\[
L:=\{(t,0):t\ge 0\},
\qquad
\Gamma:=\{(t,t^2\sin(1/t)):t>0\},
\qquad
A:=L\cup\Gamma.
\]
Then $A$ is closed in $\mathbb{R}^2$ and contains the origin: the only possible finite
accumulation point of $\Gamma$ outside its graph as $t\to0^+$ is the origin, which
belongs to $L$.
For $k\ge 1$, let
\[
x_k:=\Bigl(\frac1{k\pi},0\Bigr)\in L\cap\Gamma,
\]
so $x_k\to (0,0)$.

At $x_k$, the branch $L$ has tangent direction $(1,0)$.
The branch $\Gamma$ is $C^1$ on $(0,\infty)$ with parametrization
$\gamma(t)=(t,t^2\sin(1/t))$ and derivative
\[
\gamma'(t)=\bigl(1,2t\sin(1/t)-\cos(1/t)\bigr).
\]
Hence
\[
\gamma'\!\Bigl(\frac1{k\pi}\Bigr)=\bigl(1,-(-1)^k\bigr),
\]
which is not collinear with $(1,0)$. Since $A=L\cup\Gamma$, the tangent cone
$\Tan_{x_k}(A)$ contains both the tangent direction of $L$ and the tangent direction of
$\Gamma$ at $x_k$. These two directions are not collinear, hence
$\Span(\Tan_{x_k}(A))=\mathbb R^2$. Since the ambient space is $\mathbb R^2$, one obtains
\[
\dim_{\mathrm{Ptan}}\{x_k\}_{A}=2
\]
for every $k$.

At $x=(0,0)$, tangent vectors coming from $L$ are collinear with $(1,0)$.
Let $v\in\Tan_x(\Gamma)$, so there exist $t_j\downarrow 0$ and $\lambda_j\downarrow 0$ such that
\[
\Bigl(\frac{t_j}{\lambda_j},\frac{t_j^2\sin(1/t_j)}{\lambda_j}\Bigr)\to v.
\]
Since the first component converges, $t_j/\lambda_j$ is bounded, and because
$t_j\sin(1/t_j)\to 0$, the second component also tends to $0$:
\[
\frac{t_j^2\sin(1/t_j)}{\lambda_j}
=
\frac{t_j}{\lambda_j}\,t_j\sin(1/t_j)
\longrightarrow 0.
\]
So every $v\in\Tan_x(\Gamma)$ is collinear with $(1,0)$. Since $A=L\cup\Gamma$, the
finite-union property of the Bouligand tangent cone gives
\[
\Tan_{(0,0)}(A)=\Tan_{(0,0)}(L)\cup\Tan_{(0,0)}(\Gamma).
\]
Hence every tangent vector to $A$ at the origin is horizontal, and therefore
$\dim_{\mathrm{Ptan}}\{(0,0)\}_{A}=1$.

Consequently,
\[
\limsup_{k\to\infty} \dim_{\mathrm{Ptan}}\{x_k\}_{A}=2>1=\dim_{\mathrm{Ptan}}\{(0,0)\}_{A}.
\]
\end{proof}

\begin{remark}[Failure of outer semicontinuity: geometric reading]
The previous counterexample shows not only that \(\dim_{\mathrm{Ptan}}\) is not
upper semicontinuous in general, but also that the Bouligand tangent cones themselves
fail to be outer semicontinuous along this sequence of base points, in the Kuratowski sense.
Indeed, along the sequence
\[
x_k=\Bigl(\frac1{k\pi},0\Bigr)\to (0,0),
\]
one has tangent directions
\[
(1,-(-1)^k)\in \Tan_{x_k}(A),
\]
and hence, along a suitable subsequence, these directions converge to a nonhorizontal
vector such as \((1,-1)\).
However, at the limit point \((0,0)\), all tangent directions are horizontal, so
\[
(1,-1)\notin \Tan_{(0,0)}(A).
\]
Thus
\[
\limsup_{k\to\infty}\Tan_{x_k}(A)\not\subset \Tan_{(0,0)}(A),
\]
which shows that Bouligand tangent cones are not outer semicontinuous in full generality.

Geometrically, this means that a sequence of genuine crossings may converge to a point
where these two directions collapse into a single one.
Hence no general upper semicontinuity statement is available without extra geometric
assumptions.
\end{remark}
\begin{remark}[On sufficient conditions for upper semicontinuity]
The example suggests that any positive upper-semicontinuity result must impose some stability
of the tangent spans, or some controlled finite-branch structure preventing the collapse
of independent directions. We do not formulate such a criterion here. Instead, the
appropriate viewpoint is to record where each tangential rank occurs and to study the
corresponding strata.
\end{remark}
\subsection{Calibration on smooth and rectifiable geometries}

The point-tangential dimension behaves as expected on classical regular objects.
\begin{proposition}[Smooth calibration]
\label{prop:smooth-calibration-tan}
Let $M\subset \mathbb{R}^n$ be a $C^1$ embedded submanifold without boundary
of dimension $m$.
Then for every $x\in M$,
\[
\Tan_x(M)=T_xM
\qquad\text{and}\qquad
\dim_{\mathrm{Ptan}}\{x\}_{M}=m.
\]
\end{proposition}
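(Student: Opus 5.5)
The plan is to reduce the statement to the flat case $M=\R^m\times\{0\}$ by the $C^1$-diffeomorphism invariance of Proposition~\ref{prop:basic-tandim}(5), and the germ dependence of Proposition~\ref{prop:basic-tandim}(3). Fix $x\in M$. Since $M$ is a $C^1$ embedded submanifold without boundary, there is a local straightening chart. Concretely, there exist an open neighbourhood $W$ of $x$, an open set $W'\subset\R^n$, and a $C^1$-diffeomorphism $\psi:W\to W'$ with $\psi(x)=0$ and $\psi(M\cap W)=(\R^m\times\{0\})\cap W'$.

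The first technical point is that Proposition~\ref{prop:basic-tandim}(5) is stated for global diffeomorphisms of $\R^n$. I will not try to extend $\psi$ globally. Instead I will rerun the proof of (5) locally. That argument only used differentiability of $\Phi$ at the base point, applied to $\Phi$ and to $\Phi^{-1}$, along sequences converging to the base point. Such sequences eventually lie in $W$ (respectively $W'$). Combining this with germ dependence yields
\[
\Tan_0\bigl((\R^m\times\{0\})\cap W'\bigr)=D\psi(x)\,\Tan_x(M\cap W)=D\psi(x)\,\Tan_x(M).
\]
I expect this localisation to be the only real obstacle, and it is mild: the local-chart version is a verbatim repetition of the earlier argument.

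Next I compute the flat cone directly. Set $F:=\R^m\times\{0\}$. Since $F\cap W'$ contains a neighbourhood of $0$ in $F$, germ dependence gives $\Tan_0(F\cap W')=\Tan_0(F)$. If $v\in\Tan_0(F)$, then $v$ is a limit of vectors in the closed subspace $F$, so $v\in F$. Conversely, for $v\in F$ I take $x_k=\lambda_k v$ with $\lambda_k\downarrow0$; these points lie in $F$, converge to $0$, and satisfy $(x_k-0)/\lambda_k=v$. Hence $\Tan_0(F)=F$. Therefore $\Tan_x(M)=D\psi(x)^{-1}F$, which is an $m$-dimensional linear subspace.

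Finally I identify this subspace with $T_xM$. Take $\varphi=\psi^{-1}$ restricted to the flat slice as a local parametrization. Under the standard definition, $T_xM$ is the image $D\varphi(0)(\R^m)$, which equals $D\psi(x)^{-1}F$. This gives $\Tan_x(M)=T_xM$.

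Since $T_xM$ is already a linear subspace, its span is itself, and so $\dim_{\mathrm{Ptan}}\{x\}_M=\dim T_xM=m$.
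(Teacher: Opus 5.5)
Your proof is correct, but it is organized differently from the paper's. The paper proves the two inclusions directly. It gets $T_xM\subset\Tan_x(M)$ from a $C^1$ curve $\gamma$ in $M$ with $\gamma(0)=x$ and $\gamma'(0)=v$, since $(\gamma(t)-x)/t\to v$. It gets $\Tan_x(M)\subset T_xM$ by writing $M$ locally as the graph of a $C^1$ map $f$, expanding $f(u_k)=f(0)+Df(0)u_k+o(\|u_k\|)$, and passing to the limit to obtain $w=(a,Df(0)a)$.

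You instead straighten $M$ with an ambient slice chart $\psi$ and transport the cone by a localized form of the $C^1$-invariance in Proposition~\ref{prop:basic-tandim}(5). You then compute the cone of the flat subspace by hand. There is no circularity, since (5) is proved earlier.

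What your route buys is that both inclusions come at once from a single equality. It also makes transparent that smooth calibration is just the trivial linear case transported by $C^1$-invariance.

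The price is twofold. First, you need a full $n$-dimensional $C^1$ slice chart rather than only a graph or parametrization. This is standard; for instance, $\psi(u,w)=(u-u_0,\,w-f(u))$ comes straight from the graph description. Second, you must check that (5) holds for diffeomorphisms between open sets. You handle this correctly: the proof of (5) only uses differentiability of $\psi$ and $\psi^{-1}$ at the base point, along sequences that eventually enter $W$ and $W'$. The paper itself later invokes exactly this localized form in Corollary~\ref{cor:eff-inherited-properties}(6).

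The paper's argument is more elementary and self-contained. In effect, it inlines in graph coordinates the same first-order Taylor estimate that underlies (5).
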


\begin{proof}
We prove the two inclusions.

First, let \(v\in T_xM\).
Since \(M\) is a \(C^1\) submanifold, there exists a \(C^1\) curve
\(\gamma:(-\varepsilon,\varepsilon)\to M\) such that
\[
\gamma(0)=x,
\qquad
\gamma'(0)=v.
\]
Hence
\[
\frac{\gamma(t)-x}{t}\to v
\qquad\text{as }t\to 0,
\]
so \(v\in \Tan_x(M)\).
Therefore
\[
T_xM\subset \Tan_x(M).
\]

Conversely, let \(w\in \Tan_x(M)\).
Then there exist \(x_k\in M\) and \(\lambda_k\downarrow 0\) such that
\[
x_k\to x,
\qquad
\frac{x_k-x}{\lambda_k}\to w.
\]
After choosing a local \(C^1\) chart, equivalently coordinates in which
\(M\) is represented near \(x\) as the graph of a \(C^1\) map
\(f:\mathbb{R}^m\to\mathbb{R}^{n-m}\), we may write \(x=(0,f(0))\). For \(k\) large, write \(x_k=(u_k,f(u_k))\). Then
\[
f(u_k)=f(0)+Df(0)u_k+o(\|u_k\|).
\]
Hence
\[
x_k-x
=
\bigl(u_k,\,Df(0)u_k\bigr)+o(\|u_k\|).
\]
Dividing by \(\lambda_k\), we obtain
\[
\frac{x_k-x}{\lambda_k}
=
\left(\frac{u_k}{\lambda_k},\,Df(0)\frac{u_k}{\lambda_k}\right)
+
\frac{o(\|u_k\|)}{\lambda_k}.
\]
Since the first \(m\) components of \((x_k-x)/\lambda_k\) converge, the sequence
\(u_k/\lambda_k\) is bounded. Therefore \(\|u_k\|/\lambda_k\) is bounded, and the last term
is \(o(1)\). Since the first \(m\) components converge, \(u_k/\lambda_k\to a\in\mathbb{R}^m\)
for some \(a\). Passing to the limit yields
\[
w=\bigl(a,\,Df(0)a\bigr)\in T_xM.
\]
Hence
\[
\Tan_x(M)\subset T_xM.
\]

Thus \(\Tan_x(M)=T_xM\). Since \(T_xM\) is an \(m\)-dimensional vector space, it follows that
\[
\dim_{\mathrm{Ptan}}\{x\}_{M}
=
\dim \Span(\Tan_x(M))
=
\dim T_xM
=
m.
\]
\end{proof}

\begin{remark}[Extension of the classical tangent-space formula]
For a $C^1$ embedded submanifold without boundary $M$, the classical identity
\[
\dim M=\dim T_xM
\qquad (x\in M)
\]
holds at every point. Proposition~\ref{prop:smooth-calibration-tan} shows that the point-tangential
dimension recovers exactly this value:
\[
\dim_{\mathrm{Ptan}}\{x\}_{M}=\dim T_xM.
\]
Outside the smooth category, a classical tangent space need not exist at all, so differential
geometry no longer provides a canonical local tangential dimension. The Bouligand tangent cone
already extends the notion of tangency beyond smooth manifolds. The role of the point-tangential
dimension is to restore a genuine dimensional content at that level by setting
\[
\dim_{\mathrm{Ptan}}\{x\}_{A}=\dim\Span(\Tan_x(A)).
\]
In this sense, the point-tangential dimension extends the classical formula
\(\dim M=\dim T_xM\) far beyond the category of $C^1$ manifolds.
\end{remark}

\begin{remark}[Global smooth calibration]
\label{rem:global-smooth-calibration}
By taking the maximum over $x\in\overline M$ in Proposition~\ref{prop:smooth-calibration-tan}, one
obtains
\[
\dim_{\mathrm{Ptan}}(M)=m
\]
for every closed $C^1$ embedded submanifold without boundary
$M\subset \mathbb{R}^n$ of dimension $m$.
The closedness assumption is natural here, since the set-level invariant is defined through
$\overline M$: a nonclosed smooth submanifold may acquire additional tangential directions at
accumulation points of $\overline M\setminus M$.
For instance, the nonclosed $C^1$ curve
\[
M:=\{(t,t^{1/2}\sin(1/t)):t>0\}\subset\mathbb R^2
\]
satisfies $(0,0)\in\overline M\setminus M$ and
\[
(1,0),(0,1)\in\Tan_{(0,0)}(M),
\]
obtained respectively by taking $t_k=(k\pi)^{-1}$ with $\lambda_k=t_k$, and
$s_k=(\pi/2+2k\pi)^{-1}$ with $\mu_k=\sqrt{s_k}$. Hence
$\dim_{\mathrm{Ptan}}\{(0,0)\}_{M}=2$, although $M$ is one-dimensional.
\end{remark}

\begin{proposition}[Regular level-set case]
\label{prop:ift-tandim}
Let \(F\in C^1(\mathbb{R}^n,\mathbb{R})\) and let
\[
A:=\{x\in\mathbb{R}^n : F(x)=0\}.
\]
Assume that \(p\in A\) satisfies
\[
\nabla F(p)\neq 0.
\]
Then:
\begin{enumerate}
    \item \(A\) is a \(C^1\) submanifold of codimension \(1\) in a neighborhood of \(p\).
    \item
    \[
    \Tan_p(A)=\ker DF(p)=\{v\in\mathbb{R}^n:\nabla F(p)\cdot v=0\}.
    \]
    \item consequently,
    \[
    \dim_{\mathrm{Ptan}}\{p\}_{A}=n-1.
    \]
\end{enumerate}
\end{proposition}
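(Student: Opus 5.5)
The plan is to reduce everything to the smooth calibration of Proposition~\ref{prop:smooth-calibration-tan} via the implicit function theorem. The one genuine subtlety is that Proposition~\ref{prop:smooth-calibration-tan} is stated for an embedded submanifold, whereas $A$ is only a submanifold near $p$.

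\textbf{Step 1: local submanifold structure.} Since $\nabla F(p)\neq0$, after a permutation of coordinates we may assume $\partial_n F(p)\neq0$. Write $p=(p',p_n)$ with $p'\in\R^{n-1}$. By the implicit function theorem there are an open neighbourhood $U=U'\times I$ of $p$ and a $C^1$ map $g:U'\to I$ such that
\[
A\cap U=\{(y,g(y)):y\in U'\}.
\]
Thus $A\cap U$ is the graph of a $C^1$ map over an open subset of $\R^{n-1}$. It is therefore a $C^1$ embedded submanifold without boundary of dimension $n-1$, which proves~(1).

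\textbf{Step 2: the tangent space is the kernel.} By locality of the Bouligand cone (Proposition~\ref{prop:basic-tandim}(3)), $\Tan_p(A)=\Tan_p(A\cap U)$. Applying Proposition~\ref{prop:smooth-calibration-tan} to $M:=A\cap U$ gives $\Tan_p(A)=T_pM$. It remains to show $T_pM=\ker DF(p)$.

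For the inclusion $T_pM\subset\ker DF(p)$, take any $C^1$ curve $\gamma$ in $M$ with $\gamma(0)=p$. Then $F\circ\gamma\equiv0$, and differentiating gives $\nabla F(p)\cdot\gamma'(0)=0$.

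For equality, $T_pM$ has dimension $n-1$, and so does $\ker DF(p)$ because $DF(p)\neq0$. An inclusion of subspaces of equal dimension is an equality. This proves~(2).

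Alternatively, one can differentiate $F(y,g(y))=0$ directly. This yields $Dg(p')=-\partial_{y}F(p)/\partial_nF(p)$, and hence the graph tangent $\{(a,Dg(p')a)\}$ coincides with the kernel.

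\textbf{Step 3: the dimension.} Statement~(3) follows at once: $\dim_{\mathrm{Ptan}}\{p\}_A=\dim\Span(\ker DF(p))=n-1$.

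\textbf{Main obstacle.} The only delicate point is the passage from the local graph to the tangent cone of the whole set $A$. Points of $A$ far from $p$ could a priori contribute, which is exactly why the germ-dependence property is invoked before applying the smooth calibration. That property removes such points, so no genuine difficulty remains beyond bookkeeping the implicit function theorem neighbourhood.
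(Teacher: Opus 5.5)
Your proposal is correct and follows the same route as the paper: the implicit function theorem, germ dependence of the Bouligand cone, the smooth calibration applied to $A\cap U$, and then the identity $T_p(A\cap U)=\ker DF(p)$. The paper simply quotes that identity as the standard regular-level-set fact, whereas you verify it with an inclusion-plus-dimension count. One minor quibble: points of $A$ away from $p$ can never affect $\Tan_p(A)$, so the real reason for localizing is the one you gave at the outset, namely that the smooth calibration requires an embedded submanifold.
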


\begin{proof}
Since \(\nabla F(p)\neq 0\), the implicit function theorem gives a neighborhood
\(U\) of \(p\) such that \(A\cap U\) is a \(C^1\) embedded submanifold without
boundary of codimension \(1\). By germ dependence of the Bouligand tangent cone,
\[
\Tan_p(A)=\Tan_p(A\cap U).
\]
Applying Proposition~\ref{prop:smooth-calibration-tan} to \(A\cap U\), we obtain
\[
\Tan_p(A)=T_p(A\cap U).
\]
For a regular level set, the tangent space is
\[
T_p(A\cap U)=\ker DF(p)=\{v\in\mathbb{R}^n:\nabla F(p)\cdot v=0\}.
\]
Hence
\[
\dim_{\mathrm{Ptan}}\{p\}_{A}
=
\dim\ker DF(p)
=
n-1.
\]
\end{proof}
\begin{corollary}[Graph case in dimension \(2\)]
\label{cor:graph-tandim}
Assume that, in a neighborhood of \(p=(x_0,f(x_0))\), the set \(A\subset\mathbb{R}^2\)
is the graph of a \(C^1\) function \(y=f(x)\).
Then
\[
\Tan_p(A)=\{(u,f'(x_0)u):u\in\mathbb{R}\}
\qquad\text{and}\qquad
\dim_{\mathrm{Ptan}}\{p\}_{A}=1.
\]
\end{corollary}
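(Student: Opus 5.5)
The plan is to realize the graph of $f$ near $p$ as a one-dimensional $C^1$ embedded submanifold and then invoke Proposition~\ref{prop:smooth-calibration-tan}. Concretely, I would fix an open interval $I\ni x_0$ and an open neighbourhood $U$ of $p$ such that $A\cap U=\{(x,f(x)):x\in I\}\cap U$. By germ dependence of the Bouligand tangent cone (Proposition~\ref{prop:basic-tandim}(3)), $\Tan_p(A)=\Tan_p(A\cap U)$, so it suffices to compute the tangent cone of the local graph $\Gamma_I:=\{(x,f(x)):x\in I\}$ at $p$. After shrinking $U$, one also has $\Tan_p(A\cap U)=\Tan_p(\Gamma_I)$.

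The set $\Gamma_I$ is a $C^1$ embedded submanifold of $\mathbb{R}^2$ of dimension $1$ without boundary. Indeed, the map $\gamma(x)=(x,f(x))$ is a $C^1$ immersion, since $\gamma'(x)=(1,f'(x))\neq0$, and it is a homeomorphism onto its image, with inverse given by the first-coordinate projection. Its tangent space at $p$ is $T_p\Gamma_I=\mathbb{R}\,\gamma'(x_0)=\{(u,f'(x_0)u):u\in\mathbb{R}\}$. Proposition~\ref{prop:smooth-calibration-tan} then yields $\Tan_p(\Gamma_I)=T_p\Gamma_I$ and $\dim_{\mathrm{Ptan}}\{p\}_A=1$.

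As an alternative, one could apply Proposition~\ref{prop:ift-tandim} with $F(x,y)=y-f(x)$. Then $\nabla F(p)=(-f'(x_0),1)\neq0$, and $\ker DF(p)$ is exactly the line described above. This route, however, needs $F\in C^1(\mathbb{R}^2)$, which in turn requires $f$ to be defined globally. It also needs $A$ to coincide with the zero set of $F$ only locally, and germ dependence handles that. So I would use the direct route and mention this one only as a remark.

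A fully elementary check is also available for the inclusion $\Tan_p(\Gamma_I)\subset$ line. If $(x_k,f(x_k))\to p$ and $\bigl((x_k-x_0)/\lambda_k,\,(f(x_k)-f(x_0))/\lambda_k\bigr)\to(a,b)$, then the mean value theorem gives $f(x_k)-f(x_0)=f'(\xi_k)(x_k-x_0)$ with $\xi_k\to x_0$. Hence $b=f'(x_0)a$. The reverse inclusion follows by taking $x_k=x_0+\lambda_k u$. The only mild obstacle is the bookkeeping of the neighbourhood $U$, namely ensuring that the points of $A$ near $p$ are exactly graph points, and that is what the hypothesis provides.
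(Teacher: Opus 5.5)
Your proposal is correct and follows the same route as the paper: restrict to the local graph using germ dependence of the Bouligand cone, note that it is a $C^1$ embedded curve without boundary with tangent line $\{(u,f'(x_0)u):u\in\mathbb{R}\}$, and apply Proposition~\ref{prop:smooth-calibration-tan}. The implicit-function alternative and the direct mean-value verification are valid supplements, but the proof does not need them.
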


\begin{proof}
By germ dependence, we may restrict to the neighborhood where \(A\) is the graph
of \(f\). Thus \(A\) is locally a \(C^1\) embedded curve without boundary at \(p\).
Its tangent space is
\[
T_pA=\{(u,f'(x_0)u):u\in\mathbb R\}.
\]
The conclusion follows from Proposition~\ref{prop:smooth-calibration-tan}.
\end{proof}

\begin{definition}[Countably \(C^1\)-rectifiable set of dimension \(m\)]
Let \(m\in\{0,\dots,n\}\). A set \(E\subset \mathbb{R}^n\) is said to be
\emph{countably \(C^1\)-rectifiable of dimension \(m\)} if there exist
\(C^1\) submanifolds \(M_j\subset \mathbb{R}^n\) of dimension \(m\), for \(j\ge 1\), and a set
\(N\subset \mathbb R^n\) such that
\[
\mathcal H^m(N)=0
\qquad\text{and}\qquad
E\subset N\cup\bigcup_{j=1}^\infty M_j.
\]
\end{definition}

\begin{remark}[Directional richness beyond rectifiability]
The smooth calibration shows that \(\dim_{\mathrm{Ptan}}\) agrees with the usual tangent
dimension on regular \(C^1\) submanifolds.
The rectifiable setting then reveals an additional strength: \(\dim_{\mathrm{Ptan}}\)
can still register nonregular directional contributions even when they are carried by
sets of negligible \(\mathcal H^m\)-measure.
This extra sensitivity distinguishes Bouligand-type tangent information from approximate tangent
notions in geometric measure theory.
At the same time, \(\dim_{\mathrm{Ptan}}\) is still a coarse, unweighted rank:
it records the presence of additional directions, but does not yet measure their effective
geometric weight.
This motivates the weighted refinement introduced later through the
Point--Cross dimension.
\end{remark}

\begin{example}[A rectifiable set with extra Bouligand directions]
\label{ex:rectifiable-bouligand-obstruction}
Set
\[
L:=\mathbb{R}\times\{0\},
\qquad
N:=\{(q,1/n):q\in\mathbb{Q},\ n\in\mathbb{N}\},
\qquad
E:=L\cup N.
\]
Then \(E\) is countably \(C^1\)-rectifiable of dimension \(1\) in the sense of the previous definition:
the set \(L\) is a \(C^1\) one-dimensional submanifold, and \(N\) is countable, hence
\(\mathcal H^1(N)=0\).

Fix \(p=(a,0)\in L\). Since \(L\subset E\), one has
\[
(1,0)\in\Tan_p(E).
\]
Choose rationals \(q_n\) such that \(|q_n-a|\le 1/n^2\), and define
\[
x_n:=(q_n,1/n)\in N,
\qquad
\lambda_n:=1/n.
\]
Then \(x_n\to p\) and
\[
\frac{x_n-p}{\lambda_n}
=
\bigl(n(q_n-a),1\bigr)
\longrightarrow
(0,1).
\]
Therefore
\[
(0,1)\in\Tan_p(E).
\]
Hence
\[
\dim_{\mathrm{Ptan}}\{p\}_{E}
=
\dim\Span\bigl(\Tan_p(E)\bigr)
=
2
\qquad\text{for every }p\in L.
\]
Thus \(\dim_{\mathrm{Ptan}}\{x\}_{E}\neq 1\) on \(L\), a set of positive
\(\mathcal H^1\)-measure. Consequently, countable \(C^1\)-rectifiability of dimension
\(1\) does not force \(\dim_{\mathrm{Ptan}}\{x\}_{E}=1\) almost everywhere.
In particular, at the set level,
\[
\dim_{\mathrm{Ptan}}(E)=2,
\]
although \(E\) is countably \(C^1\)-rectifiable of dimension \(1\).

\end{example}

\begin{remark}[Bouligand versus approximate tangency]
The previous example is compatible with the usual almost-everywhere
existence of approximate tangent planes for rectifiable sets. The invariant
\(\dim_{\mathrm{Ptan}}\) is built from the Bouligand tangent cone, which records
all limiting approach directions, including those produced by negligible subsets.

Thus a rectifiable set may have an approximate tangent \(m\)-plane at a point while
its Bouligand tangent cone contains additional directions. Consequently,
\(\dim_{\mathrm{Ptan}}\{x\}_E\) is a complementary first-order invariant: it is
more sensitive to rare directional intrusions, but remains coarse because it records
only the resulting tangent rank.
\end{remark}
\subsection{Interpretation, local classification, and tangential strata}

The point-tangential dimension plays a different role from box, Hausdorff, or topological dimension:
it measures first-order directional richness, not metric scaling.
More precisely, \(\dim_{\mathrm{Ptan}}\{x\}_{A}\) records the number of linearly independent
tangent directions present at the point \(x\). Thus:
\begin{itemize}
    \item local Hausdorff and box dimensions measure metric scaling.
    \item local topological dimension measures qualitative neighborhood type.
    \item \(\dim_{\mathrm{Ptan}}\{x\}_{A}\) measures independent tangent directions.
\end{itemize}
Therefore \(\dim_{\mathrm{Ptan}}\) is an integer-valued, first-order, extrinsic invariant
of the local embedding of the set, and none of these notions dominates the others in general.

\begin{proposition}[Vanishing of the point-tangential dimension]
\label{prop:ptan-zero-isolated}
Let \(A\subset \mathbb R^n\) and \(p\in \overline A\).
If \(p\) is an accumulation point of \(A\), then
\[
\dim_{\mathrm{Ptan}}\{p\}_{A}\ge 1.
\]
Consequently, if \(p\in A\), then
\[
\dim_{\mathrm{Ptan}}\{p\}_{A}=0
\iff
p \text{ is an isolated point of }A.
\]
\end{proposition}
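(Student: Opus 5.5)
The plan is to prove the first claim by a direct compactness argument on the unit sphere, and then derive the equivalence from it together with a trivial observation about isolated points.

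Let $p$ be an accumulation point of $A$. Choose a sequence $x_k\in A\setminus\{p\}$ with $x_k\to p$, and set $\lambda_k:=\|x_k-p\|>0$. After passing to a subsequence we may assume $\lambda_k\downarrow 0$ strictly; this is possible because $\lambda_k\to 0$ and every $\lambda_k$ is positive. The normalized displacements $u_k:=(x_k-p)/\lambda_k$ lie on the compact sphere $S^{n-1}$. A further subsequence therefore converges to some $u\in S^{n-1}$. By the definition of the Bouligand tangent cone, $u\in\Tan_p(A)$. Since $u\neq 0$, we get $\dim\Span(\Tan_p(A))\ge 1$, which is the first claim.

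For the equivalence, fix $p\in A$. If $p$ is not isolated in $A$, it is an accumulation point, and the first part gives $\dim_{\mathrm{Ptan}}\{p\}_A\ge1$. This proves the implication from vanishing to isolatedness, by contraposition. Conversely, suppose $p$ is isolated. Choose a neighbourhood $U$ with $A\cap U=\{p\}$. Germ dependence, Proposition~\ref{prop:basic-tandim}(3), gives $\Tan_p(A)=\Tan_p(\{p\})$. Any sequence in $\{p\}$ is constant, equal to $p$, so every admissible quotient $(x_k-p)/\lambda_k$ is zero. Hence $\Tan_p(\{p\})=\{0\}$, its span is $\{0\}$, and the dimension is $0$.

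There is no serious obstacle here. The only care points are to extract a strictly decreasing sequence $\lambda_k$, as the definition requires $\lambda_k\downarrow0$, and to use $x_k\neq p$ so that the normalization by $\lambda_k$ is legitimate.
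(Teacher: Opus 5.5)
Your proof is correct and follows the paper's argument essentially step for step. You normalize $x_k-p$ by $\lambda_k=\|x_k-p\|$ and extract a convergent subsequence on $S^{n-1}$ to produce a nonzero tangent vector; for an isolated point you show $\Tan_p(A)=\{0\}$. The only cosmetic difference is that you reduce to $\Tan_p(\{p\})$ via germ dependence, whereas the paper observes directly that any sequence in $A$ converging to $p$ is eventually equal to $p$.
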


\begin{proof}
Assume that \(p\) is an accumulation point of \(A\).
Then there exists a sequence \(x_n\in A\setminus\{p\}\) such that
\[
x_n\to p.
\]
Set
\[
\lambda_n:=\|x_n-p\|>0.
\]
Since \(x_n\to p\), one has \(\lambda_n\to0\). After passing to a subsequence,
we may assume that \(\lambda_n\downarrow0\).
Then
\[
\frac{x_n-p}{\lambda_n}\in S^{n-1}.
\]
By compactness of the unit sphere, a subsequence converges to some \(u\in S^{n-1}\).
In particular, \(u\neq 0\), and by definition of the Bouligand tangent cone,
\[
u\in \Tan_p(A).
\]
Hence \(\Tan_p(A)\) contains a nonzero vector, so
\[
\dim_{\mathrm{Ptan}}\{p\}_{A}
=
\dim\Span(\Tan_p(A))
\ge 1.
\]

If \(p\in A\) is isolated, then there exists a neighborhood \(U\) of \(p\) such that
\[
A\cap U=\{p\}.
\]
If \(x_n\in A\) and \(x_n\to p\), then necessarily \(x_n=p\) for all \(n\) large enough.
Hence, for every choice of \(\lambda_n\downarrow 0\), one has
\[
\frac{x_n-p}{\lambda_n}=0
\]
eventually. Therefore
\[
\Tan_p(A)=\{0\},
\]
and hence
\[
\dim_{\mathrm{Ptan}}\{p\}_{A}=0.
\]
The converse follows from the first part.
\end{proof}

\begin{example}[Illustrative local comparisons]
\leavevmode
\begin{enumerate}
    \item \textbf{Corner / crossing in the plane.}
    Let \(A\) be the union of two transverse half-lines in \(\mathbb R^2\) meeting at the origin.
    Then
    \[
    \dim_{\mathrm{Ptan}}\{0\}_{A}=2,
    \]
    whereas the local Hausdorff, box, and topological dimensions are all equal to \(1\).

    \item \textbf{Semicubical cusp.}
    For
    \[
    A=\{(x,y)\in\mathbb R^2:\ y^2=x^3\},
    \]
    one has
    \[
    \dim_{\mathrm{Ptan}}\{(0,0)\}_{A}=1.
    \]
    Thus a cusp and a smooth curve have the same point-tangential dimension,
    although one is singular and the other is not.

    \item \textbf{Oscillatory singularity with one-dimensional tangent rank.}
    Let
    \[
    \Gamma_1:=\{(t,t^2\sin(1/t)):t>0\}\cup\{(0,0)\}.
    \]
    By the same computation as in the proof of Proposition~\ref{prop:usc-tandim},
    every tangent vector to \(\Gamma_1\) at the origin is horizontal. Hence
    \[
    \dim_{\mathrm{Ptan}}\{(0,0)\}_{\Gamma_1}=1.
    \]
    Thus a point may be singular and highly oscillatory while still carrying only one tangent direction.

    \item \textbf{Oscillatory singularity with two-dimensional tangent rank.}
    Let
    \[
    \Gamma_2:=\{(t,t^{1/2}\sin(1/t)):t>0\}\cup\{(0,0)\}.
    \]
    This is the same oscillatory curve as in Remark~\ref{rem:global-smooth-calibration},
    with the limiting point added. Adding the point \((0,0)\) does not change the
    nonzero Bouligand directions at the origin, and the computation given there shows that
    \((1,0),(0,1)\in \Tan_{(0,0)}(\Gamma_2)\). Hence
    \[
    \dim_{\mathrm{Ptan}}\{(0,0)\}_{\Gamma_2}=2.
    \]
    This shows that nondifferentiability alone does not determine \(\dim_{\mathrm{Ptan}}\):
    some singular points remain tangentially one-dimensional, whereas others activate two independent tangent directions.

    \item \textbf{Planar open set.}
    If \(\Omega\subset \mathbb R^2\) is open and \(x\in \Omega\), then
    \[
    \Tan_x(\Omega)=\mathbb R^2
    \qquad\text{and}\qquad
    \dim_{\mathrm{Ptan}}\{x\}_{\Omega}=2.
    \]
\end{enumerate}
\end{example}

\begin{remark}[Local classification in dimension \(2\)]
\label{rem:local-classification-2d}
For subsets of \(\mathbb R^2\), the point-tangential dimension yields a first local classification:
\begin{itemize}
    \item isolated points are exactly the points with \(\dim_{\mathrm{Ptan}}=0\).
    \item regular curve points, cusp points, and more generally points whose tangent cone spans a line satisfy \(\dim_{\mathrm{Ptan}}=1\).
    \item corners, transverse crossings, planar interior points, and more generally points whose tangent cone spans \(\mathbb R^2\) satisfy \(\dim_{\mathrm{Ptan}}=2\).
\end{itemize}
Thus \(\dim_{\mathrm{Ptan}}\) does not measure smoothness, differentiability, or oscillatory complexity by themselves. It measures first-order directional richness.
For curve germs in dimension \(2\), differentiability implies \(\dim_{\mathrm{Ptan}}=1\)
at non-isolated points, but the converse fails:
a point may be singular or oscillatory and still have tangential rank \(1\).
Conversely, the value \(2\) reflects the coexistence of two linearly independent tangent directions.
More generally, for a smooth \(m\)-dimensional submanifold, \(\dim_{\mathrm{Ptan}}\) equals the usual local manifold dimension, but singular points may share the same value whenever their tangent cone has the same span dimension.
\end{remark}

\begin{figure}[H]
\centering
\tikzset{
  ptanCurve/.style={very thick,line cap=round,line join=round},
  ptanTangent/.style={very thick,blue!75!black,line cap=round},
  ptanHint/.style={dashed,gray!70,line cap=round}
}
\begin{minipage}{0.32\textwidth}
\centering
\begin{tikzpicture}[scale=1,>=latex,every node/.style={font=\small}]
  \draw[->] (-2,0) -- (2,0) node[below] {$x$};
  \draw[->] (0,-1.5) -- (0,1.5) node[left] {$y$};
  \draw[ptanCurve,black!80,domain=-1.5:1.5,smooth,variable=\x]
       plot ({\x},{0.5*\x + 0.3*\x*\x*\x});
  \fill (0,0) circle (1.4pt) node[above left] {$p$};
  \draw[ptanTangent] (-1.8,-0.9) -- (1.8,0.9)
       node[above right,font=\scriptsize] {tangent};
  \node[below,align=center] at (0,-1.25) {Smooth point\\ $\dim_{\mathrm{Ptan}}=1$};
\end{tikzpicture}
\end{minipage}\hfill
\begin{minipage}{0.32\textwidth}
\centering
\begin{tikzpicture}[scale=1,>=latex,every node/.style={font=\small}]
  \draw[->] (-1.2,0) -- (1.8,0) node[below] {$x$};
  \draw[->] (0,-1.6) -- (0,1.6) node[left] {$y$};
  \draw[ptanCurve,black!80,domain=0:1.3,smooth,variable=\t]
       plot ({\t*\t},{\t*\t*\t});
  \draw[ptanCurve,black!80,domain=0:1.3,smooth,variable=\t]
       plot ({\t*\t},{-\t*\t*\t});
  \fill (0,0) circle (1.4pt) node[below left] {$p$};
  \draw[ptanTangent] (-1.0,0) -- (1.6,0)
       node[above right,font=\scriptsize] {tangent};
  \node[below,align=center] at (0,-1.35) {Cusp\\ $\dim_{\mathrm{Ptan}}=1$};
\end{tikzpicture}
\end{minipage}\hfill
\begin{minipage}{0.32\textwidth}
\centering
\begin{tikzpicture}[x=12cm,y=12cm,>=latex,every node/.style={font=\small}]
  \draw[->] (-0.02,0) -- (0.32,0) node[below] {$x$};
  \draw[->] (0,-0.14) -- (0,0.14) node[left] {$y$};
  \draw[ptanCurve,teal!70!black,domain=4:55,samples=700,smooth,variable=\u]
       plot ({1/\u},{sin(deg(\u))/(\u*\u)});
  \fill (0,0) circle (1.4pt) node[below left] {$p$};
  \draw[ptanTangent] (-0.01,0) -- (0.28,0)
       node[above right,font=\scriptsize] {tangent};
  \node[below,align=center] at (0.155,-0.11) {$y=x^2\sin(1/x)$\\ $\dim_{\mathrm{Ptan}}=1$};
\end{tikzpicture}
\end{minipage}

\par\medskip

\begin{minipage}{0.32\textwidth}
\centering
\begin{tikzpicture}[x=12cm,y=3.5cm,>=latex,every node/.style={font=\small}]
  \draw[->] (-0.02,0) -- (0.32,0) node[below] {$x$};
  \draw[->] (0,-0.72) -- (0,0.72) node[left] {$y$};
  \draw[ptanCurve,orange!80!black,domain=4:75,samples=900,smooth,variable=\u]
       plot ({1/\u},{sin(deg(\u))/sqrt(\u)});
  \fill (0,0) circle (1.4pt) node[below left] {$p$};
  \draw[ptanTangent] (0,0) -- (0.28,0);
  \draw[ptanTangent] (0,-0.62) -- (0,0.62)
       node[above right,font=\scriptsize] {tangents};
  \node[below,align=center] at (0.155,-0.60) {$y=x^{1/2}\sin(1/x)$\\ $\dim_{\mathrm{Ptan}}=2$};
\end{tikzpicture}
\end{minipage}\hfill
\begin{minipage}{0.32\textwidth}
\centering
\begin{tikzpicture}[scale=1,>=latex,every node/.style={font=\small}]
  \draw[->] (-1.6,0) -- (1.6,0) node[below] {$x$};
  \draw[->] (0,-1.6) -- (0,1.6) node[left] {$y$};
  \draw[ptanCurve,red!75!black] (0,0) -- (1.4,1.0);
  \draw[ptanCurve,red!75!black] (0,0) -- (1.4,-0.4);
  \draw[ptanHint] (0,0) -- (1.2,0.86);
  \draw[ptanHint] (0,0) -- (1.2,-0.34);
  \fill (0,0) circle (1.4pt) node[above left] {$p$};
  \node[below,align=center] at (0,-1.35) {Corner\\ $\dim_{\mathrm{Ptan}}=2$};
\end{tikzpicture}
\end{minipage}\hfill
\begin{minipage}{0.32\textwidth}
\centering
\begin{tikzpicture}[scale=1,>=latex,every node/.style={font=\small}]
  \draw[->] (-1.2,0) -- (1.0,0) node[below] {$x$};
  \draw[->] (0,-1.3) -- (0,1.3) node[left] {$y$};
  \draw[ptanCurve,purple!80!black,domain=-1.35:1.35,samples=260,smooth,variable=\t]
       plot ({\t*\t-1},{\t*\t*\t-\t});
  \fill (0,0) circle (1.4pt) node[above left] {$p$};
  \node[below,align=center] at (-0.1,-1.08) {Double point\\ $\dim_{\mathrm{Ptan}}=2$};
\end{tikzpicture}
\end{minipage}
\caption{Comparative local models in dimension \(2\): smooth point, cusp, oscillatory point with one-dimensional tangent rank, oscillatory point with two-dimensional tangent rank, corner, and double point.}
\label{fig:ptan-local-comparison}
\end{figure}

This local picture can then be organized globally through the tangential strata.

Since \(\dim_{\mathrm{Ptan}}\{x\}_{A}\) takes integer values between \(0\) and \(n\),
it naturally stratifies the set according to its first-order directional richness.

\begin{definition}[Tangential strata]
For \(k\in\{0,\dots,n\}\), define
\[
S_k(A):=\{x\in \overline{A}: \dim_{\mathrm{Ptan}}\{x\}_{A}=k\}.
\]
More generally, define the threshold sets
\[
F_{\ge k}(A):=\{x\in \overline{A}: \dim_{\mathrm{Ptan}}\{x\}_{A}\ge k\}.
\]
\end{definition}

\begin{proposition}[Stratification under upper semicontinuity]
\label{prop:stratification-tan}
Assume that the map \(x\mapsto \dim_{\mathrm{Ptan}}\{x\}_{A}\) is upper semicontinuous on
\(\overline{A}\).
For every \(k\in\{0,\dots,n\}\), the set \(F_{\ge k}(A)\) is closed in \(\overline{A}\).
In particular, each stratum \(S_k(A)\) is Borel.
\end{proposition}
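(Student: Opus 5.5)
The plan is a direct unwinding of upper semicontinuity on the topological space $\overline{A}$ (with the subspace topology), using only that $f(x):=\dim_{\mathrm{Ptan}}\{x\}_{A}$ takes values in the finite set $\{0,\dots,n\}$. Fix $k\in\{0,\dots,n\}$. The case $k=0$ is trivial, since $F_{\ge 0}(A)=\overline{A}$ is closed in itself. For $k\ge1$, I will show that the complement
\[
\overline{A}\setminus F_{\ge k}(A)=\{x\in\overline{A}: f(x)\le k-1\}
\]
is open in $\overline{A}$. Upper semicontinuity at a point $x$ means that for every real $c>f(x)$ there is a neighbourhood $U$ of $x$ with $f(y)<c$ for all $y\in U\cap\overline{A}$. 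If $f(x)\le k-1$, take $c=k-\tfrac12$. This gives a neighbourhood on which $f<k-\tfrac12$. Since $f$ is integer-valued, this means $f\le k-1$ there, which is exactly openness of the complement.

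Equivalently, I could argue sequentially, which is legitimate in the metric space $\overline{A}$. If $x_j\in F_{\ge k}(A)$ and $x_j\to x\in\overline{A}$, then upper semicontinuity gives $f(x)\ge\limsup_j f(x_j)\ge k$, so $x\in F_{\ge k}(A)$. I would present one of the two versions, probably the sequential one, since it mirrors the formulation used in Proposition~\ref{prop:usc-tandim}.

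For the Borel statement, write $S_k(A)=F_{\ge k}(A)\setminus F_{\ge k+1}(A)$, with the convention $F_{\ge n+1}(A)=\varnothing$. Each $F_{\ge k}(A)$ is closed in $\overline{A}$. Since $\overline{A}$ is closed in $\mathbb{R}^n$, each $F_{\ge k}(A)$ is also closed in $\mathbb{R}^n$. A difference of two closed sets is Borel (indeed it is the intersection of a closed set and an open set), so each $S_k(A)$ is Borel in $\mathbb{R}^n$.

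There is no genuine obstacle here. The only points needing care are the integer-valuedness step (turning $f<k-\tfrac12$ into $f\le k-1$) and the remark that relative closedness in $\overline{A}$ upgrades to closedness in $\mathbb{R}^n$ because $\overline{A}$ is itself closed. Proposition~\ref{prop:usc-tandim} shows that the upper semicontinuity hypothesis cannot be dropped, so the proof relies on it essentially, but no further tangent-cone analysis is required.
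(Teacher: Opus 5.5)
Your proof is correct and matches the paper's argument: superlevel sets of an upper semicontinuous (here integer-valued) map are closed, and $S_k(A)=F_{\ge k}(A)\setminus F_{\ge k+1}(A)$ is then a difference of closed sets. Your convention $F_{\ge n+1}(A)=\varnothing$ simply absorbs the paper's separate $k=n$ case. Your remark that closedness in $\overline{A}$ upgrades to closedness in $\mathbb{R}^n$ is a harmless extra precision.
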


\begin{proof}
This is immediate: superlevel sets of an upper semicontinuous map are closed.
For \(k<n\), the Borel regularity of
\[
S_k(A)=F_{\ge k}(A)\setminus F_{\ge k+1}(A)
\]
follows. For \(k=n\), one simply has \(S_n(A)=F_{\ge n}(A)\), which is closed.
\end{proof}

\begin{remark}[Scope of the stratification statement]
Although upper semicontinuity fails in full generality
(Proposition~\ref{prop:usc-tandim}),
the previous proposition remains useful in regular classes in which upper semicontinuity can be verified,
for instance under additional geometric hypotheses ensuring stable tangent behavior.
In such settings, one obtains a natural tangential stratification through the closed
threshold sets \(F_{\ge k}(A)\).
\end{remark}

\begin{remark}[Reading the strata in dimension \(2\)]
For \(A\subset \mathbb R^2\):
\begin{itemize}
    \item \(S_2(A)\) contains points whose tangent cone spans the plane, such as corners, transverse crossings, and interior points of planar pieces.
    \item \(S_1(A)\) contains points whose tangent cone spans a line, such as regular curve points, cusp points, and certain oscillatory singularities.
    \item \(S_0(A)\) is exactly the set of isolated points of \(A\).
\end{itemize}
Thus the tangential stratification already reveals a first-order geometry of singularities.
\end{remark}

\begin{remark}[Global envelope versus tangential strata]
If $A\neq\varnothing$, then
\[
\dim_{\mathrm{Ptan}}(A)
=
\max\{k\in\{0,\dots,n\}:S_k(A)\neq\varnothing\}
=
\max\{k\in\{0,\dots,n\}:F_{\ge k}(A)\neq\varnothing\}.
\]
Thus $\dim_{\mathrm{Ptan}}(A)$ is only the coarsest global envelope of the pointwise tangential
ranks. The genuinely informative global organization is provided by the full family of strata
$S_k(A)$ and threshold sets $F_{\ge k}(A)$, which record where each tangential rank occurs.
\end{remark}

\chapter{\texorpdfstring{Point-Cross Dimension ($\times$)}{Point-Cross Dimension (x)}: a consolidated framework}
\label{sec:PCD}

The previous chapter identified which local directions are exactly realized and which are only asymptotically accessible. We now pass to the decisive next layer: beyond tangent rank, we quantify the local contribution detectable along each effective direction through admissible probes, and we combine these directional contributions into the Point-Cross dimension.

The constructions developed so far progressively refine the local directional reading of a set.
The point-vector dimension \(\dim_{\mathrm{Pvec}}\) captures directions that are \emph{exactly}
realized by local segments. In particular, it already detects the dimensional gain at a genuine
straight cross, where two independent linear germs meet at the same point.
The point-tangential dimension \(\dim_{\mathrm{Ptan}}\) goes one step further by replacing exact
segment germs with asymptotically accessible tangent directions.
It therefore extends the same directional phenomenon to a curvilinear setting:
corners, tangentially transverse curve germs, double points, and more generally singular
configurations in which several independent tangent directions coexist at first order.
This hierarchy from \(\dim_{\mathrm{Pvec}}\) to \(\dim_{\mathrm{Ptan}}\) was deliberately
constructed so that the tangential layer appears as the natural extension of the vector layer
to the local tangent geometry.
More precisely, Proposition~\ref{prop:ptan-as-pvec} shows that
\(\dim_{\mathrm{Ptan}}\{x\}_{A}\) is exactly the point-vector dimension of the local tangent
model associated with the Bouligand tangent cone, that is,
\[
\dim_{\mathrm{Ptan}}\{x\}_{A}
=
\dim_{\mathrm{Pvec}}\{x\}_{\Tan_x^{\mathrm{loc}}(A)}.
\]

However, \(\dim_{\mathrm{Ptan}}\) still remains an intermediate invariant.
Its role is to detect the tangent directions that are present at first order
in the Bouligand sense, including in singular, oscillatory, or more generally
nonregular geometries.
In this sense, \(\dim_{\mathrm{Ptan}}\) already identifies the local directional channels
that are present at first order.
What it does not yet provide is a quantitative account of how much each of these directions
actually contributes to the local geometric complexity, and hence to the pointwise
dimensionality one wishes to measure.
This is precisely the gap that the Point-Cross dimension \(\dim_{\times}\) is designed to fill:
it will not merely detect which relevant directions are present, but quantify their genuine
contribution to the local dimensional structure.

\section{Limits of the point-tangential dimension and motivation for Point-Cross}
\label{lptdm}

Concretely, the limitation of \(\dim_{\mathrm{Ptan}}\) comes from the fact that it records only
the rank of the tangent span.
Once a tangent direction is present, its geometric support no longer matters at the level of
\(\dim_{\mathrm{Ptan}}\): a geometrically robust trace and a very sparse or highly oscillatory
one contribute equally whenever they generate the same tangent direction.
Likewise, two sets may have the same point-tangential dimension even though the first carries
only a very thin directional trace, while the second supports the same direction through a
much more persistent geometric pattern.

This limitation already appears in the family of oscillatory examples
\[
\mathcal O_\alpha:=\{(t,t^\alpha\sin(1/t)):t>0\}\cup\{(0,0)\},
\]
whose tangent behavior at the origin varies substantially with \(\alpha\).
For instance:
\begin{itemize}
    \item for \(\alpha=2\), one has
    \[
    \dim_{\mathrm{Ptan}}\{(0,0)\}_{\mathcal O_2}=1.
    \]
    \item for \(\alpha=1\), one already has
    \[
    \dim_{\mathrm{Ptan}}\{(0,0)\}_{\mathcal O_1}=2.
    \]
    \item for \(\alpha=\tfrac12\), the tangent structure becomes even more directionally activated,
    while the value of \(\dim_{\mathrm{Ptan}}\) still remains equal to \(2\).
\end{itemize}
Thus the point-tangential dimension detects a jump in tangent rank, but it does not distinguish
between different effective intensities of directional activation once the same rank has been reached.
More generally, \(\dim_{\mathrm{Ptan}}\) remains purely tangent-level: it says that a direction
is asymptotically accessible, but it does not yet decide how much geometric complexity can actually
be detected along that direction by an admissible one-dimensional probe.
This becomes especially problematic for fractal geometries, where one would like to quantify
the directional contribution revealed by such a probe even when the corresponding trace is thin,
oscillatory, or fractally distributed.
In such situations, the tangent rank alone is too coarse to serve as a genuine directional
dimension.

This is precisely why a further refinement is needed.
The next step is not to discard tangent directions, nor to replace the Bouligand
cone by a smaller object. Rather, it is to equip its normalized directional support
with a probing mechanism capable of measuring how much local complexity is actually
detected along each direction.
In this sense, the effective directions introduced below are the normalized Bouligand
directions, while admissible probes provide the geometric representatives through which
their quantitative contribution will be tested.

To each such direction \(v\), we will associate a directional contribution
\(\theta_x^A(v)\), intended to measure the maximal local complexity detected along that
direction. The Point-Cross dimension will then arise by combining these weighted
directional contributions into a single local invariant.
Its purpose is not to classify the precise shape of the tangent cone
(for instance, whether it is a sector or a half-plane), but rather to quantify the
effective contribution detected along the available directions and to aggregate these
contributions in an optimal way, without exceeding the dimension of the ambient space.
The guiding principle is therefore the following:
the point-vector dimension counts exact local directions,
the point-tangential dimension counts asymptotically accessible tangent directions through their span,
and the Point-Cross dimension quantifies the contribution detected by admissible probes along
these directions.
It is this last step that makes it possible to pass from a purely rank-based tangent theory
to a genuinely directional local dimension, including in geometries with fractal behavior.

\section{Effective directions and admissible probing curves}
\label{sec:effective-directions-admissible-probes}

The point-tangential dimension identifies the tangent directions that are asymptotically accessible at a point.
For the Point-Cross theory, however, the issue is not to prune this directional support.
The directional support will be exactly the normalized Bouligand tangent cone.
The additional task is operational: each effective direction must be realizable by admissible
one-dimensional probes, because the directional contribution \(\theta_x^A(v)\) will be computed from
the local dimensional behavior of the traces
\[
A\cap \Gamma_\gamma
\]
along such probes.

The guiding idea is therefore the following:
a direction is \emph{effective} when it is present as a unit Bouligand tangent direction at the point.
The role of admissible probes is not to define a smaller class of directions, but to provide controlled
geometric representatives of those directions, whose contact with \(A\) recurs at arbitrarily small
scales. In this sense, effective directions give the available first-order directional support, while
admissible probes provide the concrete mechanism needed for the later quantitative theory.

\begin{definition}[Effective directions]
\label{def:effective-direction}
Let \(A\subset\mathbb R^n\) and let \(x\in\mathbb R^n\). If \(x\in\overline A\), we define the set of
effective directions of \(A\) at \(x\) by
\[
\Eff_x(A):=\Tan_x(A)\cap S^{n-1}.
\]
If \(x\notin\overline A\), we set
\[
\Eff_x(A):=\varnothing.
\]
Equivalently, for \(x\in\overline A\), a unit vector \(v\in S^{n-1}\) belongs to
\(\Eff_x(A)\) if and only if there exists a sequence
\((a_k)_{k\ge1}\subset A\setminus\{x\}\) such that
\[
a_k\to x
\qquad\text{and}\qquad
\frac{a_k-x}{\|a_k-x\|}\to v.
\]
\end{definition}

\begin{definition}[Admissible Lipschitz probes]
\label{def:admissible-lipschitz-probes}
Let \(A\subset\mathbb R^n\), let \(x\in\overline A\), and let \(v\in S^{n-1}\).
An admissible Lipschitz probe for \(A\) at \(x\) in the oriented direction \(v\) is an
injective Lipschitz map
\[
\gamma:[0,\delta]\to\mathbb R^n
\]
for some \(\delta>0\), such that
\[
\gamma(0)=x,
\]
\[
\lim_{t\to0^+}
\frac{\gamma(t)-x}{\|\gamma(t)-x\|}
=
v,
\]
and
\[
A\cap\gamma((0,\eta])\neq\varnothing
\qquad\text{for every }\eta\in(0,\delta].
\]
Equivalently, the probe meets \(A\) along a sequence of parameter values tending to
\(0^+\).

We denote by
\[
\mathcal G_x^A(v)
\]
the family of all admissible Lipschitz probes for \(A\) at \(x\) in the direction \(v\).
For \(\gamma\in\mathcal G_x^A(v)\), its geometric image is denoted by
\[
\Gamma_\gamma:=\gamma([0,\delta]).
\]
\end{definition}

\begin{remark}[Why Lipschitz probes]
\label{rem:rectifiable-arcs-as-probes}
The Lipschitz requirement means that the probing germ admits a rectifiable one-dimensional
parametrization including the base point. Hence every admissible probe has local
point-extended box dimension at most one. This prevents a single probe from carrying more
than a one-dimensional contribution, while keeping the class stable under local
\(C^1\)-diffeomorphisms.

It is not the speed of a particular parametrization that matters, but the existence of such a
Lipschitz representative of the local image germ \(\Gamma_\gamma\). In particular, any
rectifiable injective arc may be reparametrized by arclength and hence represented by a
Lipschitz map. Consequently, if such an arc starts at \(x\), accumulates in \(A\) at
\(x\), and has limiting direction \(v\), its arclength parametrization gives an element
of \(\mathcal G_x^A(v)\).
Thus, up to admissible reparametrization of the image germ, admissible Lipschitz
probes are precisely rectifiable injective arcs satisfying the base-point,
recurrence, and limiting-direction conditions.
\end{remark}
\begin{proposition}[Exact local directions are realized by straight probes]
\label{prop:exact-directions-realized-by-probes}
Let \(A\subset\mathbb R^n\), let \(x\in\mathbb R^n\), and let
\(w\in\operatorname{Dir}_A(x)\), where \(\operatorname{Dir}_A(x)\) is defined in
Definition~\ref{def:dir-at-point}. Set
\[
v:=\frac{w}{\|w\|}.
\]
Then \(x\in\overline A\), \(v\in\Eff_x(A)\), and
\[
\mathcal G_x^A(v)\neq\varnothing.
\]
More precisely, \(v\) is realized by a straight admissible Lipschitz probe whose
punctured image is locally contained in \(A\).
\end{proposition}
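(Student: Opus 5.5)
The plan is to produce the straight probe explicitly from the segment germ supplied by the hypothesis \(w\in\operatorname{Dir}_A(x)\) and then check the three requirements in turn: \(x\in\overline A\), \(v\in\Eff_x(A)\), and admissibility. By Definition~\ref{def:dir-at-point}, there is \(\delta_0>0\) with \(\{x+t\,w:0<t<\delta_0\}\subset A\). Reparametrizing by \(s=t\|w\|\), this reads \(\{x+s\,v:0<s<\delta_0\|w\|\}\subset A\). Fix \(\delta:=\tfrac12\delta_0\|w\|>0\), which is chosen strictly below the endpoint so that the closed interval \([0,\delta]\) is handled cleanly.

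First, the points \(a_k:=x+\tfrac{\delta}{k}\,v\) lie in \(A\setminus\{x\}\) and converge to \(x\), so \(x\in\overline A\). Moreover, \((a_k-x)/\|a_k-x\|=v\) for every \(k\). By the equivalent sequential description in Definition~\ref{def:effective-direction}, this gives \(v\in\Eff_x(A)\). Alternatively, Proposition~\ref{prop:pvec-le-tandim} shows \(w\in\Tan_x(A)\), and the cone property then gives \(v\in\Tan_x(A)\cap S^{n-1}\).

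Next, define \(\gamma:[0,\delta]\to\mathbb R^n\) by \(\gamma(s):=x+s\,v\). We check the conditions of Definition~\ref{def:admissible-lipschitz-probes} one at a time:
\begin{itemize}
\item \(\gamma\) is an isometric embedding, since \(\|\gamma(s)-\gamma(s')\|=|s-s'|\); hence it is injective and \(1\)-Lipschitz.
\item \(\gamma(0)=x\).
\item For \(s>0\), \((\gamma(s)-x)/\|\gamma(s)-x\|=v\), so the limiting-direction condition holds trivially.
\item For any \(\eta\in(0,\delta]\), \(\gamma((0,\eta])\subset\{x+s\,v:0<s<\delta_0\|w\|\}\subset A\), so \(A\cap\gamma((0,\eta])=\gamma((0,\eta])\neq\varnothing\).
\end{itemize}
Thus \(\gamma\in\mathcal G_x^A(v)\), and its punctured image \(\gamma((0,\delta])\) is contained in \(A\), which is the sharper claim.

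No step is a real obstacle; the argument is a direct unwinding of definitions. The only points needing care are the choice of \(\delta\) strictly inside the open germ, so that the closed parameter interval still has its punctured image in \(A\), and the normalization from \(w\) to the unit vector \(v\) with the corresponding rescaling of the germ length.
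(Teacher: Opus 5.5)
Your proposal is correct and follows essentially the same route as the paper, which also takes the straight probe $\gamma(t)=x+tv$ on $[0,\rho]$ with $\rho=\delta_0\|w\|/2$, verifies injectivity, Lipschitz continuity, the constant limiting direction and recurrence, and obtains $v\in\Eff_x(A)$ from the points $\gamma(t_k)$ with $t_k\downarrow 0$. The only difference is cosmetic: you establish $x\in\overline A$ and $v\in\Eff_x(A)$ before constructing the probe, whereas the paper does so afterwards.
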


\begin{proof}
Since \(w\in\operatorname{Dir}_A(x)\), Definition~\ref{def:dir-at-point} gives
\(\delta_0>0\) such that
\[
\{x+s w:0<s<\delta_0\}\subset A.
\]
In particular, \(x\in\overline A\).

Set
\[
v:=\frac{w}{\|w\|}\in S^{n-1}.
\]
Choose
\[
\rho:=\frac{\delta_0\|w\|}{2}>0,
\]
and define
\[
\gamma:[0,\rho]\to\mathbb R^n,
\qquad
\gamma(t):=x+t v.
\]
Then \(\gamma\) is injective and Lipschitz, and
\[
\gamma(0)=x.
\]
Moreover, for every \(t\in(0,\rho]\),
\[
\gamma(t)=x+t v
=
x+\frac{t}{\|w\|}w.
\]
Since
\[
0<\frac{t}{\|w\|}
\le
\frac{\rho}{\|w\|}
=
\frac{\delta_0}{2}
<
\delta_0,
\]
we obtain
\[
\gamma(t)\in A
\qquad\text{for every }t\in(0,\rho].
\]
Therefore, for every \(\eta\in(0,\rho]\),
\[
\gamma((0,\eta])\subset A,
\]
and in particular
\[
A\cap\gamma((0,\eta])\neq\varnothing.
\]

Finally,
\[
\frac{\gamma(t)-x}{\|\gamma(t)-x\|}
=
\frac{t v}{\|t v\|}
=
v
\qquad\text{for every }t\in(0,\rho].
\]
Hence
\[
\lim_{t\to0^+}
\frac{\gamma(t)-x}{\|\gamma(t)-x\|}
=
v.
\]
Thus
\[
\gamma\in\mathcal G_x^A(v),
\]
and consequently
\[
\mathcal G_x^A(v)\neq\varnothing.
\]

It remains only to check that \(v\in\Eff_x(A)\) according to
Definition~\ref{def:effective-direction}. Let \(t_k:=\rho/(k+1)\). Then
\(t_k\downarrow0\) and
\[
\gamma(t_k)\in A,
\qquad
\gamma(t_k)\to x,
\qquad
\frac{\gamma(t_k)-x}{\|\gamma(t_k)-x\|}=v.
\]
Therefore
\[
v\in\Tan_x(A)\cap S^{n-1}.
\]
By Definition~\ref{def:effective-direction}, this means that
\[
v\in\Eff_x(A).
\]
\end{proof}
\begin{proposition}[Probe realization of effective directions]
\label{prop:probe-realization-effective-directions}
Let \(A\subset\mathbb R^n\), let \(x\in\overline A\), and let \(v\in S^{n-1}\). Then
\[
v\in\Eff_x(A)
\qquad\Longleftrightarrow\qquad
\mathcal G_x^A(v)\neq\varnothing.
\]
\end{proposition}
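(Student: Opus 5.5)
The equivalence splits into two implications of quite different difficulty. The implication ``\(\Leftarrow\)'' follows directly from the definitions. The implication ``\(\Rightarrow\)'' requires constructing a probe, and I would do this with a polygonal arc through a rapidly decaying subsequence of approximating points.

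\emph{(\(\Leftarrow\)).} Let \(\gamma\in\mathcal G_x^A(v)\), defined on \([0,\delta]\).
\begin{enumerate}
\item The recurrence condition, applied with \(\eta=\delta/k\), yields parameters \(t_k\in(0,\delta/k]\) with \(a_k:=\gamma(t_k)\in A\).
\item Injectivity together with \(\gamma(0)=x\) gives \(a_k\neq x\).
\item Continuity gives \(a_k\to x\).
\item The limiting-direction condition, evaluated along \(t_k\to0^+\), gives \((a_k-x)/\|a_k-x\|\to v\).
\end{enumerate}
By the sequential characterization in Definition~\ref{def:effective-direction}, it follows that \(v\in\Eff_x(A)\).

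\emph{(\(\Rightarrow\)).} Let \(v\in\Eff_x(A)\). Take \(a_k\in A\setminus\{x\}\) with \(a_k\to x\) and \(u_k:=(a_k-x)/\|a_k-x\|\to v\), and set \(r_k:=\|a_k-x\|\). Pass to a subsequence such that \(r_{k+1}\le r_k/4\) and \(\langle u_k,v\rangle\ge\cos\varepsilon_k\), where \(\varepsilon_k\downarrow0\) and \(\varepsilon_k<\pi/6\).

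Define \(\gamma\) as the polygonal path that starts at \(x\), passes through the vertices \(\dots,a_{k+1},a_k,\dots,a_2\), and ends at \(a_1\). Its total length is at most \(\sum_k (r_k+r_{k+1})\le 2\sum_k r_k<\infty\), since the \(r_k\) decay geometrically. Parametrize this path by arclength from \(x\) on \([0,L]\) and set \(\gamma(0)=x\). The map is then \(1\)-Lipschitz, it is continuous at \(0\) because \(r_k\to0\), and it is continuous at each vertex.

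Recurrence holds because the vertices \(a_k\in A\) occur at arclength parameters \(\le 2\sum_{j\ge k}r_j\to0\).

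\emph{Main obstacle: injectivity and the limiting direction.} I would handle both with the height function \(h(y):=\langle y-x,v\rangle\).
\begin{itemize}
\item At the vertices, \(h(a_k)\ge r_k\cos\varepsilon_k\ge r_k\cos(\pi/6)>r_k/4\ge r_{k+1}\ge h(a_{k+1})\). So \(h\) strictly increases from \(a_{k+1}\) to \(a_k\).
\item On each segment \(h\) is affine, hence strictly increasing there.
\item Therefore \(h\circ\gamma\) is strictly increasing on \([0,L]\), with \(h>0\) away from \(0\). This gives injectivity of \(\gamma\) on all of \([0,L]\), including at \(t=0\).
\end{itemize}
For the direction, a point \(y\) on the segment \([a_{k+1},a_k]\) satisfies \(y-x=\lambda(a_{k+1}-x)+(1-\lambda)(a_k-x)\). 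This is a nonnegative combination of two vectors lying within angle \(\max(\varepsilon_k,\varepsilon_{k+1})=\varepsilon_k\) of \(v\), and such a cone is convex since \(\varepsilon_k<\pi/2\). Moreover \(y\neq x\), because \(h(y)>0\). Hence \(\angle(y-x,v)\le\varepsilon_k\to0\) as \(t\to0^+\), which is exactly the limiting-direction condition. Thus \(\gamma\in\mathcal G_x^A(v)\). This is consistent with Remark~\ref{rem:rectifiable-arcs-as-probes}, since the construction produces a rectifiable injective arc.
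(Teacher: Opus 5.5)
Your proof is correct and is essentially the paper's. The (\(\Leftarrow\)) direction is identical, and for (\(\Rightarrow\)) the paper's curve \(\gamma(t)=x+tv+e(t)\), with \(e\) affine on each \([r_{k+1},r_k]\), is exactly your polygon through a sparse subsequence with \(r_{k+1}\le r_k/4\), parametrized so that the \(k\)-th vertex sits at \(t=r_k\) rather than by arclength. The only difference is in the verification: the paper gets injectivity from the bi-Lipschitz bound \(\|\gamma(t)-\gamma(s)\|\ge\frac{19}{24}|t-s|\) (using \(\operatorname{Lip}(e)\le 5/24\)) and the limiting direction from \(\|e(t)\|/t\to0\), whereas your monotone height \(\langle\gamma-x,v\rangle\) and convex-cone argument reach the same conclusions equally rigorously.
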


\begin{proof}
Assume first that \(\mathcal G_x^A(v)\neq\varnothing\), and let
\(\gamma\in\mathcal G_x^A(v)\). By definition, for every \(\eta\in(0,\delta]\),
\[
A\cap\gamma((0,\eta])\neq\varnothing.
\]
For each \(k\ge1\), choose
\[
t_k\in\left(0,\min\{\delta,1/k\}\right]
\]
such that
\[
\gamma(t_k)\in A.
\]
Then \(t_k\to0^+\). Since \(\gamma(t_k)\to x\), \(\gamma(t_k)\neq x\), and
\[
\frac{\gamma(t_k)-x}{\|\gamma(t_k)-x\|}\to v,
\]
the sequential characterization of the Bouligand tangent cone gives
\[
v\in\Tan_x(A)\cap S^{n-1}.
\]
Hence \(v\in\Eff_x(A)\).

Conversely, assume that \(v\in\Eff_x(A)\). Then, by Definition~\ref{def:effective-direction},
there exists a sequence \((y_k)_{k\ge1}\subset A\setminus\{x\}\) such that
\[
y_k\to x,
\qquad
u_k:=\frac{y_k-x}{\|y_k-x\|}\to v.
\]
Set
\[
r_k:=\|y_k-x\|.
\]
After passing to a subsequence and relabelling, we may assume that
\[
r_k\downarrow0,
\qquad
r_{k+1}\le \frac14 r_k,
\qquad
\|u_k-v\|\le \frac18
\quad\text{for every }k.
\]
Write
\[
y_k=x+r_k u_k
=
x+r_k v+e_k,
\qquad
e_k:=r_k(u_k-v).
\]

We construct a polygonal correction \(e:[0,r_1]\to\mathbb R^n\) by setting
\(e(0):=0\) and, for \(t\in[r_{k+1},r_k]\),
\[
s_k(t):=\frac{t-r_{k+1}}{r_k-r_{k+1}},
\qquad
e(t):=(1-s_k(t))e_{k+1}+s_k(t)e_k.
\]
Then \(e(r_k)=e_k\) for every \(k\), and \(e\) is continuous and affine on each interval
\([r_{k+1},r_k]\). Moreover, for \(t\in[r_{k+1},r_k]\),
\[
\frac{\|e(t)\|}{t}
\le
\max\{\|u_k-v\|,\|u_{k+1}-v\|\}.
\]
In particular,
\[
\frac{\|e(t)\|}{t}\to0
\qquad\text{as }t\to0^+.
\]
Moreover, since \(\|u_k-v\|\le1/8\), for \(t\in[r_{k+1},r_k]\) one has
\[
\|e(t)\|
\le
(1-s_k(t))\|e_{k+1}\|+s_k(t)\|e_k\|
\le
\frac18\bigl((1-s_k(t))r_{k+1}+s_k(t)r_k\bigr)
=
\frac18 t.
\]

The map \(e\) is Lipschitz on \([0,r_1]\). Indeed, on each interval
\([r_{k+1},r_k]\),
\[
\operatorname{Lip}\bigl(e|_{[r_{k+1},r_k]}\bigr)
=
\frac{\|e_k-e_{k+1}\|}{r_k-r_{k+1}}
\le
\frac{r_k/8+r_{k+1}/8}{r_k-r_{k+1}}
\le
\frac{5}{24}.
\]
Summing over the finitely many affine pieces met by an interval \([s,t]\subset(0,r_1]\),
and using \(\|e(t)\|\le t/8\) when \(s=0\), gives the global Lipschitz estimate
\[
\|e(t)-e(s)\|\le \frac{5}{24}|t-s|
\qquad(0\le s,t\le r_1).
\]

Define
\[
\gamma:[0,r_1]\to\mathbb R^n,
\qquad
\gamma(t):=x+t v+e(t).
\]
Then \(\gamma\) is Lipschitz and
\[
\gamma(r_k)=x+r_kv+e_k=y_k\in A
\]
for every \(k\). Moreover, \(\gamma\) is injective. Indeed, if \(0\le s<t\le r_1\), then
\[
\|\gamma(t)-\gamma(s)\|
\ge
(t-s)-\|e(t)-e(s)\|
\ge
\left(1-\frac{5}{24}\right)(t-s)>0.
\]
Finally,
\[
\frac{\gamma(t)-x}{t}=v+\frac{e(t)}{t}\to v
\qquad\text{as }t\to0^+.
\]
Therefore
\[
\frac{\gamma(t)-x}{\|\gamma(t)-x\|}\to v.
\]
For every \(\eta\in(0,r_1]\), choose \(k\) large enough that \(r_k<\eta\). Then
\[
\gamma(r_k)=y_k\in A\cap\gamma((0,\eta]).
\]
Hence \(\gamma\in\mathcal G_x^A(v)\), and so
\[
\mathcal G_x^A(v)\neq\varnothing.
\]
\end{proof}

\begin{remark}[Directions versus probes]
Effective directions and admissible probes play two different roles. The set
\(\Eff_x(A)\) is the first-order directional support:
\[
\Eff_x(A)=\Tan_x(A)\cap S^{n-1}.
\]
The family \(\mathcal G_x^A(v)\), on the other hand, consists of the admissible
one-dimensional representatives through which the local contribution in the direction
\(v\) will be measured. Proposition~\ref{prop:probe-realization-effective-directions}
shows that this probing mechanism realizes exactly the effective directions and no others.
Thus the passage from \(\Eff_x(A)\) to \(\mathcal G_x^A(v)\) does not change the
directional support. It only supplies the controlled geometric traces needed to define
\(\theta_x^A(v)\).
\end{remark}

\begin{definition}[Projective effective directions]
\label{def:projective-effective-directions}
For \(v\in\mathbb R^n\setminus\{0\}\), let \([v]\in\mathbb P^{n-1}(\mathbb R)\) denote the
one-dimensional subspace \(\mathbb Rv\). We define the projective effective-direction set by
\[
\Eff_x^{\mathbb P}(A)
:=
\bigl\{\xi\in\mathbb P^{n-1}(\mathbb R):
\xi\cap\Eff_x(A)\neq\varnothing\bigr\}.
\]
Equivalently,
\[
\Eff_x^{\mathbb P}(A)=\{[v]:v\in\Eff_x(A)\}.
\]
Thus the geometric direction is projective, while the spherical set \(\Eff_x(A)\)
keeps track of the oriented unit representatives used in the proofs.
\end{definition}

\begin{remark}[Orientation and projective convention]
The vectors \(v\) and \(-v\) represent the same geometric line direction, but they need
not play the same role for one-sided germs. For instance, a half-line may realize one
orientation but not the opposite one. We therefore keep the oriented formulation in
\(S^{n-1}\) when defining admissible probes and oriented contributions, and pass to
projective directions when the final aggregation is performed. This prevents the same
geometric direction from being counted twice while preserving the possible asymmetry
between \(v\) and \(-v\) at the probing level.
\end{remark}

\begin{corollary}[Basic inherited properties of effective directions]
\label{cor:eff-inherited-properties}
Let \(A,B\subset\mathbb R^n\) and let \(x\in\mathbb R^n\). Then:
\begin{enumerate}
    \item \textbf{Monotonicity.}
    If \(A\subset B\), then
    \[
    \Eff_x(A)\subset \Eff_x(B).
    \]

    \item \textbf{Dependence on the germ.}
    For every neighborhood \(U\) of \(x\),
    \[
    \Eff_x(A)=\Eff_x(A\cap U).
    \]

    \item \textbf{Dependence on the closure.}
    \[
    \Eff_x(A)=\Eff_x(\overline A).
    \]

    \item \textbf{Compatibility with finite unions.}
    \[
    \Eff_x(A\cup B)=\Eff_x(A)\cup \Eff_x(B).
    \]

    \item \textbf{Isolated points.}
    If \(x\in A\) is an isolated point of \(A\), then
    \[
    \Eff_x(A)=\varnothing.
    \]
    \item \textbf{Local \(C^1\)-diffeomorphism invariance.}
    Let \(U,V\subset\mathbb R^n\) be open, let
    \[
    \Phi:U\to V
    \]
    be a \(C^1\)-diffeomorphism, and assume that \(x\in\overline A\cap U\). Set
    \[
    y:=\Phi(x),
    \qquad
    L:=D\Phi(x).
    \]
    Then
    \[
    \Eff_y\bigl(\Phi(A\cap U)\bigr)
    =
    \left\{
    \frac{Lv}{\|Lv\|}:v\in\Eff_x(A)
    \right\}.
    \]
    Equivalently,
    \[
    \Eff_y^{\mathbb P}\bigl(\Phi(A\cap U)\bigr)
    =
    \mathbb P(L)\bigl(\Eff_x^{\mathbb P}(A)\bigr).
    \]
    
\end{enumerate}
\end{corollary}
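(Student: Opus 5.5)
The plan is to reduce every item to the corresponding statement for the Bouligand tangent cone, using $\Eff_x(A)=\Tan_x(A)\cap S^{n-1}$ for $x\in\overline A$ and $\Eff_x(A)=\varnothing$ otherwise. Before each item we separate the case $x\notin\overline A$. In that case both sides of each identity are empty: for (1) simply because the left side is empty, and for (2)--(4) because $x\notin\overline{A\cap U}$, $x\notin\overline{\overline A}$, and $x\notin\overline{A\cup B}$ exactly when $x\notin\overline A$ (resp.\ $x\notin \overline A\cup\overline B$ in (4), with the obvious convention for the component not containing $x$ in its closure). We may therefore assume $x$ lies in the relevant closures.

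Items (1)--(3) then follow by intersecting with $S^{n-1}$ the identities $\Tan_x(A)\subset\Tan_x(B)$, $\Tan_x(A)=\Tan_x(A\cap U)$ and $\Tan_x(A)=\Tan_x(\overline A)$. These are established in the proof of Proposition~\ref{prop:basic-tandim} (items (2)--(4)), and they do not use $x\in\overline A$, since both cones are empty otherwise. For (4) we intersect $\Tan_x(A\cup B)=\Tan_x(A)\cup\Tan_x(B)$ from Proposition~\ref{prop:ptan-union-grassmann} with the sphere. The only care needed is the case where $x$ lies in $\overline A$ but not in $\overline B$; then $\Eff_x(B)=\varnothing=\Tan_x(B)\cap S^{n-1}$, so the formula persists. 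For (5), an isolated point has $\Tan_x(A)=\{0\}$ by Proposition~\ref{prop:ptan-zero-isolated}, and $\{0\}\cap S^{n-1}=\varnothing$.

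Item (6) is the only step needing real work, since Proposition~\ref{prop:basic-tandim}(5) was stated for global diffeomorphisms of $\mathbb R^n$. I would redo its argument locally: by (2), $\Tan_x(A)=\Tan_x(A\cap U)$. For $v\in\Tan_x(A\cap U)$ realized by $x_k\in A\cap U$ and $\lambda_k\downarrow0$, differentiability of $\Phi$ at $x$ gives $(\Phi(x_k)-y)/\lambda_k\to Lv$, so $L\,\Tan_x(A\cap U)\subset\Tan_y(\Phi(A\cap U))$. Applying the same argument to $\Phi^{-1}:V\to U$ at $y$ yields the reverse inclusion, using $\Phi^{-1}(\Phi(A\cap U))=A\cap U$ and $D\Phi^{-1}(y)=L^{-1}$. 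This also uses $y\in\overline{\Phi(A\cap U)}$, which holds because $x\in\overline{A\cap U}$ (as $U$ is an open neighbourhood of $x$) and $\Phi$ is continuous. Hence $\Tan_y(\Phi(A\cap U))=L\,\Tan_x(A)$.

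Normalizing, a unit vector $w$ lies in $L\,\Tan_x(A)$ iff $w=Lu$ with $u\in\Tan_x(A)\setminus\{0\}$. Since the cone is closed under positive scaling and $L$ is injective, this is equivalent to $w=Lv/\|Lv\|$ with $v=u/\|u\|\in\Eff_x(A)$. This gives the spherical formula. The projective version follows by applying $[\cdot]$, noting that $[Lv/\|Lv\|]=\mathbb P(L)[v]$. The main obstacle is only bookkeeping in (6): localizing to $U$ and checking that the base-point closure hypothesis transfers, so that the "$\varnothing$" branch of the definition never intervenes.
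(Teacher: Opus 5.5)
Your proposal is correct and follows essentially the paper's route: each item is reduced to the corresponding Bouligand tangent-cone property (Propositions~\ref{prop:basic-tandim} and~\ref{prop:ptan-union-grassmann}, plus $\Tan_x(A)=\{0\}$ at isolated points) and intersected with $S^{n-1}$, and item (6) is obtained by transporting the cone through $L=D\Phi(x)$ and then normalizing. The one difference is that you re-derive the $C^1$ transport for the local diffeomorphism $\Phi:U\to V$, whereas the paper cites Proposition~\ref{prop:basic-tandim}(5), which is stated only for global diffeomorphisms of $\mathbb R^n$; your version makes explicit a localization step the paper leaves implicit, and your separate case $x\notin\overline A$ is harmless but unnecessary, since $\Tan_x(A)=\varnothing$ there.
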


\begin{proof}
By Definition~\ref{def:effective-direction}, and also when \(x\notin\overline A\) since then
\(\Tan_x(A)=\varnothing\), one has in all cases
\[
\Eff_x(A)=\Tan_x(A)\cap S^{n-1}.
\]
Thus the first three properties follow by intersecting with \(S^{n-1}\) the corresponding
properties of the Bouligand tangent cone: monotonicity follows directly from the sequential
definition of the Bouligand tangent cone, while germ dependence and closure dependence are stated
in Proposition~\ref{prop:basic-tandim}.

For finite unions, Proposition~\ref{prop:ptan-union-grassmann} gives
\[
\Tan_x(A\cup B)=\Tan_x(A)\cup\Tan_x(B).
\]
Intersecting with \(S^{n-1}\) and using Definition~\ref{def:effective-direction}, we obtain
\[
\Eff_x(A\cup B)
=
(\Tan_x(A)\cup\Tan_x(B))\cap S^{n-1}
=
\Eff_x(A)\cup\Eff_x(B).
\]

If \(x\in A\) is isolated in \(A\), then every sequence in \(A\) converging to \(x\)
is eventually equal to \(x\). Hence
\[
\Tan_x(A)=\{0\},
\]
and therefore
\[
\Eff_x(A)=\Tan_x(A)\cap S^{n-1}=\varnothing.
\]

It remains to prove the local \(C^1\)-diffeomorphism statement. Let
\(U,V\subset\mathbb R^n\) be open, let
\[
\Phi:U\to V
\]
be a \(C^1\)-diffeomorphism, let \(x\in\overline A\cap U\), and set
\[
y:=\Phi(x),
\qquad
L:=D\Phi(x),
\qquad
B:=\Phi(A\cap U).
\]
By the germ dependence of the Bouligand tangent cone,
\[
\Tan_x(A\cap U)
=
\Tan_x(A),
\]
and by the \(C^1\)-diffeomorphism invariance of the Bouligand tangent cone
established in Proposition~\ref{prop:basic-tandim}, applied to the local germ
\(A\cap U\),
\[
\Tan_y(B)
=
L\,\Tan_x(A\cap U)
=
L\,\Tan_x(A).
\]
Therefore,
\[
\begin{aligned}
\Eff_y(B)
&=
\Tan_y(B)\cap S^{n-1}\\
&=
L\,\Tan_x(A)\cap S^{n-1}.
\end{aligned}
\]
Since \(L\) is an invertible linear map and \(\Tan_x(A)\) is a cone,
\[
L\,\Tan_x(A)\cap S^{n-1}
=
\left\{
\frac{Lv}{\|Lv\|}:
v\in\Tan_x(A)\cap S^{n-1}
\right\}.
\]
Using
\[
\Eff_x(A)=\Tan_x(A)\cap S^{n-1},
\]
we obtain
\[
\Eff_y(B)
=
\left\{
\frac{D\Phi(x)v}{\|D\Phi(x)v\|}:
v\in\Eff_x(A)
\right\}.
\]
Passing to projective classes gives
\[
\Eff_y^{\mathbb P}(B)
=
\mathbb P(D\Phi(x))
\bigl(\Eff_x^{\mathbb P}(A)\bigr).
\]
\end{proof}
\begin{example}[Basic examples]
\label{ex:basic-effective-directions}
\leavevmode
\begin{enumerate}
    \item If \(A\) is locally a regular two-sided \(C^1\) curve germ at \(x\), then, in the
    spherical formulation, \(\Eff_x(A)\) consists of the two opposite unit tangent
    representatives \(v\) and \(-v\). Equivalently, in the projective formulation,
    \(\Eff_x^{\mathbb P}(A)\) consists of the unique tangent line direction \([v]\).
    If \(A\) is the union of two transverse such \(C^1\) curve germs at \(x\), then
    \(\Eff_x^{\mathbb P}(A)\) consists of the two corresponding projective tangent
    directions.

    \item More generally than the oscillatory family introduced above, consider
    \[
    \Gamma_{\alpha,\beta}
    :=
    \{(t,t^\alpha\sin((1/t)^\beta)):t>0\}\cup\{(0,0)\},
    \qquad \alpha,\beta>0.
    \]
    Then, at the origin and in the spherical formulation,
    \[
    \Eff_{(0,0)}(\Gamma_{\alpha,\beta})
    =
    \begin{cases}
    \{(1,0)\}, & \alpha>1,\\[0.4em]
    \left\{\dfrac{(1,m)}{\sqrt{1+m^2}}:m\in[-1,1]\right\}, & \alpha=1,\\[0.8em]
    \{u\in S^1:u_1\ge 0\}, & 0<\alpha<1.
    \end{cases}
    \]
    By Definition~\ref{def:effective-direction}, it is enough to determine
    \[
    \Tan_{(0,0)}(\Gamma_{\alpha,\beta})\cap S^1.
    \]
    Write
    \[
    x=t,
    \qquad
    y=t^\alpha\sin((1/t)^\beta),
    \qquad
    \frac{y}{x}=t^{\alpha-1}\sin((1/t)^\beta).
    \]
    There are three regimes.

    If \(\alpha>1\), then
    \[
    \frac{y}{x}=t^{\alpha-1}\sin((1/t)^\beta)\to 0.
    \]
    More rigorously, let \(t_k\downarrow0\), let \(\lambda_k\downarrow0\), and assume that
    \[
    \frac{(t_k,t_k^\alpha\sin((1/t_k)^\beta))}{\lambda_k}
    \longrightarrow u\neq0.
    \]
    The first component gives
    \[
    \frac{t_k}{\lambda_k}\to u_1\ge0.
    \]
    If \(u_1=0\), then the second component also tends to zero, since
    \[
    \left|
    \frac{t_k^\alpha\sin((1/t_k)^\beta)}{\lambda_k}
    \right|
    \le
    \frac{t_k}{\lambda_k}\,t_k^{\alpha-1}\to0,
    \]
    contradicting \(u\neq0\). Hence \(u_1>0\). Moreover,
    \[
    \frac{t_k^\alpha\sin((1/t_k)^\beta)}{\lambda_k}
    =
    \frac{t_k}{\lambda_k}\,t_k^{\alpha-1}\sin((1/t_k)^\beta)\to0.
    \]
    Thus every nonzero tangent vector is a positive multiple of \((1,0)\). Conversely,
    taking \(\lambda_k=t_k\) gives the direction \((1,0)\). Therefore
    \[
    \Tan_{(0,0)}(\Gamma_{\alpha,\beta})
    =
    \mathbb R_+(1,0),
    \qquad
    \Eff_{(0,0)}(\Gamma_{\alpha,\beta})=\{(1,0)\}.
    \]

    If \(\alpha=1\), then
    \[
    \frac{y}{x}=\sin((1/t)^\beta)\in[-1,1].
    \]
    For any prescribed slope \(m\in[-1,1]\), choose a sequence \(s_k\to+\infty\) such that
    \[
    \sin s_k=m,
    \]
    and set
    \[
    t_k:=s_k^{-1/\beta}.
    \]
    Then \(t_k\downarrow0\) and
    \[
    \frac{(t_k,t_k\sin((1/t_k)^\beta))}{t_k}
    =
    (1,m).
    \]
    Hence every vector \((1,m)\), \(m\in[-1,1]\), belongs to the tangent cone.
    By conicity of the Bouligand tangent cone, all positive multiples of these vectors,
    together with the zero vector, also belong to the tangent cone.

    Conversely, assume that
    \[
    \frac{(t_k,t_k\sin((1/t_k)^\beta))}{\lambda_k}
    \longrightarrow (u_1,u_2).
    \]
    Then
    \[
    u_1=\lim_{k\to\infty}\frac{t_k}{\lambda_k}\ge0,
    \]
    and
    \[
    |u_2|
    =
    \lim_{k\to\infty}
    \left|
    \frac{t_k}{\lambda_k}\sin((1/t_k)^\beta)
    \right|
    \le
    \lim_{k\to\infty}\frac{t_k}{\lambda_k}
    =
    u_1.
    \]
    Therefore
    \[
    \Tan_{(0,0)}(\Gamma_{1,\beta})
    =
    \{(u_1,u_2):u_1\ge0,\ |u_2|\le u_1\},
    \]
    and so
    \[
    \Eff_{(0,0)}(\Gamma_{1,\beta})
    =
    \left\{\frac{(1,m)}{\sqrt{1+m^2}}:m\in[-1,1]\right\}.
    \]

    If \(0<\alpha<1\), then every finite slope occurs. Indeed, for a prescribed
    \(m\in\mathbb R\), points of \(\Gamma_{\alpha,\beta}\) lying on the line \(y=mx\)
    must satisfy
    \[
    t^\alpha\sin((1/t)^\beta)=mt,
    \]
    equivalently
    \[
    \sin((1/t)^\beta)=m t^{1-\alpha}.
    \]
    Setting
    \[
    s:=(1/t)^\beta,
    \qquad
    q:=\frac{1-\alpha}{\beta}>0,
    \]
    this becomes
    \[
    \sin s=m s^{-q}.
    \]
    If \(m=0\), we take a sequence of zeros of \(\sin\). If \(m>0\), then on the intervals
    \[
    [2k\pi,\pi/2+2k\pi],
    \]
    the function \(\sin s\) goes from \(0\) to \(1\), while \(m s^{-q}\to0\). Hence, for
    all large \(k\), the intermediate value theorem gives a solution. The case \(m<0\)
    is analogous, using intervals where \(\sin s\) goes from \(0\) to \(-1\).
    Thus, for every \(m\in\mathbb R\), there exists \(t_k\downarrow0\) such that
    \[
    t_k^\alpha\sin((1/t_k)^\beta)=m t_k.
    \]
    Hence the corresponding points of the graph have exact slope \(m\), and every direction
    with positive first component belongs to the tangent cone.
    By conicity of the Bouligand tangent cone, all positive multiples of these directions also
    belong to the tangent cone.

    The vertical directions also occur. For the upward vertical direction, choose
    \(t_k\downarrow0\) such that
    \[
    \sin((1/t_k)^\beta)=1.
    \]
    Then
    \[
    \frac{(t_k,t_k^\alpha)}{\sqrt{t_k^2+t_k^{2\alpha}}}
    \longrightarrow
    (0,1),
    \]
    because \(0<\alpha<1\). Similarly, choosing
    \[
    \sin((1/t_k)^\beta)=-1
    \]
    gives the direction \((0,-1)\). These vertical directions are effective in the
    Bouligand sense only: they express an asymptotic approach along peak sequences,
    not the presence of a vertical segment or an exact vertical branch in the germ.

    Conversely, every tangent vector must have non-negative first component, since all
    points of the germ have first coordinate \(t>0\). Therefore
    \[
    \Tan_{(0,0)}(\Gamma_{\alpha,\beta})
    =
    \{(u_1,u_2):u_1\ge0\},
    \]
    and hence
    \[
    \Eff_{(0,0)}(\Gamma_{\alpha,\beta})
    =
    \{u\in S^1:u_1\ge0\}.
    \]

    Projectively, these three cases yield respectively one direction, the interval of slopes
    \([-1,1]\), and the whole projective line. In particular, \(\beta\) changes the oscillatory
    frequency but not the directional support itself, which is one reason why a later
    quantitative weight \(\theta_x^A(v)\) is needed.

    \item Let
    \[
    A:=\{1/n:n\in\mathbb N\}\cup\{0\}\subset\mathbb R,
    \qquad x=0.
    \]
    Then, in the spherical formulation,
    \[
    \Eff_0(A)=\{1\},
    \]
    exactly as for the interval germ \([0,1]\) at the origin. Thus effective directions do not
    distinguish by themselves between a thin discrete approach to \(x\) and a full one-sided
    interval. The later directional contribution \(\theta_0^A(1)\) is introduced precisely to
    capture this missing quantitative thickness along a fixed direction.
\end{enumerate}
\end{example}

These examples show that effective directions are operationally useful but not yet quantitative: they identify the available directional support, while the distinction between thin and thick behavior along a fixed direction will only appear at the level of the directional contribution \(\theta_x^A(v)\).
\section{Normal forms and representative probes}
\label{sec:normal-forms-representative-probes}

The previous section separated two notions which should not be confused.
The set \(\Eff_x(A)\) is the first-order directional support of \(A\) at \(x\), and is
precisely the normalized Bouligand tangent cone. The admissible family
\(\mathcal G_x^A(v)\), on the other hand, consists of the one-dimensional geometric
representatives through which the contribution of \(A\) in the direction \(v\) will later be
measured.

Before defining this contribution, we record several normal forms for admissible probes.
Their purpose is not to introduce new effective directions, nor to restrict the class of probes
used in the definition of the directional contribution. Rather, they clarify the geometric
content of admissibility and show that every effective direction may be represented by a
controlled one-dimensional germ.

\begin{proposition}[First-order normal form for admissible probes]
\label{prop:probe-rho-form}
Let \(A\subset\mathbb R^n\), let \(x\in\overline A\), let \(v\in S^{n-1}\), and let
\[
\gamma:[0,\delta]\to\mathbb R^n
\]
be an injective Lipschitz map such that \(\gamma(0)=x\). Then
\[
\gamma\in\mathcal G_x^A(v)
\]
if and only if there exist a function
\[
\rho:(0,\delta]\to(0,\infty)
\]
with
\[
\rho(t)\to0
\qquad\text{as }t\to0^+,
\]
and a remainder
\[
r:(0,\delta]\to\mathbb R^n
\]
such that
\[
\gamma(t)=x+\rho(t)v+r(t)
\qquad(0<t\le\delta),
\]
\[
\frac{\|r(t)\|}{\rho(t)}\to0
\qquad\text{as }t\to0^+,
\]
and
\[
A\cap\gamma((0,\eta])\neq\varnothing
\qquad\text{for every }\eta\in(0,\delta].
\]
\end{proposition}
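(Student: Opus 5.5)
The proof is a two-way translation between the limiting-direction condition of Definition~\ref{def:admissible-lipschitz-probes} and the decomposition $\gamma(t)=x+\rho(t)v+r(t)$. The recurrence condition $A\cap\gamma((0,\eta])\neq\varnothing$ appears verbatim on both sides, so it needs no argument. The whole content is therefore the equivalence
\[
\lim_{t\to0^+}\frac{\gamma(t)-x}{\|\gamma(t)-x\|}=v
\iff
\gamma(t)=x+\rho(t)v+r(t)\ \text{with}\ \rho>0,\ \rho(t)\to0,\ \frac{\|r(t)\|}{\rho(t)}\to0 .
\]
We work throughout under the standing hypotheses that $\gamma$ is injective, Lipschitz, and satisfies $\gamma(0)=x$.

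\emph{Forward direction.} Assume $\gamma\in\mathcal G_x^A(v)$. Injectivity gives $\gamma(t)\neq x$ for $t\in(0,\delta]$, so we may set
\[
\rho(t):=\|\gamma(t)-x\|>0,\qquad r(t):=\gamma(t)-x-\rho(t)v .
\]
Continuity of $\gamma$ gives $\rho(t)\to0$ as $t\to0^+$. Dividing the remainder by $\rho(t)$,
\[
\frac{r(t)}{\rho(t)}=\frac{\gamma(t)-x}{\|\gamma(t)-x\|}-v\longrightarrow 0,
\]
which is exactly the admissible limiting-direction condition.

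\emph{Converse.} Assume the decomposition holds. Then
\[
\frac{\gamma(t)-x}{\rho(t)}=v+\frac{r(t)}{\rho(t)}\longrightarrow v .
\]
Since $\|v\|=1$, the norm $\|\gamma(t)-x\|/\rho(t)$ tends to $1$. In particular $\gamma(t)\neq x$ for small $t$; this also follows directly from injectivity. Normalizing,
\[
\frac{\gamma(t)-x}{\|\gamma(t)-x\|}
=\frac{(\gamma(t)-x)/\rho(t)}{\|\gamma(t)-x\|/\rho(t)}
\longrightarrow \frac{v}{1}=v .
\]
Combined with the assumed recurrence condition and the standing hypotheses, this gives $\gamma\in\mathcal G_x^A(v)$.

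\emph{Main obstacle.} There is no real obstacle. The only point requiring care is that $\rho$ must be strictly positive on all of $(0,\delta]$, as the statement demands, and not merely near $0$. This is exactly where injectivity of $\gamma$ is used. In the converse one only needs $\|\gamma(t)-x\|/\rho(t)\to1$, which follows from $\|v\|=1$ and $r/\rho\to0$ by the triangle inequality.
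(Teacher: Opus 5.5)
Your proof is correct and follows the paper's argument essentially step for step. In the forward direction you make the same choice $\rho(t)=\|\gamma(t)-x\|$ and $r(t)=\gamma(t)-x-\rho(t)v$, with injectivity guaranteeing $\rho>0$; in the converse you use the same normalization argument, based on $\|\gamma(t)-x\|/\rho(t)\to 1$.
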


\begin{proof}
Assume first that \(\gamma\in\mathcal G_x^A(v)\). For \(t\in(0,\delta]\), set
\[
\rho(t):=\|\gamma(t)-x\|
\]
and
\[
r(t):=\gamma(t)-x-\rho(t)v.
\]
Since \(\gamma\) is injective and \(\gamma(0)=x\), one has \(\rho(t)>0\) for
\(t>0\). Moreover, since \(\gamma\) is continuous at \(0\),
\[
\rho(t)=\|\gamma(t)-x\|\to0
\qquad\text{as }t\to0^+.
\]
By construction,
\[
\gamma(t)=x+\rho(t)v+r(t).
\]
Furthermore,
\[
\frac{\|r(t)\|}{\rho(t)}
=
\left\|
\frac{\gamma(t)-x}{\|\gamma(t)-x\|}-v
\right\|
\longrightarrow0
\qquad\text{as }t\to0^+,
\]
because \(\gamma\in\mathcal G_x^A(v)\). The recurrence condition is exactly the one
appearing in the definition of \(\mathcal G_x^A(v)\).

Conversely, assume that such \(\rho\) and \(r\) exist. Then
\[
\frac{\gamma(t)-x}{\rho(t)}
=
v+\frac{r(t)}{\rho(t)}
\longrightarrow v
\qquad\text{as }t\to0^+.
\]
Taking norms gives
\[
\frac{\|\gamma(t)-x\|}{\rho(t)}\to \|v\|=1.
\]
Hence
\[
\frac{\gamma(t)-x}{\|\gamma(t)-x\|}
=
\frac{\gamma(t)-x}{\rho(t)}
\frac{\rho(t)}{\|\gamma(t)-x\|}
\longrightarrow v.
\]
Since \(\gamma\) is injective and Lipschitz, satisfies \(\gamma(0)=x\), and has the required
recurrence with \(A\), it follows that
\[
\gamma\in\mathcal G_x^A(v).
\]
\end{proof}

\begin{remark}[Role of the first-order normal form]
\label{rem:rho-role}
Proposition~\ref{prop:probe-rho-form} does not define a smaller family of probes.
It only rewrites the directional condition in the first-order form
\[
\gamma(t)=x+\rho(t)v+r(t),
\qquad
\frac{\|r(t)\|}{\rho(t)}\to0.
\]
Thus an admissible probe in the direction \(v\) is, at first order, a one-dimensional germ
emanating from \(x\) in the direction \(v\), up to an error negligible with respect to its
distance scale \(\rho(t)\). No monotonicity of \(\rho\) is required.
\end{remark}

\begin{corollary}[Asymptotically straight representatives]
\label{cor:asymptotically-straight-representatives}
Let \(A\subset\mathbb R^n\), let \(x\in\overline A\), and let
\(v\in\Eff_x(A)\). Then there exists
\[
\gamma\in\mathcal G_x^A(v)
\]
of the form
\[
\gamma(t)=x+t v+e(t),
\qquad 0\le t\le\delta,
\]
where \(e:[0,\delta]\to\mathbb R^n\) is Lipschitz, \(e(0)=0\), and
\[
\frac{\|e(t)\|}{t}\to0
\qquad\text{as }t\to0^+.
\]
Moreover, if \(v=w/\|w\|\) for some exact local direction
\(w\in\operatorname{Dir}_A(x)\), then one may choose the straight representative
\[
\gamma(t)=x+tv.
\]
\end{corollary}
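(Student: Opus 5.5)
The corollary follows by reading off the construction already carried out in the proof of Proposition~\ref{prop:probe-realization-effective-directions}, together with Proposition~\ref{prop:exact-directions-realized-by-probes} for the straight case. Given \(v\in\Eff_x(A)\), I will rerun the converse half of that proof. Pick \(y_k\in A\setminus\{x\}\) with \(y_k\to x\) and \(u_k:=(y_k-x)/\|y_k-x\|\to v\). Pass to a subsequence with \(r_k:=\|y_k-x\|\downarrow 0\), \(r_{k+1}\le r_k/4\) and \(\|u_k-v\|\le 1/8\). Let \(e\) be the polygonal interpolation of the errors \(e_k:=r_k(u_k-v)\) at the nodes \(r_k\), with \(e(0):=0\). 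Set \(\delta:=r_1\) and \(\gamma(t):=x+tv+e(t)\). This is exactly the required normal form with \(\rho(t)=t\).

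I then check the three properties of \(e\) asked for in the statement, all of which were established in that proof.

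\begin{enumerate}
\item \(e(0)=0\) holds by definition.
\item \(e\) is Lipschitz on \([0,\delta]\), with constant \(5/24\).
\item On \([r_{k+1},r_k]\) one has \(\|e(t)\|/t\le\max\{\|u_k-v\|,\|u_{k+1}-v\|\}\). This bound tends to \(0\) as \(t\to0^+\), since \(k\to\infty\) and \(u_k\to v\).
\end{enumerate}

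The same proof shows that \(\gamma\) is injective, because \(\|\gamma(t)-\gamma(s)\|\ge\frac{19}{24}|t-s|\). It hits \(A\) at every \(r_k\), so the recurrence condition holds, and it has limiting direction \(v\). Hence \(\gamma\in\mathcal G_x^A(v)\). Alternatively, membership follows directly from Proposition~\ref{prop:probe-rho-form} with \(\rho(t)=t\) and \(r=e\).

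The only point needing care is the bound \(\|e(t)\|/t\to 0\) on the interpolated pieces, not just at the nodes. On \([r_{k+1},r_k]\), convexity gives
\[
\|e(t)\|\le (1-s_k)r_{k+1}\|u_{k+1}-v\|+s_k r_k\|u_k-v\|\le t\max\{\|u_k-v\|,\|u_{k+1}-v\|\},
\]
using \((1-s_k)r_{k+1}+s_kr_k=t\). This is the main (mild) obstacle, and the proposition's proof already records it.

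For the second assertion, suppose \(v=w/\|w\|\) with \(w\in\operatorname{Dir}_A(x)\). Proposition~\ref{prop:exact-directions-realized-by-probes} provides the straight probe \(\gamma(t)=x+tv\) on \([0,\rho]\) with \(\rho=\delta_0\|w\|/2\). This is the normal form with \(e\equiv 0\), which trivially satisfies all the requirements.
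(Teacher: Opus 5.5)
Your proposal is correct and follows the paper's proof essentially verbatim. The paper also obtains the first assertion by reading off the polygonal correction $e$ built in the proof of Proposition~\ref{prop:probe-realization-effective-directions} (with $\delta=r_1$), and it obtains the straight case from Proposition~\ref{prop:exact-directions-realized-by-probes} with $e\equiv 0$. Your rechecks of the Lipschitz constant $5/24$, the injectivity bound $\tfrac{19}{24}|t-s|$, and the piecewise estimate $\|e(t)\|\le t\max\{\|u_k-v\|,\|u_{k+1}-v\|\}$ are all accurate.
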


\begin{proof}
The first assertion is precisely the representative constructed in the proof of
Proposition~\ref{prop:probe-realization-effective-directions}. Indeed, starting from a
sequence \((y_k)\subset A\setminus\{x\}\) such that
\[
y_k\to x,
\qquad
\frac{y_k-x}{\|y_k-x\|}\to v,
\]
that proof constructs a Lipschitz correction \(e\) and a probe
\[
\gamma(t)=x+tv+e(t)
\]
on an interval \([0,\delta]\) (there, \(\delta=r_1\)), with the Lipschitz property of
\(e\) following from the estimate established in that proof, and satisfying
\[
\frac{\|e(t)\|}{t}\to0,
\qquad
\gamma\in\mathcal G_x^A(v).
\]
If \(v=w/\|w\|\) for some \(w\in\operatorname{Dir}_A(x)\), then
Proposition~\ref{prop:exact-directions-realized-by-probes} shows that \(v\) is realized by
the straight admissible probe
\[
\gamma(t)=x+tv
\]
on a sufficiently small interval. This corresponds to the case \(e\equiv0\).
\end{proof}

\begin{remark}[Exact directions and asymptotic directions]
\label{rem:exact-versus-asymptotic-probes}
Corollary~\ref{cor:asymptotically-straight-representatives} gives the geometric bridge
between the point-vector and the point-cross viewpoints. Exact local directions are represented
by straight probes. More general effective directions, which may only be present in the
Bouligand sense, are still represented by probes which are asymptotically straight at first
order. Thus the passage from \(\dim_{\mathrm{Pvec}}\) to \(\dim_{\times}\) does not abandon
the vector intuition. It extends it from exact line germs to first-order directional germs
along which a quantitative trace can be measured.
\end{remark}

\begin{lemma}[Restriction of probes]
\label{lem:restriction-of-probes}
Let \(A\subset\mathbb R^n\), let \(x\in\overline A\), let \(v\in S^{n-1}\), and let
\[
\gamma\in\mathcal G_x^A(v)
\]
be defined on \([0,\delta]\). If \(0<\delta'\le\delta\), then the restriction
\[
\gamma|_{[0,\delta']}
\]
also belongs to \(\mathcal G_x^A(v)\).

Moreover, for every neighborhood \(U\) of \(x\), there exists \(\delta_U\in(0,\delta]\)
such that
\[
\gamma([0,\delta_U])\subset U.
\]
\end{lemma}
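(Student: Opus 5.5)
The plan is to check directly the defining conditions of Definition~\ref{def:admissible-lipschitz-probes} for the restricted map \(\tilde\gamma:=\gamma|_{[0,\delta']}\). Injectivity and the Lipschitz property pass to restrictions, with the same Lipschitz constant, and \(\tilde\gamma(0)=\gamma(0)=x\). The limiting-direction condition
\[
\lim_{t\to0^+}\frac{\tilde\gamma(t)-x}{\|\tilde\gamma(t)-x\|}=v
\]
depends only on the values of \(\gamma\) for small \(t\). It is therefore identical to the one satisfied by \(\gamma\), and it is well defined because \(\tilde\gamma(t)\neq x\) for \(t>0\) by injectivity.

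The only condition that needs a word is recurrence. For \(\eta\in(0,\delta']\) we have \(\eta\in(0,\delta]\), so the recurrence of \(\gamma\) gives
\[
A\cap\tilde\gamma((0,\eta])=A\cap\gamma((0,\eta])\neq\varnothing.
\]
The quantifier over \(\eta\) simply shrinks, so nothing is lost. Hence \(\tilde\gamma\in\mathcal G_x^A(v)\).

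For the second assertion, we use continuity of \(\gamma\) at \(0\), which is immediate from the Lipschitz bound \(\|\gamma(t)-x\|\le \operatorname{Lip}(\gamma)\,t\). Since \(U\) is a neighbourhood of \(x\), choose \(r>0\) with \(B(x,r)\subset U\). Then set
\[
\delta_U:=\min\{\delta,\ r/(2\operatorname{Lip}(\gamma))\},
\]
or \(\delta_U:=\delta\) if \(\operatorname{Lip}(\gamma)=0\), which is in fact impossible by injectivity. For \(t\in[0,\delta_U]\) this gives \(\|\gamma(t)-x\|\le r/2<r\), hence \(\gamma([0,\delta_U])\subset U\).

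There is no real obstacle here. The only point requiring care is to note that recurrence is a condition over all small scales and is thus inherited under restriction. Combining the two parts, one can in addition take \(\gamma|_{[0,\delta_U]}\in\mathcal G_x^A(v)\), with image contained in \(U\).
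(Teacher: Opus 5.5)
Your proof is correct and follows the paper's argument. Like the paper, you verify each condition of Definition~\ref{def:admissible-lipschitz-probes} for the restriction, observe that recurrence is inherited because the quantifier over \(\eta\) only shrinks, and obtain \(\delta_U\) from continuity of \(\gamma\) at \(0\); your explicit choice \(\delta_U=\min\{\delta,\,r/(2\operatorname{Lip}(\gamma))\}\) is simply a quantitative form of that continuity step.
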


\begin{proof}
The restriction of an injective Lipschitz map is again injective and Lipschitz, and still
sends \(0\) to \(x\). The limiting direction is unchanged:
\[
\lim_{t\to0^+}
\frac{\gamma(t)-x}{\|\gamma(t)-x\|}
=
v.
\]
If \(0<\eta\le\delta'\), then the recurrence condition for \(\gamma\) gives
\[
A\cap\gamma((0,\eta])\neq\varnothing.
\]
Hence
\[
\gamma|_{[0,\delta']}\in\mathcal G_x^A(v).
\]

Finally, since \(\gamma\) is continuous at \(0\) and \(\gamma(0)=x\), for every neighborhood
\(U\) of \(x\) there exists \(\delta_U\in(0,\delta]\) such that
\[
\gamma([0,\delta_U])\subset U.
\]
\end{proof}

\begin{remark}[Geometric germs and admissible reparametrizations]
\label{rem:germ-reparam}
The directional information carried by an admissible probe is attached to its geometric germ
near \(x\), not to a specific speed of parametrization. The later quantity
\[
\dim_{\mathrm{Pbox}}\{x\}_{A\cap\Gamma_\gamma}
\]
will depend only on the local trace
\[
\Gamma_\gamma=\gamma([0,\delta])
\]
near \(x\).

However, admissibility is a property of a parametrized representative. Since admissible
representatives are required to be Lipschitz, arbitrary changes of speed need not preserve
admissibility. More precisely, let
\[
\gamma\in\mathcal G_x^A(v)
\]
be defined on \([0,\delta]\), and let
\[
\phi:[0,\delta']\to[0,\delta_0]
\]
be a strictly increasing Lipschitz homeomorphism, where \(0<\delta_0\le\delta\), with
\[
\phi(0)=0.
\]
Then
\[
\widetilde\gamma:=\gamma\circ\phi
\]
is again an admissible Lipschitz probe in the same direction:
\[
\widetilde\gamma\in\mathcal G_x^A(v).
\]
It has the same geometric image germ as \(\gamma|_{[0,\delta_0]}\).

Indeed, \(\widetilde\gamma\) is injective and Lipschitz, satisfies
\[
\widetilde\gamma(0)=x,
\]
and
\[
\frac{\widetilde\gamma(t)-x}{\|\widetilde\gamma(t)-x\|}
=
\frac{\gamma(\phi(t))-x}{\|\gamma(\phi(t))-x\|}
\longrightarrow v
\qquad\text{as }t\to0^+.
\]
Furthermore, for every \(\eta\in(0,\delta']\), since
\[
\phi((0,\eta])=(0,\phi(\eta)],
\]
the recurrence condition for \(\gamma\) gives
\[
A\cap\widetilde\gamma((0,\eta])
=
A\cap\gamma((0,\phi(\eta)])
\neq\varnothing.
\]
Thus the theory is geometric at the level of traces, while admissible parametrized
representatives are required to remain Lipschitz in order to preserve the one-dimensional
control of the probe.
\end{remark}

\begin{definition}[Radially normalized probe]
\label{def:radially-normalized-probe}
Let \(A\subset\mathbb R^n\), let \(x\in\overline A\), and let \(v\in S^{n-1}\).
A probe
\[
\gamma\in\mathcal G_x^A(v)
\]
defined on \([0,\delta]\) is called radially normalized if the function
\[
t\longmapsto \|\gamma(t)-x\|
\]
is strictly increasing on \((0,\delta]\).
\end{definition}

\begin{lemma}[Radial representatives as an optional normalization]
\label{lem:radial-representatives}
Let \(A\subset\mathbb R^n\), let \(x\in\overline A\), and let
\(v\in\Eff_x(A)\). Then \(v\) admits a radially normalized admissible representative. More precisely,
there exists
\[
\gamma\in\mathcal G_x^A(v)
\]
such that
\[
t\longmapsto \|\gamma(t)-x\|
\]
is strictly increasing on \((0,\delta]\).
\end{lemma}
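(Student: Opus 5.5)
The idea is to reuse the asymptotically straight representative of Corollary~\ref{cor:asymptotically-straight-representatives} and check that it is already radially normalized after a suitable restriction. Concretely, since \(v\in\Eff_x(A)\), I take the probe constructed in the proof of Proposition~\ref{prop:probe-realization-effective-directions},
\[
\gamma(t)=x+tv+e(t),\qquad 0\le t\le r_1,
\]
where \(e(0)=0\), \(\|e(t)\|\le t/8\), and \(e\) is Lipschitz with constant at most \(5/24\). This probe already lies in \(\mathcal G_x^A(v)\). The only thing left to verify is that \(t\mapsto\|\gamma(t)-x\|=\|tv+e(t)\|\) is strictly increasing on \((0,r_1]\).

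For monotonicity I work with the squared radius \(f(t):=\|tv+e(t)\|^2\). The function \(e\) is piecewise affine on the intervals \([r_{k+1},r_k]\), so \(f\) is piecewise smooth there and \(e'(t)\) exists off the countable set \(\{r_k\}\), with \(\|e'(t)\|\le 5/24\). On each open piece I compute
\[
f'(t)=2\langle tv+e(t),\,v+e'(t)\rangle .
\]
Expanding, I bound the leading term \(\langle tv,v\rangle=t\) from below. The cross terms are controlled using \(\|e(t)\|\le t/8\) and \(\|e'(t)\|\le 5/24\): \(|\langle tv,e'\rangle|\le 5t/24\), \(|\langle e,v\rangle|\le t/8\), and \(|\langle e,e'\rangle|\le 5t/192\). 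Altogether this gives \(f'(t)\ge 2t(1-5/24-1/8-5/192)>0\).

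Since \(f\) is continuous and has positive derivative on each piece \([r_{k+1},r_k]\), it is strictly increasing on each such closed piece. Chaining the pieces, it is strictly increasing on \((0,r_1]\). Taking square roots preserves strict monotonicity, which yields the radial normalization.

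The main point requiring care is this derivative estimate. One must either use the explicit constants \(1/8\) and \(5/24\) from the earlier construction, or first restrict \(\gamma\) via Lemma~\ref{lem:restriction-of-probes} to a smaller interval on which \(\|e(t)\|/t\) is as small as desired. The latter route still needs a uniform bound on the Lipschitz constant of \(e\), which the construction supplies.

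An alternative, if one prefers not to depend on the specific polygonal construction, is a direct Lipschitz comparison. Writing \(\|\gamma(t)-x\|\ge\|\gamma(s)-x\|+\langle\gamma(t)-\gamma(s),u_s\rangle\) with \(u_s\) the unit radial vector at \(s\), one bounds the inner product below by \((t-s)(1-\|e\|_{\mathrm{Lip}})-\|u_s-v\|(t-s)(1+\|e\|_{\mathrm{Lip}})\), which is positive since \(\|u_s-v\|\) is small. I would present this version, as it avoids differentiability issues at the breakpoints \(r_k\).
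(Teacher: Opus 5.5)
Your proposal is correct. It is closer to the paper than it may look. On \([r_{k+1},r_k]\) one has \(x+tv+e(t)=x+(1-s_k(t))r_{k+1}u_{k+1}+s_k(t)r_ku_k\), so the probe from Proposition~\ref{prop:probe-realization-effective-directions} is literally the same polygon the paper builds here. Moreover, \(\|u_k-v\|\le\tfrac18\) already gives the paper's angular condition \(\langle u_k,u_{k+1}\rangle\ge\tfrac{31}{32}>\tfrac12\).

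The paper nevertheless re-derives the Lipschitz bound, the limiting direction and the recurrence. It then proves monotonicity by the exact computation \(F_k'(s)=2(1-s)\,a(bc-a)+2s\,b(b-ac)>0\) along each segment, using only \(a<b/4\) and \(c>1/2\). Your derivative argument is the same mechanism written in the \(t\)-parameter with perturbative bounds, and your constant \(1-\tfrac{69}{192}>0\) checks out. Reusing the earlier proposition legitimately spares you the re-verification.

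Your preferred Cauchy--Schwarz comparison is genuinely different and more general. It never uses the piecewise-affine structure, only the global bound \(\operatorname{Lip}(e)=L<1\) and the smallness of \(\|u_s-v\|\). When you write it up, make that smallness explicit: \(\|u_s-v\|\le 2\|e(s)\|/s\), which here gives \(\|u_s-v\|\le\tfrac14\). The bracket is then \(\tfrac{19}{24}-\tfrac{29}{24}\cdot\tfrac14>0\) uniformly for \(s\in(0,r_1]\), so no restriction is needed.

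Since \(e(s)=o(s)\), the same argument shows more: every representative \(x+tv+e(t)\) as in Corollary~\ref{cor:asymptotically-straight-representatives} with \(\operatorname{Lip}(e)<1\) becomes radially normalized after restriction. The radial normal form is thus a consequence of that corollary rather than a separate construction. The paper's exact computation, by contrast, is tied to the specific polygon but needs no smallness of \(\|u_k-v\|\) beyond the angular condition.
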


\begin{proof}
Since \(v\in\Eff_x(A)\), by Definition~\ref{def:effective-direction} there exists a sequence
\((y_k)_{k\ge1}\subset A\setminus\{x\}\) such that
\[
y_k\to x,
\qquad
u_k:=\frac{y_k-x}{\|y_k-x\|}\to v.
\]
Write
\[
y_k=x+r_k u_k,
\qquad
r_k:=\|y_k-x\|>0.
\]
After passing to a subsequence and relabelling, we may assume that
\[
r_k\downarrow0,
\qquad
r_{k+1}<\frac14 r_k,
\qquad
\langle u_k,u_{k+1}\rangle>\frac12
\]
for every \(k\ge1\). The condition on the radii is obtained by choosing the subsequence
sufficiently sparse, while the angular condition follows from \(u_k\to v\).

We construct a polygonal curve joining the points \(y_k\) in decreasing order of their
distance to \(x\). Define
\[
\gamma:[0,r_1]\to\mathbb R^n
\]
by
\[
\gamma(0):=x,
\]
and, for \(t\in[r_{k+1},r_k]\), set
\[
s_k(t):=\frac{t-r_{k+1}}{r_k-r_{k+1}},
\]
\[
\gamma(t)
:=
x+(1-s_k(t))r_{k+1}u_{k+1}
+s_k(t)r_ku_k.
\]
Then
\[
\gamma(r_k)=y_k\in A
\]
for every \(k\), and \(\gamma\) is continuous and affine on each interval
\([r_{k+1},r_k]\).

We first show that \(\gamma\) is Lipschitz. On \([r_{k+1},r_k]\), the slope has norm
\[
\frac{\|r_ku_k-r_{k+1}u_{k+1}\|}{r_k-r_{k+1}}
\le
\frac{r_k+r_{k+1}}{r_k-r_{k+1}}.
\]
Since \(r_{k+1}<r_k/4\), we get
\[
\frac{r_k+r_{k+1}}{r_k-r_{k+1}}
\le
\frac{r_k+r_k/4}{r_k-r_k/4}
=
\frac53.
\]
Thus all affine pieces have Lipschitz constant at most \(5/3\). If \(0<s<t\le r_1\),
then the interval \([s,t]\) meets only finitely many affine pieces, and summing the preceding
estimates gives
\[
\|\gamma(t)-\gamma(s)\|\le \frac53(t-s).
\]
If \(s=0\), and \(t\in[r_{k+1},r_k]\), then
\[
\|\gamma(t)-x\|
\le
(1-s_k(t))r_{k+1}+s_k(t)r_k
=
t.
\]
Therefore
\[
\|\gamma(t)-\gamma(0)\|\le t\le \frac53 t.
\]
Hence \(\gamma\) is Lipschitz on \([0,r_1]\).

We next prove radial monotonicity. Fix \(k\ge1\), and set
\[
a:=r_{k+1},
\qquad
b:=r_k,
\qquad
c:=\langle u_{k+1},u_k\rangle.
\]
For \(s\in[0,1]\), define
\[
z_k(s):=(1-s)a u_{k+1}+s b u_k,
\qquad
F_k(s):=\|z_k(s)\|^2.
\]
Then
\[
F_k'(s)
=
2(1-s)(abc-a^2)+2s(b^2-abc).
\]
Since \(a<b/4\) and \(c>1/2\), one has
\[
bc-a>\frac b2-\frac b4>0,
\]
and therefore
\[
abc-a^2=a(bc-a)>0.
\]
Similarly,
\[
b-ac>b-\frac b4>0,
\]
and therefore
\[
b^2-abc=b(b-ac)>0.
\]
Thus
\[
F_k'(s)>0
\qquad\text{for every }s\in[0,1].
\]
It follows that \(s\mapsto\|z_k(s)\|\) is strictly increasing on each polygonal segment.
Since the endpoint norms are
\[
\|z_k(0)\|=r_{k+1},
\qquad
\|z_k(1)\|=r_k,
\]
the function
\[
t\longmapsto\|\gamma(t)-x\|
\]
is strictly increasing on \((0,r_1]\). In particular, \(\gamma\) is injective.

We now verify the limiting direction. For \(t\in[r_{k+1},r_k]\),
\[
\gamma(t)-x
=
(1-s_k(t))r_{k+1}u_{k+1}+s_k(t)r_ku_k.
\]
Since
\[
t=(1-s_k(t))r_{k+1}+s_k(t)r_k,
\]
we can write
\[
\gamma(t)-x=t v+e(t),
\]
where
\[
e(t)
:=
(1-s_k(t))r_{k+1}(u_{k+1}-v)
+s_k(t)r_k(u_k-v).
\]
Hence
\[
\|e(t)\|
\le
t\max\{\|u_k-v\|,\|u_{k+1}-v\|\}.
\]
Since \(u_k\to v\), it follows that
\[
\frac{\|e(t)\|}{t}\to0
\qquad\text{as }t\to0^+.
\]
Therefore
\[
\frac{\gamma(t)-x}{t}\to v.
\]
Taking norms gives
\[
\frac{\|\gamma(t)-x\|}{t}\to1,
\]
and consequently
\[
\frac{\gamma(t)-x}{\|\gamma(t)-x\|}\to v.
\]

Finally, for every \(\eta\in(0,r_1]\), choose \(k\) large enough that \(r_k<\eta\). Then
\[
\gamma(r_k)=y_k\in A\cap\gamma((0,\eta]).
\]
Thus
\[
A\cap\gamma((0,\eta])\neq\varnothing
\qquad\text{for every }\eta\in(0,r_1].
\]
We have shown that \(\gamma\) is injective, Lipschitz, satisfies \(\gamma(0)=x\), has
limiting direction \(v\), and meets \(A\) at arbitrarily small scales. Hence
\[
\gamma\in\mathcal G_x^A(v).
\]
It is radially normalized by construction.
\end{proof}

\begin{remark}[Radiality is not part of the admissible class]
\label{rem:radiality-not-admissible-class}
Lemma~\ref{lem:radial-representatives} shows that radially normalized representatives are
available for every effective direction. This is useful as a Euclidean normal form, and it
may simplify some local constructions.

However, radial monotonicity is not imposed in Definition~\ref{def:admissible-lipschitz-probes}.
The directional contribution will be defined by taking the supremum over all admissible
Lipschitz probes in \(\mathcal G_x^A(v)\), not only over radially normalized ones.

This distinction is essential. Radial monotonicity is tied to the Euclidean distance from the
base point and is not stable under general \(C^1\)-diffeomorphisms. The invariant must
therefore be built from the larger and geometrically stable class of admissible Lipschitz
directional probes. Radial representatives serve as available normal forms, not as the
definition of admissibility.
\end{remark}

The preceding results justify the terminology. Effective directions give the available
first-order support, while admissible probes give controlled one-dimensional representatives
of that support. Normal forms make these representatives easier to manipulate, but the
quantitative theory will use the full family \(\mathcal G_x^A(v)\).
\section{Recall of the Point-Extended Box Dimension}
\label{subsec:recall-pebd}

Before defining the directional contribution associated with an effective direction,
it is useful to recall the pointwise dimension notion that will be used in the present work.

At a conceptual level, the Point-Cross construction is flexible.
Indeed, once one has fixed the family \(\mathcal G_x^A(v)\) of admissible probes in a direction \(v\),
the directional contribution could in principle be defined by means of any
dimension notion \(\dim_x(E)\) that is either genuinely pointwise or at least localizable at the point,
provided it satisfies a few structural requirements. However, one should distinguish between the
properties that are minimally needed to define a directional contribution \(\theta_x^A(v)\), and those
that become useful only later in the full Point-Cross formalism.

For the definition of \(\theta_x^A(v)\) alone, the minimal structural requirement is germ dependence:
\[
\dim_x(E)=\dim_x(E\cap U)
\qquad\text{for every neighborhood }U\ni x.
\]
Indeed, \(\theta_x^A(v)\) is meant to measure an infinitesimal contribution detected by probes approaching
\(x\), so any dependence on the remote part of the set would be artificial. If one also wants
\(\theta_x^A(v)\) to be intrinsic, one should moreover require invariance under the admissible local
changes of variables used in the theory:
\[
\dim_{\Phi(x)}(\Phi(E))=\dim_x(E)
\]
for the relevant class of maps \(\Phi\) (typically local bi-Lipschitz maps, or \(C^1\)-diffeomorphisms in
the present Euclidean setting). This ensures that the value attached to a probe is geometric rather
than an artefact of parametrization or coordinates.

Monotonicity,
\[
E\subset F \Longrightarrow \dim_x(E)\le \dim_x(F),
\]
is not logically needed to formulate \(\theta_x^A(v)\), but it is a very natural
stability property and becomes useful immediately for comparison arguments and upper bounds.

By contrast, the max-formula on finite unions,
\[
\dim_x(E\cup F)=\max\bigl(\dim_x(E),\dim_x(F)\bigr),
\]
is not required to define \(\theta_x^A(v)\) itself, and it should not be regarded here as a foundational
axiom of directional dimension. On the contrary, one of the motivations of the present theory is
precisely that classical isotropic dimensions obey such a max-law and therefore erase the additional
structure carried by transverse coexistence. If one works with an auxiliary isotropic local dimension
inside a \emph{fixed probe}, then this property is often available and useful: it prevents artificial
additivity obtained by decomposing the same directional trace into several pieces. At the level of the
final Point-Cross invariant, however, the situation is exactly the opposite: the purpose is to go beyond
the classical max-logic and to allow genuinely independent directions to contribute cumulatively. In that
sense, the max-formula is not part of the philosophical core of the theory. It belongs only, when one
chooses to keep it, to the auxiliary scalar dimension used along a single probe.
Thus, depending on the geometric or analytic features one wishes to emphasize,
one may choose different underlying local dimensions.

For instance, if one wants a notion strongly tied to measure and insensitive to countable sets,
Hausdorff dimension is the natural candidate.
If one wishes to emphasize doubling behavior and extremal local scaling,
Assouad dimension is more appropriate.
Packing dimension offers yet another possible compromise.
In this sense, the Point-Cross framework is not intrinsically tied to a single background notion:
it may be coupled with different pointwise dimensional theories.
In the present article, however, we choose the \emph{Point-Extended Box Dimension}, introduced in \cite{pointcsf}, as the underlying quantitative notion.
This choice is consistent with the philosophy of our previous work:
although the present article uses the metric specialization, we showed in \cite{pointcsf} that the same notion can be introduced in a purely topological vector-space framework, without relying on a measure-theoretic structure.
A further reason is the pointwise locality built into the Point-Extended Box construction itself:
as explained in \cite[Remark~7]{pointcsf}, it is not merely the localization of a prior global dimension
on neighborhoods. Moreover, the Point-Extended Box framework is flexible enough to encompass
finite, invisible, and infinite-dimensional regimes, although in the present paper we use only the classical
logarithmic scale \((\ln,\ln)\).

\paragraph{General perspective.}
The Point-Extended Box Dimension was introduced in a much broader framework,
namely that of topological vector spaces.
Here, since the whole Point-Cross theory is developed in \(\mathbb R^n\),
we restrict ourselves to the metric case and recall only the corresponding definition.

\begin{definition}[Point-Extended Box Dimension in the metric case]
\label{def:pbox-metric}
Let \(A\subset \mathbb R^n\), let \(x\in \overline{A}\), and let \(N_\varepsilon(E)\) denote the minimal number of sets of diameter at most \(\varepsilon\) needed to cover a bounded set \(E\subset\mathbb R^n\).
The point-extended box dimension of \(A\) at \(x\) is defined by
\[
\dim_{\mathrm{Pbox}}\{x\}_A
:=
\inf_{r>0}\,
\limsup_{\varepsilon\to 0}
\frac{\ln N_\varepsilon\bigl(A\cap B(x,r)\bigr)}{\ln(1/\varepsilon)}=\inf_{r>0}\,\dimBl\bigl(A\cap B(x,r)\bigr).
\]
Equivalently,
\[
\dim_{\mathrm{Pbox}}\{x\}_A
=
\lim_{r\to 0^+}
\limsup_{\varepsilon\to 0}
\frac{\ln N_\varepsilon\bigl(A\cap B(x,r)\bigr)}{\ln(1/\varepsilon)}=\lim_{r\to 0^+}\dimBl\bigl(A\cap B(x,r)\bigr).
\]
The limit in \(r\) exists because the quantity
\(r\mapsto \dimBl(A\cap B(x,r))\) is nondecreasing in \(r\). Equivalently, the limit equals the preceding infimum.
The associated global point-extended box dimension of \(A\) is then
\[
\dim_{\mathrm{Pbox}}(A)
:=
\sup_{x\in\overline A}\dim_{\mathrm{Pbox}}\{x\}_A.
\]
We also adopt the harmless convention that, whenever \(x\notin\overline E\),
\[
\dim_{\mathrm{Pbox}}\{x\}_E:=0.
\]
\end{definition}

\begin{remark}
For \((\varphi_1,\varphi_2)=(\ln,\ln)\), this is exactly the metric specialization
of the point-extended box dimension introduced in our previous work.
It coincides with the local upper box dimension in the classical metric setting.
\end{remark}

\paragraph{Why the upper box version.}
As in the classical theory, the use of \(\limsup\) rather than an ordinary limit is motivated by two reasons.
First, the limit need not exist in general.
Second, the upper version is the one compatible with the max-formula on finite unions,
a property used within fixed probes and in comparison arguments.
The cumulative behavior of the final Point-Cross invariant will instead come from the aggregation of independent directions.

We shall use the following properties, recalled from the general theory.

\begin{proposition}[Basic structural properties]
\label{prop:basic-pebd-recall}
Let \(A,B\subset \mathbb R^n\), let \(x\in \mathbb R^n\), and let
\(\Phi:\mathbb R^n\to\mathbb R^n\) be a bi-Lipschitz homeomorphism. Then:
\begin{enumerate}
    \item \textbf{Monotonicity at the point.}
    If \(A\subset B\), then
    \[
    \dim_{\mathrm{Pbox}}\{x\}_A\le \dim_{\mathrm{Pbox}}\{x\}_B.
    \]

    \item \textbf{Dependence on the germ.}
    For every neighborhood \(U\) of \(x\),
    \[
    \dim_{\mathrm{Pbox}}\{x\}_A=\dim_{\mathrm{Pbox}}\{x\}_{A\cap U}.
    \]

    \item \textbf{Dependence on the closure.}
    \[
    \dim_{\mathrm{Pbox}}\{x\}_A=\dim_{\mathrm{Pbox}}\{x\}_{\overline A}.
    \]

    \item \textbf{Finite unions.}
    \[
    \dim_{\mathrm{Pbox}}\{x\}_{A\cup B}
    =
    \max\!\bigl(\dim_{\mathrm{Pbox}}\{x\}_A,\dim_{\mathrm{Pbox}}\{x\}_B\bigr).
    \]

    \item \textbf{Bi-Lipschitz invariance.}
    \[
    \dim_{\mathrm{Pbox}}\{\Phi(x)\}_{\Phi(A)}
    =
    \dim_{\mathrm{Pbox}}\{x\}_A.
    \]
    Consequently,
    \[
    \dim_{\mathrm{Pbox}}(\Phi(A))=\dim_{\mathrm{Pbox}}(A).
    \]

    \item \textbf{Global max formula.}
    \[
    \dim_{\mathrm{Pbox}}(A\cup B)
    =
    \max\!\bigl(\dim_{\mathrm{Pbox}}(A),\dim_{\mathrm{Pbox}}(B)\bigr).
    \]
\end{enumerate}
\end{proposition}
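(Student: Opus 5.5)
The six items all reduce to standard facts about the classical upper box dimension \(\dimBl\) and the minimal covering number \(N_\varepsilon\), passed through the formula \(\dim_{\mathrm{Pbox}}\{x\}_A=\lim_{r\to0^+}\dimBl(A\cap B(x,r))\) of Definition~\ref{def:pbox-metric}. So I would first isolate three classical facts about bounded sets \(E,F\subset\mathbb R^n\):
\begin{enumerate}
\item \(N_\varepsilon\) is monotone under inclusion.
\item \(N_\varepsilon(\overline E)=N_\varepsilon(E)\) up to passing from \(\varepsilon\) to \(2\varepsilon\).
\item \(N_\varepsilon(E\cup F)\le N_\varepsilon(E)+N_\varepsilon(F)\le 2\max\{N_\varepsilon(E),N_\varepsilon(F)\}\).
\end{enumerate}
In addition, an \(L\)-bi-Lipschitz map sends \(\varepsilon\)-covers to \(L\varepsilon\)-covers. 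In logarithmic form, the constant factors disappear after division by \(\ln(1/\varepsilon)\), since \(\ln(L\varepsilon)/\ln\varepsilon\to1\).

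\textbf{Items (1)--(3).} Item (1) follows from monotonicity of \(N_\varepsilon(A\cap B(x,r))\) in \(A\), then taking \(\limsup_\varepsilon\) and \(\inf_r\). For (2), given a neighborhood \(U\), choose \(r_0\) with \(B(x,r_0)\subset U\). For \(r<r_0\) one has \(A\cap B(x,r)=(A\cap U)\cap B(x,r)\), and since the limit in \(r\) may be computed over small \(r\), the two values agree. For (3), note that \(\overline A\cap B(x,r)\subset\overline{A\cap B(x,r')}\) for \(r<r'\), so \(\dimBl(A\cap B(x,r))\le\dimBl(\overline A\cap B(x,r))\le\dimBl(A\cap B(x,r'))\). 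Letting \(r,r'\to0\) gives equality. The convention for \(x\notin\overline A\) is consistent: the balls are eventually empty, and \(x\notin\overline A\) iff \(x\notin\overline{\overline A}\).

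\textbf{Items (4) and (6).} For (4), use \((A\cup B)\cap B(x,r)=(A\cap B(x,r))\cup(B\cap B(x,r))\) together with the subadditivity bound, which gives \(\dimBl(E\cup F)=\max(\dimBl E,\dimBl F)\) for the upper box dimension. Then pass to the limit in \(r\); the max commutes with monotone limits. The boundary case where \(x\) lies in only one closure is absorbed by (2), since one of the traces is eventually empty. For (6), write \(\overline{A\cup B}=\overline A\cup\overline B\) and take the supremum of the pointwise max from (4). This yields \(\sup\) over \(\overline A\cup\overline B\) of \(\max\), and this equals the max of the two suprema, using that at \(x\notin\overline A\) the \(A\)-term is \(0\).

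\textbf{Item (5).} This is the only step needing care, because balls are not mapped to balls. With \(\Phi\) \(L\)-bi-Lipschitz, I would use
\[
\Phi(A\cap B(x,r))\subset\Phi(A)\cap B(\Phi(x),Lr)
\quad\text{and}\quad
\Phi(A)\cap B(\Phi(x),r/L)\subset\Phi(A\cap B(x,r)).
\]
Combined with \(\dimBl(\Phi(E))=\dimBl(E)\) from the covering-transfer estimate, these inclusions sandwich the \(r\)-limits and give equality. The global statement then follows by taking the supremum, since \(\Phi(\overline A)=\overline{\Phi(A)}\) for a homeomorphism.

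I expect no real obstacle beyond this bookkeeping. These are recalled properties from \cite{pointcsf}, so the proof may also simply cite that paper.
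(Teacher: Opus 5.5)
Your proposal is correct. Each item follows from the monotonicity, closure invariance, finite stability and bi-Lipschitz invariance of $\dimBl$ as you describe, and your two-sided ball inclusion for (5) is the same sandwich the paper later uses in Lemma~\ref{lem:local-pbox-bilipschitz-germs}. The paper gives no proof here and only recalls these properties from \cite{pointcsf}, so your argument is a self-contained filling-in. One cosmetic point: $N_\varepsilon(\overline E)=N_\varepsilon(E)$ holds exactly, with no need to pass to $2\varepsilon$, since closing the covering sets does not increase their diameters.
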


\begin{remark}[Upper semicontinuity]
In the metric framework relevant here, the map
\[
x\longmapsto \dim_{\mathrm{Pbox}}\{x\}_A
\]
is upper semicontinuous under the compactness/totally boundedness assumptions used in
the general Point-Extended Box framework (see Proposition~5 in \cite{pointcsf}).
Consequently, on compact sets the global point-extended box dimension is attained at
some point (see Corollary~2 in \cite{pointcsf}).
This will be useful when locating points of maximal local complexity.
\end{remark}
\begin{remark}[Compact sets]
As recalled from the general theory, if \(K\subset\mathbb R^n\) is compact, then
\[
\dim_{\mathrm{Pbox}}(K)=\max_{x\in K}\dim_{\mathrm{Pbox}}\{x\}_K=\dimBl(K).
\]
In particular, the classical upper box dimension of a compact set is attained at some point (see \cite{pointcsf}).
\end{remark}
\paragraph{On the choice of scale.}
Although only the classical logarithmic scale
\[
(\varphi_1,\varphi_2)=(\ln,\ln)
\]
will be used in the present article, one should keep in mind that the general framework allows more exotic scales.
For sets whose dimensional behavior is invisible to the usual box scale,
one may replace \((\ln,\ln)\) with slower gauges such as
\[
\bigl(\ln,\ln\!\ln\bigr),
\]
or other comparable choices, in order to detect sub-box growth.
Likewise, in infinite-dimensional contexts, one may use even slower scales adapted to the asymptotic growth of covering numbers.
This flexibility is one of the strengths of the Point-Extended Box framework.
However, none of these generalized scales will be needed in the present paper.

\paragraph{Role in the Point-Cross theory.}
The reason for recalling this notion here is that it will serve as the quantitative engine
behind the directional contribution.
Roughly speaking, once a direction \(v\) is fixed and an admissible probe
\(\gamma\in\mathcal G_x^A(v)\) is chosen in that direction, with geometric image
\[
\Gamma_\gamma=\gamma([0,\delta]),
\]
the quantity
\[
\dim_{\mathrm{Pbox}}\{x\}_{A\cap \Gamma_\gamma}
\]
measures how much local complexity of \(A\) is actually detected along that probe.
The Point-Cross dimension will then be obtained by combining such directional contributions
over linearly independent effective directions.

\section{The directional contribution \texorpdfstring{$\theta_x^A(v)$}{theta x A v}}
\label{sec:directional-contribution}

We have now separated the three ingredients needed for the Point-Cross construction.
First, the effective directions
\[
\Eff_x(A)=\Tan_x(A)\cap S^{n-1}
\]
provide the first-order directional support of \(A\) at \(x\). Second, the family
\[
\mathcal G_x^A(v)
\]
provides the admissible Lipschitz probes representing an effective direction \(v\).
Third, the point-extended box dimension provides the local scalar dimension used to measure
the complexity detected along a probe.

We can therefore assign to each effective direction a quantitative contribution. The guiding
principle is simple: in a fixed direction \(v\), we look at all admissible probes approaching
\(x\) with limiting direction \(v\), measure the local point-extended box dimension of the trace
of \(A\) along each probe, and take the supremum of the contributions detected in this way.

\begin{definition}[Directional contribution]
\label{def:directional-contribution}
Let \(A\subset\mathbb R^n\), let \(x\in\overline A\), and let
\(v\in\Eff_x(A)\). The directional contribution of \(A\) at \(x\) in the oriented
effective direction \(v\) is defined by
\[
\theta_x^A(v)
:=
\sup_{\gamma\in\mathcal G_x^A(v)}
\dim_{\mathrm{Pbox}}\{x\}_{A\cap\Gamma_\gamma}.
\]
Here, as in Definition~\ref{def:admissible-lipschitz-probes},
\[
\Gamma_\gamma:=\gamma([0,\delta])
\]
denotes the geometric image of the admissible probe \(\gamma\).
In formal statements, we keep the ambient-set notation \(\theta_x^A(v)\).
\end{definition}

\begin{remark}[Domain of definition]
The quantity \(\theta_x^A(v)\) is defined for
\[
v\in\Eff_x(A).
\]
This ensures, by Proposition~\ref{prop:probe-realization-effective-directions}, that
\[
\mathcal G_x^A(v)\neq\varnothing.
\]
Moreover, for every \(\gamma\in\mathcal G_x^A(v)\), the recurrence condition implies that
\[
x\in\overline{A\cap\Gamma_\gamma}.
\]
Hence the point-extended box dimension
\[
\dim_{\mathrm{Pbox}}\{x\}_{A\cap\Gamma_\gamma}
\]
is evaluated at a genuine accumulation point of the trace.
\end{remark}

\begin{remark}[Interpretation of \(\theta_x^A(v)\)]
The quantity \(\theta_x^A(v)\) records the maximal local complexity of \(A\)
detectable along the oriented directional channel \(v\) by admissible Lipschitz probes.
It does not merely say that the direction \(v\) is tangent or effective. That information is
already contained in
\[
v\in\Eff_x(A).
\]
Rather, \(\theta_x^A(v)\) measures how much pointwise box complexity can actually be recovered
from the traces
\[
A\cap\Gamma_\gamma
\]
as \(\gamma\) ranges over all admissible probes in the direction \(v\).

Because the point-extended box dimension depends only on the germ at \(x\), the value of
\(\theta_x^A(v)\) is determined only by the infinitesimal behavior of these traces near \(x\).
\end{remark}

\begin{lemma}[One-dimensional bound along a probe]
\label{lem:pbox-trace-bound}
Let \(A\subset\mathbb R^n\), let \(x\in\overline A\), let \(v\in\Eff_x(A)\), and let
\[
\gamma\in\mathcal G_x^A(v).
\]
Then
\[
\dim_{\mathrm{Pbox}}\{x\}_{A\cap\Gamma_\gamma}\le 1.
\]
Consequently,
\[
0\le \theta_x^A(v)\le 1.
\]
\end{lemma}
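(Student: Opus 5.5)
The plan is to bound the covering numbers of the trace by those of the probe image, and then to use the Lipschitz parametrization to control covering numbers of the image directly. Fix \(\gamma\in\mathcal G_x^A(v)\), defined on \([0,\delta]\) with Lipschitz constant \(L\ge 1\) (we may always enlarge \(L\)). Since \(A\cap\Gamma_\gamma\subset\Gamma_\gamma\), monotonicity in Proposition~\ref{prop:basic-pebd-recall} gives
\[
\dim_{\mathrm{Pbox}}\{x\}_{A\cap\Gamma_\gamma}\le\dim_{\mathrm{Pbox}}\{x\}_{\Gamma_\gamma}.
\]
Because \(\dim_{\mathrm{Pbox}}\{x\}_{\Gamma_\gamma}=\inf_{r>0}\dimBl(\Gamma_\gamma\cap B(x,r))\), it is enough to show \(\dimBl(\Gamma_\gamma)\le 1\). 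Indeed, for every \(r\) we have \(\Gamma_\gamma\cap B(x,r)\subset\Gamma_\gamma\), and \(N_\varepsilon\) is monotone under inclusion.

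The key covering estimate works as follows. For \(0<\varepsilon<L\delta\), subdivide \([0,\delta]\) into \(m=\lceil L\delta/\varepsilon\rceil\) closed subintervals, each of length at most \(\varepsilon/L\). Each image \(\gamma(I_j)\) has diameter at most \(L\cdot\varepsilon/L=\varepsilon\). Hence
\[
N_\varepsilon(\Gamma_\gamma)\le\frac{L\delta}{\varepsilon}+1\le\frac{2L\delta}{\varepsilon}
\]
for \(\varepsilon\) small. Taking logarithms and dividing by \(\ln(1/\varepsilon)\), we get
\[
\limsup_{\varepsilon\to0}\frac{\ln N_\varepsilon(\Gamma_\gamma)}{\ln(1/\varepsilon)}\le\limsup_{\varepsilon\to0}\frac{\ln(2L\delta)+\ln(1/\varepsilon)}{\ln(1/\varepsilon)}=1.
\]
Chaining the inequalities yields \(\dim_{\mathrm{Pbox}}\{x\}_{A\cap\Gamma_\gamma}\le1\). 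Injectivity of \(\gamma\) is not needed here; only the Lipschitz bound is used.

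For the consequence, the upper bound \(\theta_x^A(v)\le1\) follows by taking the supremum over \(\gamma\in\mathcal G_x^A(v)\). This supremum is over a nonempty family, by Proposition~\ref{prop:probe-realization-effective-directions}, since \(v\in\Eff_x(A)\). The lower bound \(\theta_x^A(v)\ge0\) holds because each term is nonnegative. This is where the recurrence condition enters: \(x\in\overline{A\cap\Gamma_\gamma}\), so every trace \(A\cap\Gamma_\gamma\cap B(x,r)\) is nonempty and \(N_\varepsilon\ge1\). Each \(\ln N_\varepsilon\) is therefore \(\ge0\), and \(\dim_{\mathrm{Pbox}}\ge0\).

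I do not expect a real obstacle. The only points needing care are two minor bookkeeping issues. The first is the convention for \(N_\varepsilon\) of an empty set, which the recurrence condition avoids as just explained. The second is that the bound \(N_\varepsilon(\Gamma_\gamma)\le 2L\delta/\varepsilon\) requires \(\varepsilon\) small, which is harmless since only the limit \(\varepsilon\to0\) matters.
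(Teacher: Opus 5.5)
Your proof is correct and follows the same route as the paper. Monotonicity reduces the trace to \(\Gamma_\gamma\), the bound \(\dim_{\mathrm{Pbox}}\{x\}_{\Gamma_\gamma}\le\dimBl(\Gamma_\gamma)\le 1\) comes from the Lipschitz parametrization, and the bounds on \(\theta_x^A(v)\) follow from nonemptiness of \(\mathcal G_x^A(v)\) and nonnegativity. The only difference is that you prove the covering estimate \(N_\varepsilon(\Gamma_\gamma)\le 2L\delta/\varepsilon\) explicitly, whereas the paper simply invokes the standard fact that a Lipschitz image of a bounded interval has upper box dimension at most one.
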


\begin{proof}
Since \(\gamma:[0,\delta]\to\mathbb R^n\) is Lipschitz, its image
\[
\Gamma_\gamma=\gamma([0,\delta])
\]
is the Lipschitz image of a bounded interval. Hence its upper box dimension is at most one.
By the metric definition of the point-extended box dimension,
\[
\dim_{\mathrm{Pbox}}\{x\}_{\Gamma_\gamma}
=
\inf_{r>0}\dimBl(\Gamma_\gamma\cap B(x,r))
\le
\dimBl(\Gamma_\gamma)
\le 1.
\]
Since
\[
A\cap\Gamma_\gamma\subset \Gamma_\gamma,
\]
monotonicity of the point-extended box dimension gives
\[
\dim_{\mathrm{Pbox}}\{x\}_{A\cap\Gamma_\gamma}
\le
\dim_{\mathrm{Pbox}}\{x\}_{\Gamma_\gamma}
\le 1.
\]
Since \(v\in\Eff_x(A)\), Proposition~\ref{prop:probe-realization-effective-directions}
ensures that \(\mathcal G_x^A(v)\neq\varnothing\).
Taking the supremum over \(\gamma\in\mathcal G_x^A(v)\) and using nonnegativity of
\(\dim_{\mathrm{Pbox}}\) yields
\[
0\le\theta_x^A(v)\le1.
\]
\end{proof}

\begin{remark}[Why a single direction contributes at most one]
The bound
\[
\theta_x^A(v)\le 1
\]
comes from the fact that each admissible probe is a Lipschitz image of a compact interval.
Thus, even if the trace
\[
A\cap\Gamma_\gamma
\]
has a complicated distribution near \(x\), it is still contained in a one-dimensional
Lipschitz channel.

This is why the directional contribution measures the amount of local complexity detected
along one oriented direction, but a single direction cannot contribute more than one unit.
The higher values of the Point-Cross dimension will arise only later, by aggregating
contributions carried by independent projective directions.
\end{remark}

\begin{proposition}[Basic structural properties of the directional contribution]
\label{prop:basic-theta-properties}
Let \(A,B\subset\mathbb R^n\), let \(x\in\mathbb R^n\), and let \(v\in S^{n-1}\). Then:
\begin{enumerate}
    \item \textbf{Monotonicity with respect to the set.}
    If \(A\subset B\) and \(v\in\Eff_x(A)\), then \(v\in\Eff_x(B)\) and
    \[
    \theta_x^A(v)\le \theta_x^B(v).
    \]

    \item \textbf{Dependence on the germ.}
    If \(U\) is a neighborhood of \(x\) and \(v\in\Eff_x(A)\), then
    \[
    \theta_x^A(v)=\theta_x^{A\cap U}(v).
    \]
\end{enumerate}
\end{proposition}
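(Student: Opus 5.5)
For (1), I would first settle the membership of \(v\): by Corollary~\ref{cor:eff-inherited-properties}(1), \(A\subset B\) gives \(\Eff_x(A)\subset\Eff_x(B)\), so \(v\in\Eff_x(B)\) and \(\theta_x^B(v)\) is defined. The main step is the inclusion of probe families \(\mathcal G_x^A(v)\subset\mathcal G_x^B(v)\). Injectivity, the Lipschitz property, \(\gamma(0)=x\) and the limiting direction do not involve the set at all. The recurrence condition transfers because \(\varnothing\neq A\cap\gamma((0,\eta])\subset B\cap\gamma((0,\eta])\) for every \(\eta\).

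Then, for each \(\gamma\in\mathcal G_x^A(v)\), we have \(A\cap\Gamma_\gamma\subset B\cap\Gamma_\gamma\). Monotonicity of the point-extended box dimension (Proposition~\ref{prop:basic-pebd-recall}(1)) gives \(\dim_{\mathrm{Pbox}}\{x\}_{A\cap\Gamma_\gamma}\le\dim_{\mathrm{Pbox}}\{x\}_{B\cap\Gamma_\gamma}\le\theta_x^B(v)\). Taking the supremum over \(\gamma\in\mathcal G_x^A(v)\) yields \(\theta_x^A(v)\le\theta_x^B(v)\).

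For (2), Corollary~\ref{cor:eff-inherited-properties}(2) gives \(\Eff_x(A\cap U)=\Eff_x(A)\), so both sides are defined. Since \(A\cap U\subset A\), part (1) gives \(\theta_x^{A\cap U}(v)\le\theta_x^A(v)\).

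For the reverse inequality, fix \(\gamma\in\mathcal G_x^A(v)\) on \([0,\delta]\). By Lemma~\ref{lem:restriction-of-probes}, choose \(\delta_U\in(0,\delta]\) with \(\gamma([0,\delta_U])\subset U\), and let \(\tilde\gamma:=\gamma|_{[0,\delta_U]}\), which lies in \(\mathcal G_x^A(v)\). I then need \(\tilde\gamma\in\mathcal G_x^{A\cap U}(v)\); only recurrence needs checking. For \(\eta\le\delta_U\), the set \(A\cap\tilde\gamma((0,\eta])\) is nonempty and contained in \(U\), hence equals \((A\cap U)\cap\tilde\gamma((0,\eta])\), which is therefore nonempty.

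It remains to compare the two traces. Choose \(r>0\) with \(B(x,r)\subset U\) and also \(r<\min_{t\in[\delta_U,\delta]}\|\gamma(t)-x\|\). This minimum is positive by injectivity and compactness, since \(\gamma(t)\neq x\) for \(t>0\). With this choice,
\[
A\cap\Gamma_\gamma\cap B(x,r)=A\cap\Gamma_{\tilde\gamma}\cap B(x,r)=(A\cap U)\cap\Gamma_{\tilde\gamma}\cap B(x,r).
\]
Germ dependence (Proposition~\ref{prop:basic-pebd-recall}(2)) then gives \(\dim_{\mathrm{Pbox}}\{x\}_{A\cap\Gamma_\gamma}=\dim_{\mathrm{Pbox}}\{x\}_{(A\cap U)\cap\Gamma_{\tilde\gamma}}\le\theta_x^{A\cap U}(v)\). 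Taking the supremum over \(\gamma\) concludes.

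The only delicate point is this last germ identification: a probe may return near \(x\) for larger parameter values, and the choice of \(r\) below the distance of \(\gamma([\delta_U,\delta])\) from \(x\) handles exactly that.
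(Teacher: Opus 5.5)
Your proof is correct and follows the paper's argument essentially step by step. Part (1) rests on the inclusion of probe families and monotonicity of \(\dim_{\mathrm{Pbox}}\). Part (2) restricts the probe into \(U\) and uses the fact that \(\gamma([\delta_U,\delta])\) stays at positive distance from \(x\), so the two traces have the same germ; your explicit choice of the radius \(r\) simply makes precise the germ identification that the paper states in words.
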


\begin{proof}
We first prove monotonicity. Assume that \(A\subset B\) and
\(v\in\Eff_x(A)\). By Corollary~\ref{cor:eff-inherited-properties},
\[
v\in\Eff_x(B).
\]
Moreover, every admissible probe for \(A\) in the direction \(v\) is also an admissible
probe for \(B\) in the same direction. Indeed, if
\[
\gamma\in\mathcal G_x^A(v),
\]
then the recurrence condition
\[
A\cap\gamma((0,\eta])\neq\varnothing
\qquad\text{for every }\eta\in(0,\delta]
\]
implies
\[
B\cap\gamma((0,\eta])\neq\varnothing
\qquad\text{for every }\eta\in(0,\delta].
\]
Hence
\[
\mathcal G_x^A(v)\subset\mathcal G_x^B(v).
\]
For every \(\gamma\in\mathcal G_x^A(v)\), one also has
\[
A\cap\Gamma_\gamma\subset B\cap\Gamma_\gamma.
\]
By monotonicity of the point-extended box dimension,
\[
\dim_{\mathrm{Pbox}}\{x\}_{A\cap\Gamma_\gamma}
\le
\dim_{\mathrm{Pbox}}\{x\}_{B\cap\Gamma_\gamma}.
\]
Taking the supremum over \(\gamma\in\mathcal G_x^A(v)\), and then using
\[
\mathcal G_x^A(v)\subset\mathcal G_x^B(v),
\]
we obtain
\[
\theta_x^A(v)\le\theta_x^B(v).
\]

We now prove dependence on the germ. Let \(U\) be a neighborhood of \(x\), and let
\(v\in\Eff_x(A)\). By Corollary~\ref{cor:eff-inherited-properties},
\[
\Eff_x(A)=\Eff_x(A\cap U),
\]
so both quantities are defined.

We first show that
\[
\theta_x^A(v)\le\theta_x^{A\cap U}(v).
\]
Let
\[
\gamma\in\mathcal G_x^A(v)
\]
be defined on \([0,\delta]\). By Lemma~\ref{lem:restriction-of-probes}, after decreasing
\(\delta_U\) if necessary, there exists \(0<\delta_U<\delta\) such that
\[
\gamma([0,\delta_U])\subset U.
\]
Set
\[
\gamma_U:=\gamma|_{[0,\delta_U]}.
\]
Then
\[
\gamma_U\in\mathcal G_x^{A\cap U}(v).
\]
Indeed, the same lemma shows that \(\gamma_U\in\mathcal G_x^A(v)\), and since
\(\Gamma_{\gamma_U}\subset U\), its recurrence condition is equivalently a recurrence
condition for \(A\cap U\).
Moreover, by germ dependence of the point-extended box dimension,
the trace on the discarded part does not affect the value at \(x\). Indeed, since
\(\gamma\) is injective and \(\gamma(0)=x\), the compact set
\(\gamma([\delta_U,\delta])\) does not contain \(x\). Hence it has positive distance from
\(x\). Therefore \(A\cap\Gamma_\gamma\) and \(A\cap\Gamma_{\gamma_U}\) have the same germ
at \(x\), and
\[
\dim_{\mathrm{Pbox}}\{x\}_{A\cap\Gamma_\gamma}
=
\dim_{\mathrm{Pbox}}\{x\}_{A\cap\Gamma_{\gamma_U}}.
\]
Since \(\Gamma_{\gamma_U}\subset U\), one has
\[
A\cap\Gamma_{\gamma_U}
=
(A\cap U)\cap\Gamma_{\gamma_U}.
\]
Therefore
\[
\dim_{\mathrm{Pbox}}\{x\}_{A\cap\Gamma_\gamma}
=
\dim_{\mathrm{Pbox}}\{x\}_{(A\cap U)\cap\Gamma_{\gamma_U}}
\le
\theta_x^{A\cap U}(v).
\]
Taking the supremum over \(\gamma\in\mathcal G_x^A(v)\), we get
\[
\theta_x^A(v)\le\theta_x^{A\cap U}(v).
\]

Conversely, since
\[
A\cap U\subset A,
\]
the monotonicity already proved gives
\[
\theta_x^{A\cap U}(v)\le\theta_x^A(v).
\]
Hence
\[
\theta_x^A(v)=\theta_x^{A\cap U}(v).
\]
\end{proof}

\subsection*{Closure invariance and shadowing of directional traces}

The basic germ dependence proved above is local in the ambient space. Closure invariance is
more subtle. Although
\[
\Eff_x(A)=\Eff_x(\overline A)
\]
follows directly from the Bouligand definition of effective directions, the corresponding
statement for \(\theta_x^A(v)\) is not purely formal. A probe may meet \(\overline A\)
along points which are not themselves in \(A\). We therefore need a shadowing argument showing
that the pointwise box complexity detected along a trace in \(\overline A\) can be recovered,
up to an arbitrarily small loss in exponent, by an admissible probe meeting \(A\).

Recall that a finite set \(F\subset\mathbb R^n\) is called \(\varepsilon\)-separated if
\(\|p-q\|\ge\varepsilon\) for every two distinct points \(p,q\in F\).
\begin{lemma}[Separated sets from upper box dimension]
\label{lem:separated-from-box}
Let \(K\subset\mathbb R^n\) be bounded, and let
\[
s<\dimBl(K).
\]
Then there exist arbitrarily small \(\varepsilon>0\) and finite subsets
\(F_\varepsilon\subset K\) such that \(F_\varepsilon\) is \(4\varepsilon\)-separated and
\[
\#F_\varepsilon\ge \varepsilon^{-s}.
\]
\end{lemma}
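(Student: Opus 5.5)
The plan is to pass from covering numbers to packing numbers via a maximal separated set. Since \(s<\dimBl(K)\), the definition of the upper box dimension as a \(\limsup\) yields a sequence \(\eta_j\downarrow0\) with
\[
N_{\eta_j}(K)\ge \eta_j^{-s'}
\]
for some fixed \(s'\) with \(s<s'<\dimBl(K)\). Here \(N_\eta\) denotes the minimal number of sets of diameter at most \(\eta\) needed to cover \(K\), as in Definition~\ref{def:pbox-metric}. The margin \(s'-s\) will be used to absorb constant factors.

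Next I fix \(\eta=\eta_j\), set \(\varepsilon:=\eta/8\), and choose a maximal \(4\varepsilon\)-separated subset \(F_\varepsilon\subset K\). Such a set is finite, because \(K\) is bounded and a \(4\varepsilon\)-separated subset of a bounded region of \(\mathbb R^n\) has cardinality bounded by a volume comparison. By maximality, every point of \(K\) lies within distance less than \(4\varepsilon\) of some point of \(F_\varepsilon\). Hence the sets \(K\cap B(p,4\varepsilon)\), for \(p\in F_\varepsilon\), cover \(K\), and each has diameter at most \(8\varepsilon=\eta\). It follows that
\[
\#F_\varepsilon\ge N_\eta(K)\ge \eta^{-s'}=(8\varepsilon)^{-s'}=8^{-s'}\varepsilon^{-s'}.
\]

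Finally, \(8^{-s'}\varepsilon^{-s'}\ge \varepsilon^{-s}\) is equivalent to \(\varepsilon^{-(s'-s)}\ge 8^{s'}\), which holds once \(\varepsilon\) is small enough. This is true for all large \(j\), since \(\varepsilon=\eta_j/8\to0\). This produces arbitrarily small \(\varepsilon\) with the required \(4\varepsilon\)-separated sets \(F_\varepsilon\) satisfying \(\#F_\varepsilon\ge\varepsilon^{-s}\).

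If \(s<0\), the statement is trivial, since any single point of \(K\) works, provided \(K\neq\varnothing\); and \(K\) is nonempty whenever \(\dimBl(K)>s\) with \(s\ge0\). The only point requiring care is the constant loss, and it is handled by the intermediate exponent \(s'\). I do not expect a genuine obstacle.
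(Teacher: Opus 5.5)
Your proposal is correct and matches the paper's proof essentially step for step. Both use the intermediate exponent \(s'\), the substitution \(\delta=8\varepsilon\), and the bound \(N_{8\varepsilon}(K)\le\#F_\varepsilon\) obtained from a maximal \(4\varepsilon\)-separated subset whose \(4\varepsilon\)-balls cover \(K\) with sets of diameter at most \(8\varepsilon\); your separate remark on \(s<0\) is unnecessary, since the main argument already covers that case whenever \(K\neq\varnothing\).
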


\begin{proof}
For a bounded set \(Y\subset\mathbb R^n\), let \(P_\rho(Y)\) denote the maximal cardinality
of a finite \(\rho\)-separated subset of \(Y\). Such a maximum exists because bounded subsets
of \(\mathbb R^n\) are totally bounded, so all \(\rho\)-separated subsets have uniformly
bounded finite cardinality.

We use the standard comparison
\[
N_{2\rho}(Y)\le P_\rho(Y).
\]
Indeed, a \(\rho\)-separated subset of maximal cardinality is maximal by inclusion. Hence
the \(\rho\)-balls centered at its points cover \(Y\), and these balls have diameter at most
\(2\rho\).

Choose \(s'\) such that
\[
s<s'<\dimBl(K).
\]
By the definition of the upper box dimension, there exist arbitrarily small \(\delta>0\) such
that
\[
N_\delta(K)\ge \delta^{-s'}.
\]
Writing \(\delta=8\varepsilon\), we obtain for arbitrarily small \(\varepsilon>0\)
\[
P_{4\varepsilon}(K)
\ge
N_{8\varepsilon}(K)
\ge
(8\varepsilon)^{-s'}.
\]
Since \(s<s'\), for all sufficiently small such \(\varepsilon\),
\[
(8\varepsilon)^{-s'}\ge \varepsilon^{-s}.
\]
Thus a maximal \(4\varepsilon\)-separated subset of \(K\) contains a finite subset
\(F_\varepsilon\subset K\) with the desired cardinality.
\end{proof}

\begin{lemma}[Disjoint vanishing balls around a convergent sequence]
\label{lem:disjoint-vanishing-balls}
Let \((p_i)_{i\ge1}\subset\mathbb R^n\setminus\{x\}\) be a sequence of distinct points such
that
\[
p_i\to x.
\]
Then there exist radii \(r_i>0\) such that the closed balls
\[
\overline B(p_i,r_i)
\]
are pairwise disjoint, avoid \(x\), and satisfy
\[
\frac{r_i}{\|p_i-x\|}\longrightarrow0.
\]
\end{lemma}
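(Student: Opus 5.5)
The plan is to choose the radii explicitly, so that each ball is small compared with both the distance to $x$ and the distance to every other point of the sequence. Set $d_i:=\|p_i-x\|>0$. For each $i$ define
\[
\sigma_i:=\inf_{j\neq i}\|p_i-p_j\|.
\]
The first step is to show $\sigma_i>0$. The points are distinct, so each individual distance $\|p_i-p_j\|$ is positive. Since $p_j\to x\neq p_i$, all but finitely many $j$ satisfy $\|p_j-x\|\le d_i/2$, and for those $j$ we get $\|p_i-p_j\|\ge d_i/2$. The remaining finitely many $j$ contribute positive distances. Hence $\sigma_i\ge\min\{d_i/2,\ \text{finitely many positive numbers}\}>0$.

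The second step is to define
\[
r_i:=\min\Bigl\{\frac{\sigma_i}{3},\ \frac{d_i}{i+1}\Bigr\}>0,
\]
and check the three required properties.
\begin{itemize}
\item \textbf{Disjointness.} For $i\neq j$ we have $r_i+r_j\le \sigma_i/3+\sigma_j/3\le \tfrac23\|p_i-p_j\|<\|p_i-p_j\|$, because $\sigma_i,\sigma_j\le\|p_i-p_j\|$. So $\overline B(p_i,r_i)$ and $\overline B(p_j,r_j)$ are disjoint.
\item \textbf{Avoiding $x$.} We have $r_i\le d_i/2<d_i$, so $x\notin\overline B(p_i,r_i)$.
\item \textbf{Vanishing ratio.} We have $r_i/\|p_i-x\|\le 1/(i+1)\to0$.
\end{itemize}

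The only point that needs care is the positivity of $\sigma_i$. An infimum over infinitely many distances could in principle be zero, and the convergence $p_j\to x$ together with $p_i\neq x$ is exactly what rules this out. Everything else is a direct verification.
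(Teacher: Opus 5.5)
Your proof is correct and is essentially the paper's argument. The paper also bounds $\|p_i-p_j\|$ below by $\tfrac12\|p_i-x\|$ for all but finitely many $j$, takes $r_i$ below one third of the resulting positive separation and below a vanishing multiple of $\|p_i-x\|$, and checks disjointness via $r_i+r_j\le\tfrac23\|p_i-p_j\|$. The differences are cosmetic: the paper uses $2^{-i}$ where you use $1/(i+1)$, and it includes $x$ in the set defining the separation distance, whereas you get $x\notin\overline B(p_i,r_i)$ from $1/(i+1)\le 1/2$.
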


\begin{proof}
For each fixed \(i\), the point \(p_i\) is isolated in the set
\[
\{x\}\cup\{p_j:j\ge1\}.
\]
Indeed, since \(p_j\to x\neq p_i\), all sufficiently large \(j\) satisfy
\[
\|p_j-p_i\|\ge \frac12\|p_i-x\|,
\]
and only finitely many indices remain. Hence
\[
d_i:=\operatorname{dist}\bigl(p_i,\{x\}\cup\{p_j:j\neq i\}\bigr)>0.
\]
Choose
\[
0<r_i<\min\left\{\frac{d_i}{3},\,2^{-i}\|p_i-x\|\right\}.
\]
Then \(x\notin\overline B(p_i,r_i)\). Moreover, if \(i\neq j\), then
\[
\|p_i-p_j\|\ge d_i
\qquad\text{and}\qquad
\|p_i-p_j\|\ge d_j,
\]
so
\[
r_i+r_j<\frac{d_i+d_j}{3}\le \frac23\|p_i-p_j\|<\|p_i-p_j\|.
\]
Thus the closed balls are pairwise disjoint. Finally,
\[
\frac{r_i}{\|p_i-x\|}\le 2^{-i}\to0.
\]
\end{proof}

\begin{lemma}[A controlled point-moving bump]
\label{lem:controlled-point-moving-bump}
Let \(p,q\in\mathbb R^n\), let \(r>0\), and assume that
\[
\|q-p\|<\frac r4.
\]
Then there exists a bi-Lipschitz homeomorphism
\[
H:\mathbb R^n\to\mathbb R^n
\]
such that
\[
H(p)=q,
\qquad
H(z)=z\quad(z\notin B(p,r)),
\]
\[
H(B(p,r))=B(p,r),
\]
\[
\|H(z)-z\|\le \|q-p\|
\qquad(z\in\mathbb R^n),
\]
and
\[
\operatorname{Lip}(H)\le2,
\qquad
\operatorname{Lip}(H^{-1})\le2.
\]
\end{lemma}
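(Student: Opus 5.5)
We build \(H\) as a translation by \(w:=q-p\) weighted by a radial cutoff centred at \(p\). Fix a Lipschitz function \(\phi:[0,\infty)\to[0,1]\) with \(\phi\equiv1\) on \([0,r/2]\), \(\phi\equiv0\) on \([r,\infty)\), linear on \([r/2,r]\), so \(\operatorname{Lip}(\phi)=2/r\). Then set
\[
H(z):=z+\phi(\|z-p\|)\,w .
\]
Three properties are immediate. First, \(H(p)=p+w=q\), since \(\phi(0)=1\). Second, \(H(z)=z\) when \(\|z-p\|\ge r\). Third, \(\|H(z)-z\|\le\|w\|=\|q-p\|\), since \(0\le\phi\le1\).

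The Lipschitz bound on \(H\) follows from the triangle inequality. The map \(z\mapsto\phi(\|z-p\|)w\) is Lipschitz with constant \(\operatorname{Lip}(\phi)\,\|w\|=2\|w\|/r<1/2\), because \(\|w\|<r/4\). Hence
\[
\|H(z)-H(z')\|\le\bigl(1+\tfrac12\bigr)\|z-z'\|\le2\|z-z'\|.
\]

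For the inverse, write \(H=\mathrm{id}+g\) with \(\operatorname{Lip}(g)=\kappa<1/2\). The reverse triangle inequality gives
\[
\|H(z)-H(z')\|\ge(1-\kappa)\|z-z'\|\ge\tfrac12\|z-z'\|.
\]
This shows \(H\) is injective and that \(H^{-1}\), on the image, has Lipschitz constant at most \(2\). Surjectivity is the standard perturbation-of-identity argument: for fixed \(y\), the map \(z\mapsto y-g(z)\) is a contraction of \(\mathbb R^n\), so Banach's fixed point theorem yields \(z\) with \(H(z)=y\). Thus \(H\) is a bi-Lipschitz homeomorphism of \(\mathbb R^n\) with \(\operatorname{Lip}(H),\operatorname{Lip}(H^{-1})\le2\).

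The step needing most care is \(H(B(p,r))=B(p,r)\), where one must pin down whether the ball is open or closed. We take the open ball. Since \(H\) is a bijection fixing the complement of \(B(p,r)\) pointwise, it maps that complement onto itself. It follows that \(H(B(p,r))\subset B(p,r)\): if some \(z\in B(p,r)\) had \(H(z)\notin B(p,r)\), then \(H(z)=H(H(z))\) and injectivity would force \(z=H(z)\notin B(p,r)\), a contradiction. Surjectivity together with the invariance of the complement then gives equality.

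If the closed ball is intended, the same argument applies, because \(\phi(r)=0\) means the sphere \(\|z-p\|=r\) is also fixed pointwise. As a sanity check, for \(z\) with \(r/2\le\|z-p\|<r\) one has \(\|H(z)-p\|\le\|z-p\|+\phi\,\|w\|\); this plain estimate does not show the image stays inside the ball, which is why the proof goes through the bijectivity argument rather than a direct norm bound.
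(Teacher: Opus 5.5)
Your proof is correct and essentially the paper's: $H=\mathrm{id}+\varphi(\|z-p\|)\,(q-p)$ is a perturbation of the identity with $\operatorname{Lip}<1$, the reverse triangle inequality gives injectivity and $\operatorname{Lip}(H^{-1})\le 2$, and Banach's fixed point theorem gives surjectivity. The differences are cosmetic: the paper uses the tent cutoff $\max\{1-s/r,0\}$ and gets $H(B(p,r))\subset B(p,r)$ from the direct estimate $\|H(z)-p\|\le s+(1-s/r)\|q-p\|<r$, whereas your bijectivity argument yields the invariance with no computation. Contrary to your closing aside, the direct estimate also works with your plateau cutoff, since for $r/2\le s<r$ it gives $\|H(z)-p\|<s+(r-s)/2<r$.
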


\begin{proof}
If \(q=p\), take \(H\) to be the identity. Otherwise set
\[
d:=q-p
\]
and choose the Lipschitz cut-off
\[
\varphi(s):=\max\left\{1-\frac{s}{r},0\right\}
\qquad(s\ge0).
\]
Define
\[
u(z):=\varphi(\|z-p\|)d,
\qquad
H(z):=z+u(z).
\]
Then \(u(p)=d\), \(u(z)=0\) for \(z\notin B(p,r)\), and
\[
\|u(z)\|\le\|d\|.
\]
If \(s:=\|z-p\|<r\), then
\[
\|H(z)-p\|
\le
s+\left(1-\frac{s}{r}\right)\|d\|
<
s+\left(1-\frac{s}{r}\right)r
=
r,
\]
because \(s<r\) and \(\|d\|<r\). Hence
\[
H(B(p,r))\subset B(p,r).
\]
Moreover,
\[
\operatorname{Lip}(u)
\le
\frac{\|d\|}{r}
<
\frac14.
\]
Therefore, for all \(z,w\in\mathbb R^n\),
\[
\|H(z)-H(w)\|
\le
\left(1+\operatorname{Lip}(u)\right)\|z-w\|
\le2\|z-w\|,
\]
and also
\[
\|H(z)-H(w)\|
\ge
\left(1-\operatorname{Lip}(u)\right)\|z-w\|.
\]
In particular \(H\) is injective. To see that it is onto, fix \(y\in\mathbb R^n\). The map
\[
T_y(z):=y-u(z)
\]
is a contraction on \(\mathbb R^n\), because \(\operatorname{Lip}(u)<1\). By the contraction
fixed point theorem, there exists \(z\) such that \(z=T_y(z)\), equivalently \(H(z)=y\).
Thus \(H\) is a homeomorphism, and the lower Lipschitz estimate gives
\[
\operatorname{Lip}(H^{-1})
\le
\frac{1}{1-\operatorname{Lip}(u)}
\le2.
\]
Since \(H\) is onto and fixes every point outside \(B(p,r)\), the inclusion
\(H(B(p,r))\subset B(p,r)\) also implies
\[
H(B(p,r))=B(p,r).
\]
All stated properties follow.
\end{proof}

\begin{lemma}[Bi-Lipschitz shadowing along a probe]
\label{lem:bump-shadowing-probe}
Let \(\gamma:[0,\delta]\to\mathbb R^n\) be an injective Lipschitz curve with
\[
\gamma(0)=x
\]
and
\[
\frac{\gamma(t)-x}{\|\gamma(t)-x\|}\to v\in S^{n-1}
\qquad\text{as }t\to0^+.
\]
Let \(t_i\downarrow0\) be a strictly decreasing sequence of parameters, set
\[
p_i:=\gamma(t_i),
\]
and let \((r_i)_{i\ge1}\) be radii such that the closed balls
\[
\overline B(p_i,r_i)
\]
are pairwise disjoint, avoid \(x\), and satisfy
\[
\frac{r_i}{\|p_i-x\|}\to0.
\]
If points \(q_i\in\mathbb R^n\) satisfy
\[
\|q_i-p_i\|<\frac{r_i}{4},
\]
then there exists an injective Lipschitz curve
\[
\widetilde\gamma:[0,\delta]\to\mathbb R^n
\]
such that
\[
\widetilde\gamma(0)=x,
\qquad
\widetilde\gamma(t_i)=q_i\quad(i\ge1),
\]
and
\[
\frac{\widetilde\gamma(t)-x}{\|\widetilde\gamma(t)-x\|}\to v
\qquad\text{as }t\to0^+.
\]
Consequently, if \(q_i\in A\) for every \(i\), then
\[
\widetilde\gamma\in\mathcal G_x^A(v).
\]
\end{lemma}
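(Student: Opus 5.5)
The plan is to compose $\gamma$ with the bumps of Lemma~\ref{lem:controlled-point-moving-bump}, one for each ball $\overline B(p_i,r_i)$.

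\emph{Construction.} For each $i$, Lemma~\ref{lem:controlled-point-moving-bump} gives a bi-Lipschitz homeomorphism $H_i$ with
\[
H_i(p_i)=q_i,\qquad H_i=\mathrm{id}\ \text{outside } B(p_i,r_i),\qquad H_i(B(p_i,r_i))=B(p_i,r_i),
\]
\[
\|H_i(z)-z\|\le\|q_i-p_i\|,\qquad \operatorname{Lip}(H_i),\ \operatorname{Lip}(H_i^{-1})\le 2 .
\]
The balls are pairwise disjoint, so the supports do not interact. We define $H$ by $H=H_i$ on $B(p_i,r_i)$ and $H=\mathrm{id}$ elsewhere. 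Then $H$ is a bijection of $\R^n$ with inverse built the same way from the $H_i^{-1}$, and $H(x)=x$ because no ball contains $x$. We set $\widetilde\gamma:=H\circ\gamma$. Injectivity follows from that of $\gamma$ and $H$, and $\widetilde\gamma(t_i)=H_i(p_i)=q_i$ since $p_i$ lies only in the $i$-th ball.

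\emph{Global Lipschitz bound.} This is the main obstacle, since infinitely many bumps accumulate at $x$. I would show $\operatorname{Lip}(H)\le 2$ directly. Take $z,w$.
\begin{itemize}
\item If $z$ and $w$ lie in the same ball, or neither lies in any ball, the bound is immediate.
\item Otherwise the segment $[z,w]$ leaves the ball containing $z$ through a boundary point $z'$ and enters the ball containing $w$ through a boundary point $w'$, where $H$ is the identity.
\end{itemize}
Using the triangle inequality along $z,z',w',w$, together with $\operatorname{Lip}(H_i)\le 2$ on each convex ball and $H=\mathrm{id}$ at $z'$ and $w'$, gives
\[
\|H(z)-H(w)\|\le 2\|z-z'\|+\|z'-w'\|+2\|w'-w\|\le 2\|z-w\| ,
\]
because the three pieces are collinear. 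Alternatively, one can note that $\|H(z)-z\|\le \tfrac14 r_i\le \tfrac14\operatorname{dist}(z,\partial B_i)$-type estimates are not needed at all, since the collinear splitting suffices. Hence $\widetilde\gamma$ is Lipschitz, with constant at most $2\operatorname{Lip}(\gamma)$.

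\emph{Limiting direction.} For $t$ small, $\|\widetilde\gamma(t)-\gamma(t)\|$ vanishes unless $\gamma(t)\in B(p_i,r_i)$ for some $i$. In that case the displacement is at most $r_i/4$, and
\[
\|\gamma(t)-x\|\ge \|p_i-x\|-r_i .
\]
Since $r_i/\|p_i-x\|\to0$, we get
\[
\frac{\|\widetilde\gamma(t)-\gamma(t)\|}{\|\gamma(t)-x\|}\to0 .
\]
We need $i\to\infty$ as $t\to0$. This holds because each closed ball avoids $x$, so it has positive distance from $x$, and $\gamma(t)\to x$ eventually leaves any finite union of balls. Therefore the normalized direction of $\widetilde\gamma(t)-x$ has the same limit $v$ as that of $\gamma(t)-x$. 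This is essentially the computation in Proposition~\ref{prop:probe-rho-form}.

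\emph{Admissibility.} If every $q_i\in A$, then $\widetilde\gamma(t_i)=q_i\in A$ with $t_i\downarrow0$. This is the recurrence condition, so together with the above $\widetilde\gamma\in\mathcal G_x^A(v)$.
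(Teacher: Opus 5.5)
Your proof is correct. Its architecture matches the paper's: glue the bumps $H_i$ into one map $H$, set $\widetilde\gamma:=H\circ\gamma$, show that the displacement $\|\widetilde\gamma(t)-\gamma(t)\|$ is $o(\|\gamma(t)-x\|)$ to transfer the limiting direction (using that only balls of large index meet small neighbourhoods of $x$), and read off recurrence from $\widetilde\gamma(t_i)=q_i$.

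Where you genuinely differ is the global Lipschitz bound. The paper first proves continuity of $H$ and $H^{-1}$ separately. It then splits $[z,w]$ into the countably many open subsegments lying in the balls plus their complement, and bounds the length of $H([z,w])$ by summing over these pieces, passing from finitely many balls to all of them by monotone convergence. Your argument evaluates $H$ at only the four points $z,z',w',w$. Because the closed balls are pairwise disjoint, the exit point $z'\in\partial B(p_i,r_i)$ and the entry point $w'\in\partial B(p_j,r_j)$ lie outside every open ball. Hence $H(z')=z'$ and $H(w')=w'$, and the balls crossed between $z'$ and $w'$ never enter the estimate. This is shorter, gives continuity of $H$ for free (including at $x$), and applies verbatim to $H^{-1}$, built from the $H_i^{-1}$, so it also recovers the bi-Lipschitz property. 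The paper's route gives a slightly stronger length-distortion statement for images of segments, which the lemma does not need.

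A few write-up points:
\begin{itemize}
\item State explicitly that the order $z,z',w',w$ along the segment follows from convexity and disjointness of the balls, and set $w':=w$ when $w$ lies in no ball.
\item Delete the sentence about $\tfrac14 r_i\le\tfrac14\operatorname{dist}(z,\partial B_i)$. That inequality is false, since the distance is at most $r_i$ and tends to $0$ near the sphere, and nothing uses it.
\item Before normalizing, note that $\widetilde\gamma(t)\neq x$ for $t>0$, by injectivity of $H$ and $H(x)=x$.
\item Membership in $\mathcal G_x^A(v)$ presupposes $x\in\overline A$. This holds because $\|q_i-x\|\le\|p_i-x\|+r_i/4\to0$.
\end{itemize}
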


\begin{proof}
For each \(i\), apply Lemma~\ref{lem:controlled-point-moving-bump} in the ball
\(B(p_i,r_i)\) to obtain a bi-Lipschitz homeomorphism \(H_i\) sending \(p_i\) to \(q_i\),
equal to the identity outside \(B(p_i,r_i)\), and with both Lipschitz constants at most \(2\).
Moreover,
\[
H_i(B(p_i,r_i))=B(p_i,r_i),
\]
so \(H_i^{-1}\) is also the identity outside \(B(p_i,r_i)\).
Since the closed balls are pairwise disjoint, the formula
\[
H(z):=
\begin{cases}
H_i(z),& z\in B(p_i,r_i)\text{ for some }i,\\
z,& z\notin\bigcup_{i\ge1}B(p_i,r_i)
\end{cases}
\]
defines a bijection of \(\mathbb R^n\) onto itself, because each ball is mapped onto itself and
the complement of the union is fixed pointwise. Its inverse is obtained by replacing each
\(H_i\) with \(H_i^{-1}\). We also have \(H(x)=x\), because \(x\) belongs to none of the closed
balls, and
\[
H(p_i)=q_i
\qquad(i\ge1).
\]
Moreover,
\[
H(z)-z=o(\|z-x\|)
\qquad(z\to x).
\]
To prove this, let \(0<\eta<1/2\). For all sufficiently large \(i\), one has
\[
r_i\le\eta\|p_i-x\|.
\]
The finitely many remaining balls have positive distance from \(x\). Hence, for \(z\) close
enough to \(x\), either \(H(z)=z\), or \(z\in B(p_i,r_i)\) for some large \(i\). In the latter
case,
\[
\|H(z)-z\|
\le
\|q_i-p_i\|
<
\frac{r_i}{4}
\le
\frac{\eta}{4}\|p_i-x\|,
\]
while
\[
\|z-x\|
\ge
\|p_i-x\|-r_i
\ge
(1-\eta)\|p_i-x\|.
\]
Thus
\[
\frac{\|H(z)-z\|}{\|z-x\|}
\le
\frac{\eta}{4(1-\eta)},
\]
and the claim follows by letting \(\eta\downarrow0\).

Since \(H(z)-z=o(\|z-x\|)\), we have
\[
H(z)\to x=H(x)
\qquad(z\to x).
\]
This removes the only possible continuity issue for \(H\), since the balls can accumulate only
at \(x\). Away from \(x\), only finitely many balls meet a sufficiently small neighborhood, and
the pieces agree with the identity on the boundaries. Thus \(H\) is continuous everywhere. The
same argument applies to the inverse map, obtained by replacing each \(H_i\) with
\(H_i^{-1}\). Hence \(H^{-1}\) is continuous as well. Therefore \(H\) is a homeomorphism.

We now prove the global Lipschitz estimate. For any two points \(z,w\), the intersection of
the segment \([z,w]\) with the countable family of pairwise disjoint open balls is a countable
union of pairwise disjoint open subsegments. The summation over countably many pieces is
justified by first considering finitely many balls and then passing to the limit by monotone
convergence of the lengths of the corresponding subsegments. On each such subsegment, the
corresponding \(H_i\) increases length by at most a factor \(2\), while on the complementary
subsegments \(H\) is the identity. Summing the lengths over these pieces, the image curve
\(H([z,w])\) is rectifiable and satisfies
\[
\operatorname{Length}\bigl(H([z,w])\bigr)
\le
2\operatorname{Length}([z,w])
=
2\|z-w\|.
\]
Consequently,
\[
\|H(z)-H(w)\|\le2\|z-w\|.
\]
The same argument applied to the inverse pieces gives
\[
\operatorname{Lip}(H^{-1})\le2.
\]

Define
\[
\widetilde\gamma:=H\circ\gamma.
\]
Since \(H\) is bi-Lipschitz and \(\gamma\) is injective Lipschitz, \(\widetilde\gamma\) is
injective and Lipschitz. Also \(\widetilde\gamma(0)=x\) and \(\widetilde\gamma(t_i)=q_i\).
Finally,
\[
\widetilde\gamma(t)-\gamma(t)=o(\|\gamma(t)-x\|)
\qquad(t\to0^+).
\]
Since \(H\) is injective and \(H(x)=x\), one has \(\widetilde\gamma(t)\neq x\) for
\(t>0\). We use the elementary fact that, if
\[
y_t-z_t=o(\|z_t-x\|)
\qquad\text{and}\qquad
\frac{z_t-x}{\|z_t-x\|}\to v,
\]
then
\[
\frac{y_t-x}{\|y_t-x\|}\to v.
\]
Indeed, \(\|y_t-x\|/\|z_t-x\|\to1\), and the normalized directions differ by a quantity
which tends to \(0\). Applying this with \(z_t=\gamma(t)\) and
\(y_t=\widetilde\gamma(t)\) shows that the limiting direction of \(\widetilde\gamma\) at
\(x\) is \(v\).

If \(q_i\in A\) for every \(i\), then every interval \((0,\eta]\) contains some parameter
\(t_i\), because \(t_i\downarrow0\). Hence
\[
A\cap\widetilde\gamma((0,\eta])\neq\varnothing
\qquad(0<\eta\le\delta),
\]
so \(\widetilde\gamma\in\mathcal G_x^A(v)\).
\end{proof}

\begin{lemma}[Shadowing of closure traces by admissible probes]
\label{lem:closure-trace-shadowing}
Let \(A\subset\mathbb R^n\), let \(x\in\overline A\), let
\[
v\in\Eff_x(A)=\Eff_x(\overline A),
\]
and let
\[
\gamma\in\mathcal G_x^{\overline A}(v)
\]
be defined on \([0,\delta]\). Set
\[
\Gamma_\gamma:=\gamma([0,\delta]),
\qquad
E:=\overline A\cap \Gamma_\gamma.
\]
Then, for every
\[
s<\dim_{\mathrm{Pbox}}\{x\}_{E},
\]
there exists a probe
\[
\widetilde\gamma\in\mathcal G_x^A(v)
\]
such that
\[
\dim_{\mathrm{Pbox}}\{x\}_{A\cap\Gamma_{\widetilde\gamma}}
\ge s.
\]
\end{lemma}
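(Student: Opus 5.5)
The plan is to recover the complexity of \(E=\overline A\cap\Gamma_\gamma\) at a sequence of scales from finite separated subsets of \(E\) near \(x\). We then move each of these points onto a nearby point of \(A\) using the bi-Lipschitz shadowing of Lemma~\ref{lem:bump-shadowing-probe}, and check that separation, and hence covering numbers, survive the move. We may assume \(s>0\).

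\emph{Step 1: multiscale separated sets.} Since \(\dim_{\mathrm{Pbox}}\{x\}_E\) is the infimum over \(r\) of \(\dimBl(E\cap B(x,r))\), we have \(\dimBl(E\cap B(x,\rho))>s\) for every \(\rho>0\). We construct inductively radii \(\rho_k\downarrow0\), scales \(\varepsilon_k\downarrow 0\) with \(\varepsilon_k<\rho_k\), and finite sets \(F_k\subset (E\cap B(x,\rho_k))\setminus\{x\}\).

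Each \(F_k\) is required to be \(4\varepsilon_k\)-separated with \(\#F_k\ge \varepsilon_k^{-s}-1\). This follows from Lemma~\ref{lem:separated-from-box} applied to \(K=E\cap B(x,\rho_k)\), after discarding the point \(x\) if it occurs. We then choose \(\rho_{k+1}<\min\{\operatorname{dist}(x,F_k),\rho_k/2\}\), so that the blocks \(F_k\) lie in disjoint annuli.

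The union \(P=\bigcup_k F_k\) is a countable set of distinct points of \(\Gamma_\gamma\setminus\{x\}\) accumulating only at \(x\). Since \(\gamma\) is a homeomorphism onto its compact image, the parameters \(\gamma^{-1}(P)\subset(0,\delta]\) accumulate only at \(0\). We can therefore enumerate them as a strictly decreasing sequence \(t_i\downarrow0\), and set \(p_i=\gamma(t_i)\).

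\emph{Step 2: shadowing.} Lemma~\ref{lem:disjoint-vanishing-balls} gives radii \(r_i\) with pairwise disjoint closed balls \(\overline B(p_i,r_i)\) that avoid \(x\) and satisfy \(r_i/\|p_i-x\|\to0\). Since \(p_i\in\overline A\), we choose \(q_i\in A\) with
\[
\|q_i-p_i\|<\min\{r_i/4,\ \varepsilon_{k(i)}/2\},
\]
where \(k(i)\) is the block containing \(p_i\). Lemma~\ref{lem:bump-shadowing-probe} then produces \(\widetilde\gamma\in\mathcal G_x^A(v)\) with \(\widetilde\gamma(t_i)=q_i\), so that \(\{q_i\}\subset A\cap\Gamma_{\widetilde\gamma}\).

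\emph{Step 3: counting.} For a block \(F_k\), the moved points \(Q_k=\{q_i:p_i\in F_k\}\) are \(3\varepsilon_k\)-separated, because each point moved by less than \(\varepsilon_k/2\). They also lie in \(B(x,2\rho_k)\). A set of diameter at most \(\varepsilon_k\) contains at most one point of \(Q_k\). Hence, for any fixed \(r>0\) and all large \(k\),
\[
N_{\varepsilon_k}\bigl(A\cap\Gamma_{\widetilde\gamma}\cap B(x,r)\bigr)\ge \varepsilon_k^{-s}-1 .
\]
Since \(\varepsilon_k\to0\), this gives \(\dimBl(A\cap\Gamma_{\widetilde\gamma}\cap B(x,r))\ge s\) for every \(r\), and hence \(\dim_{\mathrm{Pbox}}\{x\}_{A\cap\Gamma_{\widetilde\gamma}}\ge s\).

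\emph{Main obstacle.} The delicate step is the bookkeeping in Step 1. The shadowing lemma needs a single strictly decreasing parameter sequence tending to \(0\) together with disjoint vanishing balls, while the counting argument needs infinitely many scales that persist inside every small ball. Separating the blocks into disjoint annuli, and capping each displacement by \(\varepsilon_k/2\), reconciles these two requirements.
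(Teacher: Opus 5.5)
Your proof is correct and follows essentially the same route as the paper's: separated blocks extracted via Lemma~\ref{lem:separated-from-box}, replacement of each block point by a nearby point of \(A\) within a tolerance of \(r_i/4\) and a fraction of the block scale, the bump-shadowing Lemma~\ref{lem:bump-shadowing-probe}, and a block-by-block count of covering numbers. The differences are only bookkeeping. Your disjoint annuli and global sorting of parameters replace the paper's parameter-ordering and cone conditions, which Lemma~\ref{lem:bump-shadowing-probe} already makes unnecessary since it controls the limiting direction. Your \(-1\) slack replaces the paper's \(s_0\) device, and the case \(s\le0\) that you set aside only needs \(\mathcal G_x^A(v)\neq\varnothing\), which Proposition~\ref{prop:probe-realization-effective-directions} provides.
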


\begin{proof}
Set
\[
d:=\dim_{\mathrm{Pbox}}\{x\}_{E}.
\]
If \(s\le0\), the conclusion follows from the nonnegativity of the point-extended box
dimension and from Proposition~\ref{prop:probe-realization-effective-directions}. We therefore
assume that
\[
0<s<d.
\]
Choose \(s_0\) such that
\[
s<s_0<d.
\]

We first extract separated blocks from the closure trace, while keeping their natural order
along \(\gamma\). Since \(\gamma\) is injective and \(\gamma(0)=x\), for every
\(\tau\in(0,\delta]\), the compact set
\[
\gamma([\tau,\delta])
\]
does not contain \(x\). Hence
\[
\operatorname{dist}\bigl(x,\gamma([\tau,\delta])\bigr)>0.
\]
Consequently, for every sufficiently small \(r>0\),
\[
E\cap B(x,r)\subset E\cap\gamma([0,\tau)).
\]

We choose recursively numbers \(\tau_m\downarrow0\), scales
\(\varepsilon_m\downarrow0\), and finite sets
\[
F_m\subset E\cap\gamma((0,\tau_m))
\]
such that
\[
F_m \text{ is }4\varepsilon_m\text{-separated},
\qquad
\#F_m\ge\varepsilon_m^{-s}.
\]
At stage \(m\), choose \(\tau_m\in(0,\delta]\) so small that:
\begin{enumerate}
    \item if \(m>1\), then every point of \(\gamma((0,\tau_m])\) has parameter smaller than
    all parameters previously selected.
    \item \(\gamma((0,\tau_m])\) is contained in a cone of aperture \(2^{-m}\) around \(v\).
\end{enumerate}
Choose \(r_m>0\) so small that
\[
r_m<2^{-m},
\]
and
\[
E\cap B(x,r_m)\subset E\cap\gamma([0,\tau_m)).
\]
Since
\[
d=\inf_{r>0}\dimBl(E\cap B(x,r))
\]
and \(s_0<d\), we have
\[
\dimBl(E\cap B(x,r_m))>s_0.
\]
Let \(M_m\) be the number of points already selected at the previous stages. Applying
Lemma~\ref{lem:separated-from-box} to \(E\cap B(x,r_m)\), and taking
\(\varepsilon_m>0\) sufficiently small, we obtain a finite
\(4\varepsilon_m\)-separated set
\[
F_m^0\subset E\cap B(x,r_m)
\]
such that
\[
\#F_m^0\ge\varepsilon_m^{-s_0}.
\]
We also choose \(\varepsilon_m\) so small that
\[
\varepsilon_m<2^{-m},
\qquad
\varepsilon_m^{-s_0}-(M_m+1)\ge \varepsilon_m^{-s}.
\]
Removing from \(F_m^0\) the point \(x\), if it occurs, and all previously selected points,
we obtain a set \(F_m\) satisfying
\[
F_m\subset E\cap\gamma((0,\tau_m)),
\qquad
F_m \text{ is }4\varepsilon_m\text{-separated},
\qquad
\#F_m\ge\varepsilon_m^{-s}.
\]

Set
\[
F:=\bigcup_{m\ge1}F_m.
\]
Since each block is finite and the parameters of the blocks move strictly toward \(0\), after
ordering each finite block internally, the set \(F\) can be enumerated in decreasing order of
the corresponding parameter values:
\[
F=\{p_i:i\ge1\},
\qquad
p_i=\gamma(t_i),
\qquad
t_i\downarrow0.
\]
By Lemma~\ref{lem:disjoint-vanishing-balls}, choose radii \(r_i>0\) such that the closed balls
\[
\overline B(p_i,r_i)
\]
are pairwise disjoint, avoid \(x\), and satisfy
\[
\frac{r_i}{\|p_i-x\|}\to0.
\]

For each \(p_i\in F_m\), choose \(0<\alpha_i<1\) small enough so that
\[
\|q-p_i\|<\alpha_i\|p_i-x\|
\quad\Longrightarrow\quad
\frac{q-x}{\|q-x\|}
\text{ lies in a cone of aperture }2^{1-m}\text{ around }v.
\]
This is possible because \(p_i\neq x\), the normalization map \(z\mapsto (z-x)/\|z-x\|\) is continuous near \(p_i\),
and the direction of \(p_i-x\) lies in the smaller cone of aperture \(2^{-m}\) around \(v\).
Then choose points \(q_i\in A\), one for each \(p_i\in F_m\), so close to their
corresponding \(p_i\) that, writing
\[
Q_m:=\{q_i:p_i\in F_m\},
\]
the following conditions hold:
\begin{enumerate}
    \item
    \[
    \|q_i-p_i\|<\min\left\{\varepsilon_m,\frac{r_i}{4},\alpha_i\|p_i-x\|\right\}
    \qquad(p_i\in F_m),
    \]
    so that \(Q_m\) is \(2\varepsilon_m\)-separated.
    \item since \(q_i\in B(p_i,r_i)\), the closed balls are pairwise disjoint and avoid \(x\),
    all points thus obtained are automatically distinct and different from \(x\).
    \item by the choice of \(\alpha_i\), the directions
    \[
    \frac{q_i-x}{\|q_i-x\|}
    \]
    lie in a cone of aperture \(2^{1-m}\) around \(v\).
\end{enumerate}

These choices are possible because each \(p_i\in F_m\) belongs to \(\overline A\), and only
finitely many strict requirements occur at each stage. If a point \(p_i\) itself belongs to
\(A\) and is isolated in \(A\), one may take \(q_i=p_i\). The disjointness of the balls ensures
that this does not create repetitions.

Set
\[
Q:=\bigcup_{m\ge1}Q_m=\{q_i:i\ge1\}.
\]
If \(q_i\in Q_m\), then \(p_i\in F_m\subset B(x,r_m)\) and
\[
\|q_i-p_i\|<\varepsilon_m<2^{-m}.
\]
Since \(r_m<2^{-m}\), it follows that
\[
\|q_i-x\|\le r_m+\varepsilon_m<2^{1-m}.
\]
Thus \(Q_m\to x\). Consequently \(Q\subset A\), \(Q\to x\), and every point of \(Q\) has a
unique ancestor in the trace \(\Gamma_\gamma\). The cone condition gives
\[
\frac{q_i-x}{\|q_i-x\|}\to v.
\]

We record the dimensional content of \(Q\). Let \(r>0\). For all sufficiently large \(m\),
one has
\[
Q_m\subset B(x,r).
\]
Since \(Q_m\) is \(2\varepsilon_m\)-separated, every set of diameter at most
\(\varepsilon_m\) contains at most one point of \(Q_m\). Hence
\[
N_{\varepsilon_m}(Q\cap B(x,r))
\ge
\#Q_m
\ge
\varepsilon_m^{-s}
\]
for infinitely many \(m\). Therefore
\[
\dim_{\mathrm{Pbox}}\{x\}_{Q}\ge s.
\]

It remains to place \(Q\) on a single admissible probe. Lemma~\ref{lem:bump-shadowing-probe},
applied to \(\gamma\), the parameters \((t_i)\), the radii \((r_i)\), and the points \((q_i)\),
gives an injective Lipschitz curve
\[
\widetilde\gamma:[0,\delta]\to\mathbb R^n
\]
such that
\[
\widetilde\gamma\in\mathcal G_x^A(v)
\]
and
\[
Q\subset A\cap\Gamma_{\widetilde\gamma}.
\]
Finally, by monotonicity of the point-extended box dimension,
\[
\dim_{\mathrm{Pbox}}\{x\}_{A\cap\Gamma_{\widetilde\gamma}}
\ge
\dim_{\mathrm{Pbox}}\{x\}_{Q}
\ge s.
\]
This proves the lemma.
\end{proof}
\begin{remark}[Role of the shadowing lemma]
Lemma~\ref{lem:closure-trace-shadowing} is the technical core of the closure invariance of the
directional contribution. Its role is to separate two distinct tasks.

First, the separated sets
\[
F_m\subset \overline A\cap\Gamma_\gamma
\]
retain the pointwise box exponent detected along the closure trace. Since each point of
\(F_m\) belongs to \(\overline A\), it can be approximated by a point of \(A\). Choosing these
approximating points sufficiently close to their ancestors produces finite sets
\[
Q_m\subset A
\]
which remain separated at the relevant scale and therefore carry the same lower dimensional
information.

Second, the approximating points must be placed on a single admissible probe without destroying
injectivity, Lipschitz regularity, or the limiting direction. This is achieved by a bi-Lipschitz
ambient shadowing map \(H\), supported in pairwise disjoint balls around the selected points of
the original trace. The condition
\[
\frac{r_i}{\|p_i-x\|}\to0
\]
ensures that
\[
H(z)-z=o(\|z-x\|)
\qquad(z\to x),
\]
so that the transformed probe
\[
\widetilde\gamma:=H\circ\gamma
\]
has the same limiting direction as \(\gamma\).

Thus the separated blocks provide the dimensional lower bound, while the bi-Lipschitz shadowing
construction ensures that this lower bound is realized along one admissible probe meeting \(A\)
itself. In this sense, any directional contribution detected along a closure trace can be recovered,
up to an arbitrarily small loss in exponent, by a suitable admissible probe for \(A\).
\end{remark}
\begin{proposition}[Closure invariance of the directional contribution]
\label{prop:theta-closure-invariance}
Let \(A\subset\mathbb R^n\), let \(x\in\overline A\), and let
\(v\in\Eff_x(A)\). Then
\[
\theta_x^A(v)=\theta_x^{\overline A}(v).
\]
\end{proposition}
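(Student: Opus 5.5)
The plan is to prove the two inequalities separately. The inequality \(\theta_x^A(v)\le\theta_x^{\overline A}(v)\) is immediate from the monotonicity part of Proposition~\ref{prop:basic-theta-properties}, applied to \(A\subset\overline A\). Note that \(v\in\Eff_x(A)=\Eff_x(\overline A)\) by Corollary~\ref{cor:eff-inherited-properties}, so both sides are defined. All the substance is therefore in the reverse inequality \(\theta_x^{\overline A}(v)\le\theta_x^A(v)\).

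For the reverse inequality, I will reduce to the shadowing Lemma~\ref{lem:closure-trace-shadowing}. Fix an arbitrary probe \(\gamma\in\mathcal G_x^{\overline A}(v)\) and set \(E:=\overline A\cap\Gamma_\gamma\). It suffices to show
\[
\dim_{\mathrm{Pbox}}\{x\}_{E}\le\theta_x^A(v),
\]
since taking the supremum over \(\gamma\) then gives the claim. If \(\dim_{\mathrm{Pbox}}\{x\}_E=0\), this holds because \(\theta_x^A(v)\ge0\) (Lemma~\ref{lem:pbox-trace-bound}, with \(\mathcal G_x^A(v)\neq\varnothing\) by Proposition~\ref{prop:probe-realization-effective-directions}). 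Otherwise, take any \(s<\dim_{\mathrm{Pbox}}\{x\}_E\). Lemma~\ref{lem:closure-trace-shadowing} provides \(\widetilde\gamma\in\mathcal G_x^A(v)\) with \(\dim_{\mathrm{Pbox}}\{x\}_{A\cap\Gamma_{\widetilde\gamma}}\ge s\), so \(\theta_x^A(v)\ge s\). Letting \(s\uparrow\dim_{\mathrm{Pbox}}\{x\}_E\) finishes the argument.

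The main obstacle is already absorbed into Lemma~\ref{lem:closure-trace-shadowing}. That lemma is needed because the trace of \(\overline A\) along \(\gamma\) may contain no points of \(A\) at all, so one cannot simply compare \(A\cap\Gamma_\gamma\) with \(\overline A\cap\Gamma_\gamma\). Indeed, \(\dim_{\mathrm{Pbox}}\{x\}_{A\cap\Gamma_\gamma}\) could be much smaller than, or unrelated to, \(\dim_{\mathrm{Pbox}}\{x\}_{E}\); closure invariance of \(\dim_{\mathrm{Pbox}}\) (Proposition~\ref{prop:basic-pebd-recall}(3)) does not apply to the trace, because \(\overline{A\cap\Gamma_\gamma}\) need not contain \(E\). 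The only remaining care at the level of the proposition is to check that the hypotheses of the lemma hold, namely \(x\in\overline A\), \(v\in\Eff_x(\overline A)\) and \(\gamma\in\mathcal G_x^{\overline A}(v)\). No loss occurs in the limit, because the lemma holds for every \(s\) strictly below the exponent and \(\theta_x^A(v)\) is a supremum.
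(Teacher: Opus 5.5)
Your proposal is correct and follows the paper's proof essentially step for step. The inequality \(\theta_x^A(v)\le\theta_x^{\overline A}(v)\) comes from monotonicity (Proposition~\ref{prop:basic-theta-properties}). The reverse inequality comes from applying Lemma~\ref{lem:closure-trace-shadowing} to an arbitrary \(\gamma\in\mathcal G_x^{\overline A}(v)\), letting \(s\uparrow\dim_{\mathrm{Pbox}}\{x\}_{\overline A\cap\Gamma_\gamma}\), and then taking the supremum over \(\gamma\). Your separate treatment of the case \(\dim_{\mathrm{Pbox}}\{x\}_E=0\) is harmless, since the lemma already handles \(s\le 0\).
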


\begin{proof}
By Corollary~\ref{cor:eff-inherited-properties},
\[
\Eff_x(A)=\Eff_x(\overline A).
\]
Hence
\[
v\in\Eff_x(\overline A),
\]
so both directional contributions are defined. Since
\[
A\subset\overline A,
\]
Proposition~\ref{prop:basic-theta-properties} gives, by monotonicity of the directional
contribution,
\[
\theta_x^A(v)\le \theta_x^{\overline A}(v).
\]

We prove the reverse inequality. Let
\[
\gamma\in\mathcal G_x^{\overline A}(v)
\]
and set
\[
E:=\overline A\cap\Gamma_\gamma.
\]
Let
\[
s<\dim_{\mathrm{Pbox}}\{x\}_{E}.
\]
By Lemma~\ref{lem:closure-trace-shadowing}, there exists
\[
\widetilde\gamma\in\mathcal G_x^A(v)
\]
such that
\[
\dim_{\mathrm{Pbox}}\{x\}_{A\cap\Gamma_{\widetilde\gamma}}\ge s.
\]
Therefore, by the definition of \(\theta_x^A(v)\),
\[
\theta_x^A(v)\ge s.
\]
Since this holds for every
\[
s<\dim_{\mathrm{Pbox}}\{x\}_{\overline A\cap\Gamma_\gamma},
\]
we obtain
\[
\theta_x^A(v)
\ge
\dim_{\mathrm{Pbox}}\{x\}_{\overline A\cap\Gamma_\gamma}.
\]
Since \(\gamma\in\mathcal G_x^{\overline A}(v)\) was arbitrary, taking the supremum over all
such \(\gamma\) gives
\[
\theta_x^A(v)\ge \theta_x^{\overline A}(v).
\]
Combining the two inequalities yields
\[
\theta_x^A(v)=\theta_x^{\overline A}(v).
\]
\end{proof}

\begin{remark}[Closure and canonical probes]
Proposition~\ref{prop:theta-closure-invariance} shows that the full directional contribution is
invariant under passage to the closure:
\[
\theta_x^A(v)=\theta_x^{\overline A}(v).
\]
This fact relies essentially on the supremum over all admissible probes in the definition of
\(\theta_x^A(v)\). A fixed geometrically natural, or canonical, probe may detect, in
\(\overline A\), a trace which is not literally detected in \(A\) by the same probe. However,
Lemma~\ref{lem:closure-trace-shadowing} shows that the corresponding pointwise box contribution
can be recovered, up to an arbitrarily small loss, by another admissible probe meeting \(A\)
itself. At the level of the supremum defining the full directional contribution, this loss
disappears.

In concrete examples, one often uses a geometrically natural probe associated with a direction
\(v\) to compute a local point-extended box dimension. Such a canonical probe need not realize
the supremum in Definition~\ref{def:directional-contribution}. When this distinction matters,
we shall explicitly distinguish the full directional contribution \(\theta_x^A(v)\) from the
contribution detected by a chosen probe.
\end{remark}

\begin{remark}[Why the point-extended box component is essential]
\label{rem:box-component-essential}
The use of the point-extended box dimension in the definition of the directional
contribution is not merely a technical convenience. It is what makes the directional
contribution compatible with the closed-germ nature of the theory.

Indeed, the effective directions are defined through the Bouligand tangent cone,
and the latter is invariant under passage to the closure:
\[
\Tan_x(A)=\Tan_x(\overline A).
\]
Consequently,
\[
\Eff_x(A)=\Eff_x(\overline A).
\]
Thus the directional support of the theory is already insensitive to closure.

The point-extended box dimension preserves this philosophy at the level of the
directional traces. More precisely, the shadowing lemma shows that any pointwise
box exponent detected in \(\overline A\) along an admissible probe can be recovered
in \(A\), up to an arbitrarily small loss, by replacing the probe with a suitable
admissible shadowing probe. At the level of the supremum defining \(\theta_x^A(v)\),
this loss disappears. Consequently,
\[
\theta_x^A(v)=\theta_x^{\overline A}(v).
\]

This closure invariance would generally fail for a Hausdorff-based version of the directional
contribution. Local Hausdorff dimension is not stable under passage to the closure. Already in
a single straight probe, for instance,
\[
\dim_H(\mathbb Q\cap[0,1])=0
\qquad\text{whereas}\qquad
\dim_H(\overline{\mathbb Q\cap[0,1]})=\dim_H([0,1])=1.
\]
Thus a Hausdorff-based variant would follow a different philosophy, more sensitive
to the actual set of points present in \(A\), while the present Point-Cross theory
is designed as an invariant of the closed local germ.
\end{remark}

\subsection*{Projective directional contribution}

\begin{definition}[Projective directional contribution]
\label{def:projective-directional-contribution}
Let \(A\subset\mathbb R^n\), let \(x\in\overline A\), and let
\(\xi\in\Eff_x^{\mathbb P}(A)\). We define the projective directional contribution of
\(A\) at \(x\) in the projective effective direction \(\xi\) by
\[
\theta_x^A(\xi)
:=
\sup\bigl\{\theta_x^A(w):w\in\Eff_x(A),\ [w]=\xi\bigr\}.
\]
Equivalently, if \(\xi=[v]\) with \(v\in S^{n-1}\), then
\[
\theta_x^A([v])
=
\max\bigl\{\theta_x^A(w):w\in\Eff_x(A)\cap\{v,-v\}\bigr\},
\]
where only the orientations that actually belong to \(\Eff_x(A)\) are considered.
In formal statements, we keep the ambient-set notation \(\theta_x^A(\xi)\).
\end{definition}

\begin{remark}[Why the projective contribution is needed]
The oriented values \(\theta_x^A(v)\) and \(\theta_x^A(-v)\) may differ, and one of them may
even be undefined in one-sided situations. The projective value \(\theta_x^A([v])\) records the
largest contribution available along the geometric line direction \([v]\), without forcing both
orientations to be present.

Moreover, by Lemma~\ref{lem:pbox-trace-bound} and by the definition above,
\[
0\le \theta_x^A(\xi)\le 1
\qquad
(\xi\in\Eff_x^{\mathbb P}(A)).
\]
Combining Corollary~\ref{cor:eff-inherited-properties} with
Proposition~\ref{prop:theta-closure-invariance}, the projective contribution is also invariant
under closure: for every \(\xi\in\Eff_x^{\mathbb P}(A)\),
\[
\theta_x^{A}(\xi)=\theta_x^{\overline A}(\xi).
\]
\end{remark}

\section{Point-Cross Aggregation and Point-Cross Dimension}
\label{sec:point-cross-dimension}

We now pass from the directional contribution to the final pointwise invariant.
The guiding principle is that independent directional channels should contribute cumulatively.
In contrast with the classical max-rule satisfied by isotropic dimensions on finite unions,
the purpose of the Point-Cross construction is precisely to retain the additional local structure
created by the coexistence of linearly independent directions.

Since \(\theta_x^A(\xi)\in[0,1]\) measures the local complexity detected along a single
projective effective direction \(\xi\), the natural way to combine these contributions is to
sum them over linearly independent projective effective directions and then take the supremal
value.

\begin{definition}[Directional aggregates]
\label{def:directional-aggregates}
Let \(A\subset\mathbb R^n\), let \(x\in\overline A\), and let
\[
\mathcal I_x^{\mathbb P}(A)
:=
\Bigl\{
(\xi_1,\dots,\xi_m):
m\in\{0,\dots,n\},\;
\xi_i\in \Eff_x^{\mathbb P}(A)\ (1\le i\le m),\;
\xi_1,\dots,\xi_m\ \text{linearly independent}
\Bigr\}.
\]
Here projective directions \(\xi_1,\dots,\xi_m\) are called linearly independent, or
projectively independent, if,
for some, equivalently for every, choice of nonzero representatives \(w_i\in\xi_i\),
the vectors \(w_1,\dots,w_m\) are linearly independent.

The case \(m=0\) denotes the empty family. For every
\[
(\xi_1,\dots,\xi_m)\in\mathcal I_x^{\mathbb P}(A),
\]
we define its directional aggregate by
\[
\Theta_x^A(\xi_1,\dots,\xi_m)
:=
\sum_{i=1}^m \theta_x^A(\xi_i),
\]
with the convention that the empty sum is \(0\). When \(A\) is fixed by context, we simply write
\[
\Theta_x(\xi_1,\dots,\xi_m).
\]
\end{definition}

\begin{remark}[Why aggregation is projective]
The aggregation is performed in projective space because opposite oriented representatives
belong to the same geometric line direction and must not be counted twice. The oriented spherical
formalism remains useful for constructing probes and estimating the quantities
\(\theta_x^A(v)\), but the final Point-Cross sum is a sum over geometric directions.
\end{remark}

\begin{definition}[Point-Cross dimension at a point]
\label{def:point-cross-point}
Let \(A\subset\mathbb R^n\) and let \(x\in\overline A\).
The Point-Cross dimension of \(A\) at \(x\) is defined by
\[
\dim_{\times}\{x\}_A
:=
\sup\Bigl\{
\Theta_x^A(\xi_1,\dots,\xi_m):
(\xi_1,\dots,\xi_m)\in\mathcal I_x^{\mathbb P}(A)
\Bigr\}.
\]
Equivalently,
\[
\dim_{\times}\{x\}_A
=
\sup\Bigl\{
\sum_{i=1}^m \theta_x^A(\xi_i):
m\in\{0,\dots,n\},\;
\xi_i\in\Eff_x^{\mathbb P}(A)\ (1\le i\le m),\;
\xi_1,\dots,\xi_m\ \text{linearly independent}
\Bigr\}.
\]
\end{definition}

\begin{definition}[Global Point-Cross dimension]
\label{def:point-cross-global}
Let \(A\subset\mathbb R^n\).
If \(A=\varnothing\), we set
\[
\dim_{\times}(A):=0.
\]
Otherwise, \(\overline A\neq\varnothing\), and we define the global Point-Cross dimension of
\(A\) by
\[
\dim_{\times}(A)
:=
\sup_{x\in\overline A}\dim_{\times}\{x\}_A.
\]
\end{definition}

\begin{remark}[Meaning of the aggregation]
The projective formulation prevents opposite oriented representatives of the same geometric
line direction from being counted twice. The linear independence condition then ensures that
only genuinely independent projective effective directions contribute cumulatively. Thus
\(\dim_{\times}\{x\}_A\) measures the supremal cumulative local complexity carried at \(x\)
by independent directional channels.
\end{remark}

\begin{proposition}[Basic bounds]
\label{prop:point-cross-basic-bounds}
Let \(A\subset\mathbb R^n\). For every \(x\in\overline A\), one has
\[
0\le \dim_{\times}\{x\}_A\le \dim_{\mathrm{Ptan}}\{x\}_A\le n.
\]
In particular,
\[
0\le \dim_{\times}(A)\le n.
\]
\end{proposition}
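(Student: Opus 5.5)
The proof is a chain of three inequalities, and each link follows from a single earlier result, so I would take them in order.

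For the lower bound $0\le\dim_{\times}\{x\}_A$, note that the empty family ($m=0$) always belongs to $\mathcal I_x^{\mathbb P}(A)$. Its aggregate is the empty sum $0$, so the supremum in Definition~\ref{def:point-cross-point} is taken over a nonempty set containing $0$. This also covers the degenerate case $\Eff_x(A)=\varnothing$, for instance when $x$ is an isolated point of $A$.

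The central step is the middle inequality $\dim_{\times}\{x\}_A\le\dim_{\mathrm{Ptan}}\{x\}_A$. Fix an arbitrary $(\xi_1,\dots,\xi_m)\in\mathcal I_x^{\mathbb P}(A)$. By Definition~\ref{def:projective-effective-directions}, each $\xi_i$ contains some $w_i\in\Eff_x(A)=\Tan_x(A)\cap S^{n-1}$. Projective independence means exactly that $w_1,\dots,w_m$ are linearly independent vectors lying in $\Tan_x(A)$. Hence
\[
m\le\dim\Span(\Tan_x(A))=\dim_{\mathrm{Ptan}}\{x\}_A .
\]
For the weights, Lemma~\ref{lem:pbox-trace-bound} gives $\theta_x^A(w)\le 1$ for every oriented effective direction. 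Taking the supremum in Definition~\ref{def:projective-directional-contribution} then gives $\theta_x^A(\xi_i)\le 1$. Therefore
\[
\Theta_x^A(\xi_1,\dots,\xi_m)\le m\le\dim_{\mathrm{Ptan}}\{x\}_A .
\]
Taking the supremum over all admissible families yields the claim. The last inequality $\dim_{\mathrm{Ptan}}\{x\}_A\le n$ is Proposition~\ref{prop:basic-tandim}(1).

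For the global statement:
\begin{itemize}
\item If $A=\varnothing$, the value is $0$ by Definition~\ref{def:point-cross-global}.
\item Otherwise $\dim_{\times}(A)$ is a supremum over the nonempty set $\overline A$ of quantities lying in $[0,n]$, so it lies in $[0,n]$.
\end{itemize}

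No step is a real obstacle. The only points needing care are the use of the empty family for nonnegativity, and checking that projective independence transfers to independence of the chosen unit representatives inside the tangent cone, which holds by the "some, equivalently every" clause in Definition~\ref{def:directional-aggregates}.
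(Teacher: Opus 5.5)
Your proof is correct and follows essentially the same route as the paper's. The paper likewise bounds each weight by $1$ via Lemma~\ref{lem:pbox-trace-bound}, uses independent unit representatives in $\Tan_x(A)$ to get $m\le\dim_{\mathrm{Ptan}}\{x\}_A$, uses the empty family for nonnegativity, and treats $A=\varnothing$ separately in the global statement.
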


\begin{proof}
We first prove the pointwise bound. Let \(x\in\overline A\), and let
\[
(\xi_1,\dots,\xi_m)\in\mathcal I_x^{\mathbb P}(A).
\]
By Definition~\ref{def:projective-directional-contribution} and
Lemma~\ref{lem:pbox-trace-bound}, one has
\[
0\le \theta_x^A(\xi_i)\le 1
\qquad\text{for every }i\in\{1,\dots,m\}.
\]
Hence
\[
0\le \Theta_x^A(\xi_1,\dots,\xi_m)\le m.
\]

For each \(i\), choose
\[
v_i\in \xi_i\cap\Eff_x(A).
\]
Since \(\xi_1,\dots,\xi_m\) are projectively linearly independent, the representatives
\(v_1,\dots,v_m\) are linearly independent. Moreover,
\[
v_i\in\Eff_x(A)=\Tan_x(A)\cap S^{n-1}.
\]
Therefore
\[
m
\le
\dim\Span(\Tan_x(A))
=
\dim_{\mathrm{Ptan}}\{x\}_A.
\]
It follows that
\[
\Theta_x^A(\xi_1,\dots,\xi_m)
\le
\dim_{\mathrm{Ptan}}\{x\}_A.
\]

Since the empty family belongs to \(\mathcal I_x^{\mathbb P}(A)\), the supremum defining
\(\dim_{\times}\{x\}_A\) is taken over a nonempty family and is at least \(0\). Taking the
supremum over all admissible projective families yields
\[
0\le \dim_{\times}\{x\}_A\le \dim_{\mathrm{Ptan}}\{x\}_A.
\]
Since
\[
\dim_{\mathrm{Ptan}}\{x\}_A\le n,
\]
the pointwise bound follows.

For the global bound, if \(A=\varnothing\), then
\[
\dim_{\times}(A)=0
\]
by definition. Otherwise, taking the supremum over \(x\in\overline A\) gives
\[
0\le \dim_{\times}(A)\le n.
\]
\end{proof}

\begin{proposition}[Lower bound by the point-vector dimension]
\label{prop:pvec-lower-bound-point-cross}
Let \(A\subset\mathbb R^n\) and let \(x\in\overline A\). Then
\[
\dim_{\mathrm{Pvec}}\{x\}_A\le \dim_{\times}\{x\}_A.
\]
Consequently,
\[
\dim_{\mathrm{Pvec}}(A)\le \dim_{\times}(A).
\]
\end{proposition}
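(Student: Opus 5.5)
The plan is to exhibit, from a maximal free family, an explicit admissible projective family whose aggregate equals $k:=\dim_{\mathrm{Pvec}}\{x\}_A$. If $k=0$ there is nothing to prove, since $\dim_{\times}\{x\}_A\ge0$ by Proposition~\ref{prop:point-cross-basic-bounds}. So assume $k\ge1$ and choose a free family $(w_1,\dots,w_k)$ at $x$ relative to $A$. By definition there is $\delta_0>0$ with $\{x+tw_i:0<t<\delta_0\}\subset A$ for each $i$; in particular each $w_i\in\operatorname{Dir}_A(x)$.

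Set $v_i:=w_i/\|w_i\|$. Proposition~\ref{prop:exact-directions-realized-by-probes} gives $x\in\overline A$, $v_i\in\Eff_x(A)$, and a straight admissible probe $\gamma_i(t)=x+tv_i$ on $[0,\rho_i]$ with $\gamma_i((0,\rho_i])\subset A$. Put $\xi_i:=[v_i]\in\Eff_x^{\mathbb P}(A)$. These projective directions are linearly independent because the $w_i$ are, so $(\xi_1,\dots,\xi_k)\in\mathcal I_x^{\mathbb P}(A)$.

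The key step is to show $\theta_x^A(v_i)=1$. For the straight probe, $A\cap\Gamma_{\gamma_i}\supset\gamma_i((0,\rho_i])$, a segment of length $\rho_i$. For every $r>0$, the set $A\cap\Gamma_{\gamma_i}\cap B(x,r)$ contains a nondegenerate segment. Hence $N_\varepsilon\gtrsim \varepsilon^{-1}$, its upper box dimension is at least $1$, and so $\dim_{\mathrm{Pbox}}\{x\}_{A\cap\Gamma_{\gamma_i}}\ge1$. Combining this with the upper bound of Lemma~\ref{lem:pbox-trace-bound} gives $\theta_x^A(v_i)=1$. By Definition~\ref{def:projective-directional-contribution}, $\theta_x^A(\xi_i)\ge\theta_x^A(v_i)=1$, hence $\theta_x^A(\xi_i)=1$.

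It follows that $\dim_{\times}\{x\}_A\ge\Theta_x^A(\xi_1,\dots,\xi_k)=k$. For the global statement, take the supremum over $x\in\overline A$, using that $\dim_{\mathrm{Pvec}}\{x\}_A=0$ off $\overline A$; the empty case is trivial. The only point needing care is the covering lower bound for a segment, which is elementary: cover it by sets of diameter $\varepsilon$, each of which meets a sub-segment of length at most $\varepsilon$.
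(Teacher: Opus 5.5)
Your proposal is correct and follows essentially the same route as the paper: straight probes along the normalized exact directions $v_i=w_i/\|w_i\|$ give $\theta_x^A(\xi_i)=1$ for $k$ projectively independent directions, whence $\dim_{\times}\{x\}_A\ge k$. The differences are cosmetic: you take the free family directly from the definition rather than via Proposition~\ref{prop:pvec-via-dir}, and you justify the unit point-box weight of the punctured segment by an explicit covering count instead of citing the one-dimensional calibration of $\dim_{\mathrm{Pbox}}$.
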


\begin{proof}
Let
\[
k:=\dim_{\mathrm{Pvec}}\{x\}_A.
\]
If \(k=0\), the inequality follows from Proposition~\ref{prop:point-cross-basic-bounds}.
Assume therefore that \(k\ge1\).

By Proposition~\ref{prop:pvec-via-dir}, there exist linearly independent vectors
\[
w_1,\dots,w_k\in \operatorname{Dir}_A(x).
\]
Set
\[
v_i:=\frac{w_i}{\|w_i\|}\in S^{n-1}.
\]
By Proposition~\ref{prop:exact-directions-realized-by-probes}, each \(v_i\) belongs to
\(\Eff_x(A)\). Set
\[
\xi_i:=[v_i]\in\Eff_x^{\mathbb P}(A).
\]
Since the vectors \(v_1,\dots,v_k\) are linearly independent, the projective directions
\(\xi_1,\dots,\xi_k\) are linearly independent.

For each \(i\), since \(w_i\in\operatorname{Dir}_A(x)\), there exists \(\delta_i>0\) such that
\[
\{x+t\,w_i:0<t<\delta_i\}\subset A.
\]
Choose \(\rho_i\) with
\[
0<\rho_i<\delta_i\|w_i\|.
\]
The straight probe
\[
\gamma_i:[0,\rho_i]\to\mathbb R^n,
\qquad
\gamma_i(t):=x+t\,v_i,
\]
belongs to \(\mathcal G_x^A(v_i)\), and its image is
\[
\Gamma_{\gamma_i}=x+[0,\rho_i]v_i.
\]
Moreover,
\[
\Gamma_{\gamma_i}\setminus\{x\}
\subset
A\cap\Gamma_{\gamma_i}
\subset
\Gamma_{\gamma_i}.
\]
Both the punctured and the full segment have point-extended box dimension \(1\) at \(x\) by
the one-dimensional calibration of \(\dim_{\mathrm{Pbox}}\). Hence, by monotonicity,
\[
\dim_{\mathrm{Pbox}}\{x\}_{A\cap\Gamma_{\gamma_i}}=1.
\]
Therefore
\[
\theta_x^A(v_i)\ge1.
\]
By Lemma~\ref{lem:pbox-trace-bound}, one also has
\[
\theta_x^A(v_i)\le1.
\]
Thus
\[
\theta_x^A(v_i)=1
\qquad\text{for every }i\in\{1,\dots,k\}.
\]

By Definition~\ref{def:projective-directional-contribution},
\[
\theta_x^A(\xi_i)\ge \theta_x^A(v_i)=1.
\]
Again by Lemma~\ref{lem:pbox-trace-bound} and the definition of the projective contribution,
\[
\theta_x^A(\xi_i)\le1.
\]
Hence
\[
\theta_x^A(\xi_i)=1
\qquad\text{for every }i\in\{1,\dots,k\}.
\]

Therefore, by the definition of \(\dim_{\times}\{x\}_A\),
\[
\dim_{\times}\{x\}_A
\ge
\sum_{i=1}^k \theta_x^A(\xi_i)
=
k
=
\dim_{\mathrm{Pvec}}\{x\}_A.
\]

If \(A=\varnothing\), the global inequality is immediate from the conventions. Otherwise,
taking the supremum over \(x\in\overline A\) yields
\[
\dim_{\mathrm{Pvec}}(A)\le \dim_{\times}(A).
\]
\end{proof}
\begin{corollary}[First hierarchy]
\label{cor:first-hierarchy-point-cross}
For every \(A\subset\mathbb R^n\) and every \(x\in\overline A\),
\[
\dim_{\mathrm{Pvec}}\{x\}_A
\le
\dim_{\times}\{x\}_A
\le
\dim_{\mathrm{Ptan}}\{x\}_A.
\]
Consequently,
\[
\dim_{\mathrm{Pvec}}(A)
\le
\dim_{\times}(A)
\le
\dim_{\mathrm{Ptan}}(A).
\]
\end{corollary}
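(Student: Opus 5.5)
The plan is simply to concatenate the two pointwise bounds already established, and then pass to suprema over \(x\in\overline A\) for the set-level statement.

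\emph{Pointwise chain.} Fix \(x\in\overline A\). The left inequality \(\dim_{\mathrm{Pvec}}\{x\}_A\le\dim_{\times}\{x\}_A\) is exactly Proposition~\ref{prop:pvec-lower-bound-point-cross}. That proof realizes each exact direction by a straight probe whose trace is a punctured segment, so its projective contribution equals \(1\). The right inequality \(\dim_{\times}\{x\}_A\le\dim_{\mathrm{Ptan}}\{x\}_A\) is the middle part of Proposition~\ref{prop:point-cross-basic-bounds}. There, each summand \(\theta_x^A(\xi_i)\) is at most \(1\) by Lemma~\ref{lem:pbox-trace-bound}, and the number \(m\) of projectively independent effective directions is at most \(\dim\Span(\Tan_x(A))\), because their representatives lie in \(\Tan_x(A)\). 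Chaining the two gives the pointwise statement.

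\emph{Global chain.} If \(A=\varnothing\), all three global quantities are \(0\) by convention. Otherwise \(\overline A\neq\varnothing\), and all three set-level invariants are defined as suprema over the same index set \(x\in\overline A\): Definition~\ref{def:point-cross-global}, Definition~\ref{def:set-ptan}, and the definition of \(\dim_{\mathrm{Pvec}}(A)\). A pointwise inequality between functions on \(\overline A\) is preserved by taking suprema, so the global chain follows.

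I expect no genuine obstacle here. The only point needing care is that the index sets really coincide, namely that all three global invariants take their supremum over \(\overline A\) rather than over \(\mathbb R^n\). This holds by the definitions, and points outside \(\overline A\) would in any case contribute \(0\) to \(\dim_{\mathrm{Pvec}}\) and \(\dim_{\mathrm{Ptan}}\).
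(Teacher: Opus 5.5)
Your proof is correct and matches the paper's own argument. The pointwise chain comes from combining Proposition~\ref{prop:pvec-lower-bound-point-cross} with Proposition~\ref{prop:point-cross-basic-bounds}, and the global chain from the empty-set conventions together with taking suprema over the common index set $\overline A$, which you rightly checked is shared by all three set-level definitions.
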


\begin{proof}
The pointwise inequalities follow by combining
Proposition~\ref{prop:pvec-lower-bound-point-cross} with
Proposition~\ref{prop:point-cross-basic-bounds}.

If \(A=\varnothing\), the global inequalities follow from the corresponding conventions.
Assume therefore that \(A\neq\varnothing\). The lower global inequality is the global statement
of Proposition~\ref{prop:pvec-lower-bound-point-cross}. The upper global inequality follows by
taking the supremum over \(x\in\overline A\) in the pointwise estimate
\[
\dim_{\times}\{x\}_A\le\dim_{\mathrm{Ptan}}\{x\}_A,
\]
together with the definitions of \(\dim_{\times}(A)\) and
\(\dim_{\mathrm{Ptan}}(A)\).
\end{proof}

\begin{remark}
The lower bound comes from the fact that every exact local projective direction contributes a
full unit amount, because it is represented by a punctured segment germ contained in \(A\).
The upper bound comes from the fact that each directional contribution is at most \(1\), and
that at most \(\dim_{\mathrm{Ptan}}\{x\}_A\) linearly independent projective effective
directions may occur. Thus the Point-Cross dimension occupies an intermediate position between
the rigid exact-direction invariant \(\dim_{\mathrm{Pvec}}\) and the tangent-cone invariant
\(\dim_{\mathrm{Ptan}}\).
\end{remark}

\begin{proposition}[Smooth calibration]
\label{prop:point-cross-smooth-calibration}
Let \(A\subset\mathbb R^n\), and let \(x\in A\).
If \(A\) is locally a \(C^1\) embedded submanifold of dimension \(k\) near \(x\), then
\[
\dim_{\times}\{x\}_A
=
\dim_{\mathrm{Ptan}}\{x\}_A
=
k.
\]
\end{proposition}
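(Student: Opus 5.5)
The plan is to squeeze \(\dim_{\times}\{x\}_A\) between \(k\) and \(\dim_{\mathrm{Ptan}}\{x\}_A\), and then identify the latter with \(k\).

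\textbf{Step 1: the tangential value.} Choose a neighbourhood \(U\) of \(x\) such that \(M:=A\cap U\) is a \(C^1\) embedded \(k\)-submanifold without boundary. By germ dependence (Proposition~\ref{prop:basic-tandim}(3)) and Proposition~\ref{prop:smooth-calibration-tan},
\[
\Tan_x(A)=\Tan_x(M)=T_xM
\qquad\text{and}\qquad
\dim_{\mathrm{Ptan}}\{x\}_A=k.
\]
Corollary~\ref{cor:first-hierarchy-point-cross} then gives the upper bound \(\dim_{\times}\{x\}_A\le k\).

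\textbf{Step 2: the lower bound.} It suffices to pick a basis \(v_1,\dots,v_k\) of \(T_xM\) consisting of unit vectors and to show that \(\theta_x^A(v_i)=1\) for each \(i\). Since \(\Eff_x(A)=T_xM\cap S^{n-1}\), the classes \([v_1],\dots,[v_k]\) form a projectively independent family in \(\Eff_x^{\mathbb P}(A)\). The aggregate over this family would then equal \(k\), which closes the argument.

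To produce a probe for \(v_i\), take a local \(C^1\) graph chart as in the proof of Proposition~\ref{prop:smooth-calibration-tan}. Write \(M\) near \(x\) as \(\{(u,f(u))\}\) with \(f\in C^1\), after an affine change placing \(T_xM=\mathbb R^k\times\{0\}\), so that \(Df(0)=0\). Define
\[
\gamma_i(t):=x+\bigl(t a_i,\,f(t a_i)\bigr),\qquad t\in[0,\delta],
\]
where \(a_i\) is the coordinate vector of \(v_i\). This curve is \(C^1\) on a compact interval, hence Lipschitz. It is injective because its first block of coordinates \(t a_i\) is injective. Its limiting direction is \(v_i\) because \(f(ta_i)=o(t)\). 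Finally, \(\Gamma_{\gamma_i}\subset A\), so the recurrence condition holds trivially, and \(\gamma_i\in\mathcal G_x^A(v_i)\).

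The trace \(A\cap\Gamma_{\gamma_i}=\Gamma_{\gamma_i}\) is a \(C^1\) arc. Its orthogonal projection onto the line \(\mathbb R v_i\) is the segment \(x+[0,\delta]v_i\). Projections are \(1\)-Lipschitz, so the covering numbers of the arc dominate those of this segment, and hence the local upper box dimension at \(x\) is at least \(1\). Lemma~\ref{lem:pbox-trace-bound} gives the reverse bound, so the value is exactly \(1\). Therefore \(\theta_x^A(v_i)=1\) and \(\theta_x^A([v_i])=1\), and summing over the family yields \(\dim_{\times}\{x\}_A\ge k\).

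\textbf{Main obstacle.} The only delicate point is this last computation: the point-extended box dimension of the \(C^1\) arc must equal exactly \(1\) at \(x\), and this requires \(A\cap\Gamma_{\gamma_i}\) to contain a full arc germ, not merely a sparse trace. The projection argument handles both issues, since any cover of \(\Gamma_{\gamma_i}\cap B(x,r)\) by sets of diameter at most \(\varepsilon\) projects to a cover of a segment of length comparable to \(r\). Alternatively, one can invoke the bi-Lipschitz invariance of Proposition~\ref{prop:basic-pebd-recall} applied to the straightening map \((u,y)\mapsto(u,y-f(u))\), which is bi-Lipschitz near \(x\) because \(f\) is \(C^1\). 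That map sends the arc to a straight segment, whose point-extended box dimension is \(1\) by the calibration already used in Proposition~\ref{prop:pvec-lower-bound-point-cross}.
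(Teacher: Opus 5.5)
Your proof is correct and follows the paper's argument. The upper bound $\dim_{\times}\le\dim_{\mathrm{Ptan}}=k$ comes from germ dependence together with the tangential smooth calibration, and the lower bound comes from $k$ independent tangent directions, each realized by a $C^1$ arc inside $A$ that carries weight $1$. The differences are in detail only: you build the arcs explicitly in a graph chart and justify $\dim_{\mathrm{Pbox}}=1$ by the $1$-Lipschitz projection onto $\mathbb R v_i$, whereas the paper simply invokes the one-dimensional calibration of $\dim_{\mathrm{Pbox}}$. Your alternative straightening route would need the local germ version of bi-Lipschitz invariance, since $(u,y)\mapsto(u,y-f(u))$ is only defined near $x$.
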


\begin{proof}
Choose a neighborhood \(U\) of \(x\) such that \(A\cap U\) is a \(C^1\) embedded
\(k\)-submanifold. By the germ-dependence property in
Proposition~\ref{prop:basic-tandim} and by Proposition~\ref{prop:smooth-calibration-tan},
one has
\[
\dim_{\mathrm{Ptan}}\{x\}_A
=
\dim_{\mathrm{Ptan}}\{x\}_{A\cap U}
=
k.
\]
If \(k=0\), then Proposition~\ref{prop:point-cross-basic-bounds} gives
\[
0\le \dim_{\times}\{x\}_A\le \dim_{\mathrm{Ptan}}\{x\}_A=0,
\]
hence
\[
\dim_{\times}\{x\}_A=0.
\]

Assume now that \(k\ge1\). By Proposition~\ref{prop:point-cross-basic-bounds}, it remains to
prove
\[
\dim_{\times}\{x\}_A\ge k.
\]

Let
\[
T_xA:=T_x(A\cap U),
\]
and choose linearly independent unit vectors
\[
v_1,\dots,v_k\in T_xA\cap S^{n-1}.
\]
Set
\[
\xi_i:=[v_i]
\qquad(i=1,\dots,k).
\]
These projective directions are linearly independent.

Since \(A\cap U\) is a \(C^1\) embedded submanifold, for each \(i\) there exists a
non-degenerate \(C^1\) curve germ
\[
\alpha_i:[0,\rho_i]\to A\cap U
\]
such that
\[
\alpha_i(0)=x,
\qquad
\alpha_i'(0)\neq0,
\qquad
\frac{\alpha_i'(0)}{\|\alpha_i'(0)\|}=v_i.
\]
After restricting \(\rho_i>0\) if necessary, we may assume that \(\alpha_i\) is injective and
that
\[
\frac{\alpha_i(t)-x}{\|\alpha_i(t)-x\|}
\to v_i
\qquad\text{as }t\to0^+.
\]
Define
\[
\gamma_i:[0,\rho_i]\to\mathbb R^n,
\qquad
\gamma_i(t):=\alpha_i(t).
\]
Then \(\gamma_i\) is an admissible Lipschitz probe in \(\mathcal G_x^A(v_i)\). Indeed, it is
injective and Lipschitz after restriction, it satisfies \(\gamma_i(0)=x\), it has limiting
direction \(v_i\), and its image is contained in \(A\). In particular,
\[
A\cap\Gamma_{\gamma_i}=\Gamma_{\gamma_i}.
\]
Thus \(v_i\in\Eff_x(A)\), and hence
\[
\xi_i\in\Eff_x^{\mathbb P}(A)
\qquad(i=1,\dots,k).
\]

Since \(\Gamma_{\gamma_i}\) is a non-degenerate \(C^1\) arc, the one-dimensional calibration of
\(\dim_{\mathrm{Pbox}}\) gives
\[
\dim_{\mathrm{Pbox}}\{x\}_{A\cap\Gamma_{\gamma_i}}
=
\dim_{\mathrm{Pbox}}\{x\}_{\Gamma_{\gamma_i}}
=
1.
\]
By Definition~\ref{def:directional-contribution},
\[
\theta_x^A(v_i)\ge1.
\]
By Lemma~\ref{lem:pbox-trace-bound}, one also has
\[
\theta_x^A(v_i)\le1.
\]
Therefore
\[
\theta_x^A(v_i)=1
\qquad\text{for every }i\in\{1,\dots,k\}.
\]

By Definition~\ref{def:projective-directional-contribution},
\[
\theta_x^A(\xi_i)\ge \theta_x^A(v_i)=1.
\]
Conversely, every oriented representative entering the projective supremum has contribution
at most \(1\) by Lemma~\ref{lem:pbox-trace-bound}. Hence
\[
\theta_x^A(\xi_i)\le1.
\]
Thus
\[
\theta_x^A(\xi_i)=1
\qquad\text{for every }i\in\{1,\dots,k\}.
\]

Since the projective directions \(\xi_1,\dots,\xi_k\) are linearly independent, we obtain
\[
\dim_{\times}\{x\}_A
\ge
\sum_{i=1}^k \theta_x^A(\xi_i)
=
k.
\]
Combined with the upper bound
\[
\dim_{\times}\{x\}_A\le \dim_{\mathrm{Ptan}}\{x\}_A=k,
\]
this gives
\[
\dim_{\times}\{x\}_A=k.
\]
The equality with \(\dim_{\mathrm{Ptan}}\{x\}_A\) was already proved above, and the result
follows.
\end{proof}

\begin{remark}[Interpretation]
Corollary~\ref{cor:first-hierarchy-point-cross} and
Proposition~\ref{prop:point-cross-smooth-calibration} show that the Point-Cross dimension
sits between two previously introduced directional invariants:
\[
\dim_{\mathrm{Pvec}}\{x\}_A
\le
\dim_{\times}\{x\}_A
\le
\dim_{\mathrm{Ptan}}\{x\}_A.
\]
In situations where the relevant local directional structure is already exhausted by exact
segment germs, the lower bound is saturated and one has
\[
\dim_{\mathrm{Pvec}}\{x\}_A
=
\dim_{\times}\{x\}_A.
\]
In the case of \(C^1\) embedded submanifolds, Proposition~\ref{prop:point-cross-smooth-calibration}
shows that the upper bound is saturated:
\[
\dim_{\times}\{x\}_A
=
\dim_{\mathrm{Ptan}}\{x\}_A.
\]
The genuine role of the Point-Cross dimension is therefore to quantify intermediate
configurations in which effective tangent directions exist, but carry only partial local
complexity.
\end{remark}

\paragraph{Combining dispersion and directionality.}
The construction above isolates the directional layer of the theory through the invariant
\(\dim_{\times}\). This layer measures the part of local dimensionality arising from the
weighted coexistence of independent effective directions. However, as emphasized in the
introduction, the notion of dimension that motivates the present work has two complementary
aspects.

The first is a \emph{dispersive} or entropic aspect: how strongly the set spreads around the
point across scales. This is measured here by the point-extended box dimension. The second is
a \emph{directional} aspect: how many independent channels of local complexity are effectively
present at the point. This is measured by the Point-Cross dimension just defined.

It is therefore natural to introduce a combined invariant retaining the larger local
dimensional value detected by either of these two mechanisms. The symbol \(\boxtimes\) is meant
to recall this synthesis: the box represents the dispersive point-box component, while the
cross represents the directional crossing component.

\begin{definition}[Point-Box-Cross dimension]
\label{def:point-box-cross-dimension}
Let \(A\subset\mathbb R^n\) and let \(x\in\overline A\). The
\emph{Point-Box-Cross dimension} of \(A\) at \(x\) is defined by
\[
\dimBoxCross\{x\}_A
:=
\max\Bigl\{
\dim_{\mathrm{Pbox}}\{x\}_A,
\dim_{\times}\{x\}_A
\Bigr\}.
\]
If \(A=\varnothing\), we set
\[
\dimBoxCross(A):=0.
\]
Otherwise, the associated global invariant is
\[
\dimBoxCross(A)
:=
\sup_{x\in\overline A}\dimBoxCross\{x\}_A.
\]
\end{definition}

\begin{remark}[Directional and combined invariants]
The invariant \(\dimBoxCross\) is the combined local invariant obtained in the present
framework. The quantity \(\dim_{\times}\) should be read as its directional component, whereas
\(\dim_{\mathrm{Pbox}}\) records its dispersive component.

The maximum expresses the principle that a point may be dimensionally complex either because
the set is highly dispersed near it, or because several independent effective directions
coexist there. It also avoids adding two quantities that may partly detect the same local
complexity.

Thus the theory contains two related but distinct levels:
\[
\dim_{\times}\{x\}_A
\quad\text{for directional complexity,}
\]
and
\[
\dimBoxCross\{x\}_A
\quad\text{for the combined box-cross complexity.}
\]
\end{remark}

\subsection*{Collapse of the hierarchy on geometric graphs}
\label{subsec:graph-collapse-prop}

Remark~\ref{rem:graph-collapse} showed that geometric graphs are already completely
controlled, at the point-vector level, by exact incident segment directions. We now return
to this elementary class after introducing the full directional hierarchy. The point is to
prove that the later refinements do not change the answer on embedded PL $1$-complexes.
The mechanism is entirely local: after shrinking to a neighbourhood of the point, a
geometric graph is just a finite star of straight segment germs, so the tangent and
crossing refinements cannot create directions beyond the exact incident ones.

\begin{lemma}[Tangent cone of a finite segment star]
\label{lem:star-tangent-cone}
Let $x\in\mathbb R^n$, let $u_1,\dots,u_m\in S^{n-1}$ be distinct oriented unit vectors,
and let
\[
S=
\bigcup_{j=1}^m\{x+t\,u_j:0\le t\le \delta_0\}
\qquad (\delta_0>0)
\]
be the corresponding finite star. Then
\[
\Tan_x(S)=\{0\}\cup\bigcup_{j=1}^m \mathbb R_{\ge0}\,u_j.
\]
In particular,
\[
\dim_{\mathrm{Ptan}}\{x\}_S=
\dim\Span\{u_1,\dots,u_m\}.
\]
\end{lemma}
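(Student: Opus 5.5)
The natural plan is to reduce the star to its individual segments via the finite-union property of the Bouligand cone. Write $S=\bigcup_{j=1}^m S_j$ with $S_j:=\{x+t\,u_j:0\le t\le\delta_0\}$. Proposition~\ref{prop:ptan-union-grassmann} is stated for two sets. Its proof extracts a subsequence lying in a single component, and that argument works verbatim for finitely many sets. Alternatively, I would simply induct on $m$. Either way this gives
\[
\Tan_x(S)=\bigcup_{j=1}^m\Tan_x(S_j).
\]
So everything reduces to the single-segment identity $\Tan_x(S_j)=\{0\}\cup\mathbb R_{\ge0}u_j$.

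For one segment I will prove the two inclusions separately. First, $0\in\Tan_x(S_j)$, by taking the constant sequence $x_k=x$. Next, if $v=c\,u_j$ with $c>0$, choose any $\lambda_k\downarrow0$ with $c\lambda_k\le\delta_0$ and set $x_k:=x+c\lambda_k u_j\in S_j$. Then $(x_k-x)/\lambda_k=v$ exactly, so $v\in\Tan_x(S_j)$.

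For the reverse inclusion, let $x_k=x+t_ku_j\in S_j$ with $t_k\in[0,\delta_0]$, and suppose $(x_k-x)/\lambda_k=(t_k/\lambda_k)u_j\to v$. Pairing with $u_j$ shows that the scalars $t_k/\lambda_k\ge0$ converge to some $c\ge0$. Hence $v=c\,u_j\in\mathbb R_{\ge0}u_j$. This is the only place where any care is needed, and it is routine. The hypothesis that the $u_j$ are distinct plays no role in the cone identity; it only ensures the star is written without redundancy.

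Finally, the rank statement follows from the definition $\dim_{\mathrm{Ptan}}\{x\}_S=\dim\Span(\Tan_x(S))$. The span of $\{0\}\cup\bigcup_j\mathbb R_{\ge0}u_j$ equals $\Span\{u_1,\dots,u_m\}$, since each ray contributes exactly the line $\mathbb R u_j$ to the span.

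The main (mild) obstacle I expect is justifying the finite-union formula for $m>2$. I would handle it by induction from the two-set case of Proposition~\ref{prop:ptan-union-grassmann}, applied to $\bigl(\bigcup_{j<m}S_j\bigr)\cup S_m$.
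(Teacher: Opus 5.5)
Your proposal is correct and follows essentially the same route as the paper. For ``$\supset$'' the paper uses the same explicit sequences $x+s_k\lambda u_j$. For ``$\subset$'' it inlines the same pigeonhole extraction of a subsequence on a single branch, which is exactly the argument behind the finite-union property of Proposition~\ref{prop:ptan-union-grassmann} that you invoke with induction. Your pairing with $u_j$, showing that $t_k/\lambda_k$ converges to some $c\ge 0$, spells out a step the paper leaves implicit.
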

\begin{proof}
The inclusion ``$\supset$'' is immediate. The zero vector is obtained by taking
$y_k=x$, and if $v=\lambda u_j$ with $\lambda>0$, choose $s_k\downarrow0$ with
$s_k\lambda\le\delta_0$ and set $y_k=x+s_k\lambda u_j$. Then $y_k\in S$, $y_k\to x$, and
$(y_k-x)/s_k=v$.

Conversely, let $v\in\Tan_x(S)$. There exist $y_k\in S$, $y_k\to x$, and
$s_k\downarrow0$ such that
\[
\frac{y_k-x}{s_k}\to v.
\]
If $v=0$, there is nothing to prove. Assume $v\neq0$. Then $y_k\neq x$ for all large
$k$, and for those $k$ one can write
\[
y_k=x+t_k u_{j(k)},
\qquad t_k\in(0,\delta_0],
\qquad j(k)\in\{1,\dots,m\}.
\]
Since there are only finitely many branches, pass to a subsequence with constant branch
index $j_0$. Along this subsequence,
\[
\frac{y_k-x}{s_k}=\frac{t_k}{s_k}u_{j_0}\to v,
\]
with $t_k/s_k\ge0$. Hence $v\in\mathbb R_{\ge0}u_{j_0}$. This proves the tangent-cone
formula, and the statement about $\dim_{\mathrm{Ptan}}$ follows by taking linear spans.
\end{proof}

\begin{proposition}[Collapse on geometric graphs]
\label{prop:graph-collapse}
Let $G$ be a geometric graph and let $x\in G$. Then
\[
\dim_{\mathrm{Pvec}}\{x\}_G
=\dim_{\times}\{x\}_G
=\dim_{\mathrm{Ptan}}\{x\}_G
=\dim\Span\,\mathcal U_G(x),
\]
the linear rank of the incident edge directions at $x$. Consequently, if $G\neq\varnothing$,
\[
\dim_{\mathrm{Pvec}}(G)
=\dim_{\times}(G)
=\dim_{\mathrm{Ptan}}(G)
=\max_{v\in V(G)}\dim\Span\,\mathcal U_G(v).
\]
If $G=\varnothing$, all three global invariants are $0$ by convention.
\end{proposition}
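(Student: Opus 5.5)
The plan is to squeeze the chain of inequalities from Corollary~\ref{cor:first-hierarchy-point-cross},
\[
\dim_{\mathrm{Pvec}}\{x\}_G\le\dim_{\times}\{x\}_G\le\dim_{\mathrm{Ptan}}\{x\}_G,
\]
between two quantities that both equal $\dim\Span\,\mathcal U_G(x)$. The lower end is already computed: Proposition~\ref{prop:node-computation} gives $\dim_{\mathrm{Pvec}}\{x\}_G=\dim\Span\,\mathcal U_G(x)$. So everything reduces to showing $\dim_{\mathrm{Ptan}}\{x\}_G\le\dim\Span\,\mathcal U_G(x)$; equality at all levels then follows.

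For the upper end I will localize. Since $G$ is a locally finite union of closed segments, only finitely many edges meet a small ball $B(x,r)$, and I choose $r$ so that these are exactly the edges containing $x$. Because two distinct edges meet only at common endpoints, each such edge either has $x$ in its relative interior (contributing the two rays $\pm u$) or has $x$ as an endpoint (contributing one ray). Shrinking $r$ if needed, and using that edges not containing $x$ are compact and at positive distance from $x$, I get
\[
G\cap B(x,r)=S\cap B(x,r),\qquad S=\bigcup_{u\in\mathcal U_G(x)}\{x+tu:0\le t\le\delta_0\},
\]
a finite segment star with $\delta_0$ at least the radius. This step needs some care with the bookkeeping: every edge through $x$ must be accounted for, and no extra germ may appear. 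Germ dependence, Proposition~\ref{prop:basic-tandim}(3), then gives $\Tan_x(G)=\Tan_x(S)$, and Lemma~\ref{lem:star-tangent-cone} yields $\dim_{\mathrm{Ptan}}\{x\}_G=\dim\Span\,\mathcal U_G(x)$. Combined with the chain above, this proves the pointwise statement.

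For the global statement, $G$ is closed, so $\overline G=G$ and all three global invariants are suprema over $x\in G$ of pointwise values that coincide. Points in the relative interior of an edge have value $1$, while any node has value at least $1$ because it carries at least one incident edge. If $V(G)\neq\varnothing$, the supremum is therefore the maximum over nodes, as in the set-level remark after Proposition~\ref{prop:node-computation}. The empty graph is handled by convention.

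The main obstacle is the localization step, namely ensuring that the germ of $G$ at $x$ is exactly the star built from $\mathcal U_G(x)$. Everything after that is a direct invocation of earlier results.
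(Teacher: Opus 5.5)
Your proposal is correct and follows essentially the same route as the paper: squeeze the hierarchy $\dim_{\mathrm{Pvec}}\{x\}_G\le\dim_{\times}\{x\}_G\le\dim_{\mathrm{Ptan}}\{x\}_G$ between the node computation and the tangent-cone formula for the localized finite star, then pass to the global statement using $\overline G=G$. Your hedge ``if $V(G)\neq\varnothing$'' is harmless but unnecessary, since a nonempty geometric graph has at least one edge and hence its endpoints as nodes.
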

\begin{proof}
Set
\[
r:=\dim\Span\,\mathcal U_G(x).
\]
By Proposition~\ref{prop:node-computation},
\[
\dim_{\mathrm{Pvec}}\{x\}_G=r.
\]

Since $G$ is locally finite and its edges are straight, there is a neighbourhood $U$ of
$x$ such that $G\cap U$ is the finite star determined by the incident directions
$\mathcal U_G(x)$, restricted to $U$. By the germ-dependence of the Bouligand tangent
cone, Proposition~\ref{prop:basic-tandim}(3), and by Lemma~\ref{lem:star-tangent-cone},
\[
\dim_{\mathrm{Ptan}}\{x\}_G
=\dim\Span\,\mathcal U_G(x)
=r.
\]

The Point-Cross value is then squeezed between the two equal ranks by
Corollary~\ref{cor:first-hierarchy-point-cross}:
\[
r
=\dim_{\mathrm{Pvec}}\{x\}_G
\le
\dim_{\times}\{x\}_G
\le
\dim_{\mathrm{Ptan}}\{x\}_G
=r.
\]
Thus $\dim_{\times}\{x\}_G=r$ as well.

For the global statement, a locally finite union of closed subsets of the ambient affine space is closed, hence
$\overline G=G$. The set-level invariants are therefore obtained by taking the supremum
over $x\in G$. At an interior edge point, the incident directions are two opposite rays,
so their span has dimension $1$. At every vertex, the same pointwise formula gives the
rank of the incident edge directions, again at least $1$. Hence the global supremum is
attained at a vertex, and the stated maximum over $V(G)$ follows.
\end{proof}

\begin{remark}[Calibration]
Proposition~\ref{prop:graph-collapse} is a calibration check rather than a mere special
case. On finite stars of straight segments, where the directional content is completely
visible, the three directional invariants return the same integer: the local rank of the
incident directions. The hierarchy can therefore separate only when the local geometry is
richer, for instance on curved arcs, fractal traces, or space-filling configurations.
\end{remark}

\section{Relation to existing directional notions of dimension}
\label{sec:related-directional-dimensions}

The Point-Cross dimension lies at the intersection of two strands of
research: the local geometry of tangent cones and the scale-based theory
of fractal dimensions.  It is therefore important to distinguish it from
existing notions that are directional or tangential in flavour.

The distinguishing feature of the present construction is not merely its
use of directions, but the graded \emph{directional weight}
\[
\theta_z^A(\xi)\in[0,1].
\]
For a single point \(z\) and a single projective effective direction \(\xi\),
this quantity measures the point-extended box-dimensional mass detected
in the traces of \(A\) selected by admissible probes with limiting direction
\(\xi\).  The Point-Cross dimension is then obtained by aggregating these
weights over projectively independent directions:
\[
\dim_{\times}\{z\}_A
=
\sup\left\{
\sum_{i=1}^k \theta_z^A(\xi_i)
:
\xi_1,\dots,\xi_k
\text{ are projectively independent}
\right\}.
\]
Thus the construction combines four ingredients: pointwise localization,
per-direction resolution, box-graded mass, and aggregation over ambient
linear independence.  To the best of our knowledge, the neighbouring
notions reviewed below do not combine these four features simultaneously.

\subsection*{Tangential dimensions}

Guido and Isola introduced pointwise tangential dimensions for metric
spaces, defined through lower and upper box dimensions of tangent sets at
a point \cite{GuidoIsolaI}.  This framework is close in spirit to the
intermediate tangent layer of the present hierarchy, since both theories
use tangent objects to detect local dimensional behaviour.

However, tangential dimensions remain isotropic: they read a box-type
dimension of the tangent set as a whole.  They do not decompose the local
geometry direction by direction.  In particular, they do not distinguish
whether the tangent mass is carried by one dominant direction or
split among several independent directions, once the resulting tangent
box dimension is the same.  By contrast, the Point-Cross dimension is
built precisely to measure how local mass is distributed across
independent directional channels.  For instance, the coordinate cross in
\(\mathbb R^2\) has ordinary local box dimension \(1\) at the origin,
whereas its Point-Cross dimension at the origin is \(2\), because two
independent one-dimensional channels are simultaneously active.

\subsection*{Direction sets and directional dimension}

For a set-germ \(A\) at the origin, Koike and Paunescu consider the
direction set
\[
D(A)
=
\left\{
r\in S^{n-1}:
\exists\, x_i\in A\setminus\{0\},\ x_i\to 0,
\frac{x_i}{\|x_i\|}\to r
\right\},
\]
and study the Hausdorff dimension of this set of limiting directions
\cite{KoikePaunescu}.  They prove bi-Lipschitz invariance results
for the directional dimension of subanalytic germs, under suitable
tameness assumptions.  This theory was later extended to definable sets
in o-minimal structures by Koike, Loi, Paunescu and Shiota
\cite{KoikeLoiPaunescuShiota2013}.

This is one of the closest existing notions to the present framework, but
the difference is essential.  The direction set records \emph{which}
directions occur, but it does not attach a graded mass to each direction.
Moreover, \(\dim D(A)\) measures the size of the set of directions as a
subset of the sphere, not the ambient linear rank generated by those
directions.

For example, let \(A\subset\mathbb R^3\) be the union of three distinct
lines through the origin contained in a common plane, and let
\(B\subset\mathbb R^3\) be the union of three distinct lines through the
origin spanning \(\mathbb R^3\).  In both cases the direction set is
finite, hence
\[
\dim D(A)=\dim D(B)=0.
\]
Nevertheless, the ambient linear ranks are different: the first germ is
carried by a plane, while the second spans \(\mathbb R^3\).  The
Point-Cross dimension detects this distinction:
\[
\dim_{\times}\{0\}_A=2,
\qquad
\dim_{\times}\{0\}_B=3.
\]
Thus the Point-Cross dimension is not a size invariant of the direction
set.  It is a pointwise, rank-sensitive invariant of the embedded germ.

This also explains the difference in invariance behaviour.  Directional
dimension in the sense of Koike--Paunescu is designed to be stable under
bi-Lipschitz transformations in tame categories.  The Point-Cross
dimension, by contrast, is sensitive to ambient linear independence.  Its
natural invariance class is therefore closer to \(C^1\) changes of
coordinates, or more generally to maps admitting an invertible linear
blow-up at the point, than to arbitrary bi-Lipschitz equivalences of
metric germs.

\subsection*{Tangent cones under bi-Lipschitz maps}

Sampaio proved that bi-Lipschitz homeomorphic subanalytic sets have
bi-Lipschitz homeomorphic tangent cones \cite{Sampaio2016}.  This result
concerns the tangent cone as a metric germ, up to bi-Lipschitz
homeomorphism.  It is therefore compatible with, but different from, the
Point-Cross viewpoint.

Indeed, the Point-Cross dimension is not only concerned with the abstract
metric type of the tangent cone.  It also records how tangent directions
sit inside the ambient vector space and how much independent linear rank
they generate.  Finite unions with the same number of rays may be
bi-Lipschitz equivalent as abstract metric germs while having different
ambient spans. For instance, three rays contained in a plane and three
rays spanning \(\mathbb R^3\) can have the same abstract branching type.
Such a distinction is invisible to the abstract metric germ but visible
to \(\dim_{\times}\).

\subsection*{Assouad-type dimensions, spectra, and projections}

A large body of work studies the Assouad dimension, the Assouad spectrum,
dimension profiles, and their behaviour under orthogonal projections. See
for instance \cite{Fraser2020,FalconerFraserShmerkin2021}.  These
invariants are directional in an important sense, but along a different
axis.

The Assouad spectrum refines dimension by interpolating between scales:
one fixes a relation between two scales and measures worst-case local
covering growth.  This refines the scale variable, not the directional
variable.  When directionality enters through projection theorems, one
studies the behaviour of \(\dim P_V(X)\) for subspaces \(V\) in the
Grassmannian, usually for almost every \(V\), or else one estimates the
exceptional set of projections.

The Point-Cross dimension is different in two respects.  First, it is
attached to one point and one direction at a time.  Second, it is not an
almost-everywhere statement over the Grassmannian and does not ask how
the whole set behaves after projection.  It asks how much
box-dimensional mass is visible along a specific direction at a specific
germ, and then sums these masses over independent directions.  In short,
Assouad-type theories refine scaling and projection behaviour, whereas
the Point-Cross dimension refines the directional decomposition of a germ.

\subsection*{Decomposability bundles and Alberti representations}

The decomposability bundle \(V(\mu,x)\) introduced by Alberti and
Marchese \cite{AlbertiMarchese2016} is another close conceptual relative.
It assigns to a measure \(\mu\), for \(\mu\)-almost every point \(x\), a
linear subspace of directions along which the measure can be decomposed
into rectifiable one-dimensional pieces.  This connects naturally with
Alberti representations, originating in Alberti's work on the rank-one
property of BV derivatives \cite{Alberti1993}, and with Bate's work on
Lipschitz differentiability spaces \cite{Bate2015}.

The resemblance is real: both frameworks attach Grassmannian information
to points, and both are sensitive to the presence of curve-like
directions.  However, the differences are substantial.  First, the
decomposability bundle is measure-dependent and defined only
\(\mu\)-almost everywhere, while the Point-Cross dimension is defined at
every point of \(\overline A\) and requires no measure to be fixed in
advance.  Second, membership in \(V(\mu,x)\) is Boolean: a direction
belongs to the bundle or it does not.  By contrast, the Point-Cross
weight \(\theta_z^A(\xi)\) is graded and can take any value in
\([0,1]\).  Third, the decomposability bundle is built from rectifiable
one-dimensional decompositions of measures, while \(\theta_z^A(\xi)\)
measures box-dimensional thickness of the set itself inside conical
traces.

Thus the decomposability bundle is a measure-theoretic, almost-everywhere
tangent field, whereas the Point-Cross construction is a set-theoretic,
everywhere-defined, graded directional mass.

\subsection*{Summary}

The comparison can be summarized as follows.  Existing theories capture
important parts of the local directional picture: tangent scaling,
direction sets, metric tangent cones, projection behaviour, or
measure-theoretic decomposability.  The Point-Cross dimension differs by
combining, in a single invariant, pointwise localization, per-direction
box-dimensional weights, and aggregation over ambient linear
independence.

\begin{table}[htbp]
\centering
\small
\renewcommand{\arraystretch}{1.28}
\begin{tabular}{@{}p{4.0cm}p{5.0cm}p{6.1cm}@{}}
\toprule
\textbf{Notion} & \textbf{What it measures} &
\textbf{How \(\dim_{\times}\) differs} \\
\midrule
Tangential dimensions
\newline (Guido--Isola)
&
box dimension of tangent sets at a point
&
isotropic; no per-direction decomposition
\\
\midrule
Direction set
\newline (Koike--Paunescu)
&
Hausdorff dimension of limiting directions
&
rank-blind; finite line configurations all have \(\dim D=0\)
\\
\midrule
Tangent cones up to bi-Lipschitz equivalence
\newline (Sampaio)
&
metric germ of the tangent cone
&
does not record the ambient linear span
\\
\midrule
Assouad-type dimensions and projections
\newline (Falconer--Fraser--Shmerkin and others)
&
scale interpolation and projection behaviour
&
refines scales and projections, not pointwise directional masses
\\
\midrule
Decomposability bundle
\newline (Alberti--Marchese)
&
\(\mu\)-a.e.\ subspace of rectifiable decomposability directions
&
measure-dependent, almost-everywhere, and not a graded set-theoretic point invariant
\\
\midrule
\textbf{Point-Cross dimension}
\newline \(\dim_{\times}\)
&
per-direction box weights summed over independent directions
&
pointwise, per-direction, box-graded, and rank-aggregated
\\
\bottomrule
\end{tabular}
\medskip
\caption{Comparison between the Point-Cross dimension and neighbouring
directional or tangential notions.}
\label{tab:comparison-directional-notions}
\end{table}

\begin{remark}[On the scope of the comparison]
The comparison above is not meant to be an exhaustive survey of every
possible directional construction in analysis or geometry.  Its purpose
is more specific: to locate the Point-Cross dimension with respect to the
main existing notions that combine local dimension, tangent geometry,
direction sets, projection theory, or directional decomposability.  Within
this landscape, the graded per-direction, rank-aggregated pointwise weight
appears to be new.
\end{remark}

\begin{remark}[Transition to the structural theory]
The preceding construction provides two invariants: the directional Point-Cross dimension
\(\dim_{\times}\), obtained from directional contributions and projective aggregation, and the
combined Point-Box-Cross dimension \(\dimBoxCross\), obtained by coupling the directional
component with the point-extended box dimension.

The next chapter is devoted to the structural study of these invariants: their locality and
invariance properties, their behavior under unions and transverse configurations, their
relation with the point-extended box dimension, and their evaluation on basic smooth, discrete,
oscillatory, and fractal examples.
\end{remark}

\chapter{Structural Properties, Directional Calculus, and Model Calibration}

The previous chapter introduced the Point-Cross dimension as a local directional
invariant obtained by aggregating weighted contributions over independent
projective effective directions. More precisely, for \(A\subset\mathbb R^n\)
and \(x\in\overline A\), each effective projective direction
\(\xi\in\Eff_x^{\mathbb P}(A)\) carries a directional contribution
\[
\theta_x^A(\xi)\in[0,1],
\]
defined through the point-extended box complexity detected along admissible
Lipschitz probes. The Point-Cross dimension
\[
\dim_{\times}\{x\}_A
\]
is then obtained by taking the supremum of the sums of these contributions over
finite projectively independent families of directions.

The purpose of the present chapter is threefold. We first establish the basic
structural properties of the invariant. We then develop a local calculus
describing how weighted directional channels behave under natural geometric
operations. Finally, we test the construction on an initial collection of model
configurations, ranging from smooth crossings to sparse and fractal directional
structures.

The guiding principle is that \(\dim_{\times}\) refines the point-tangential
dimension while retaining the same underlying effective directional support,
namely the projective form of the Bouligand tangent directions. The
point-tangential dimension answers the question: which independent tangent
directions are present at \(x\)? By contrast, the Point-Cross dimension asks
how much pointwise box complexity is effectively carried by those directions.
It therefore replaces an unweighted directional rank by a weighted projective
one.

This distinction is essential. In smoothly calibrated situations, every
direction of a suitable independent tangent frame carries a full
one-dimensional contribution along admissible probes, and the Point-Cross
dimension coincides with the point-tangential dimension, and hence with the
manifold dimension. In singular, oscillatory, sparse, or fractal
configurations, however, effective directions may be present without all
carrying full weight. The Point-Cross dimension is designed precisely to
measure this intermediate regime, where directional support and directional
complexity no longer coincide automatically.

A central theme of the theory is that \(\dim_{\times}\) should not be confused
with the point-extended box dimension
\[
\dim_{\mathrm{Pbox}}\{x\}_A.
\]
The latter measures the multiscale covering complexity of \(A\) near \(x\),
whereas the former measures the coexistence, at \(x\), of independent effective
directional channels. These mechanisms are related, but neither should be
expected to dominate the other without additional hypotheses. For instance, a
planar cross has point-extended box dimension \(1\) at its crossing point,
while its Point-Cross dimension is \(2\). Conversely, high point-extended box
complexity need not, by itself, imply the presence of independent directional
channels of comparable total weight.

We begin with locality, monotonicity, closure invariance, the failure of upper
semicontinuity, and invariance under local \(C^1\)-diffeomorphisms. We then
develop a local calculus of directional structures. Finite unions lead to the
supermax phenomenon and to an exact weighted union rule. Intersections admit
only a common-channel upper bound in general, while the classical Grassmann
identity reappears under clean smooth hypotheses. We then study Cartesian
products, fractal coordinate frames, and linear projections, which test how
directional channels combine, separate, collapse, or become visible under
geometric operations.

The chapter concludes with the first model examples and prepares the framework
for the final chapter, where general comparison theorems will be established.

\section{Structural Properties}

This section collects the structural properties of the Point-Cross dimension
that follow from the construction developed in the previous chapter. The
results below will be used repeatedly in the examples and in the general
comparison theorems.

Throughout this section, \(A,B\subset\mathbb R^n\) and \(x\in\mathbb R^n\). The
notation \(\Eff_x^{\mathbb P}(A)\) denotes the set of projective effective directions
of \(A\) at \(x\). Whenever \(x\in\overline A\), the notation introduced
in the previous chapter gives
\[
\dim_{\times}\{x\}_A
=
\sup_{(\xi_1,\dots,\xi_m)\in\mathcal I_x^{\mathbb P}(A)}
\sum_{i=1}^m\theta_x^A(\xi_i),
\]
where the empty family is allowed and contributes \(0\).

\subsection{Locality, monotonicity, and closure invariance}

We begin with the most basic structural properties. They express the fact that
the Point-Cross dimension is a local invariant of the germ of the set at the
point, and that adding points cannot destroy previously available directional
channels.

\begin{proposition}[Basic structural properties]
\label{prop:pcd-locality-monotonicity-closure}
Let \(A,B\subset\mathbb R^n\) and let \(x\in\mathbb R^n\). Then the following
properties hold.

\begin{enumerate}
    \item \textbf{Bounds.} If \(x\in\overline A\), then
    \[
    0
    \le
    \dim_{\times}\{x\}_A
    \le
    \dim_{\mathrm{Ptan}}\{x\}_A
    \le n.
    \]

    \item \textbf{Locality.} If \(x\in\overline A\cap\overline B\), and if
    there exists a neighborhood \(U\) of \(x\) such that
    \[
    A\cap U=B\cap U,
    \]
    then
    \[
    \dim_{\times}\{x\}_A
    =
    \dim_{\times}\{x\}_B.
    \]

    \item \textbf{Monotonicity.} If \(A\subset B\), then for every
    \(x\in\overline A\),
    \[
    \dim_{\times}\{x\}_A
    \le
    \dim_{\times}\{x\}_B.
    \]

    \item \textbf{Closure invariance.} For every \(x\in\overline A\),
    \[
    \dim_{\times}\{x\}_A
    =
    \dim_{\times}\{x\}_{\overline A}.
    \]

    \item \textbf{Dense-subset invariance.} If \(D\subset A\) is dense in
    \(A\), equivalently \(\overline D=\overline A\), then
    \[
    \dim_{\times}\{x\}_D
    =
    \dim_{\times}\{x\}_A
    \qquad
    (x\in\overline A),
    \]
    and consequently
    \[
    \dim_{\times}(D)=\dim_{\times}(A).
    \]
\end{enumerate}
\end{proposition}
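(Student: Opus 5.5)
The plan is to reduce each item to a corresponding property of the two ingredients of $\dim_{\times}$: the projective effective-direction set $\Eff_x^{\mathbb P}(\cdot)$ and the projective contribution $\theta_x^{\cdot}(\xi)$. Item (1) is Proposition~\ref{prop:point-cross-basic-bounds} verbatim, so nothing new is required there.

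For locality (2), I would first note that $A\cap U=B\cap U$ gives, by Corollary~\ref{cor:eff-inherited-properties}(2),
\[
\Eff_x(A)=\Eff_x(A\cap U)=\Eff_x(B\cap U)=\Eff_x(B).
\]
Hence $\mathcal I_x^{\mathbb P}(A)=\mathcal I_x^{\mathbb P}(B)$. For each oriented $v$ in this common set, Proposition~\ref{prop:basic-theta-properties}(2) gives
\[
\theta_x^A(v)=\theta_x^{A\cap U}(v)=\theta_x^{B\cap U}(v)=\theta_x^B(v).
\]
Taking the maximum over the orientations in $\{v,-v\}$ transfers this equality to the projective contributions, so the two suprema defining $\dim_\times$ are taken over identical families with identical summands.

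Monotonicity (3) is argued the same way. For $A\subset B$ we have $\Eff_x(A)\subset\Eff_x(B)$, hence $\Eff_x^{\mathbb P}(A)\subset\Eff_x^{\mathbb P}(B)$, and projective independence does not depend on the ambient set. Therefore $\mathcal I_x^{\mathbb P}(A)\subset\mathcal I_x^{\mathbb P}(B)$. For $\xi\in\Eff_x^{\mathbb P}(A)$, every oriented representative $w\in\Eff_x(A)\cap\xi$ lies in $\Eff_x(B)$ and satisfies $\theta_x^A(w)\le\theta_x^B(w)$ by Proposition~\ref{prop:basic-theta-properties}(1). Taking the supremum over such $w$ gives $\theta_x^A(\xi)\le\theta_x^B(\xi)$. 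Each aggregate for $A$ is therefore dominated by an aggregate for $B$, and the inequality passes to the suprema.

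Closure invariance (4) uses $\Eff_x(A)=\Eff_x(\overline A)$ from Corollary~\ref{cor:eff-inherited-properties}(3), together with the projective closure invariance $\theta_x^A(\xi)=\theta_x^{\overline A}(\xi)$. The latter is recorded in the remark following Definition~\ref{def:projective-directional-contribution} and rests on Proposition~\ref{prop:theta-closure-invariance}. Since both the admissible families and their aggregates coincide, the suprema coincide. The substance here is the shadowing Lemma~\ref{lem:closure-trace-shadowing}; once that is available, this step is only bookkeeping.

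For dense-subset invariance (5), I would apply (4) twice:
\[
\dim_\times\{x\}_D=\dim_\times\{x\}_{\overline D}=\dim_\times\{x\}_{\overline A}=\dim_\times\{x\}_A,
\]
which is valid for $x\in\overline A=\overline D$. The global equality follows by taking the supremum over the common closure $\overline D=\overline A$. The case $A=\varnothing$ needs separate attention: then $D=\varnothing$ and both values are $0$ by convention.

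I expect no real obstacle in any of these steps. The one point that needs care is the projective step in (3): a given $\xi$ may have more oriented representatives that are effective for $B$ than for $A$. This only enlarges the supremum defining $\theta_x^B(\xi)$, so it works in the right direction.
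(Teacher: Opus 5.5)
Your proposal is correct and follows the paper's proof essentially step by step. Each item is reduced to the corresponding germ, monotonicity, or closure property of $\Eff_x^{\mathbb P}$ and of the oriented contributions $\theta_x^{E}(v)$, passed to projective contributions by a supremum over effective orientations, and then transferred to $\dim_{\times}$ through equal or nested index families; (5) follows by applying (4) to $D$ and to $A$, and in (4) you quote the projective closure invariance from the remark after Definition~\ref{def:projective-directional-contribution} where the paper rederives it inline from Proposition~\ref{prop:theta-closure-invariance}.
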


\begin{proof}
The bounds are exactly those of
Proposition~\ref{prop:point-cross-basic-bounds}.

We first prove locality. Assume that
\[
x\in\overline A\cap\overline B
\]
and that
\[
A\cap U=B\cap U
\]
for some neighborhood \(U\) of \(x\). By the germ dependence of effective
directions, stated in Corollary~\ref{cor:eff-inherited-properties}, one has
\[
\Eff_x(A)
=
\Eff_x(A\cap U)
=
\Eff_x(B\cap U)
=
\Eff_x(B).
\]
Passing to projective classes gives
\[
\Eff_x^{\mathbb P}(A)
=
\Eff_x^{\mathbb P}(B).
\]

Moreover, by the germ dependence of the oriented directional contribution,
Proposition~\ref{prop:basic-theta-properties}, for every
\[
w\in\Eff_x(A)=\Eff_x(B),
\]
one has
\[
\theta_x^A(w)
=
\theta_x^{A\cap U}(w)
=
\theta_x^{B\cap U}(w)
=
\theta_x^B(w).
\]
Taking the supremum over the oriented representatives of each projective class,
as in Definition~\ref{def:projective-directional-contribution}, gives
\[
\theta_x^A(\xi)=\theta_x^B(\xi)
\]
for every
\[
\xi\in\Eff_x^{\mathbb P}(A)=\Eff_x^{\mathbb P}(B).
\]
Consequently,
\[
\mathcal I_x^{\mathbb P}(A)
=
\mathcal I_x^{\mathbb P}(B),
\]
and every family in this common index set has the same directional aggregate
for \(A\) and for \(B\). Therefore
\[
\dim_{\times}\{x\}_A
=
\dim_{\times}\{x\}_B.
\]

We next prove monotonicity. Assume that \(A\subset B\) and
\(x\in\overline A\). By Corollary~\ref{cor:eff-inherited-properties},
\[
\Eff_x(A)\subset\Eff_x(B),
\]
and hence
\[
\Eff_x^{\mathbb P}(A)\subset\Eff_x^{\mathbb P}(B).
\]
For every \(\xi\in\Eff_x^{\mathbb P}(A)\), the monotonicity of the oriented
directional contribution and
Definition~\ref{def:projective-directional-contribution} give
\[
\begin{aligned}
\theta_x^A(\xi)
&=
\sup_{w\in\Eff_x(A)\cap\xi}\theta_x^A(w)\\
&\le
\sup_{w\in\Eff_x(A)\cap\xi}\theta_x^B(w)\\
&\le
\sup_{w\in\Eff_x(B)\cap\xi}\theta_x^B(w)\\
&=
\theta_x^B(\xi).
\end{aligned}
\]
Thus every family
\[
(\xi_1,\dots,\xi_m)\in\mathcal I_x^{\mathbb P}(A)
\]
also belongs to \(\mathcal I_x^{\mathbb P}(B)\), and
\[
\sum_{i=1}^m\theta_x^A(\xi_i)
\le
\sum_{i=1}^m\theta_x^B(\xi_i).
\]
Taking the supremum over
\(\mathcal I_x^{\mathbb P}(A)\) yields
\[
\dim_{\times}\{x\}_A
\le
\dim_{\times}\{x\}_B.
\]

Finally, we prove closure invariance. By
Corollary~\ref{cor:eff-inherited-properties},
\[
\Eff_x^{\mathbb P}(A)
=
\Eff_x^{\mathbb P}(\overline A).
\]
Moreover, Proposition~\ref{prop:theta-closure-invariance} gives
\[
\theta_x^A(w)=\theta_x^{\overline A}(w)
\]
for every \(w\in\Eff_x(A)\). Therefore, for every
\(\xi\in\Eff_x^{\mathbb P}(A)\),
\[
\begin{aligned}
\theta_x^A(\xi)
&=
\sup_{w\in\Eff_x(A)\cap\xi}\theta_x^A(w)\\
&=
\sup_{w\in\Eff_x(\overline A)\cap\xi}
\theta_x^{\overline A}(w)\\
&=
\theta_x^{\overline A}(\xi).
\end{aligned}
\]
Consequently,
\[
\mathcal I_x^{\mathbb P}(A)
=
\mathcal I_x^{\mathbb P}(\overline A),
\]
and every family in this common index set has the same directional aggregate
for \(A\) and for \(\overline A\). Hence
\[
\dim_{\times}\{x\}_A
=
\dim_{\times}\{x\}_{\overline A}.
\]

If \(D\subset A\) is dense in \(A\), then \(\overline D=\overline A\). Thus,
for every \(x\in\overline A\), closure invariance applied to \(D\) and to
\(A\) gives
\[
\dim_{\times}\{x\}_D
=
\dim_{\times}\{x\}_{\overline D}
=
\dim_{\times}\{x\}_{\overline A}
=
\dim_{\times}\{x\}_A.
\]
If \(\overline A=\varnothing\), the global identity is trivial. Otherwise, taking
the supremum over the common closure \(\overline D=\overline A\) yields
\[
\dim_{\times}(D)=\dim_{\times}(A).
\]
\end{proof}

\begin{remark}[Closed-germ viewpoint and anchored alternatives]
The preceding proposition shows that the Point-Cross dimension depends only on
the closed local germ of the set at the base point. Indeed, if
\(x\in\overline A\cap\overline B\) and if there exists a neighborhood \(U\) of
\(x\) such that
\[
\overline A\cap U=\overline B\cap U,
\]
then closure invariance and locality give
\[
\dim_{\times}\{x\}_A
=
\dim_{\times}\{x\}_{\overline A}
=
\dim_{\times}\{x\}_{\overline B}
=
\dim_{\times}\{x\}_B.
\]
The dense-subset invariance stated in the same proposition is the corresponding
global form of this closed-germ viewpoint.

This closure invariance is natural in the present framework. The directional
support is governed by the Bouligand tangent cone and is therefore already a
closed-germ object, while the directional weights are defined through
point-extended box complexity along admissible traces. The role of this
box-based component, including the shadowing mechanism and the contrast with a
Hausdorff-based localization, was explained in
Remark~\ref{rem:box-component-essential}.

The same closed-germ behavior is inherited by the combined Point-Box-Cross
invariant. Indeed, the point-extended box dimension is closure-invariant by
Proposition~\ref{prop:basic-pebd-recall}, and \(\dim_{\times}\) is
closure-invariant by the preceding proposition. Therefore
\[
\dimBoxCross\{x\}_A
=
\dimBoxCross\{x\}_{\overline A}
\qquad
(x\in\overline A).
\]
Consequently,
\[
\dimBoxCross(A)=\dimBoxCross(\overline A).
\]

This is also a deliberate structural choice. An anchored variant was initially
considered in the development of the theory. Such a variant would require
\(x\in A\) and would require the relevant branches or probes to be genuinely
anchored in \(A\) at the point \(x\) itself. It would therefore distinguish a
planar cross from the same cross with its intersection point removed, and would
be more sensitive to the incidence structure and internal topology of the set.

This is a legitimate alternative viewpoint, but it is not the one adopted
here. The present definition is based on the closed local germ: it is defined
for \(x\in\overline A\), records the directional structure visible from that
closed germ, and is insensitive to punctual deletions or other modifications
that do not alter the closed local germ.
\end{remark}

The locality statement should not be confused with semicontinuity. Unlike the
point-extended box dimension, the Point-Cross dimension is not upper
semicontinuous in general.

\begin{proposition}[Failure of upper semicontinuity]
\label{prop:pcd-not-usc}
The map
\[
x\longmapsto \dim_{\times}\{x\}_A
\]
is not upper semicontinuous on \(\overline A\) in general.
\end{proposition}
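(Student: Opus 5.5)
I will reuse the counterexample from Proposition~\ref{prop:usc-tandim}: set
\[
L:=\{(t,0):t\ge0\},\qquad \Gamma:=\{(t,t^2\sin(1/t)):t>0\},\qquad A:=L\cup\Gamma,
\]
which is closed in \(\mathbb R^2\) and contains the origin. Take \(x_k:=(1/(k\pi),0)\in L\cap\Gamma\), so that \(x_k\to(0,0)\). The goal is to show that \(\dim_{\times}\{x_k\}_A=2\) for every \(k\), while \(\dim_{\times}\{(0,0)\}_A=1\). This gives \(\limsup_k\dim_{\times}\{x_k\}_A=2>1\), which contradicts upper semicontinuity.

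\textbf{At the limit point.} Proposition~\ref{prop:usc-tandim} gives \(\dim_{\mathrm{Ptan}}\{(0,0)\}_A=1\). The upper bound in Corollary~\ref{cor:first-hierarchy-point-cross} then yields \(\dim_{\times}\{(0,0)\}_A\le1\). For the reverse inequality, note that \(L\) contains the segment germ in the direction \((1,0)\). Hence \(\dim_{\mathrm{Pvec}}\{(0,0)\}_A\ge1\), and the lower bound of the same corollary gives \(\dim_{\times}\{(0,0)\}_A=1\).

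\textbf{At the points \(x_k\).} Here the upper bound is simply \(\dim_{\times}\le n=2\). For the lower bound I will exhibit two independent projective directions, each of weight \(1\).
\begin{itemize}
\item The direction \([(1,0)]\) is realized by the straight probe along \(L\). By Proposition~\ref{prop:pvec-lower-bound-point-cross}, or directly via Proposition~\ref{prop:exact-directions-realized-by-probes}, its weight is \(1\).
\item The second direction is the tangent of \(\Gamma\) at \(x_k\), namely \(w_k=(1,-(-1)^k)/\sqrt2\). Near \(x_k\), the set \(\Gamma\) is a \(C^1\) embedded arc. The probe \(\gamma(t)=(t_k+t,(t_k+t)^2\sin(1/(t_k+t)))\) with \(t_k=1/(k\pi)\), restricted to a small interval, is injective and Lipschitz, lies entirely in \(A\), and has limiting direction \(w_k\).
\end{itemize}
Its trace \(A\cap\Gamma_\gamma=\Gamma_\gamma\) is a nondegenerate \(C^1\) arc, so it has point-extended box dimension \(1\) at \(x_k\). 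This is exactly the calibration used in the proof of Proposition~\ref{prop:point-cross-smooth-calibration}. Hence \(\theta_{x_k}^A([w_k])=1\). Since \(w_k\) is not collinear with \((1,0)\), the two directions form an element of \(\mathcal I_{x_k}^{\mathbb P}(A)\) with aggregate \(2\), and therefore \(\dim_{\times}\{x_k\}_A=2\).

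The only delicate point is confirming that the \(\Gamma\)-probe is admissible at \(x_k\). This requires injectivity, which holds because the first coordinate is strictly increasing, and the Lipschitz property, which holds because \(\Gamma\) is smooth away from the origin and \(x_k\neq0\). Both checks are routine.

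Alternatively, one could invoke smooth calibration applied to each branch separately and combine the results via monotonicity. I will instead give the direct two-probe argument above, since locally near \(x_k\) the set \(A\) is not a submanifold.
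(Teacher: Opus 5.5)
Your argument is correct, but it uses a different counterexample from the paper.

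\textbf{The paper's route.} The paper builds a fresh set: a sequence of tiny planar crosses centred at $c_m=(2^{-m},0)$, with arm length $\ell_m=2^{-3m}=o(2^{-m})$. At each centre both directions are exact segment germs. So $\dim_{\times}\{c_m\}_A=2$ follows at once from the squeeze $\dim_{\mathrm{Pvec}}\le\dim_{\times}\le\dim_{\mathrm{Ptan}}$, without any curved probe. A direct estimate $|z_{k,2}|/|z_{k,1}|\to0$ then shows that only the horizontal direction survives at the origin.

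\textbf{Your route.} You recycle the oscillatory set $L\cup\Gamma$ of Proposition~\ref{prop:usc-tandim}. This gives you the bound at the origin for free, via $\dim_{\times}\le\dim_{\mathrm{Ptan}}=1$. At $x_k$, however, the second channel is not an exact direction: $\Gamma$ contains no segment, so $\dim_{\mathrm{Pvec}}\{x_k\}_A=1$ and the squeeze is unavailable. You therefore have to construct the curved probe along $\Gamma$ and invoke the calibration that a regular $C^1$ arc has point-extended box dimension $1$ at its endpoint. You do this correctly: injectivity, the Lipschitz property, the limiting direction $w_k$ (from $\gamma'(0)\neq0$), recurrence, and $A\cap\Gamma_\gamma=\Gamma_\gamma$ all hold.

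\textbf{What each buys.} The paper's construction shows the failure already occurs with purely rectilinear pieces, where the whole directional hierarchy collapses to the same integer at the nearby points. It also isolates the mechanism: branches of size $o(\text{distance})$ become invisible in the Bouligand cone. Your construction shows that the very set witnessing the failure for $\dim_{\mathrm{Ptan}}$ also witnesses it for $\dim_{\times}$. The cost is one genuinely weighted probe computation.
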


\begin{proof}
The phenomenon is analogous to the collapse of nearby transverse directions
already observed for the point-tangential dimension in
Proposition~\ref{prop:usc-tandim}. We give an example showing that the same
failure occurs for the Point-Cross dimension.

In \(\mathbb R^2\), set
\[
c_m=(2^{-m},0),
\qquad
\ell_m=2^{-3m},
\]
and define
\[
A
=
\{(0,0)\}
\cup
\bigcup_{m\ge1}
\left(
[2^{-m}-\ell_m,2^{-m}+\ell_m]\times\{0\}
\right)
\cup
\bigcup_{m\ge1}
\left(
\{2^{-m}\}\times[-\ell_m,\ell_m]
\right).
\]
Thus \(A\) is a sequence of small planar crosses centered at the points
\(c_m\), accumulating at the origin, with sizes \(\ell_m\) negligible compared
with their distances to the origin.

For each \(m\), the \(m\)-th cross is separated by a positive distance from
all the other crosses. Hence there exists a neighborhood \(U_m\) of \(c_m\)
such that
\[
A\cap U_m
\]
is exactly the germ of the union of two transverse line segments centered at
\(c_m\). Therefore the two coordinate directions are exact independent local
directions at \(c_m\), and
\[
\dim_{\mathrm{Pvec}}\{c_m\}_A
=
\dim_{\mathrm{Ptan}}\{c_m\}_A
=
2.
\]
By Proposition~\ref{prop:pvec-lower-bound-point-cross} and
Proposition~\ref{prop:point-cross-basic-bounds},
\[
2
=
\dim_{\mathrm{Pvec}}\{c_m\}_A
\le
\dim_{\times}\{c_m\}_A
\le
\dim_{\mathrm{Ptan}}\{c_m\}_A
=
2.
\]
Consequently,
\[
\dim_{\times}\{c_m\}_A=2
\qquad(m\ge1).
\]

We now study the origin. Let
\[
z_k=(z_{k,1},z_{k,2})\in A\setminus\{(0,0)\}
\]
be any sequence such that \(z_k\to(0,0)\). For each \(k\), let \(m_k\) be such
that \(z_k\) belongs to the \(m_k\)-th cross. Since every finite collection of
crosses has positive distance from the origin, the convergence \(z_k\to(0,0)\)
implies
\[
m_k\to\infty.
\]
Moreover,
\[
|z_{k,2}|\le \ell_{m_k}
\]
and
\[
z_{k,1}\ge 2^{-m_k}-\ell_{m_k}>0.
\]
Hence
\[
\frac{|z_{k,2}|}{|z_{k,1}|}
\le
\frac{\ell_{m_k}}{2^{-m_k}-\ell_{m_k}}
=
\frac{2^{-3m_k}}{2^{-m_k}-2^{-3m_k}}
=
\frac{2^{-2m_k}}{1-2^{-2m_k}}
\longrightarrow0.
\]
It follows that
\[
\frac{z_k}{\|z_k\|}
\longrightarrow(1,0).
\]
Therefore
\[
\Eff_{(0,0)}(A)=\{(1,0)\},
\]
and in particular
\[
\dim_{\mathrm{Ptan}}\{(0,0)\}_A=1.
\]
By Proposition~\ref{prop:point-cross-basic-bounds},
\[
\dim_{\times}\{(0,0)\}_A
\le
\dim_{\mathrm{Ptan}}\{(0,0)\}_A
=
1.
\]

Since
\[
c_m\to(0,0),
\]
we obtain
\[
\limsup_{m\to\infty}
\dim_{\times}\{c_m\}_A
=
2
>
\dim_{\times}\{(0,0)\}_A.
\]
Thus the map
\[
x\longmapsto\dim_{\times}\{x\}_A
\]
is not upper semicontinuous on \(\overline A\).
\end{proof}

\begin{remark}[Contrast with the point-extended box dimension]
The failure of upper semicontinuity is caused by the possible collapse of the
effective directional support as the base point varies. This contrasts with the
behavior of the point-extended box dimension, whose upper semicontinuity follows
from the nesting of neighborhoods \cite{pointcsf}.

Indeed, fix \(\varepsilon>0\). By definition, there exists \(r>0\) such that
\[
\dimBl\bigl(A\cap B(x,r)\bigr)
<
\dim_{\mathrm{Pbox}}\{x\}_A+\varepsilon.
\]
If \(\|y-x\|<r/2\), then
\[
B(y,r/2)\subset B(x,r),
\]
and therefore
\[
\dim_{\mathrm{Pbox}}\{y\}_A
\le
\dimBl\bigl(A\cap B(y,r/2)\bigr)
\le
\dimBl\bigl(A\cap B(x,r)\bigr)
<
\dim_{\mathrm{Pbox}}\{x\}_A+\varepsilon.
\]
Hence
\[
\limsup_{y\to x}\dim_{\mathrm{Pbox}}\{y\}_A
\le
\dim_{\mathrm{Pbox}}\{x\}_A.
\]

For the Point-Cross dimension, this argument does not apply. Its value depends
not only on the pointwise box complexity detected along individual probes, but
also on the projective effective directions available at the base point and on
the linearly independent families that can be extracted from them. The
inclusion of small neighborhoods controls local covering complexity, but it
does not ensure the persistence of independent effective directions as the
base point varies.

In the example of Proposition~\ref{prop:pcd-not-usc}, each point \(c_m\) carries
two transverse exact directions, whereas at the origin the vertical branches
become asymptotically invisible in the Bouligand tangent cone because
\[
\ell_m=o(2^{-m}).
\]
Thus only the horizontal projective direction survives at the limiting point,
even though two independent directions occur at every nearby center \(c_m\).
\end{remark}


\subsection{\texorpdfstring{\(C^1\)}{C1}-invariance and regular changes of coordinates}

We now record the invariance of the Point-Cross dimension under regular changes
of coordinates. Since the construction uses tangent directions and projective
independence, the natural class of coordinate changes is the class of local
\(C^1\)-diffeomorphisms: the differential transports first-order directions,
while the local bi-Lipschitz character of the map preserves point-extended box
exponents along admissible probes.

All statements below are local. Let \(U,V\subset\mathbb R^n\) be open neighborhoods,
let
\[
\Phi:U\to V
\]
be a \(C^1\)-diffeomorphism, let \(x\in U\), and set
\[
y:=\Phi(x),
\qquad
L:=D\Phi(x).
\]
The linear map \(L\) is invertible and therefore induces a bijection on
projective space,
\[
\mathbb P(L):\mathbb P^{n-1}\to\mathbb P^{n-1},
\qquad
[v]\longmapsto [Lv].
\]
When \(A\subset\mathbb R^n\) and \(x\in\overline A\), locality allows us to
replace \(A\) by \(A\cap U\). Thus the relevant transformed germ is
\[
B:=\Phi(A\cap U)
\]
at the point \(y\). The transport of effective directions has already been established in
Corollary~\ref{cor:eff-inherited-properties}. In the present notation, it gives
\[
\Eff_y(B)
=
\left\{
\frac{Lv}{\|Lv\|}:v\in\Eff_x(A)
\right\},
\]
and, equivalently,
\[
\Eff_y^{\mathbb P}(B)
=
\mathbb P(L)\bigl(\Eff_x^{\mathbb P}(A)\bigr).
\]
In particular, \(\mathbb P(L)\) is a bijection from
\(\Eff_x^{\mathbb P}(A)\) onto \(\Eff_y^{\mathbb P}(B)\).

\begin{lemma}[Local bi-Lipschitz invariance of point-extended box germs]
\label{lem:local-pbox-bilipschitz-germs}
Let \(E\subset\mathbb R^n\), let \(x\in\overline E\), and let
\(\Psi:U_0\to V_0\) be a bi-Lipschitz homeomorphism between neighborhoods of
\(x\) and \(y:=\Psi(x)\). Then
\[
\dim_{\mathrm{Pbox}}\{y\}_{\Psi(E\cap U_0)}
=
\dim_{\mathrm{Pbox}}\{x\}_E.
\]
In particular, if \(E\subset U_0\), then
\[
\dim_{\mathrm{Pbox}}\{y\}_{\Psi(E)}
=
\dim_{\mathrm{Pbox}}\{x\}_E.
\]
\end{lemma}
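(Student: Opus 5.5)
The plan is to reduce everything to small balls around \(x\), on which \(\Psi\) distorts distances by bounded factors, and then compare covering numbers directly. Let \(\Lambda\ge1\) be a common bi-Lipschitz constant:
\[
\Lambda^{-1}\|a-b\|\le\|\Psi(a)-\Psi(b)\|\le\Lambda\|a-b\|
\qquad(a,b\in U_0).
\]
Fix \(r_0>0\) with \(B(x,r_0)\subset U_0\). Since \(V_0\) is a neighbourhood of \(y\) and \(\Psi^{-1}\) is continuous, shrinking \(r_0\) lets us assume that \(B(y,r_0/\Lambda)\subset V_0\). For \(0<r\le r_0\) the two-sided bounds then give the nesting
\[
\Psi\bigl(E\cap B(x,r/\Lambda)\bigr)\subset \Psi(E\cap U_0)\cap B(y,r/\Lambda^{0})\cap\Psi(B(x,r/\Lambda)),
\]
and more usefully
\[
\Psi(E\cap U_0)\cap B(y,r/\Lambda)\subset \Psi\bigl(E\cap B(x,r)\bigr)\subset \Psi(E\cap U_0)\cap B(y,\Lambda r).
\]
The first inclusion holds because a point \(\Psi(a)\) with \(a\in U_0\) and \(\|\Psi(a)-y\|<r/\Lambda\) satisfies \(\|a-x\|<r\). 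The only subtlety is that \(a\) must lie in \(U_0\), which is true by construction.

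The next step is the standard covering comparison. If \(F\subset U_0\) is bounded and is covered by \(N\) sets of diameter at most \(\varepsilon\), then intersecting these sets with \(F\) and applying \(\Psi\) gives \(N\) sets of diameter at most \(\Lambda\varepsilon\) covering \(\Psi(F)\). Hence \(N_{\Lambda\varepsilon}(\Psi(F))\le N_\varepsilon(F)\), and symmetrically with \(\Psi^{-1}\). Since \(\ln(1/(\Lambda\varepsilon))/\ln(1/\varepsilon)\to1\), this yields \(\dimBl(\Psi(F))=\dimBl(F)\). Apply this to \(F=E\cap B(x,r)\), combine with the sandwich above and monotonicity of \(\dimBl\), and obtain
\[
\dimBl\bigl(\Psi(E\cap U_0)\cap B(y,r/\Lambda)\bigr)\le\dimBl\bigl(E\cap B(x,r)\bigr)\le\dimBl\bigl(\Psi(E\cap U_0)\cap B(y,\Lambda r)\bigr).
\]
Letting \(r\to0^+\) and using the monotone-limit form of Definition~\ref{def:pbox-metric}, the outer terms both tend to \(\dim_{\mathrm{Pbox}}\{y\}_{\Psi(E\cap U_0)}\) and the middle tends to \(\dim_{\mathrm{Pbox}}\{x\}_E\). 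The germ dependence in Proposition~\ref{prop:basic-pebd-recall} is implicit here, since only small balls enter.

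Two routine points remain to check. First, \(y\in\overline{\Psi(E\cap U_0)}\) follows from \(x\in\overline{E\cap U_0}\) and continuity, so both sides are evaluated at genuine closure points and the convention for \(x\notin\overline E\) is not triggered. Second, the special case \(E\subset U_0\) is immediate, because then \(E\cap U_0=E\).

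The main point needing care is the sandwich of balls. In particular, we must ensure that preimages of points near \(y\) in \(\Psi(E\cap U_0)\) stay in the region where the lower Lipschitz bound is available. This is automatic, because such points are images of points of \(U_0\) and the lower bound holds on all of \(U_0\).
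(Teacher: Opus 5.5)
Your proof is correct and follows essentially the same route as the paper: the two ball inclusions \(\Psi(E\cap U_0)\cap B(y,\rho)\subset\Psi(E\cap B(x,\Lambda\rho))\) and \(\Psi(E\cap B(x,r))\subset\Psi(E\cap U_0)\cap B(y,\Lambda r)\), then bi-Lipschitz invariance and monotonicity of \(\dimBl\), then letting the radii tend to zero. The differences are cosmetic. You prove the bi-Lipschitz invariance of \(\dimBl\) by a direct covering comparison, which the paper simply cites, and you explicitly check \(y\in\overline{\Psi(E\cap U_0)}\). Your first displayed inclusion (the one with \(\Lambda^{0}\)) and the shrinking of \(r_0\) to get \(B(y,r_0/\Lambda)\subset V_0\) are unnecessary but harmless.
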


\begin{proof}
Set \(F:=\Psi(E\cap U_0)\). Let \(c,C>0\) be bi-Lipschitz constants for
\(\Psi\), so that
\[
c\|p-q\|\le \|\Psi(p)-\Psi(q)\|\le C\|p-q\|
\qquad(p,q\in U_0).
\]
For every sufficiently small \(\rho>0\), so that
\(B(x,\rho/c)\subset U_0\),
\[
F\cap B(y,\rho)
\subset
\Psi\bigl(E\cap B(x,\rho/c)\bigr),
\]
and therefore, using monotonicity and bi-Lipschitz invariance of the upper box
dimension,
\[
\begin{aligned}
\dimBl\bigl(F\cap B(y,\rho)\bigr)
&\le
\dimBl\Bigl(\Psi\bigl(E\cap B(x,\rho/c)\bigr)\Bigr)\\
&=
\dimBl\bigl(E\cap B(x,\rho/c)\bigr).
\end{aligned}
\]
Similarly, for every sufficiently small \(r>0\), so that
\(B(x,r)\subset U_0\),
\[
\Psi\bigl(E\cap B(x,r)\bigr)
\subset
F\cap B(y,Cr),
\]
and therefore
\[
\begin{aligned}
\dimBl\bigl(E\cap B(x,r)\bigr)
&=
\dimBl\Bigl(\Psi\bigl(E\cap B(x,r)\bigr)\Bigr)\\
&\le
\dimBl\bigl(F\cap B(y,Cr)\bigr).
\end{aligned}
\]
Taking the infimum over sufficiently small radii, and observing that
\(\rho/c\to0\) as \(\rho\to0\) and \(Cr\to0\) as \(r\to0\), gives both
inequalities and hence equality. The final
assertion is immediate, since then \(E\cap U_0=E\).
\end{proof}

\begin{lemma}[Transport of projective directional contributions]
\label{lem:theta-c1-invariance}
\label{lem:c1-transfer-admissible-probes}
Let \(A\subset\mathbb R^n\), let \(x\in\overline A\cap U\), and set
\(B:=\Phi(A\cap U)\). Then, for every
\(\xi\in\Eff_x^{\mathbb P}(A)\),
\[
\theta_y^B\bigl(\mathbb P(L)\xi\bigr)
=
\theta_x^A(\xi).
\]
\end{lemma}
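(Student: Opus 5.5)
The plan is to prove the oriented statement first: for every \(v\in\Eff_x(A)\), with \(w:=Lv/\|Lv\|\in\Eff_y(B)\) (by Corollary~\ref{cor:eff-inherited-properties}),
\[
\theta_y^B(w)=\theta_x^A(v).
\]
The projective identity then follows by taking the maximum over the (at most two) oriented representatives. This works because \(v\mapsto Lv/\|Lv\|\) maps \(\Eff_x(A)\cap\{\pm v\}\) bijectively onto \(\Eff_y(B)\cap\{\pm w\}\), and \(L(-v)=-Lv\). By symmetry, applying the same argument to \(\Phi^{-1}:V\to U\), with \(\Phi^{-1}(B)=A\cap U\) and \(D\Phi^{-1}(y)=L^{-1}\), it suffices to prove the single inequality \(\theta_y^B(w)\ge\theta_x^A(v)\). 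Germ dependence (Proposition~\ref{prop:basic-theta-properties}) lets us replace \(A\) by \(A\cap U\).

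First I would fix a closed ball \(\overline B(x,r_0)\subset U\) on which \(\Phi\) is bi-Lipschitz. This holds because \(D\Phi\) is continuous and invertible at \(x\), so after shrinking, \(\|D\Phi(z)-L\|\le \tfrac12\|L^{-1}\|^{-1}\) on a convex ball, which yields two-sided Lipschitz bounds.

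Now take \(\gamma\in\mathcal G_x^A(v)\). Using Lemma~\ref{lem:restriction-of-probes}, restrict it so that \(\Gamma_\gamma\subset B(x,r_0)\). Set \(\widetilde\gamma:=\Phi\circ\gamma\) and check that it lies in \(\mathcal G_y^B(w)\):
\begin{itemize}
\item Injectivity and the Lipschitz property are inherited from \(\gamma\) and \(\Phi\), and \(\widetilde\gamma(0)=y\).
\item Recurrence holds because \(\Phi\) maps \(A\cap\gamma((0,\eta])\) into \(B\cap\widetilde\gamma((0,\eta])\).
\item For the limiting direction, differentiability gives \(\Phi(\gamma(t))-y=L(\gamma(t)-x)+o(\|\gamma(t)-x\|)\). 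Dividing by \(\|\gamma(t)-x\|\), the quotient tends to \(Lv\neq0\), and normalizing gives \(w\). This is exactly the computation in the proof of Proposition~\ref{prop:basic-tandim}(5).
\end{itemize}

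Next, since \(\Phi\) is injective, \(B\cap\Gamma_{\widetilde\gamma}=\Phi\bigl((A\cap U)\cap\Gamma_\gamma\bigr)=\Phi(A\cap\Gamma_\gamma)\). Lemma~\ref{lem:local-pbox-bilipschitz-germs} then gives
\[
\dim_{\mathrm{Pbox}}\{y\}_{B\cap\Gamma_{\widetilde\gamma}}=\dim_{\mathrm{Pbox}}\{x\}_{A\cap\Gamma_\gamma}.
\]
Restricting a probe does not change its Pbox value at \(x\), since the germ is unchanged (as in the proof of Proposition~\ref{prop:basic-theta-properties}). Taking the supremum over \(\gamma\) yields \(\theta_y^B(w)\ge\theta_x^A(v)\).

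The main obstacle is the direction-transport step, in particular ensuring \(\widetilde\gamma(t)\neq y\) so the normalization makes sense. This follows from injectivity of \(\Phi\). The restriction to a ball where \(\Phi\) is genuinely bi-Lipschitz also needs care, so that both the probe class and the Pbox lemma apply. Everything else is bookkeeping.
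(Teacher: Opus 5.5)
Your proposal is correct and follows essentially the same route as the paper: you restrict the probe to a convex neighbourhood where \(\Phi\) is bi-Lipschitz, push it forward by \(\Phi\), check admissibility and the limiting direction \(Lv/\|Lv\|\), apply Lemma~\ref{lem:local-pbox-bilipschitz-germs} to the trace \(\Phi(A\cap\Gamma_\gamma)\), and obtain the reverse inequality from \(\Phi^{-1}\). The only difference is organizational: you first prove the oriented identity \(\theta_y^B(Lv/\|Lv\|)=\theta_x^A(v)\) and then pass to projective classes through the orientation bijection, whereas the paper works directly with the projective contribution via an approximation \(s<\theta_x^A(\xi)\) and a near-optimal oriented representative.
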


\begin{proof}
By the germ dependence of effective directions,
\[
\Eff_x^{\mathbb P}(A)=\Eff_x^{\mathbb P}(A\cap U).
\]
By the germ dependence of the oriented directional contribution and by
Definition~\ref{def:projective-directional-contribution},
\[
\theta_x^A(\xi)=\theta_x^{A\cap U}(\xi).
\]
Thus it is enough to work with the germ \(A\cap U\) near \(x\). We prove
\[
\theta_y^B\bigl(\mathbb P(L)\xi\bigr)
\ge
\theta_x^A(\xi),
\]
and then prove the reverse inequality by applying the same argument to the
inverse diffeomorphism.

Let
\[
s<\theta_x^A(\xi).
\]
By Definition~\ref{def:projective-directional-contribution}, there exist an
oriented representative
\[
w\in\Eff_x(A),
\qquad
[w]=\xi,
\]
and an admissible probe \(\gamma\in\mathcal G_x^A(w)\) such that
\[
\dim_{\mathrm{Pbox}}\{x\}_{A\cap\Gamma_\gamma}>s.
\]
Since \(L=D\Phi(x)\) is invertible and \(D\Phi\) is continuous at \(x\), one
may choose a sufficiently small convex neighborhood \(U_0\Subset U\) of \(x\)
such that
\[
\sup_{z\in U_0}\|D\Phi(z)-L\|
<
\frac{1}{2\|L^{-1}\|}.
\]
The mean-value estimate along line segments then shows that the restriction
\[
\Phi:U_0\to\Phi(U_0)
\]

is bi-Lipschitz. By
Lemma~\ref{lem:restriction-of-probes}, we may restrict \(\gamma\) to a
sufficiently small initial interval \([0,\delta_0]\) such that
\[
\Gamma_\gamma\subset U_0.
\]
This restriction does not change the germ of \(A\cap\Gamma_\gamma\) at \(x\), so
the germ dependence of \(\dim_{\mathrm{Pbox}}\) still gives
\[
\dim_{\mathrm{Pbox}}\{x\}_{A\cap\Gamma_\gamma}>s.
\]
Set
\[
\nu:=\frac{Lw}{\|Lw\|},
\qquad
\alpha:=\Phi\circ\gamma.
\]
After this restriction, \(\alpha\) is an injective Lipschitz curve, satisfies
\(\alpha(0)=y\), and by differentiability of \(\Phi\) at \(x\),
\[
\frac{\alpha(t)-y}{\|\alpha(t)-y\|}
\longrightarrow
\nu
\qquad(t\to0^+).
\]
Moreover, the recurrence condition is preserved. Since
\(\Gamma_\gamma\subset U_0\subset U\), for every
\(\eta\in(0,\delta_0]\) one has
\[
A\cap\gamma((0,\eta])
=
(A\cap U)\cap\gamma((0,\eta])
\neq\varnothing.
\]
Applying \(\Phi\), we obtain, for every \(\eta\in(0,\delta_0]\),
\[
B\cap\alpha((0,\eta])\neq\varnothing.
\]
Hence
\[
\alpha\in\mathcal G_y^B(\nu).
\]

Because \(\Gamma_\gamma\subset U\) and \(\Phi\) is injective,
\[
B\cap\Gamma_\alpha
=
\Phi(A\cap\Gamma_\gamma).
\]
By Lemma~\ref{lem:local-pbox-bilipschitz-germs}, applied to
\(E=A\cap\Gamma_\gamma\) and to the bi-Lipschitz map
\(\Phi:U_0\to\Phi(U_0)\), we get
\[
\dim_{\mathrm{Pbox}}\{y\}_{B\cap\Gamma_\alpha}
=
\dim_{\mathrm{Pbox}}\{x\}_{A\cap\Gamma_\gamma}
>
s.
\]
Since \([\nu]=\mathbb P(L)\xi\), the definition of the projective directional
contribution yields
\[
\theta_y^B\bigl(\mathbb P(L)\xi\bigr)
\ge
\dim_{\mathrm{Pbox}}\{y\}_{B\cap\Gamma_\alpha}
>
s.
\]
Letting \(s\uparrow\theta_x^A(\xi)\) gives the desired inequality. For the
reverse inequality, set \(\zeta:=\mathbb P(L)\xi\). By the transport of
effective projective directions recalled above, \(\zeta\in\Eff_y^{\mathbb P}(B)\)
and \(\mathbb P(L^{-1})\zeta=\xi\). Applying the same argument to the inverse
diffeomorphism
\[
\Phi^{-1}:V\to U
\]
and to the set \(B\), we obtain
\[
\theta_x^{A\cap U}(\xi)
\ge
\theta_y^B(\zeta).
\]
By germ dependence,
\[
\theta_x^{A\cap U}(\xi)=\theta_x^A(\xi),
\]
which gives the opposite inequality, and hence equality.
\end{proof}

\begin{theorem}[\(C^1\)-invariance of the Point-Cross dimension]
\label{thm:pcd-c1-invariance}
Let \(A\subset\mathbb R^n\), let \(x\in\overline A\cap U\), and set
\(B:=\Phi(A\cap U)\). Then
\[
\dim_{\times}\{y\}_B
=
\dim_{\times}\{x\}_A.
\]
Equivalently,
\[
\dim_{\times}\{\Phi(x)\}_{\Phi(A\cap U)}
=
\dim_{\times}\{x\}_A.
\]
\end{theorem}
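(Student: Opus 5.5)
The plan is to reduce the theorem to the two transport facts already established: the bijection of projective effective directions induced by \(\mathbb P(L)\), from Corollary~\ref{cor:eff-inherited-properties}, and the equality of projective contributions, from Lemma~\ref{lem:theta-c1-invariance}. First I will use locality, Proposition~\ref{prop:pcd-locality-monotonicity-closure}(2), or directly the germ dependence of \(\Eff\) and \(\theta\), to replace \(A\) by \(A\cap U\). This gives \(\dim_{\times}\{x\}_A=\dim_{\times}\{x\}_{A\cap U}\). I will also check that \(y\in\overline B\), so that \(\dim_{\times}\{y\}_B\) is defined. This holds because \(x\in\overline{A\cap U}\) when \(U\) is an open neighborhood of \(x\), and \(\Phi\) is continuous.

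Next I will show that \(\mathbb P(L)\) induces a bijection between the index sets \(\mathcal I_x^{\mathbb P}(A)\) and \(\mathcal I_y^{\mathbb P}(B)\). The map sends \((\xi_1,\dots,\xi_m)\) to \((\mathbb P(L)\xi_1,\dots,\mathbb P(L)\xi_m)\). Membership is preserved because \(\mathbb P(L)\) maps \(\Eff_x^{\mathbb P}(A)\) onto \(\Eff_y^{\mathbb P}(B)\). Projective linear independence is preserved because \(L\) is an invertible linear map: if \(w_i\in\xi_i\) are independent, then so are the \(Lw_i\in\mathbb P(L)\xi_i\). The empty family corresponds to the empty family. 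The inverse of this map on index sets is induced by \(\mathbb P(L^{-1})\).

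Finally, Lemma~\ref{lem:theta-c1-invariance} gives \(\theta_y^B(\mathbb P(L)\xi_i)=\theta_x^A(\xi_i)\) for each \(i\). Hence the aggregates agree term by term:
\[
\Theta_y^B(\mathbb P(L)\xi_1,\dots,\mathbb P(L)\xi_m)=\Theta_x^A(\xi_1,\dots,\xi_m).
\]
Taking suprema over two index sets that are in bijection and carry equal aggregates yields \(\dim_{\times}\{y\}_B=\dim_{\times}\{x\}_A\). The second displayed form of the statement is the same identity rewritten.

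The real analytic difficulty has already been handled in Lemma~\ref{lem:theta-c1-invariance}: pushing probes forward under a locally bi-Lipschitz map while keeping the limiting direction, and using Lemma~\ref{lem:local-pbox-bilipschitz-germs}. The only remaining point that needs care here is bookkeeping. The equalities of \(\theta\) must be applied with the germ \(A\cap U\) rather than \(A\), which germ dependence justifies. One must also confirm that \(\mathbb P(L)\) is a bijection onto \(\Eff_y^{\mathbb P}(B)\), and not merely a map into it, so that the two suprema are taken over corresponding families.
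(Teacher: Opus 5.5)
Your proposal is correct and follows essentially the same route as the paper. The paper likewise uses Corollary~\ref{cor:eff-inherited-properties} to transport \(\Eff_x^{\mathbb P}(A)\) bijectively onto \(\Eff_y^{\mathbb P}(B)\), uses invertibility of \(L\) to obtain a bijection \(\mathcal I_x^{\mathbb P}(A)\to\mathcal I_y^{\mathbb P}(B)\), and applies Lemma~\ref{lem:theta-c1-invariance} to match the aggregates term by term before taking suprema. Your extra bookkeeping (reducing to \(A\cap U\) and checking \(y\in\overline B\)) is harmless but already absorbed into the cited corollary and lemma.
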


\begin{proof}
By Corollary~\ref{cor:eff-inherited-properties}, the map
\(\mathbb P(L)\) sends \(\Eff_x^{\mathbb P}(A)\) bijectively onto
\(\Eff_y^{\mathbb P}(B)\). Since \(L\) is an invertible linear map, it preserves
linear independence of representatives. Consequently, \(\mathbb P(L)\) induces
a bijection
\[
\mathcal I_x^{\mathbb P}(A)
\longrightarrow
\mathcal I_y^{\mathbb P}(B),
\]
given by
\[
(\xi_1,\dots,\xi_m)
\longmapsto
\bigl(\mathbb P(L)\xi_1,\dots,\mathbb P(L)\xi_m\bigr).
\]

Moreover, by Lemma~\ref{lem:theta-c1-invariance}, the weights are preserved:
for every \(\xi\in\Eff_x^{\mathbb P}(A)\),
\[
\theta_y^B\bigl(\mathbb P(L)\xi\bigr)
=
\theta_x^A(\xi).
\]
Therefore, for every finite projectively independent family
\((\xi_1,\dots,\xi_m)\in\mathcal I_x^{\mathbb P}(A)\),
\[
\sum_{i=1}^m
\theta_y^B\bigl(\mathbb P(L)\xi_i\bigr)
=
\sum_{i=1}^m
\theta_x^A(\xi_i).
\]
The empty family is also preserved. Taking the supremum over all admissible
families in the definition of \(\dim_{\times}\) gives
\[
\dim_{\times}\{y\}_B
=
\dim_{\times}\{x\}_A.
\]
\end{proof}

\begin{corollary}[\(C^1\)-invariance of the Point-Box-Cross dimension]
\label{cor:boxcross-c1-invariance}
Under the assumptions of Theorem~\ref{thm:pcd-c1-invariance}, one has
\[
\dimBoxCross\{y\}_B
=
\dimBoxCross\{x\}_A.
\]
\end{corollary}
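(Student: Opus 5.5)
The plan is to treat the two components of the maximum defining \(\dimBoxCross\) separately, since by Definition~\ref{def:point-box-cross-dimension}
\[
\dimBoxCross\{y\}_B=\max\bigl\{\dim_{\mathrm{Pbox}}\{y\}_B,\ \dim_{\times}\{y\}_B\bigr\},
\qquad
\dimBoxCross\{x\}_A=\max\bigl\{\dim_{\mathrm{Pbox}}\{x\}_A,\ \dim_{\times}\{x\}_A\bigr\}.
\]
It then suffices to show that each component is preserved under the local \(C^1\) change of coordinates. First we check that \(y\in\overline B\), so that the left-hand side is defined. Since \(x\in\overline A\cap U\) and \(U\) is open, we have \(x\in\overline{A\cap U}\). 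Continuity of \(\Phi\) then gives \(y=\Phi(x)\in\overline{\Phi(A\cap U)}=\overline B\).

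The directional component is exactly Theorem~\ref{thm:pcd-c1-invariance}, which gives \(\dim_{\times}\{y\}_B=\dim_{\times}\{x\}_A\).

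For the dispersive component, we reuse the construction from the proof of Lemma~\ref{lem:theta-c1-invariance}. Choose a convex neighbourhood \(U_0\Subset U\) of \(x\) on which \(\|D\Phi(z)-L\|<1/(2\|L^{-1}\|)\). By the mean-value estimate, \(\Phi:U_0\to\Phi(U_0)\) is bi-Lipschitz, and \(\Phi(U_0)\) is an open neighbourhood of \(y\). Lemma~\ref{lem:local-pbox-bilipschitz-germs}, applied to \(E=A\) and \(\Psi=\Phi|_{U_0}\), yields
\[
\dim_{\mathrm{Pbox}}\{y\}_{\Phi(A\cap U_0)}=\dim_{\mathrm{Pbox}}\{x\}_A .
\]
It remains to identify \(\Phi(A\cap U_0)\) with \(B\) near \(y\). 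Because \(\Phi\) is injective on \(U\), we have \(B\cap\Phi(U_0)=\Phi(A\cap U_0)\). Germ dependence of \(\dim_{\mathrm{Pbox}}\) (Proposition~\ref{prop:basic-pebd-recall}(2)) with the neighbourhood \(\Phi(U_0)\) then gives \(\dim_{\mathrm{Pbox}}\{y\}_B=\dim_{\mathrm{Pbox}}\{x\}_A\).

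Taking the maximum of the two equalities gives the corollary. The only point requiring care is this localization step. Proposition~\ref{prop:basic-pebd-recall}(5) concerns global bi-Lipschitz homeomorphisms of \(\mathbb R^n\), while \(\Phi\) is only a local \(C^1\) diffeomorphism. This mismatch is why we pass through the neighbourhood \(U_0\) and Lemma~\ref{lem:local-pbox-bilipschitz-germs}, rather than invoking global invariance directly.
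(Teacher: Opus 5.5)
Your proposal is correct and follows the paper's proof essentially step for step. Both arguments take $\dim_{\times}$ from Theorem~\ref{thm:pcd-c1-invariance}, and obtain $\dim_{\mathrm{Pbox}}$ by restricting to a convex neighbourhood $U_0$ on which $\Phi$ is bi-Lipschitz, applying Lemma~\ref{lem:local-pbox-bilipschitz-germs}, and using $B\cap\Phi(U_0)=\Phi(A\cap U_0)$ with germ dependence; your explicit check that $y\in\overline B$ is a small addition the paper leaves implicit.
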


\begin{proof}
Choose a sufficiently small convex neighborhood \(U_0\Subset U\) of \(x\) such
that
\[
\Phi:U_0\to\Phi(U_0)
\]
is bi-Lipschitz. Set
\[
B_0:=\Phi(A\cap U_0).
\]
Since \(\Phi(U_0)\) is a neighborhood of \(y\) and
\[
B\cap\Phi(U_0)
=
\Phi(A\cap U)\cap\Phi(U_0)
=
\Phi(A\cap U_0)
=
B_0,
\]
the sets \(B\) and \(B_0\) have the same germ at \(y\). Hence the germ
dependence of the point-extended box dimension and
Lemma~\ref{lem:local-pbox-bilipschitz-germs} give
\[
\dim_{\mathrm{Pbox}}\{y\}_B
=
\dim_{\mathrm{Pbox}}\{y\}_{B_0}
=
\dim_{\mathrm{Pbox}}\{x\}_A.
\]
By Theorem~\ref{thm:pcd-c1-invariance},
\[
\dim_{\times}\{y\}_B
=
\dim_{\times}\{x\}_A.
\]
Taking the maximum of the two corresponding quantities in
Definition~\ref{def:point-box-cross-dimension} gives the result.
\end{proof}

\begin{remark}[Invariance and its limits]
\label{rem:invariance-and-limits}
The \(C^1\)-invariance of Theorem~\ref{thm:pcd-c1-invariance}
should not be confused with an intrinsic bi-Lipschitz invariance.  The
Point-Cross dimension records how the germ is embedded in the ambient vector
space, and in particular how its effective directions sit with respect to
linear independence.

For example, let \(X\subset\mathbb R^3\) be the union of three distinct lines
through the origin contained in a common plane, and let \(Y\subset\mathbb R^3\)
be the union of three lines through the origin whose directions are linearly
independent in \(\mathbb R^3\).  The germs \((X,0)\) and \((Y,0)\) are
bi-Lipschitz homeomorphic as metric germs: a ray-by-ray map is bi-Lipschitz,
since distances between different rays are comparable to the sum of the radial
parameters.  More strongly, after choosing a bi-Lipschitz homeomorphism
\[
\varphi:S^2\to S^2
\]
sending the six ray-directions of \(X\) to those of \(Y\), its conical
extension
\[
ru\longmapsto r\varphi(u),
\qquad r\ge0,\ u\in S^2,
\]
gives an ambient bi-Lipschitz homeomorphism of \(\mathbb R^3\).  The map
\(\varphi\) may moreover be chosen piecewise-linear, hence semialgebraic,
after triangulating the sphere with the six marked directions.

Nevertheless,
\[
\dim_{\times}\{0\}_X=2,
\qquad
\dim_{\times}\{0\}_Y=3.
\]
Indeed, every line involved carries directional weight \(1\). The discrepancy
is entirely due to the maximal number of projectively independent directions.
Thus the failure is not caused by a change in the one-dimensional box
complexity of the probes, but by the change in ambient linear rank.  The same
example shows that the lower directional ranks
\[
\dim_{\mathrm{Pvec}}
\qquad\text{and}\qquad
\dim_{\mathrm{Ptan}}
\]
are not arbitrary bi-Lipschitz invariants either: the whole directional
hierarchy belongs naturally to the \(C^1\), rather than the purely
bi-Lipschitz, category.

This should be contrasted with the direction-set dimension of
Koike--Paunescu \cite{KoikePaunescu}.  If \(D(A)\) denotes the set of limiting directions of
\(A\) at the origin, then \(D(X)\) and \(D(Y)\) are finite sets, hence
\[
\dim D(X)=\dim D(Y)=0.
\]
The Point-Cross dimension is therefore sensitive to linear-rank information
which is invisible to the mere dimension of the direction set.

The obstruction is that a bi-Lipschitz homeomorphism may act on the space of
directions by a nonlinear angular homeomorphism.  Such a map preserves metric
comparability of the germ, but it need not preserve projective linear
independence.  The conical map above is instructive in this respect: it admits
a degree-one homogeneous blow-up at the origin, namely the angular map
\(\varphi\) itself, but this blow-up is not linear.  This is exactly the
failure excluded by the \(C^1\) framework.

The \(C^1\) hypothesis prevents this phenomenon: the first-order action at the
point is given by an invertible linear map, which sends projectively
independent families to projectively independent families and transfers
admissible probes with their box-dimensional weights, as in
Lemma~\ref{lem:c1-transfer-admissible-probes}.  Consequently, the natural
invariance class for the present Point-Cross dimension is not arbitrary
bi-Lipschitz equivalence, even in tame categories, but rather \(C^1\) changes
of coordinates, or more generally maps admitting an invertible linear blow-up
at the point together with a compatible transfer of admissible probes.
\end{remark}

\begin{remark}[Equal Point-Cross dimension is not a \(C^1\)-classification]
\label{rem:equal-point-cross-not-classification}
The \(C^1\)-invariance result should not be read conversely.  Equality of
Point-Cross dimensions is only a necessary invariant of \(C^1\)-equivalence.
It does not classify local germs.

For example, in \(\mathbb R^2\), let
\[
A:=
(\mathbb R\times\{0\})\cup(\{0\}\times\mathbb R)
\]
be the usual cross at the origin, and let
\[
B:=A\cup\{(t,t):t\in\mathbb R\}
\]
be the three-line star.  At the origin, \(A\) has two projectively independent
directions, each carrying weight \(1\), hence
\[
\dim_{\times}\{0\}_A=2.
\]
The set \(B\) has three projective directions, each again carrying weight
\(1\).  However, in the plane at most two projective directions can be
linearly independent.  Therefore
\[
\dim_{\times}\{0\}_B=2.
\]
Thus the two germs have the same Point-Cross dimension.

Nevertheless, they are not locally homeomorphic, hence not locally
\(C^1\)-equivalent.  Indeed, after removing the base point, the germ of \(A\)
has four connected branches, whereas the germ of \(B\) has six.  The number
of connected components of the punctured local germ is a topological invariant.
Therefore \(\dim_{\times}\) captures the maximal weighted independent
directional rank, but not the full incidence or multiplicity structure of all
branches.
\end{remark}

\section{A local calculus of directional structures}

The preceding section established the basic stability properties of the
Point-Cross dimension. We now turn from structural invariance to local
calculus: how weighted directional channels combine, overlap, split, or collapse
under elementary geometric operations. We begin with unions and intersections,
where the distinction between independent, shared, and genuinely common-germ
channels is most transparent.
\begin{remark}[Tangential skeleton and weighted refinement]
The results in this section should be read at two levels. The
point-tangential dimension provides the unweighted directional skeleton: it
records the linear spans that bound the Point-Cross dimension in each local
configuration. The Point-Cross dimension refines this skeleton by assigning
weights to the effective projective channels and by aggregating only
independent weighted contributions.

Thus several upper bounds below are tangential in nature, because
\[
\dim_{\times}\{x\}_A\le \dim_{\mathrm{Ptan}}\{x\}_A.
\]
However, the equalities and inequalities for \(\dim_{\times}\) are not merely
restatements of point-tangential facts. They determine how directional weights
combine: shared channels in a finite union merge through a maximum, transverse
weighted families add, product germs contain the weighted coordinate channels
of the factors, and saturation characterizes the regime in which the weighted
dimension fills the available tangential rank.

In smooth \(C^1\) situations the two levels coincide by calibration. This is
precisely why the smooth cases recover the classical Grassmann and rank
formulae. In singular, sparse, oscillatory, or fractal configurations, the
tangential skeleton and the Point-Cross weights may differ, and the results of
this section become genuinely weighted rather than purely tangential.
\end{remark}
\subsection{Unions, intersections, and a weighted Grassmann calculus}

As announced in the introduction, the aim is not to replace classical global
union or intersection formulae by a universal identity. Rather, the purpose is
to determine, at a fixed germ, which independent directional channels are
actually active and how their contributions combine. This pointwise problem is
complementary to generic intersection theory: whereas codimension principles
such as
\[
\dim(A\cap f(B))\approx \dim A+\dim B-n
\]
are obtained by moving one set through a suitable family of transformations,
the present framework keeps \(A\), \(B\), and the base point \(x\) fixed and
distinguishes transverse, overlapping, and saturated local configurations
through the directional structure already present at the germ.

Classical geometric dimensions such as Hausdorff dimension, upper box
dimension, and packing dimension satisfy the finite-union rule
\[
\dim(A\cup B)=\max\{\dim A,\dim B\},
\]
and therefore do not register the additional structure created when
independent germs meet at the same point. The point-vectorial and
point-tangential dimensions introduced previously already provide the
unweighted skeleton of a local union--intersection calculus. In particular,
finite unions combine tangential spans according to
\[
\Span\bigl(\Tan_x(A\cup B)\bigr)
=
\Span\bigl(\Tan_x(A)\bigr)
+
\Span\bigl(\Tan_x(B)\bigr),
\]
whereas intersections retain only directions genuinely realized by the common
germ.

The Point-Cross dimension refines this picture by attaching a weight to every
effective projective direction. It may therefore be viewed as a weighted
Grassmann calculus for local directional coexistence: independent channels may
add under unions, shared channels must be merged rather than counted twice, and
intersections preserve only the channels effectively carried by the common
germ. In particular, monotonicity gives, whenever
\(x\in\overline A\cap\overline B\),
\[
\dim_{\times}\{x\}_{A\cup B}
\ge
\max\left\{
\dim_{\times}\{x\}_A,
\dim_{\times}\{x\}_B
\right\},
\]
and, unlike the classical max rule, this inequality may be strict.

\begin{definition}[Supermax point]
\label{def:supermax-point}
Let \(A,B\subset\mathbb R^n\), and let
\(x\in\overline A\cap\overline B\). We say that \(x\) is a
\emph{supermax point for the union \(A\cup B\)} if
\[
\dim_{\times}\{x\}_{A\cup B}
>
\max\left\{
\dim_{\times}\{x\}_A,
\dim_{\times}\{x\}_B
\right\}.
\]
\end{definition}

\begin{example}[The planar cross]
\label{ex:planar-cross-supermax}
Let
\[
A=[-1,1]\times\{0\},
\qquad
B=\{0\}\times[-1,1]
\]
in \(\mathbb R^2\), and let \(x=(0,0)\). Each component carries one
projective effective direction of contribution \(1\), and therefore
\[
\dim_{\times}\{x\}_A
=
\dim_{\times}\{x\}_B
=
1.
\]
The two directions carried by the union are projectively independent, so
\[
\dim_{\times}\{x\}_{A\cup B}\ge 2.
\]
The ambient upper bound gives the reverse inequality, and hence
\[
\dim_{\times}\{x\}_{A\cup B}=2.
\]
Thus \(x\) is a supermax point.

At the same time,
\[
\dim_{\mathrm{Pbox}}\{x\}_{A\cup B}=1,
\]
since \(A\cup B\) is a finite union of line segments. The two invariants
therefore record different aspects of the same germ: the point-extended box
dimension detects one-dimensional covering complexity, whereas the
Point-Cross dimension detects two independent one-dimensional channels.

Moreover,
\[
A\cap B=\{x\},
\]
and hence
\[
\dim_{\times}\{x\}_{A\cap B}=0.
\]
The dimensional gain is thus carried by the simultaneous presence of the two
branches in the union germ, not by the set-theoretic intersection itself.
\end{example}

The same phenomenon extends to the coordinate frame
\[
E_n:=\bigcup_{i=1}^n\mathbb Re_i\subset\mathbb R^n.
\]
Every individual axis has Point-Cross dimension \(1\), whereas the \(n\)
coordinate directions are projectively independent at the origin and each
carries contribution \(1\). Consequently,
\[
\dim_{\times}\{0\}_{E_n}=n.
\]
Thus a union of one-dimensional pieces may realize the full ambient
Point-Cross dimension at their common crossing point.

We now establish the exact union calculus underlying these examples. For the
purposes of this subsection only, we extend the oriented contribution by zero
outside the effective support:
\[
\widehat{\theta}_x^E(v)
:=
\begin{cases}
\theta_x^E(v),
&
v\in\Eff_x(E),\\[1mm]
0,
&
v\notin\Eff_x(E),
\end{cases}
\qquad
v\in S^{n-1}.
\]
We similarly define, for every \(\xi\in\mathbb P^{n-1}\),
\[
\widehat{\theta}_x^E(\xi)
:=
\sup_{\substack{v\in S^{n-1}\\ [v]=\xi}}
\widehat{\theta}_x^E(v).
\]
Thus
\[
\widehat{\theta}_x^E(\xi)
=
\theta_x^E(\xi)
\]
when \(\xi\in\Eff_x^{\mathbb P}(E)\), and
\[
\widehat{\theta}_x^E(\xi)=0
\]
otherwise. We also adopt the convention
\[
\mathcal G_x^E(v)=\varnothing
\qquad
\text{when }v\notin\Eff_x(E).
\]

\begin{proposition}[Exact binary-union calculus]
\label{prop:point-cross-exact-union-calculus}
Let \(A,B\subset\mathbb R^n\), and let
\(x\in\overline A\cap\overline B\). Then
\[
\Eff_x^{\mathbb P}(A\cup B)
=
\Eff_x^{\mathbb P}(A)
\cup
\Eff_x^{\mathbb P}(B),
\]
and, for every \(\xi\in\mathbb P^{n-1}\),
\[
\widehat{\theta}_x^{A\cup B}(\xi)
=
\max\left\{
\widehat{\theta}_x^A(\xi),
\widehat{\theta}_x^B(\xi)
\right\}.
\]
Equivalently, using the preceding weight identity in the definition of
\(\dim_{\times}\), one has
\[
\dim_{\times}\{x\}_{A\cup B}
=
\sup_{(\xi_1,\dots,\xi_m)\in
\mathcal I_x^{\mathbb P}(A\cup B)}
\sum_{i=1}^m
\max\left\{
\widehat{\theta}_x^A(\xi_i),
\widehat{\theta}_x^B(\xi_i)
\right\}.
\]
\end{proposition}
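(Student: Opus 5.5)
The effective-direction identity is immediate: Corollary~\ref{cor:eff-inherited-properties}(4) gives \(\Eff_x(A\cup B)=\Eff_x(A)\cup\Eff_x(B)\), and passing to projective classes yields the first claim. The last displayed formula then follows by substituting the weight identity into Definition~\ref{def:point-cross-point}, since for \(\xi\in\Eff_x^{\mathbb P}(A\cup B)\) one has \(\theta_x^{A\cup B}(\xi)=\widehat\theta_x^{A\cup B}(\xi)\). So the whole content is the weight identity. By the definition of \(\widehat\theta\) as a supremum over the two orientations, it suffices to prove the oriented version
\[
\widehat\theta_x^{A\cup B}(v)=\max\{\widehat\theta_x^A(v),\widehat\theta_x^B(v)\}\qquad(v\in S^{n-1}),
\]
because a maximum of two suprema over \(\{v,-v\}\) can be interchanged.

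For \(v\notin\Eff_x(A\cup B)\), both sides vanish, since \(v\notin\Eff_x(A)\cup\Eff_x(B)\). For \(v\in\Eff_x(A\cup B)\), the inequality ``\(\ge\)'' follows from monotonicity (Proposition~\ref{prop:basic-theta-properties}(1)) applied to \(A\subset A\cup B\) whenever \(v\in\Eff_x(A)\), and likewise for \(B\). If \(v\) is not effective for one of the sets, that term is \(0\le\theta\).

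The reverse inequality is the main step. Fix \(\gamma\in\mathcal G_x^{A\cup B}(v)\) and write
\[
(A\cup B)\cap\Gamma_\gamma=(A\cap\Gamma_\gamma)\cup(B\cap\Gamma_\gamma).
\]
The finite-union max rule of Proposition~\ref{prop:basic-pebd-recall}(4) gives
\[
\dim_{\mathrm{Pbox}}\{x\}_{(A\cup B)\cap\Gamma_\gamma}=\max\{\dim_{\mathrm{Pbox}}\{x\}_{A\cap\Gamma_\gamma},\dim_{\mathrm{Pbox}}\{x\}_{B\cap\Gamma_\gamma}\}.
\]
This is valid even when one trace does not accumulate at \(x\), thanks to the convention \(\dim_{\mathrm{Pbox}}\{x\}_E=0\) for \(x\notin\overline E\), together with germ dependence. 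Consider the term coming from \(A\). If it is positive, then \(x\in\overline{A\cap\Gamma_\gamma}\). Since \(\gamma\) is injective with \(\gamma(0)=x\), points of \(A\cap\Gamma_\gamma\) approaching \(x\) have parameters tending to \(0\). (If \(x\) itself lies in \(A\), positivity of the exponent still forces infinitely many nearby points.) Hence \(A\cap\gamma((0,\eta])\neq\varnothing\) for every \(\eta\), that is, \(\gamma\in\mathcal G_x^A(v)\), and in particular \(v\in\Eff_x(A)\). The \(A\)-term is therefore at most \(\theta_x^A(v)=\widehat\theta_x^A(v)\). If the term is zero, the bound is trivial. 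The same reasoning handles \(B\).

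Taking the supremum over \(\gamma\) gives \(\theta_x^{A\cup B}(v)\le\max\{\widehat\theta_x^A(v),\widehat\theta_x^B(v)\}\). The only delicate point is this recurrence argument: the trace of one component must genuinely accumulate along the probe near \(x\) before that probe can be reused as an admissible probe for that component. The injectivity and continuity of \(\gamma\) (as in Lemma~\ref{lem:restriction-of-probes}) make this work.
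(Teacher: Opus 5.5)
Your proof is correct and follows essentially the paper's route. The ingredients are the same: the effective-support identity from Corollary~\ref{cor:eff-inherited-properties}, monotonicity for the lower bound, the finite-union max rule for $\dim_{\mathrm{Pbox}}$ on the split trace $(A\cap\Gamma_\gamma)\cup(B\cap\Gamma_\gamma)$ for the upper bound, and then passage to projective classes. The only difference is cosmetic. The paper first proves $\mathcal G_x^{A\cup B}(v)=\mathcal G_x^A(v)\cup\mathcal G_x^B(v)$ by a pigeonhole argument and then splits cases according to which component recurs along $\gamma$. You instead bound each trace term directly, noting that a positive exponent forces recurrence of that component; this is just the contrapositive of the paper's observation that a non-recurrent component contributes at most the base point.
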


\begin{proof}
The identity for the effective projective support follows from
Corollary~\ref{cor:eff-inherited-properties}.

We first prove the corresponding identity at the oriented level. Let
\(v\in S^{n-1}\). Then
\[
\mathcal G_x^{A\cup B}(v)
=
\mathcal G_x^A(v)\cup\mathcal G_x^B(v).
\]
Indeed, the inclusion from right to left is immediate. Conversely, let
\[
\gamma\in\mathcal G_x^{A\cup B}(v).
\]
By the recurrence condition, there exists a sequence \(t_k\downarrow0\) such
that
\[
\gamma(t_k)\in A\cup B.
\]
Since the union is finite, one of the two alternatives occurs for infinitely
many indices. By passing to a subsequence, one may therefore assume that either
\[
\gamma(t_k)\in A
\qquad
\text{for every }k,
\]
or
\[
\gamma(t_k)\in B
\qquad
\text{for every }k.
\]
Hence \(\gamma\) is recurrent with respect to at least one of the two sets.
Thus possible alternation of contacts between \(A\) and \(B\) along the same
probe creates no additional admissible probe for a finite union: at least one
component recurs along a subsequence tending to \(x\).

Let now
\[
\gamma\in\mathcal G_x^{A\cup B}(v).
\]
If
\[
\gamma\in\mathcal G_x^A(v)\cap\mathcal G_x^B(v),
\]
then
\[
(A\cup B)\cap\Gamma_\gamma
=
(A\cap\Gamma_\gamma)\cup(B\cap\Gamma_\gamma).
\]
By the finite-union property of the point-extended box dimension recalled in
Proposition~\ref{prop:basic-pebd-recall},
\[
\dim_{\mathrm{Pbox}}\{x\}_{(A\cup B)\cap\Gamma_\gamma}
=
\max\left\{
\dim_{\mathrm{Pbox}}\{x\}_{A\cap\Gamma_\gamma},
\dim_{\mathrm{Pbox}}\{x\}_{B\cap\Gamma_\gamma}
\right\}.
\]
Therefore
\[
\dim_{\mathrm{Pbox}}\{x\}_{(A\cup B)\cap\Gamma_\gamma}
\le
\max\left\{
\widehat{\theta}_x^A(v),
\widehat{\theta}_x^B(v)
\right\}.
\]

Suppose instead that
\[
\gamma\in\mathcal G_x^A(v)\setminus\mathcal G_x^B(v).
\]
Since \(\gamma\) is not recurrent with respect to \(B\), there exists
\(\eta_0>0\) such that
\[
B\cap\gamma((0,\eta_0])=\varnothing.
\]
Consequently,
\[
(A\cup B)\cap\Gamma_\gamma
\]
and
\[
A\cap\Gamma_\gamma
\]
have the same punctured germ at \(x\), and may differ only by the base point
\(x\). Since a singleton has point-extended box dimension \(0\), the
finite-union property and germ dependence give
\[
\dim_{\mathrm{Pbox}}\{x\}_{(A\cup B)\cap\Gamma_\gamma}
=
\dim_{\mathrm{Pbox}}\{x\}_{A\cap\Gamma_\gamma}
\le
\widehat{\theta}_x^A(v).
\]
The case
\[
\gamma\in\mathcal G_x^B(v)\setminus\mathcal G_x^A(v)
\]
is symmetric.

Taking the supremum over
\(\gamma\in\mathcal G_x^{A\cup B}(v)\) yields
\[
\widehat{\theta}_x^{A\cup B}(v)
\le
\max\left\{
\widehat{\theta}_x^A(v),
\widehat{\theta}_x^B(v)
\right\}.
\]
The reverse inequality follows from monotonicity, since
\[
A\subset A\cup B,
\qquad
B\subset A\cup B.
\]
Thus
\[
\widehat{\theta}_x^{A\cup B}(v)
=
\max\left\{
\widehat{\theta}_x^A(v),
\widehat{\theta}_x^B(v)
\right\}.
\]

Passing to projective directions gives
\[
\begin{aligned}
\widehat{\theta}_x^{A\cup B}(\xi)
&=
\sup_{\substack{v\in S^{n-1}\\ [v]=\xi}}
\widehat{\theta}_x^{A\cup B}(v)\\
&=
\sup_{\substack{v\in S^{n-1}\\ [v]=\xi}}
\max\left\{
\widehat{\theta}_x^A(v),
\widehat{\theta}_x^B(v)
\right\}\\
&=
\max\left\{
\widehat{\theta}_x^A(\xi),
\widehat{\theta}_x^B(\xi)
\right\}.
\end{aligned}
\]
The final variational identity follows directly from the definition of
\(\dim_{\times}\).
\end{proof}

\begin{corollary}[Exact finite-union calculus]
\label{cor:point-cross-exact-finite-union-calculus}
Let \(A_1,\dots,A_m\subset\mathbb R^n\), with \(m\ge1\), and let
\[
x\in\overline{\bigcup_{j=1}^m A_j}.
\]
Then
\[
\Eff_x^{\mathbb P}\left(\bigcup_{j=1}^m A_j\right)
=
\bigcup_{j=1}^m\Eff_x^{\mathbb P}(A_j),
\]
and, for every \(\xi\in\mathbb P^{n-1}\),
\[
\widehat{\theta}_x^{\bigcup_{j=1}^m A_j}(\xi)
=
\max_{1\le j\le m}
\widehat{\theta}_x^{A_j}(\xi).
\]
Consequently, the contribution of a shared projective direction in a finite
union is its largest componentwise contribution, not the sum of the repeated
contributions.
\end{corollary}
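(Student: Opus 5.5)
The plan is an induction on \(m\), with Proposition~\ref{prop:point-cross-exact-union-calculus} as the inductive step. For \(m=1\) both identities are tautological. Assume the statement holds for \(m-1\ge1\). Set \(C:=\bigcup_{j=1}^{m-1}A_j\), so that \(\bigcup_{j=1}^m A_j=C\cup A_m\). Proposition~\ref{prop:point-cross-exact-union-calculus} is stated under the hypothesis \(x\in\overline C\cap\overline{A_m}\), whereas here we only know \(x\in\overline{C\cup A_m}=\overline C\cup\overline{A_m}\). The first step is therefore to remove this hypothesis.

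If \(x\notin\overline{A_m}\), choose a neighborhood \(U\) of \(x\) disjoint from \(A_m\). Then \((C\cup A_m)\cap U=C\cap U\). By germ dependence (Corollary~\ref{cor:eff-inherited-properties} and Proposition~\ref{prop:basic-theta-properties}), \(C\cup A_m\) and \(C\) have the same effective directions and the same oriented weights at \(x\). On the other side, \(\Eff_x(A_m)=\varnothing\), so \(\widehat\theta_x^{A_m}\equiv0\). Both claimed identities for the pair \((C,A_m)\) then hold trivially, and the case \(x\notin\overline C\) is symmetric.

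A cleaner alternative is to observe that the proof of the binary proposition never used \(x\in\overline A\cap\overline B\) beyond making the hatted weights defined. Both effective sets and probe families are empty on the side whose closure misses \(x\), and the zero-extension convention handles exactly that case. Either route yields the binary identities for \(C\cup A_m\) whenever \(x\in\overline{C\cup A_m}\).

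With the binary case in hand, the support identity follows directly: \(\Eff_x^{\mathbb P}(C\cup A_m)=\Eff_x^{\mathbb P}(C)\cup\Eff_x^{\mathbb P}(A_m)\). Whenever \(x\in\overline C\), the inductive hypothesis expands \(\Eff_x^{\mathbb P}(C)\) into \(\bigcup_{j<m}\Eff_x^{\mathbb P}(A_j)\). When \(x\notin\overline C\), each \(\Eff_x(A_j)\) with \(j<m\) is empty, so the expansion still holds. The same reasoning handles the weights. For \(\xi\in\mathbb P^{n-1}\) we get
\[
\widehat\theta_x^{C\cup A_m}(\xi)=\max\{\widehat\theta_x^{C}(\xi),\widehat\theta_x^{A_m}(\xi)\}.
\]
Then \(\widehat\theta_x^C(\xi)=\max_{j<m}\widehat\theta_x^{A_j}(\xi)\), either by induction or because all terms vanish when \(x\notin\overline C\). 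Associativity of \(\max\) finishes the induction.

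The final sentence of the corollary is a reading of the weight identity. A projective direction shared by several components enters the aggregate in Definition~\ref{def:directional-aggregates} once, with weight \(\max_j\widehat\theta_x^{A_j}(\xi)\), rather than as a sum over the components. The only genuine obstacle is the bookkeeping at points lying in the closure of the union but not in the closure of every piece, which the zero-extension convention is designed to absorb. Nothing analytic beyond the binary proposition is needed.
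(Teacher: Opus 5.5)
Your proof is correct and takes essentially the paper's route: induction on the number of pieces with Proposition~\ref{prop:point-cross-exact-union-calculus} as the step, where pieces whose closure misses $x$ contribute empty support and zero weight. The only difference is bookkeeping. The paper discards all indices $j$ with $x\notin\overline{A_j}$ at the outset and then inducts on a family with $x$ in every closure, whereas you handle such pieces at each inductive step; this makes explicit the germ-dependence argument that the paper leaves implicit when it says the identities ``reduce to the subfamily''.
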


\begin{proof}
Let
\[
J:=\{j\in\{1,
\dots,
m\}:x\in\overline{A_j}\}.
\]
Since the family is finite and
\(x\in\overline{\bigcup_{j=1}^m A_j}\), the set \(J\) is nonempty. If
\(j\notin J\), then \(\Eff_x^{\mathbb P}(A_j)=\varnothing\) and
\(\widehat{\theta}_x^{A_j}\equiv0\). Hence the identities reduce to the
subfamily \((A_j)_{j\in J}\). Relabel this subfamily as
\[
B_1,
\dots,
B_r.
\]
Then \(x\) belongs to the closure of every \(B_i\). The case \(r=1\) is
immediate, and the case \(r=2\) is
Proposition~\ref{prop:point-cross-exact-union-calculus}. The general finite
case follows by induction, applying the binary result to
\[
\left(\bigcup_{i=1}^{r-1}B_i\right)\cup B_r.
\]
\end{proof}

\begin{corollary}[Transverse addition and additivity criterion]
\label{cor:point-cross-transverse-addition}
Let \(A,B\subset\mathbb R^n\), and let
\(x\in\overline A\cap\overline B\). Then
\[
\dim_{\times}\{x\}_{A\cup B}
\le
\dim_{\times}\{x\}_A
+
\dim_{\times}\{x\}_B.
\]

Moreover, let
\[
I_A=\{\xi_1,\dots,\xi_p\}
\subset\Eff_x^{\mathbb P}(A),
\qquad
I_B=\{\eta_1,\dots,\eta_q\}
\subset\Eff_x^{\mathbb P}(B)
\]
be finite projectively independent families, with the empty family allowed.
Assume that the concatenated
family
\[
(\xi_1,\dots,\xi_p,\eta_1,\dots,\eta_q)
\]
is projectively independent. In particular, no projective direction is counted
in both lists. Then
\[
\dim_{\times}\{x\}_{A\cup B}
\ge
\sum_{i=1}^p\theta_x^A(\xi_i)
+
\sum_{j=1}^q\theta_x^B(\eta_j).
\]

Consequently, suppose that for every \(\varepsilon>0\), the families
\(I_A\) and \(I_B\) can be chosen so that their concatenation is
projectively independent and
\[
\sum_{\xi\in I_A}\theta_x^A(\xi)
\ge
\dim_{\times}\{x\}_A-\varepsilon,
\]
while
\[
\sum_{\eta\in I_B}\theta_x^B(\eta)
\ge
\dim_{\times}\{x\}_B-\varepsilon.
\]
Then
\[
\dim_{\times}\{x\}_{A\cup B}
=
\dim_{\times}\{x\}_A
+
\dim_{\times}\{x\}_B.
\]
\end{corollary}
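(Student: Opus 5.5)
The plan is to deduce all three assertions from the exact binary-union calculus of Proposition~\ref{prop:point-cross-exact-union-calculus}, together with monotonicity of the weights and the definition of \(\dim_{\times}\) as a supremum over projectively independent families.

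\textbf{Subadditivity.} Fix \((\zeta_1,\dots,\zeta_m)\in\mathcal I_x^{\mathbb P}(A\cup B)\). By Proposition~\ref{prop:point-cross-exact-union-calculus}, each \(\zeta_i\) lies in \(\Eff_x^{\mathbb P}(A)\cup\Eff_x^{\mathbb P}(B)\), and
\[
\theta_x^{A\cup B}(\zeta_i)=\max\{\widehat\theta_x^A(\zeta_i),\widehat\theta_x^B(\zeta_i)\}.
\]
Split the indices into two sets:
\[
S_A:=\{i:\widehat\theta_x^A(\zeta_i)\ge\widehat\theta_x^B(\zeta_i)\},
\qquad
S_B:=\text{its complement}.
\]
For each \(i\in S_A\) with \(\widehat\theta_x^A(\zeta_i)>0\), the direction \(\zeta_i\) must belong to \(\Eff_x^{\mathbb P}(A)\). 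Indices whose maximum is \(0\) contribute nothing and can be dropped. The same reasoning applies to \(S_B\).

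Subfamilies of a projectively independent family remain independent. Hence \((\zeta_i)_{i\in S_A}\) with positive weight lies in \(\mathcal I_x^{\mathbb P}(A)\), and its sum is at most \(\dim_{\times}\{x\}_A\). Likewise, the retained part of \(S_B\) has sum at most \(\dim_{\times}\{x\}_B\). Summing the two bounds and taking the supremum over the family gives the inequality. This step is a careful bookkeeping of the zero-extended weights \(\widehat\theta\) and should present no real difficulty.

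\textbf{Lower bound.} The concatenated family has entries in \(\Eff_x^{\mathbb P}(A)\cup\Eff_x^{\mathbb P}(B)=\Eff_x^{\mathbb P}(A\cup B)\). It is projectively independent by hypothesis, so it belongs to \(\mathcal I_x^{\mathbb P}(A\cup B)\); its length is automatically \(\le n\). By monotonicity of projective weights (established within the proof of Proposition~\ref{prop:pcd-locality-monotonicity-closure}, or directly from the max formula), we have \(\theta_x^{A\cup B}(\xi_i)\ge\theta_x^A(\xi_i)\) and \(\theta_x^{A\cup B}(\eta_j)\ge\theta_x^B(\eta_j)\). The aggregate of the concatenated family therefore dominates the stated sum, and \(\dim_{\times}\{x\}_{A\cup B}\) dominates that aggregate.

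\textbf{Additivity criterion.} Under the \(\varepsilon\)-hypothesis, the lower bound yields
\[
\dim_{\times}\{x\}_{A\cup B}\ge\dim_{\times}\{x\}_A+\dim_{\times}\{x\}_B-2\varepsilon.
\]
Letting \(\varepsilon\to0\) and combining with subadditivity gives equality.

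The only point needing care is the subadditivity step. Directions where the maximum is attained by a component whose effective support does not contain that direction must be excluded. This is exactly why the zero-extension \(\widehat\theta\) is used, and those directions carry zero weight anyway.
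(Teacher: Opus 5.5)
Your proposal is correct and follows the paper's proof essentially step by step. For subadditivity, you split an admissible family for \(A\cup B\) according to which component attains the maximum in the exact binary-union calculus, discard the zero-weight terms, and bound each independent subfamily by the corresponding \(\dim_{\times}\). The lower bound and the additivity criterion use the concatenated family, monotonicity of the weights, and the limit \(\varepsilon\downarrow0\), exactly as the paper does.
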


\begin{proof}
We first prove the general upper bound. Let
\[
(\zeta_1,\dots,\zeta_m)
\in
\mathcal I_x^{\mathbb P}(A\cup B).
\]
By Proposition~\ref{prop:point-cross-exact-union-calculus}, for each \(k\),
\[
\widehat{\theta}_x^{A\cup B}(\zeta_k)
=
\max\left\{
\widehat{\theta}_x^A(\zeta_k),
\widehat{\theta}_x^B(\zeta_k)
\right\}.
\]
For each \(k\), choose one of the two sets, denoted by
\(E_k\in\{A,B\}\), such that this maximum is attained, so that
\[
\widehat{\theta}_x^{A\cup B}(\zeta_k)
=
\widehat{\theta}_x^{E_k}(\zeta_k).
\]
Let
\[
K_A:=\{k:E_k=A\},
\qquad
K_B:=\{k:E_k=B\}.
\]
The subfamilies
\[
(\zeta_k)_{k\in K_A}
\qquad\text{and}\qquad
(\zeta_k)_{k\in K_B}
\]
are subfamilies of a projectively independent family, hence are themselves
projectively independent. Moreover, terms for which
\(\widehat{\theta}_x^A(\zeta_k)=0\) do not affect the sum. After deleting
these zero terms, the remaining directions belong to
\(\Eff_x^{\mathbb P}(A)\). Therefore
\[
\sum_{k\in K_A}
\widehat{\theta}_x^A(\zeta_k)
\le
\dim_{\times}\{x\}_A,
\]
and similarly, after deleting the zero terms for \(B\),
\[
\sum_{k\in K_B}
\widehat{\theta}_x^B(\zeta_k)
\le
\dim_{\times}\{x\}_B.
\]
Thus
\[
\sum_{k=1}^m
\widehat{\theta}_x^{A\cup B}(\zeta_k)
\le
\dim_{\times}\{x\}_A
+
\dim_{\times}\{x\}_B.
\]
Taking the supremum over all families
\((\zeta_1,\dots,\zeta_m)\in\mathcal I_x^{\mathbb P}(A\cup B)\)
gives
\[
\dim_{\times}\{x\}_{A\cup B}
\le
\dim_{\times}\{x\}_A
+
\dim_{\times}\{x\}_B.
\]

We now prove the transverse lower bound. The concatenated family
\[
(\xi_1,\dots,\xi_p,\eta_1,\dots,\eta_q)
\]
is admissible in the definition of
\(\dim_{\times}\{x\}_{A\cup B}\). Moreover,
\[
\widehat{\theta}_x^{A\cup B}(\xi_i)
\ge
\theta_x^A(\xi_i),
\]
and
\[
\widehat{\theta}_x^{A\cup B}(\eta_j)
\ge
\theta_x^B(\eta_j).
\]
Summing over the concatenated family gives
\[
\dim_{\times}\{x\}_{A\cup B}
\ge
\sum_{i=1}^p\theta_x^A(\xi_i)
+
\sum_{j=1}^q\theta_x^B(\eta_j).
\]

Under the near-optimality assumptions, this yields
\[
\dim_{\times}\{x\}_{A\cup B}
\ge
\dim_{\times}\{x\}_A
+
\dim_{\times}\{x\}_B
-
2\varepsilon.
\]
Letting \(\varepsilon\downarrow0\), we obtain
\[
\dim_{\times}\{x\}_{A\cup B}
\ge
\dim_{\times}\{x\}_A
+
\dim_{\times}\{x\}_B.
\]
Together with the general upper bound proved above, this gives the desired
equality.
\end{proof}

We now turn to intersections. Their behavior is fundamentally more rigid.
Whereas the effective support of a finite union is exactly the union of the
effective supports, a direction present in both sets need not be genuinely
realized by their set-theoretic intersection.

\begin{proposition}[Common-channel upper bound for intersections]
\label{prop:point-cross-intersection-upper-bound}
Let \(A,B\subset\mathbb R^n\), and let
\(x\in\overline{A\cap B}\). Then
\[
\Eff_x^{\mathbb P}(A\cap B)
\subset
\Eff_x^{\mathbb P}(A)
\cap
\Eff_x^{\mathbb P}(B),
\]
and, for every
\(\xi\in\Eff_x^{\mathbb P}(A\cap B)\),
\[
\theta_x^{A\cap B}(\xi)
\le
\min\left\{
\theta_x^A(\xi),
\theta_x^B(\xi)
\right\}.
\]
Consequently,
\[
\dim_{\times}\{x\}_{A\cap B}
\le
\sup_{\substack{
I\subset
\Eff_x^{\mathbb P}(A)\cap\Eff_x^{\mathbb P}(B)\\
I\ \mathrm{finite\ and\ projectively\ independent}
}}
\sum_{\xi\in I}
\min\left\{
\theta_x^A(\xi),
\theta_x^B(\xi)
\right\}.
\]
In particular,
\[
\dim_{\times}\{x\}_{A\cap B}
\le
\min\left\{
\dim_{\times}\{x\}_A,
\dim_{\times}\{x\}_B
\right\}.
\]
\end{proposition}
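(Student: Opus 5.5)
The whole statement will be derived from monotonicity, applied to the two inclusions \(A\cap B\subset A\) and \(A\cap B\subset B\). First, since \(x\in\overline{A\cap B}\), we also have \(x\in\overline A\cap\overline B\). Corollary~\ref{cor:eff-inherited-properties}(1) then gives \(\Eff_x(A\cap B)\subset\Eff_x(A)\cap\Eff_x(B)\), and passing to projective classes yields the first inclusion. For the weights, take \(\xi\in\Eff_x^{\mathbb P}(A\cap B)\) and an oriented representative \(w\in\Eff_x(A\cap B)\) with \([w]=\xi\). Proposition~\ref{prop:basic-theta-properties}(1) gives \(w\in\Eff_x(A)\cap\Eff_x(B)\) together with \(\theta_x^{A\cap B}(w)\le\theta_x^A(w)\) and \(\theta_x^{A\cap B}(w)\le\theta_x^B(w)\). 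By Definition~\ref{def:projective-directional-contribution}, \(\theta_x^A(w)\le\theta_x^A(\xi)\), and likewise for \(B\). Hence \(\theta_x^{A\cap B}(w)\le\min\{\theta_x^A(\xi),\theta_x^B(\xi)\}\), and taking the supremum over such representatives \(w\) gives the pointwise weight bound.

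For the variational bound, fix any \((\xi_1,\dots,\xi_m)\in\mathcal I_x^{\mathbb P}(A\cap B)\). By the first inclusion, \(I=\{\xi_1,\dots,\xi_m\}\) is a finite, projectively independent subset of \(\Eff_x^{\mathbb P}(A)\cap\Eff_x^{\mathbb P}(B)\). The directions are distinct because the family is independent, so the set and the ordered tuple give the same sum. Summing the weight bound over \(i\) shows that \(\Theta_x^{A\cap B}(\xi_1,\dots,\xi_m)\) is at most the corresponding sum of minima. This in turn is at most the right-hand supremum. The empty family contributes \(0\) on both sides. Taking the supremum over \(\mathcal I_x^{\mathbb P}(A\cap B)\) then gives the displayed inequality.

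Finally, for the last inequality, bound \(\min\{\theta_x^A(\xi),\theta_x^B(\xi)\}\) by \(\theta_x^A(\xi)\). Any such \(I\) is projectively independent in \(\Eff_x^{\mathbb P}(A)\), so it is admissible in \(\mathcal I_x^{\mathbb P}(A)\), and therefore the sum is at most \(\dim_{\times}\{x\}_A\); symmetrically it is at most \(\dim_{\times}\{x\}_B\). Alternatively, the inequality follows at once from Proposition~\ref{prop:pcd-locality-monotonicity-closure}(3), which applies because \(x\in\overline{A\cap B}\).

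There is no real obstacle. The only point needing care is the projective bookkeeping: orientation representatives of \(\xi\) in \(A\cap B\) may form a strict subset of those available in \(A\) or \(B\), and this is exactly what the sup-over-representatives step handles. We also need \(x\) to lie in the closures of \(A\) and \(B\), so that \(\theta^A\) and \(\theta^B\) are defined; the hypothesis \(x\in\overline{A\cap B}\) supplies this.
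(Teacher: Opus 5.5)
Your proof is correct and follows essentially the same route as the paper: you apply monotonicity of effective directions and of directional contributions to the inclusions \(A\cap B\subset A\) and \(A\cap B\subset B\), then sum over independent families and take suprema. Your explicit handling of oriented representatives inside the projective supremum spells out a step the paper leaves implicit, and your alternative argument for the final inequality, via monotonicity of \(\dim_{\times}\), is exactly the paper's.
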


\begin{proof}
Since
\[
A\cap B\subset A
\qquad\text{and}\qquad
A\cap B\subset B,
\]
monotonicity of effective directions gives
\[
\Eff_x^{\mathbb P}(A\cap B)
\subset
\Eff_x^{\mathbb P}(A)
\cap
\Eff_x^{\mathbb P}(B).
\]
By monotonicity of the directional contributions,
\[
\theta_x^{A\cap B}(\xi)
\le
\theta_x^A(\xi),
\qquad
\theta_x^{A\cap B}(\xi)
\le
\theta_x^B(\xi),
\]
and therefore
\[
\theta_x^{A\cap B}(\xi)
\le
\min\left\{
\theta_x^A(\xi),
\theta_x^B(\xi)
\right\}.
\]

Summing over every projectively independent family contained in
\(\Eff_x^{\mathbb P}(A\cap B)\) and taking the supremum gives the weighted
common-channel bound. The final inequality also follows directly from
monotonicity.
\end{proof}

\begin{example}[A common tangent channel need not survive the intersection]
\label{ex:common-tangent-empty-intersection-germ}
In \(\mathbb R^2\), let
\[
A:=\{(t,0):t\in\mathbb R\},
\qquad
B:=\{(t,t^2):t\in\mathbb R\},
\]
and let \(x=(0,0)\). Both sets are smooth curves with the same projective
tangent direction at \(x\), and
\[
\dim_{\times}\{x\}_A
=
\dim_{\times}\{x\}_B
=
1.
\]
Their common projective tangent channel carries contribution \(1\) in each
individual set. Nevertheless,
\[
A\cap B=\{x\},
\]
and hence
\[
\dim_{\times}\{x\}_{A\cap B}=0.
\]
Thus a projective direction may belong to both effective supports, with full
weight in each set, without being realized by any nontrivial common germ.
Consequently, the inclusion
\[
\Eff_x^{\mathbb P}(A\cap B)
\subset
\Eff_x^{\mathbb P}(A)\cap\Eff_x^{\mathbb P}(B)
\]
may be strict, and the weighted common-channel upper bound need not be an
equality.
\end{example}

Exact union--intersection identities reappear when the local geometry of the
intersection is controlled.

\begin{theorem}[Clean-intersection Grassmann identity]
\label{thm:point-cross-clean-intersection-grassmann}
Let \(M,N\subset\mathbb R^n\) be \(C^1\) embedded submanifolds through \(x\),
of respective dimensions \(p\) and \(q\). Assume that \(M\cap N\) is locally
a \(C^1\) embedded submanifold near \(x\) and that the intersection is clean:
\[
T_x(M\cap N)=T_xM\cap T_xN.
\]
Then
\[
\dim_{\times}\{x\}_{M\cup N}
=
\dim\bigl(T_xM+T_xN\bigr),
\]
and
\[
\dim_{\times}\{x\}_{M\cap N}
=
\dim\bigl(T_xM\cap T_xN\bigr).
\]
Consequently,
\[
\dim_{\times}\{x\}_{M\cup N}
+
\dim_{\times}\{x\}_{M\cap N}
=
p+q.
\]

In particular, if \(M\) and \(N\) are transverse at \(x\), so that
\[
T_xM+T_xN=\mathbb R^n,
\]
then
\[
\dim_{\times}\{x\}_{M\cup N}=n,
\]
and
\[
\dim_{\times}\{x\}_{M\cap N}=p+q-n.
\]
If instead
\[
T_xM\cap T_xN=\{0\},
\]
then
\[
\dim_{\times}\{x\}_{M\cup N}=p+q,
\qquad
\dim_{\times}\{x\}_{M\cap N}=0.
\]
\end{theorem}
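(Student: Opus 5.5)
The plan is to squeeze each quantity between the point-tangential rank and a lower bound built from smooth probes, as in the proofs of Proposition~\ref{prop:point-cross-smooth-calibration} and Proposition~\ref{prop:graph-collapse}.

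\emph{Union.} By Proposition~\ref{prop:ptan-union-grassmann} and Proposition~\ref{prop:smooth-calibration-tan} (with germ dependence from Proposition~\ref{prop:basic-tandim}), we have $\Span\Tan_x(M\cup N)=T_xM+T_xN$. Hence Proposition~\ref{prop:point-cross-basic-bounds} gives
\[
\dim_{\times}\{x\}_{M\cup N}\le\dim(T_xM+T_xN).
\]
For the reverse inequality, choose a basis $v_1,\dots,v_r$ of $T_xM+T_xN$ made of unit vectors, each lying in $T_xM$ or in $T_xN$. This is possible because a spanning set drawn from bases of the two tangent spaces contains a basis of the sum. For each $v_i$ lying in $T_xM$, the argument of Proposition~\ref{prop:point-cross-smooth-calibration} provides an injective $C^1$ arc in $M$ with limiting direction $v_i$. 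Its trace has point-box dimension $1$, so $\theta_x^{M}([v_i])=1$, and monotonicity (Proposition~\ref{prop:basic-theta-properties}) gives $\theta_x^{M\cup N}([v_i])=1$. The same holds for the $v_i$ in $T_xN$. Summing over the projectively independent family $[v_1],\dots,[v_r]$ yields the lower bound.

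\emph{Intersection.} The set $M\cap N$ is locally a $C^1$ submanifold near $x$ of dimension $\dim T_x(M\cap N)$. Proposition~\ref{prop:point-cross-smooth-calibration} applied to it (locality lets us work inside the neighbourhood) therefore gives
\[
\dim_{\times}\{x\}_{M\cap N}=\dim T_x(M\cap N)=\dim(T_xM\cap T_xN),
\]
the last equality being the cleanness hypothesis. Adding this to the union formula and using the classical Grassmann identity $\dim(U+V)+\dim(U\cap V)=p+q$ gives the stated sum.

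\emph{Special cases.} In the transverse case, substituting $T_xM+T_xN=\mathbb R^n$ gives $\dim_{\times}\{x\}_{M\cup N}=n$ and $\dim_{\times}\{x\}_{M\cap N}=p+q-n$. In the case $T_xM\cap T_xN=\{0\}$, the intersection germ has dimension $0$, so $\dim_{\times}\{x\}_{M\cap N}=0$ and $\dim_{\times}\{x\}_{M\cup N}=p+q$.

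The main subtle point is the degenerate case of the intersection when $\dim(T_xM\cap T_xN)=0$. There $M\cap N$ is an isolated point near $x$, and the value $0$ must come from the $k=0$ branch of Proposition~\ref{prop:point-cross-smooth-calibration} (or from Corollary~\ref{cor:eff-inherited-properties}, which gives $\Eff_x=\varnothing$). We also need that the manifolds are taken without boundary, so that two-sided arcs and full tangent spaces are available.
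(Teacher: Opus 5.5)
Your proposal is correct and follows essentially the same route as the paper. For the union, you squeeze $\dim_{\times}$ between $\dim_{\mathrm{Ptan}}$ (via the tangential Grassmann formula) and a lower bound from a basis of $T_xM+T_xN$ extracted from the two tangent spaces, each basis direction realized by a regular $C^1$ arc of weight $1$; for the intersection, you apply the smooth calibration to the clean intersection germ. The only cosmetic difference is that you transfer each weight from $M$ (or $N$) to $M\cup N$ by monotonicity of $\theta$, whereas the paper simply observes that the arc already lies in $M\cup N$.
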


\begin{proof}
Set
\[
U:=T_xM,
\qquad
V:=T_xN.
\]
Since \(M\) and \(N\) are \(C^1\) embedded submanifolds, their Bouligand tangent
cones at \(x\) coincide with their tangent spaces:
\[
\Tan_x(M)=U,
\qquad
\Tan_x(N)=V.
\]
Therefore, by the local Grassmann formula for finite unions at the tangential
level, Proposition~\ref{prop:ptan-union-grassmann}, one obtains
\[
\dim_{\mathrm{Ptan}}\{x\}_{M\cup N}
=
\dim(U+V).
\]
By the hierarchy
\(\dim_{\times}\le\dim_{\mathrm{Ptan}}\),
Proposition~\ref{prop:point-cross-basic-bounds}, this implies
\[
\dim_{\times}\{x\}_{M\cup N}
\le
\dim(U+V).
\]

Conversely, choose bases of \(U\) and \(V\), and extract from their union a
basis \(w_1,\dots,w_r\) of \(U+V\). Each vector \(w_i\) belongs to
\(U\cup V\), hence is tangent either to \(M\) or to \(N\) at \(x\). It is
therefore represented by a regular \(C^1\) arc contained in the corresponding
submanifold. This arc is also contained in \(M\cup N\), has limiting projective direction
\([w_i]\), and, being a regular \(C^1\) arc, is locally bi-Lipschitz
equivalent to an interval. Hence its trace has point-extended box dimension
\(1\) at \(x\). Thus each \([w_i]\) carries union contribution \(1\). Since
the directions \([w_1],\dots,[w_r]\) are projectively independent, the
definition of \(\dim_{\times}\) gives
\[
\dim_{\times}\{x\}_{M\cup N}
\ge
r
=
\dim(U+V).
\]
Consequently,
\[
\dim_{\times}\{x\}_{M\cup N}
=
\dim(U+V).
\]

By the clean-intersection hypothesis, \(M\cap N\) is locally a \(C^1\)
embedded submanifold with tangent space
\[
T_x(M\cap N)=U\cap V.
\]
The smooth calibration of the Point-Cross dimension,
Proposition~\ref{prop:point-cross-smooth-calibration}, gives
\[
\dim_{\times}\{x\}_{M\cap N}
=
\dim(U\cap V).
\]
Finally, the classical Grassmann identity
\[
\dim(U+V)+\dim(U\cap V)
=
\dim U+\dim V
=
p+q
\]
yields
\[
\dim_{\times}\{x\}_{M\cup N}
+
\dim_{\times}\{x\}_{M\cap N}
=
p+q.
\]

If \(U+V=\mathbb R^n\), then
\[
\dim(U+V)=n,
\qquad
\dim(U\cap V)=p+q-n,
\]
which gives the transverse identities. If \(U\cap V=\{0\}\), then
\[
\dim(U+V)=p+q,
\]
which gives the final pair of identities.
\end{proof}

\begin{remark}[Union and intersection principles]
The preceding results may be summarized by three local principles:
\[
\text{independent channels add under unions,}
\]
\[
\text{shared channels merge through their maximal union weight,}
\]
whereas
\[
\text{intersections retain only channels genuinely carried by the common germ.}
\]

Thus finite unions admit an exact weighted variational calculus: their
effective supports are united, and the contribution attached to a shared
projective direction is the maximum of the componentwise contributions, not
their sum. The finiteness is essential here. Along any admissible probe for a
finite union, at least one component must recur arbitrarily close to the base
point, so alternating contacts do not create an additional union weight.

Intersections behave more rigidly. In general, common tangent information is
only an upper bound for the common germ: a direction may belong to the effective
supports of both sets without being realized by any nontrivial set-theoretic
intersection. This is the mechanism illustrated by the tangent line--parabola
example.

Under clean \(C^1\) hypotheses, however, the full Grassmann identity reappears:
\[
\dim_{\times}\{x\}_{M\cup N}
+
\dim_{\times}\{x\}_{M\cap N}
=
\dim M+\dim N.
\]
This provides a calibration statement for the theory. In the clean smooth setting,
the Point-Cross dimension recovers the linear algebra of tangent spaces, but it
does so as a pointwise dimensional identity: the union term records the
coexistence of independent directional channels, while the intersection term
records the channels genuinely shared by the common germ.

The distinction from classical isotropic dimensions is therefore structural.
Hausdorff dimension, upper box dimension, and packing dimension obey a max rule
for finite unions, whereas the Point-Cross dimension can record the simultaneous
presence of independent branches at a crossing point. Likewise, unlike generic
intersection theories, which obtain codimension laws by moving one set relative
to another, the present identity is read directly at the fixed local germ.
\end{remark}

\subsection{Cartesian products and additive directional structure}

Cartesian products provide a natural test for any notion of dimension. In
classical fractal geometry, product formulae are rarely purely formal. For
Hausdorff dimension, one has general lower bounds and mixed upper bounds
involving box or packing dimensions, while exact additivity typically requires
additional regularity assumptions. For upper box dimension, the elementary
covering argument gives the expected product upper bound, and equality holds in
many regular self-similar situations, such as products of uniform Cantor sets.
For background on these classical product estimates, see for instance
\cite{Falconer2003,MattilaGMT}.

The purpose of this subsection is different. We do not seek a universal product
formula in terms of isotropic covering complexity. Instead, we ask how the
weighted directional channels of two germs are embedded into the product germ.
The answer is structurally simple: at the level of closed local germs, the
product contains the directional channels of each factor along the two
coordinate axes, and the corresponding weights are preserved. This gives a
general additive lower bound. The upper bound is controlled by the
point-tangential rank of the product.

Let
\[
A\subset\mathbb R^m,
\qquad
B\subset\mathbb R^n,
\qquad
x\in\overline A,
\qquad
y\in\overline B.
\]
We identify
\[
A\times B\subset\mathbb R^m\times\mathbb R^n
\simeq\mathbb R^{m+n},
\]
and write
\[
z:=(x,y).
\]

\begin{proposition}[Point-tangential additivity for products]
\label{prop:ptan-product-additivity}
For all \(A\subset\mathbb R^m\), \(B\subset\mathbb R^n\), and
\((x,y)\in\overline A\times\overline B\), one has
\[
\dim_{\mathrm{Ptan}}\{(x,y)\}_{A\times B}
=
\dim_{\mathrm{Ptan}}\{x\}_A
+
\dim_{\mathrm{Ptan}}\{y\}_B.
\]
More precisely, if
\[
U:=\Span\bigl(\Tan_x(A)\bigr),
\qquad
V:=\Span\bigl(\Tan_y(B)\bigr),
\]
then
\[
\Span\bigl(\Tan_{(x,y)}(A\times B)\bigr)
=
U\times V.
\]
\end{proposition}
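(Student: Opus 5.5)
The plan is to prove the displayed span identity first. The dimension formula then follows at once, since
\[
\dim(U\times V)=\dim U+\dim V=\dim_{\mathrm{Ptan}}\{x\}_A+\dim_{\mathrm{Ptan}}\{y\}_B .
\]
We will not try to show that \(\Tan_{(x,y)}(A\times B)\) equals \(\Tan_x(A)\times\Tan_y(B)\). That identity is false in general, because the two factors may only be approachable at incompatible rates, so the argument has to work at the level of linear spans.

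\emph{Inclusion \(\subset\).} Take \((u,w)\in\Tan_{(x,y)}(A\times B)\). Then there are points \((a_k,b_k)\in A\times B\) converging to \((x,y)\) and scalars \(\lambda_k\downarrow0\) with \(((a_k,b_k)-(x,y))/\lambda_k\to(u,w)\). Reading this componentwise gives \(a_k\to x\) and \((a_k-x)/\lambda_k\to u\). By the sequential definition of the Bouligand cone, \(u\in\Tan_x(A)\); the case \(u=0\) is allowed and needs no separate treatment. In the same way, \(w\in\Tan_y(B)\). Hence
\[
\Tan_{(x,y)}(A\times B)\subset\Tan_x(A)\times\Tan_y(B)\subset U\times V .
\]
Since \(U\times V\) is a linear subspace, taking spans gives the inclusion.

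\emph{Inclusion \(\supset\).} It suffices to show that \((u,0)\) lies in the tangent cone of the product for every \(u\in\Tan_x(A)\), and symmetrically that \((0,w)\) does for every \(w\in\Tan_y(B)\). The span then contains \(U\times\{0\}+\{0\}\times V=U\times V\). The natural realizing sequence for \((u,0)\) is \((a_k,y)\), where \((a_k)\) realizes \(u\) for \(A\). The only obstacle, and the one step I expect to need care, is that \(y\) lies only in \(\overline B\) and need not belong to \(B\).

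To get around this, I will use closure invariance of the Bouligand cone from Proposition~\ref{prop:basic-tandim}(4), together with the elementary identity \(\overline{A\times B}=\overline A\times\overline B\). These give
\[
\Tan_{(x,y)}(A\times B)=\Tan_{(x,y)}(\overline A\times\overline B),
\]
and \((a_k,y)\in\overline A\times\overline B\), so \((u,0)\) is realized by that sequence with the same \(\lambda_k\). As an alternative, one can choose \(b_k\in B\) with \(\|b_k-y\|\le\lambda_k^2\), exactly as in the proof of Proposition~\ref{prop:basic-tandim}(4). The symmetric argument handles \((0,w)\). Combining the two inclusions yields \(\Span\bigl(\Tan_{(x,y)}(A\times B)\bigr)=U\times V\), and taking dimensions gives the additivity formula.
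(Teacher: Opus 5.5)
Your proof is correct and follows essentially the same route as the paper. You read the tangent sequence componentwise for the inclusion $\subset$. For $\supset$, you use closure invariance of the Bouligand cone together with $\overline{A\times B}=\overline A\times\overline B$, so that $(u,0)$ and $(0,w)$ are realized by sequences of the form $(a_k,y)$ and $(x,b_k)$. The only cosmetic difference is that the paper makes the reduction to $x\in A$, $y\in B$ at the outset for both inclusions, rather than only in the second one.
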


\begin{proof}
Since the Bouligand tangent cone, and hence its linear span, depends only on
the closed local germ, and since
\[
\overline{A\times B}=\overline A\times\overline B,
\]
we may replace \(A\) and \(B\) by their closed local germs at \(x\) and \(y\).
Thus we may assume, for the purpose of the proof, that
\[
x\in A,
\qquad
y\in B.
\]

Let
\[
(u,v)\in\Tan_{(x,y)}(A\times B).
\]
Then there exist
\[
(a_k,b_k)\in A\times B,
\qquad
(a_k,b_k)\to(x,y),
\]
and \(\lambda_k\downarrow0\) such that
\[
\frac{(a_k-x,b_k-y)}{\lambda_k}
\longrightarrow
(u,v).
\]
Passing to the two coordinates gives
\[
u\in\Tan_x(A)\subset U,
\qquad
v\in\Tan_y(B)\subset V.
\]
Hence
\[
\Tan_{(x,y)}(A\times B)\subset U\times V,
\]
and consequently
\[
\Span\bigl(\Tan_{(x,y)}(A\times B)\bigr)
\subset
U\times V.
\]

Conversely, let \(u\in\Tan_x(A)\). Since \(y\in B\), any sequence in \(A\)
realizing \(u\) gives a sequence
\[
(a_k,y)\in A\times B
\]
realizing the direction \((u,0)\) at \((x,y)\). Thus
\[
\Tan_x(A)\times\{0\}
\subset
\Tan_{(x,y)}(A\times B).
\]
Similarly,
\[
\{0\}\times\Tan_y(B)
\subset
\Tan_{(x,y)}(A\times B).
\]
Taking linear spans gives
\[
U\times\{0\}
+
\{0\}\times V
=
U\times V
\subset
\Span\bigl(\Tan_{(x,y)}(A\times B)\bigr).
\]
The reverse inclusion was already proved, so
\[
\Span\bigl(\Tan_{(x,y)}(A\times B)\bigr)=U\times V.
\]
Taking dimensions gives the desired identity.
\end{proof}

\begin{remark}[Why no exact tangent-cone product formula is needed]
The preceding proof deliberately uses only the span of the Bouligand tangent
cone. An exact formula for the tangent cone itself would require controlling
the relative rates at which the two coordinates approach \(x\) and \(y\). Such
synchronization can be delicate for sparse or oscillatory sets. The
point-tangential dimension, however, only depends on the linear span of the
available tangent directions, and this span always splits as \(U\times V\).
\end{remark}

\begin{proposition}[Product lower bound for the Point-Cross dimension]
\label{prop:point-cross-product-lower-bound}
Let \(A\subset\mathbb R^m\), \(B\subset\mathbb R^n\), and let
\[
x\in\overline A,
\qquad
y\in\overline B.
\]
Then
\[
\dim_{\times}\{(x,y)\}_{A\times B}
\ge
\dim_{\times}\{x\}_A
+
\dim_{\times}\{y\}_B.
\]
\end{proposition}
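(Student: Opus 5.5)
We first reduce to closed germs with base points in the sets. By closure invariance (Proposition~\ref{prop:pcd-locality-monotonicity-closure}(4)) and $\overline{A\times B}=\overline A\times\overline B$, we may replace $A,B$ by $\overline A,\overline B$, so that $x\in A$ and $y\in B$. Then $A\times\{y\}\subset A\times B$ and $\{x\}\times B\subset A\times B$. Monotonicity reduces the problem to the germ $(A\times\{y\})\cup(\{x\}\times B)$ at $z=(x,y)$.

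\textbf{Step 1: each factor embeds along its coordinate axis with the same Point-Cross dimension.} Let $\iota_A:\mathbb R^m\to\mathbb R^{m+n}$, $a\mapsto(a,y)$, which is an affine isometric embedding. We show
\[
\dim_{\times}\{z\}_{A\times\{y\}}=\dim_{\times}\{x\}_A .
\]
We only need ``$\ge$'', and argue directly; Theorem~\ref{thm:pcd-c1-invariance} is not needed, since $\iota_A$ is not a diffeomorphism of $\mathbb R^{m+n}$.
\begin{itemize}
\item Effective directions map as $v\mapsto(v,0)$, so $\Eff_z(A\times\{y\})=\Eff_x(A)\times\{0\}$. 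Projective independence is preserved.
\item For $\gamma\in\mathcal G_x^A(v)$, the curve $\iota_A\circ\gamma$ is an injective Lipschitz probe with limiting direction $(v,0)$ that recurs in $A\times\{y\}$.
\item Its trace satisfies $(A\times\{y\})\cap\Gamma_{\iota_A\circ\gamma}=\iota_A(A\cap\Gamma_\gamma)$.
\item Upper box dimension is invariant under the isometry $\iota_A$ onto its image, so the point-extended box dimensions agree (cf.\ the proof of Lemma~\ref{lem:local-pbox-bilipschitz-germs}).
\end{itemize}
Hence $\theta_z^{A\times\{y\}}((v,0))\ge\theta_x^A(v)$, and at the projective level $\theta_z^{A\times B}([(v,0)])\ge\theta_x^A([v])$. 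The same argument with $\iota_B$ gives $\theta_z^{A\times B}([(0,w)])\ge\theta_y^B([w])$.

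\textbf{Step 2: concatenate independent families.} Fix $\varepsilon>0$ and choose
\begin{itemize}
\item a projectively independent family $(\xi_1,\dots,\xi_p)\in\mathcal I_x^{\mathbb P}(A)$ with $\sum_i\theta_x^A(\xi_i)\ge\dim_\times\{x\}_A-\varepsilon$;
\item a projectively independent family $(\eta_1,\dots,\eta_q)\in\mathcal I_y^{\mathbb P}(B)$ with $\sum_j\theta_y^B(\eta_j)\ge\dim_\times\{y\}_B-\varepsilon$.
\end{itemize}
Representatives $(v_i,0)$ and $(0,w_j)$ lie in the complementary subspaces $\mathbb R^m\times\{0\}$ and $\{0\}\times\mathbb R^n$. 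Since each group is independent, the concatenated family is projectively independent in $\mathbb R^{m+n}$, and $p+q\le m+n$. By Step 1 each direction belongs to $\Eff_z^{\mathbb P}(A\times B)$, so the family lies in $\mathcal I_z^{\mathbb P}(A\times B)$. Summing the lower bounds from Step 1,
\[
\dim_\times\{z\}_{A\times B}\ge\dim_\times\{x\}_A+\dim_\times\{y\}_B-2\varepsilon .
\]
Letting $\varepsilon\downarrow0$ gives the claim. Alternatively, Corollary~\ref{cor:point-cross-transverse-addition} applied to $A\times\{y\}$ and $\{x\}\times B$ packages exactly this concatenation.

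\textbf{Main obstacle.} The only delicate point is the base-point reduction. If $y\notin B$, then $A\times\{y\}$ is not contained in $A\times B$. We therefore use closure invariance first, for both the product and the factors, so that probes are built inside $\overline A\times\overline B$. The box-dimension identity along $\iota_A$ is routine, since point-extended box dimension only involves covering numbers of subsets of the image, which $\iota_A$ preserves exactly.
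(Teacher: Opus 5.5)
Your proposal is correct and follows essentially the same route as the paper. Both arguments reduce to $x\in A$, $y\in B$ by closure invariance, then lift each probe $\gamma$ to $t\mapsto(\gamma(t),y)$ (resp.\ $t\mapsto(x,\gamma(t))$), whose trace is an isometric copy of $A\cap\Gamma_\gamma$, and finally concatenate near-optimal independent families lying in the complementary coordinate subspaces. Your detour through $A\times\{y\}$ followed by monotonicity only repackages the paper's direct trace identity $(A\times B)\cap\Gamma_{\widetilde\gamma}=(A\cap\Gamma_\gamma)\times\{y\}$.
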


\begin{proof}
By the closure invariance of the Point-Cross dimension,
Proposition~\ref{prop:pcd-locality-monotonicity-closure}, and by the identity
\[
\overline{A\times B}=\overline A\times\overline B,
\]
we may replace \(A\) and \(B\) by their closed local germs near \(x\) and
\(y\). We therefore assume that
\[
x\in A,
\qquad
y\in B.
\]

Let
\[
I_A=\{\xi_1,\dots,\xi_p\}
\subset\Eff_x^{\mathbb P}(A),
\qquad
I_B=\{\eta_1,\dots,\eta_q\}
\subset\Eff_y^{\mathbb P}(B)
\]
be finite projectively independent families. Choose representatives
\[
v_i\in\Eff_x(A),
\qquad
[v_i]=\xi_i,
\qquad
\theta_x^A(v_i)=\theta_x^A(\xi_i),
\]
and
\[
w_j\in\Eff_y(B),
\qquad
[w_j]=\eta_j,
\qquad
\theta_y^B(w_j)=\theta_y^B(\eta_j).
\]
This is possible because, in a projective class, there are only the two
oriented unit representatives \(v\) and \(-v\), so the defining supremum is a
maximum.

Define projective directions in \(\mathbb R^m\times\mathbb R^n\) by
\[
\widetilde\xi_i:=[(v_i,0)],
\qquad
\widetilde\eta_j:=[(0,w_j)].
\]
The concatenated family
\[
(\widetilde\xi_1,\dots,\widetilde\xi_p,
\widetilde\eta_1,\dots,\widetilde\eta_q)
\]
is projectively independent. Indeed, any linear relation among representatives
splits into one relation in \(\mathbb R^m\) and one relation in
\(\mathbb R^n\), and both vanish because the original families are
projectively independent.

We now compare the weights. Fix \(i\). Let
\[
\gamma\in\mathcal G_x^A(v_i).
\]
Define
\[
\widetilde\gamma(t):=(\gamma(t),y).
\]
Then \(\widetilde\gamma\) is an admissible Lipschitz probe for \(A\times B\)
at \((x,y)\), with limiting direction \((v_i,0)\). Write
\[
\Gamma_{\widetilde\gamma}:=\widetilde\gamma([0,\delta]),
\qquad
\Gamma_\gamma:=\gamma([0,\delta]).
\]
Then
\[
(A\times B)\cap\Gamma_{\widetilde\gamma}
=
\bigl(A\cap\Gamma_\gamma\bigr)\times\{y\}.
\]
This trace is isometric to \(A\cap\Gamma_\gamma\). Hence
\[
\dim_{\mathrm{Pbox}}\{(x,y)\}_{(A\times B)\cap\Gamma_{\widetilde\gamma}}
=
\dim_{\mathrm{Pbox}}\{x\}_{A\cap\Gamma_\gamma}.
\]
Taking the supremum over all admissible probes \(\gamma\) gives
\[
\theta_{(x,y)}^{A\times B}(\widetilde\xi_i)
\ge
\theta_x^A(\xi_i).
\]
Similarly,
\[
\theta_{(x,y)}^{A\times B}(\widetilde\eta_j)
\ge
\theta_y^B(\eta_j).
\]

Therefore
\[
\dim_{\times}\{(x,y)\}_{A\times B}
\ge
\sum_{i=1}^p\theta_x^A(\xi_i)
+
\sum_{j=1}^q\theta_y^B(\eta_j).
\]
Taking the supremum over all finite projectively independent families in the
two factors, equivalently choosing near-optimal families in each factor and
then letting the error tend to zero, gives
\[
\dim_{\times}\{(x,y)\}_{A\times B}
\ge
\dim_{\times}\{x\}_A
+
\dim_{\times}\{y\}_B.
\]
\end{proof}

\begin{theorem}[Product bounds]
\label{thm:point-cross-product-bounds}
Let \(A\subset\mathbb R^m\), \(B\subset\mathbb R^n\), and let
\[
x\in\overline A,
\qquad
y\in\overline B.
\]
Then
\[
\dim_{\times}\{x\}_A
+
\dim_{\times}\{y\}_B
\le
\dim_{\times}\{(x,y)\}_{A\times B}
\le
\dim_{\mathrm{Ptan}}\{x\}_A
+
\dim_{\mathrm{Ptan}}\{y\}_B.
\]
\end{theorem}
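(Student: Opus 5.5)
The statement combines two results already established, so the plan is to assemble them rather than prove anything new. The left inequality is exactly Proposition~\ref{prop:point-cross-product-lower-bound}, which applies verbatim under the hypotheses \(x\in\overline A\), \(y\in\overline B\). Recall how it was obtained: each factor's admissible probes are lifted into the product along the coordinate slices \(\gamma\mapsto(\gamma,y)\) and \(\gamma\mapsto(x,\gamma)\). These lifts have isometric traces, hence equal point-extended box exponents. Concatenating projectively independent families from the two factors gives an independent family in \(\mathbb R^{m+n}\).

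For the right inequality, I would proceed in two steps. First, I note that \((x,y)\in\overline A\times\overline B=\overline{A\times B}\). This places the product point in the domain where Proposition~\ref{prop:point-cross-basic-bounds} applies, giving
\[
\dim_{\times}\{(x,y)\}_{A\times B}\le \dim_{\mathrm{Ptan}}\{(x,y)\}_{A\times B}.
\]
Second, I invoke the point-tangential additivity of Proposition~\ref{prop:ptan-product-additivity}. It identifies the span of the product tangent cone with \(U\times V\), and hence shows that the right-hand side above equals
\[
\dim_{\mathrm{Ptan}}\{x\}_A+\dim_{\mathrm{Ptan}}\{y\}_B.
\]
Chaining the two steps yields the upper bound.

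No genuine obstacle is expected. The only point requiring care is the closure identification \(\overline{A\times B}=\overline A\times\overline B\), which guarantees that the product point is a legitimate base point for \(\dim_{\times}\). This is elementary for finite products in Euclidean space. I would also remark, without proof, that both inequalities may be strict in general. The upper bound is purely tangential and ignores the weights, while the lower bound only uses coordinate-axis channels. This explains why the statement is phrased as two-sided bounds rather than an identity.
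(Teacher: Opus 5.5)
Your proposal is correct and follows the paper's proof essentially verbatim: the lower bound is Proposition~\ref{prop:point-cross-product-lower-bound}, and the upper bound chains the hierarchy $\dim_{\times}\le\dim_{\mathrm{Ptan}}$ at $(x,y)$ with the tangential additivity of Proposition~\ref{prop:ptan-product-additivity}. Your explicit check that $(x,y)\in\overline A\times\overline B=\overline{A\times B}$, which makes it a legitimate base point, is a small point the paper leaves implicit.
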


\begin{proof}
The lower bound is Proposition~\ref{prop:point-cross-product-lower-bound}.
For the upper bound, the general hierarchy gives
\[
\dim_{\times}\{(x,y)\}_{A\times B}
\le
\dim_{\mathrm{Ptan}}\{(x,y)\}_{A\times B}.
\]
By Proposition~\ref{prop:ptan-product-additivity},
\[
\dim_{\mathrm{Ptan}}\{(x,y)\}_{A\times B}
=
\dim_{\mathrm{Ptan}}\{x\}_A
+
\dim_{\mathrm{Ptan}}\{y\}_B.
\]
Combining the two estimates gives the result.
\end{proof}

\begin{definition}[Point-Cross saturation]
\label{def:point-cross-saturation}
Let \(A\subset\mathbb R^n\), and let \(x\in\overline A\). We say that \(A\)
is \emph{Point-Cross saturated at \(x\)} if
\[
\dim_{\times}\{x\}_A
=
\dim_{\mathrm{Ptan}}\{x\}_A.
\]
\end{definition}

\begin{corollary}[Exact additivity for saturated product germs]
\label{cor:point-cross-product-saturated-additivity}
Let \(A\subset\mathbb R^m\), \(B\subset\mathbb R^n\), and let
\[
x\in\overline A,
\qquad
y\in\overline B.
\]
Assume that \(A\) is Point-Cross saturated at \(x\) and that \(B\) is
Point-Cross saturated at \(y\). Then
\[
\dim_{\times}\{(x,y)\}_{A\times B}
=
\dim_{\times}\{x\}_A
+
\dim_{\times}\{y\}_B.
\]
Moreover, \(A\times B\) is Point-Cross saturated at \((x,y)\).
\end{corollary}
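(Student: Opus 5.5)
This will be a squeeze between the two bounds of Theorem~\ref{thm:point-cross-product-bounds}. That theorem gives
\[
\dim_{\times}\{x\}_A+\dim_{\times}\{y\}_B
\le
\dim_{\times}\{(x,y)\}_{A\times B}
\le
\dim_{\mathrm{Ptan}}\{x\}_A+\dim_{\mathrm{Ptan}}\{y\}_B .
\]
Saturation of \(A\) at \(x\) and of \(B\) at \(y\) (Definition~\ref{def:point-cross-saturation}) means that the two outer sums are equal. Hence all three quantities coincide, and in particular
\[
\dim_{\times}\{(x,y)\}_{A\times B}=\dim_{\times}\{x\}_A+\dim_{\times}\{y\}_B .
\]

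For the saturation of the product, I will invoke Proposition~\ref{prop:ptan-product-additivity}. It gives
\[
\dim_{\mathrm{Ptan}}\{(x,y)\}_{A\times B}
=
\dim_{\mathrm{Ptan}}\{x\}_A+\dim_{\mathrm{Ptan}}\{y\}_B .
\]
By the previous step, the right-hand side equals \(\dim_{\times}\{(x,y)\}_{A\times B}\). This is exactly Point-Cross saturation of \(A\times B\) at \((x,y)\).

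The only point needing care is that \((x,y)\in\overline{A\times B}\), so that the invariants of the product are defined. This holds because \(\overline{A\times B}=\overline A\times\overline B\). Otherwise the statement follows directly from the earlier results, and I expect no real obstacle; all the work has already been done in the product lower bound.
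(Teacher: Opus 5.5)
Your proposal is correct and follows the paper's proof essentially verbatim. You squeeze with the two bounds of Theorem~\ref{thm:point-cross-product-bounds}, which collapse under saturation of the factors, and then apply Proposition~\ref{prop:ptan-product-additivity} to get saturation of the product; your remark that \((x,y)\in\overline A\times\overline B=\overline{A\times B}\) is a correct bookkeeping addition.
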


\begin{proof}
By Theorem~\ref{thm:point-cross-product-bounds},
\[
\dim_{\times}\{x\}_A
+
\dim_{\times}\{y\}_B
\le
\dim_{\times}\{(x,y)\}_{A\times B}
\le
\dim_{\mathrm{Ptan}}\{x\}_A
+
\dim_{\mathrm{Ptan}}\{y\}_B.
\]
The saturation assumptions give
\[
\dim_{\times}\{x\}_A
=
\dim_{\mathrm{Ptan}}\{x\}_A,
\qquad
\dim_{\times}\{y\}_B
=
\dim_{\mathrm{Ptan}}\{y\}_B.
\]
Therefore all three quantities are equal:
\[
\dim_{\times}\{(x,y)\}_{A\times B}
=
\dim_{\times}\{x\}_A
+
\dim_{\times}\{y\}_B.
\]
Using Proposition~\ref{prop:ptan-product-additivity}, the same equality also
gives
\[
\dim_{\times}\{(x,y)\}_{A\times B}
=
\dim_{\mathrm{Ptan}}\{(x,y)\}_{A\times B}.
\]
Hence \(A\times B\) is Point-Cross saturated at \((x,y)\).
\end{proof}

\begin{corollary}[Smooth product calibration]
\label{cor:point-cross-smooth-product}
Let \(M\subset\mathbb R^m\) and \(N\subset\mathbb R^n\) be \(C^1\) embedded
submanifolds of dimensions \(p\) and \(q\), and let
\[
x\in M,
\qquad
y\in N.
\]
Then
\[
\dim_{\times}\{(x,y)\}_{M\times N}=p+q.
\]
\end{corollary}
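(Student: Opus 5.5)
The plan is to derive the statement as a direct specialization of Corollary~\ref{cor:point-cross-product-saturated-additivity}. The only work is to verify its two saturation hypotheses, and the smooth calibration results supply both. One subtlety needs care: $M$ and $N$ are only assumed to be $C^1$ embedded submanifolds, not closed ones. Everything here is local at $x$ and $y$, so we only need the local submanifold structure near these two points.

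First, apply Proposition~\ref{prop:point-cross-smooth-calibration} to $A=M$ at $x\in M$. Since $M$ is locally a $C^1$ embedded submanifold of dimension $p$ near $x$, this gives
\[
\dim_{\times}\{x\}_M=\dim_{\mathrm{Ptan}}\{x\}_M=p .
\]
In particular, $M$ is Point-Cross saturated at $x$ in the sense of Definition~\ref{def:point-cross-saturation}. In the same way, $\dim_{\times}\{y\}_N=\dim_{\mathrm{Ptan}}\{y\}_N=q$, so $N$ is saturated at $y$. The degenerate cases $p=0$ or $q=0$ are already covered by that proposition, which gives the value $0$ for a $0$-dimensional germ. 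Since $x\in M\subset\overline M$ and $y\in N\subset\overline N$, the standing hypotheses of the product results are satisfied.

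Next, invoke Corollary~\ref{cor:point-cross-product-saturated-additivity} with $A=M$ and $B=N$. This yields
\[
\dim_{\times}\{(x,y)\}_{M\times N}=\dim_{\times}\{x\}_M+\dim_{\times}\{y\}_N=p+q .
\]
As a consistency check, the product $M\times N$ is itself locally a $C^1$ embedded submanifold of dimension $p+q$ near $(x,y)$. A direct application of Proposition~\ref{prop:point-cross-smooth-calibration} to $M\times N$ would therefore give the same value, and this route can be mentioned as an alternative proof.

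I do not expect a genuine obstacle. The only point to check is that no hypothesis of the product bounds quietly requires closedness or global structure. Theorem~\ref{thm:point-cross-product-bounds} and its corollary are stated for arbitrary $A,B$ with base points in the closures. The calibration proposition only uses the germ at the point, by Proposition~\ref{prop:basic-tandim}(3) and the locality of $\dim_{\times}$. So the argument goes through exactly as stated.
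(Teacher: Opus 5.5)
Your proposal is correct and follows the paper's proof essentially verbatim: the paper likewise uses the smooth calibration proposition to show that $M$ and $N$ are Point-Cross saturated at $x$ and $y$, with values $p$ and $q$, and then concludes by the exact-additivity corollary for saturated product germs. Your remark that one could instead apply smooth calibration directly to the $C^1$ product submanifold $M\times N$ is also valid.
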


\begin{proof}
By the smooth calibration theorem,
Proposition~\ref{prop:point-cross-smooth-calibration},
\[
\dim_{\times}\{x\}_M
=
\dim_{\mathrm{Ptan}}\{x\}_M
=
p,
\]
and
\[
\dim_{\times}\{y\}_N
=
\dim_{\mathrm{Ptan}}\{y\}_N
=
q.
\]
Thus \(M\) and \(N\) are Point-Cross saturated at \(x\) and \(y\). The result
follows from Corollary~\ref{cor:point-cross-product-saturated-additivity}.
\end{proof}

\begin{remark}[What the product bounds say]
The product inequality separates two mechanisms. The lower bound says that the
closed product germ contains the weighted directional channels of each factor:
those of \(A\) appear along the coordinate copy \(A\times\{y\}\), while those
of \(B\) appear along \(\{x\}\times B\). These two families of channels are
automatically projectively independent because they live in complementary
coordinate subspaces.

The upper bound is tangential. The product may also contain mixed directions,
coming from simultaneous approach in both coordinates, but all such directions
remain contained in the product of the two tangent spans. Therefore the total
Point-Cross dimension cannot exceed the sum of the two point-tangential ranks.

Thus exact additivity is automatic when the two factors are already saturated:
there is no gap between their weighted directional complexity and their
tangential rank. This saturation assumption is only a sufficient condition, not
a necessary one: exact additivity may still hold for non-saturated germs when
the mixed directions of the product do not improve on the coordinate lower
bound.

Smooth manifolds are the basic example. Fractal coordinate frames provide
natural non-smooth examples in which the additive structure is carried by
weighted, possibly non-integer, directional channels.
\end{remark}
\subsection{Linear projections and rank effects}

Linear projections form another fundamental test for a local dimension. In
classical geometric measure theory, projection theorems describe how Hausdorff,
packing, or box-type dimensions behave under orthogonal projections onto
subspaces, often for almost every direction or outside exceptional families.
The prototype is Marstrand's projection theorem and its many extensions. See
for instance the surveys \cite{FalconerFraserJin2015,Falconer2026Projections}
and the box/packing projection results of Falconer and Howroyd
\cite{FalconerHowroyd1996}.

The Point-Cross dimension behaves differently. Since it measures weighted
projective directional channels rather than isotropic covering complexity,
linear projections may collapse independent channels, merge them, or even make
higher-order branching visible after a first-order direction has been
annihilated. Thus there is no general monotonicity principle for
\(\dim_{\times}\) under linear projections. What survives is a rank principle
in the smooth constant-rank regime.

Let
\[
L:\mathbb R^n\longrightarrow\mathbb R^k
\]
be a nonzero linear map. For \(A\subset\mathbb R^n\) and
\(x\in\overline A\), we write
\[
y:=Lx.
\]
When necessary, the notation \(L(A\cap U)\) will be used to emphasize that we
are projecting the local germ of \(A\) at \(x\), rather than allowing remote
points of \(A\) lying in nearby fibres to affect the projected germ at \(y\).

\begin{lemma}[Projection of non-collapsed effective directions]
\label{lem:projection-noncollapsed-effective-directions}
Let \(A\subset\mathbb R^n\), let \(x\in\overline A\), and let
\[
v\in\Eff_x(A)
\]
be such that
\[
Lv\neq0.
\]
Then
\[
\frac{Lv}{\|Lv\|}
\in
\Eff_{Lx}(L(A\cap U))
\]
for every neighborhood \(U\) of \(x\). In particular,
\[
\frac{Lv}{\|Lv\|}
\in
\Eff_{Lx}(L(A)).
\]
Equivalently, in projective notation,
\[
\mathbb P(L)([v])
\in
\Eff_{Lx}^{\mathbb P}(L(A\cap U))
\]
for every neighborhood \(U\) of \(x\), and hence
\[
\mathbb P(L)([v])
\in
\Eff_{Lx}^{\mathbb P}(L(A)).
\]
\end{lemma}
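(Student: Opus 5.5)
The plan is to work directly with the sequential characterization of effective directions in Definition~\ref{def:effective-direction} and push a realizing sequence forward by the linear map \(L\). First, by the germ dependence in Corollary~\ref{cor:eff-inherited-properties}, \(v\in\Eff_x(A)=\Eff_x(A\cap U)\) for every neighborhood \(U\) of \(x\). So there is a sequence \((a_k)\subset (A\cap U)\setminus\{x\}\) with \(a_k\to x\) and
\[
u_k:=\frac{a_k-x}{\|a_k-x\|}\to v.
\]
Set \(r_k:=\|a_k-x\|\downarrow 0\), after passing to a subsequence.

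Next, I apply \(L\), using linearity: \(La_k-Lx=r_k\,Lu_k\). Since \(Lu_k\to Lv\neq 0\), we have \(Lu_k\neq 0\) for all large \(k\), and hence \(La_k\neq Lx\). Continuity of \(L\) gives \(La_k\to Lx\). Moreover,
\[
\frac{La_k-Lx}{\|La_k-Lx\|}=\frac{Lu_k}{\|Lu_k\|}\longrightarrow\frac{Lv}{\|Lv\|},
\]
because normalization is continuous away from \(0\). The points \(La_k\) belong to \(L(A\cap U)\setminus\{Lx\}\), and \(Lx\in\overline{L(A\cap U)}\) as the limit of \(La_k\). The sequential form of Definition~\ref{def:effective-direction} therefore yields \(Lv/\|Lv\|\in\Eff_{Lx}(L(A\cap U))\).

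The assertion for \(L(A)\) follows from monotonicity (Corollary~\ref{cor:eff-inherited-properties}(1)), since \(L(A\cap U)\subset L(A)\); alternatively, take \(U=\mathbb R^n\). The projective form is immediate, because \(\mathbb P(L)([v])=[Lv]=[Lv/\|Lv\|]\), and this is well-defined because \(Lv\neq0\).

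I expect no real obstacle. The only delicate point is ensuring \(La_k\neq Lx\) so that the normalized quotient makes sense, and this is exactly where the hypothesis \(Lv\neq0\) is used, via \(Lu_k\to Lv\). One should also check that \(Lx\) lies in the closure of the projected set, so that the definition applies, and this is automatic from \(La_k\to Lx\).
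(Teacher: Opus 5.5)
Your proposal is correct and follows essentially the same argument as the paper: take a sequence realizing \(v\), push it forward by \(L\), use \(Lv\neq0\) to get \(La_k\neq Lx\) for large \(k\), and pass to the limit in the normalized quotient. The only cosmetic differences are that you obtain the sequence inside \(A\cap U\) via germ dependence rather than by discarding finitely many terms, and that you explicitly check \(Lx\in\overline{L(A\cap U)}\), which the paper leaves implicit.
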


\begin{proof}
Since \(v\in\Eff_x(A)\), there exists a sequence
\[
a_j\in A,
\qquad
a_j\to x,
\qquad
a_j\neq x,
\]
such that
\[
\frac{a_j-x}{\|a_j-x\|}
\longrightarrow v.
\]
Applying \(L\), we obtain
\[
L(a_j)\to Lx.
\]
Let \(U\) be any neighborhood of \(x\). After discarding finitely many terms,
we may assume that \(a_j\in A\cap U\).
Since \(Lv\neq0\), one has \(L(a_j)\neq Lx\) for all sufficiently large
\(j\). For such indices,
\[
\frac{L(a_j)-Lx}{\|L(a_j)-Lx\|}
=
\frac{L\left(\frac{a_j-x}{\|a_j-x\|}\right)}
{\left\|L\left(\frac{a_j-x}{\|a_j-x\|}\right)\right\|}
\longrightarrow
\frac{Lv}{\|Lv\|},
\]
because \(Lv\neq0\). Hence \(Lv/\|Lv\|\) is an effective direction of
\(L(A\cap U)\) at \(Lx\). Since \(U\) was arbitrary, the local statement
follows. Taking \(U=\mathbb R^n\) gives the global statement for \(L(A)\).
\end{proof}

\begin{remark}[Collapsed directions and higher-order visibility]
Lemma~\ref{lem:projection-noncollapsed-effective-directions} is only a
one-sided statement. If a tangent direction \(v\) satisfies \(Lv=0\), then its
first-order projection disappears. Nevertheless, the projected germ may still
carry nontrivial directions arising from higher-order terms. Thus the effective
support of \(L(A)\) at \(Lx\) is not, in general, just the projective image of
the effective support of \(A\) at \(x\). This is one of the essential
differences between Point-Cross geometry and ordinary Lipschitz behavior of
covering dimensions.
\end{remark}

\begin{proposition}[Failure of monotonicity under linear projections]
\label{prop:projection-nonmonotonicity}
There is no general monotonicity relation between
\[
\dim_{\times}\{x\}_A
\qquad\text{and}\qquad
\dim_{\times}\{Lx\}_{L(A)}
\]
under linear projections. More precisely, a linear projection may strictly
decrease the Point-Cross dimension, and it may also strictly increase it.
\end{proposition}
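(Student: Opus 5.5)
The statement is existential, so I plan to prove it with two explicit examples, one for each direction. In both I will compute $\dim_{\times}$ by squeezing it between $\dim_{\mathrm{Pvec}}$ and $\dim_{\mathrm{Ptan}}$ (Corollary~\ref{cor:first-hierarchy-point-cross}). Where that squeeze is not tight, I will instead use the smooth calibration of Proposition~\ref{prop:point-cross-smooth-calibration} or the graph collapse of Proposition~\ref{prop:graph-collapse}.

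\emph{Strict decrease.} Let $A=(\mathbb R\times\{0\})\cup(\{0\}\times\mathbb R)\subset\mathbb R^2$ be the planar cross, $x=0$, and let $L(u_1,u_2)=u_1$ be the projection onto the first axis. Example~\ref{ex:planar-cross-supermax} gives $\dim_{\times}\{0\}_A=2$. On the other side, $L(A)=\mathbb R$ is a $C^1$ one-dimensional submanifold, so Proposition~\ref{prop:point-cross-smooth-calibration} gives $\dim_{\times}\{0\}_{L(A)}=1<2$. The same computation applies to $L(A\cap U)$ for any small neighbourhood $U$ of $0$, so the local reading also gives a strict decrease.

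\emph{Strict increase.} Here I want a germ with only one effective projective direction whose image under $L$ has two independent directions. This uses the higher-order visibility mechanism described in the preceding remark. Take the twisted curve $A=\{(t,t^2,t^3):t\in\mathbb R\}\subset\mathbb R^3$, which is a $C^1$ curve, so $\dim_{\times}\{0\}_A=1$. Take $L(u_1,u_2,u_3)=(u_2,u_3)$, which kills the tangent direction $e_1$. The image is $L(A)=\{(t^2,t^3)\}$, the semicubical cusp, and its tangent cone at $0$ is only the ray $\mathbb R_{\ge0}(1,0)$, giving value $1$. That is not a gain.

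So I would switch to a germ on which a degenerate fold separates two branches. Take
\[
A=\{(t,0,t^2):t\ge0\}\cup\{(-t,t^2,0):t\ge0\}\subset\mathbb R^3 .
\]
Both branches are tangent to the line $[e_1]$ at $0$. Hence $\Eff^{\mathbb P}_0(A)=\{[e_1]\}$, so $\dim_{\mathrm{Ptan}}\{0\}_A=1$ and therefore $\dim_{\times}\{0\}_A\le1$. Project with $L(u)=(u_2,u_3)$. The image is the union of the two half-axes $\{(0,s):s\ge0\}$ and $\{(s,0):s\ge0\}$, with $s=t^2$. This is a straight corner, so Proposition~\ref{prop:graph-collapse} (or the squeeze $\dim_{\mathrm{Pvec}}=\dim_{\mathrm{Ptan}}=2$) gives $\dim_{\times}\{0\}_{L(A)}=2>1$. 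The projected germ coincides with $L(A\cap U)$ near $0$, so the local version holds as well.

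\emph{Main obstacle.} The only real difficulty is finding the increasing example, because effective directions with $Lv\neq0$ are only transported one way (Lemma~\ref{lem:projection-noncollapsed-effective-directions}). A gain must therefore come from collapsed directions whose second-order terms point in independent directions. In the write-up the steps that need care are computing the tangent cone of $A$ at $0$ (both branches have unit directions tending to $\pm e_1$) and checking that the image branches are exact segments. Everything else reduces to results already established in the paper.
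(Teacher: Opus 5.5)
Your proposal is correct and is essentially the paper's proof: the decreasing example is the same cross projected onto an axis, and your increasing example is exactly the paper's curve $\{(t,t^2,t|t|)\}$ rewritten in the coordinates $(u,v,w)\mapsto\bigl(u,(v-w)/2,(v+w)/2\bigr)$, so the same mechanism produces the corner (a $C^1$ curve whose tangent line is killed by the projection while its two one-sided quadratic terms are independent). The paper instead uses smooth calibration on the source and computes the image weights directly; your bound $\dim_{\mathrm{Ptan}}=1$ on the source and the squeeze $\dim_{\mathrm{Pvec}}=\dim_{\mathrm{Ptan}}=2$ on the image are an equally valid way to evaluate both sides.
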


\begin{proof}
We first give a decreasing example. Let
\[
C:=([-1,1]\times\{0\})\cup(\{0\}\times[-1,1])
\subset\mathbb R^2,
\qquad
x=(0,0),
\]
and let
\[
\pi(s,t):=s
\]
be the projection onto the first coordinate. At the origin, \(C\) has two
projectively independent effective directions, each carrying contribution
\(1\). Hence
\[
\dim_{\times}\{0\}_C=2.
\]
On the other hand,
\[
\pi(C)=[-1,1],
\]
and therefore
\[
\dim_{\times}\{0\}_{\pi(C)}=1.
\]
Thus projection may decrease \(\dim_{\times}\).

We now give an increasing example. In \(\mathbb R^3\), consider the curve
\[
M:=
\{(t,t^2,t|t|): |t|<1\},
\qquad
x=(0,0,0),
\]
and the coordinate projection
\[
\pi(u,v,w):=(v,w).
\]
The set \(M\) is a \(C^1\) embedded curve near \(0\), with nonzero tangent
vector
\[
(1,0,0)
\]
at the origin. The parametrization is \(C^1\). Its derivative is continuous at \(0\) and equals
\((1,0,0)\) there. Hence \(M\) is a regular \(C^1\) embedded curve near the
origin. By the smooth calibration theorem,
\[
\dim_{\times}\{0\}_M=1.
\]

However,
\[
\pi(M)
=
\{(t^2,t|t|): |t|<1\}.
\]
For \(t>0\),
\[
(t^2,t|t|)=t^2(1,1),
\]
whereas for \(t<0\),
\[
(t^2,t|t|)=t^2(1,-1).
\]
Therefore the projected germ is the union of two half-line germs with
projective directions
\[
[(1,1)]
\qquad\text{and}\qquad
[(1,-1)].
\]
These directions are projectively independent in \(\mathbb R^2\), and each
carries contribution \(1\). Hence
\[
\dim_{\times}\{0\}_{\pi(M)}=2.
\]
Thus
\[
\dim_{\times}\{0\}_{\pi(M)}
=
2
>
1
=
\dim_{\times}\{0\}_M.
\]
Projection may therefore increase the Point-Cross dimension.
\end{proof}

\begin{remark}[Why the increasing example is possible]
In the increasing example, the tangent direction of \(M\) at the origin is
\[
(1,0,0),
\]
and this direction is annihilated by the projection
\[
\pi(u,v,w)=(v,w).
\]
Thus the first-order direction disappears. The projected germ is governed by
the next nonzero terms:
\[
(t^2,t|t|).
\]
These second-order terms split into two independent rays according to the sign
of \(t\). The projection therefore converts a smooth one-dimensional germ into
a nonsmooth crossing germ with two independent projective channels.
\end{remark}

The preceding examples show that arbitrary projections are too singular for a
general monotonicity theory. A clean formula reappears when the projection has
constant rank along a smooth germ.

\begin{theorem}[Constant-rank projection formula]
\label{thm:constant-rank-projection-formula}
Let \(M\subset\mathbb R^n\) be a \(C^1\) embedded submanifold through \(x\), and
let
\[
L:\mathbb R^n\longrightarrow\mathbb R^k
\]
be a linear map. Assume that there exists a neighborhood \(U\) of \(x\) in
\(M\) such that
\[
\operatorname{rank}\bigl(L|_{T_zM}\bigr)=r
\qquad
\text{for every }z\in U.
\]
Then, after possibly shrinking \(U\), the image
\[
L(U)
\]
is a \(C^1\) embedded submanifold of \(\mathbb R^k\) of dimension \(r\), and
\[
\dim_{\times}\{Lx\}_{L(U)}
=
r.
\]
Equivalently,
\[
\dim_{\times}\{Lx\}_{L(U)}
=
\operatorname{rank}\bigl(L|_{T_xM}\bigr).
\]
\end{theorem}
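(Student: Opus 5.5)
The plan is to reduce the statement to the smooth calibration, Proposition~\ref{prop:point-cross-smooth-calibration}, once we know that $L(U)$ is locally a $C^1$ embedded $r$-submanifold through $Lx$. Write $m:=\dim M$ and fix a $C^1$ local parametrization $\phi:W\to M$, with $W\subset\mathbb R^m$ open, $\phi(0)=x$ and $\phi(W)\subset U$. The composite $F:=L\circ\phi:W\to\mathbb R^k$ is $C^1$, and $DF(w)=L\circ D\phi(w)$ has image $L(T_{\phi(w)}M)$. Since $D\phi(w)$ is an isomorphism onto $T_{\phi(w)}M$, the hypothesis gives $\operatorname{rank}DF(w)=r$ for every $w\in W$. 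So $F$ is a $C^1$ map of constant rank $r$.

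The next step is the constant rank theorem in its $C^1$ form. After shrinking $W$, there are $C^1$ diffeomorphisms $\alpha$ of a neighborhood of $0$ in $\mathbb R^m$ and $\beta$ of a neighborhood of $F(0)=Lx$ in $\mathbb R^k$ such that $\beta\circ F\circ\alpha^{-1}(u_1,\dots,u_m)=(u_1,\dots,u_r,0,\dots,0)$. Take $W$ to be the preimage under $\alpha$ of a small cube $Q=Q_r\times Q_{m-r}$. Then $F(W)=\beta^{-1}(Q_r\times\{0\})$, which is a $C^1$ embedded $r$-dimensional submanifold with tangent space $DF(0)(\mathbb R^m)=L(T_xM)$ at $Lx$. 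We shrink $U$ to $\phi(W)$, so that $L(U)=F(W)$.

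The main obstacle is that the statement concerns the germ of $L(U)$ at $Lx$, and the normal form only describes $F$ on the small cube. Because we shrank $U$ to exactly $\phi(W)$, there are no remote points of $U$ to worry about: $L(U)$ equals this embedded piece, and no other branch can enter a neighborhood of $Lx$. This shrinking is the point to handle carefully. If the authors intend $U$ to be fixed, the claim can fail, since a second piece of $U$ could cross $L(U)$ at $Lx$. The phrase ``after possibly shrinking $U$'' in the statement permits our choice.

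Finally, apply Proposition~\ref{prop:point-cross-smooth-calibration} to $A=L(U)$ at $Lx\in A$. This gives $\dim_{\times}\{Lx\}_{L(U)}=\dim_{\mathrm{Ptan}}\{Lx\}_{L(U)}=r=\operatorname{rank}(L|_{T_xM})$. If $r=0$, the same argument shows that $L(U)$ is the single point $\{Lx\}$ and the value is $0$.
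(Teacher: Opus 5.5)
Your proof is correct and follows the same route as the paper. You compose $L$ with the manifold, use the constant rank theorem to show that the shrunken image $L(U)$ is a $C^1$ embedded $r$-submanifold, and then apply the smooth calibration, Proposition~\ref{prop:point-cross-smooth-calibration}. You take $W$ to be the preimage of a cube, so that $L(U)$ is exactly $\beta^{-1}(Q_r\times\{0\})$; this makes explicit the shrinking step that the paper only asserts.
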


\begin{proof}
The restriction
\[
L|_M:M\longrightarrow\mathbb R^k
\]
is a \(C^1\) map between manifolds. Since \(L\) is linear, its differential at
\(z\in M\), restricted to \(T_zM\), is precisely
\[
L|_{T_zM}:T_zM\longrightarrow\mathbb R^k.
\]
By assumption, this differential has constant rank \(r\) on a neighborhood of
\(x\). The constant rank theorem therefore
implies that, after shrinking \(U\) if necessary,
\[
L(U)
\]
is a \(C^1\) embedded submanifold of \(\mathbb R^k\) of dimension \(r\).

By the smooth calibration theorem for the Point-Cross dimension,
Proposition~\ref{prop:point-cross-smooth-calibration}, one obtains
\[
\dim_{\times}\{Lx\}_{L(U)}
=
r.
\]
Since
\[
r=\operatorname{rank}\bigl(L|_{T_xM}\bigr),
\]
the final identity follows.
\end{proof}

\begin{corollary}[Generic smooth projection rank]
\label{cor:generic-smooth-projection-rank}
Let \(M\subset\mathbb R^n\) be a \(C^1\) embedded submanifold of dimension
\(p\), let \(x\in M\), and let
\[
1\le k\le n.
\]
For almost every \(k\)-dimensional subspace
\[
W\in G(n,k),
\]
there exists a neighborhood \(U\) of \(x\) in \(M\) such that
\[
\dim_{\times}\{P_Wx\}_{P_W(U)}
=
\min\{p,k\},
\]
where
\[
P_W:\mathbb R^n\to W
\]
denotes the orthogonal projection onto \(W\).
\end{corollary}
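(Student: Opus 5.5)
The plan is to reduce the statement to Theorem~\ref{thm:constant-rank-projection-formula}. We need a full-measure set of \(W\in G(n,k)\) for which \(P_W\) restricted to \(T_zM\) has the constant rank \(\min\{p,k\}\) for all \(z\) in a neighborhood of \(x\) in \(M\). The ambient target of \(P_W\) is \(W\), which we identify isometrically with \(\mathbb R^k\) by choosing an orthonormal basis. Under this identification the Point-Cross dimension computed in \(W\) is the one computed in \(\mathbb R^k\); this follows from the \(C^1\)-invariance, Theorem~\ref{thm:pcd-c1-invariance}, or simply because linear isometries preserve every ingredient of the definition.

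\textbf{Generic rank at the single point.} Set \(r:=\min\{p,k\}\) and \(T:=T_xM\). The restriction \(P_W|_T\) has rank \(r\) exactly when a suitable minor is nonzero. If \(p\le k\), the condition is \(T\cap W^\perp=\{0\}\). If \(p>k\), the condition is \(T+W^\perp=\mathbb R^n\), which is equivalent to \(W\cap T^\perp=\{0\}\). In both cases the set of bad \(W\) is a proper real-algebraic subvariety of the Grassmannian. One sees this in the local affine charts \(W=\operatorname{graph}(A)\), \(A\in\mathrm{Hom}(\mathbb R^k,\mathbb R^{n-k})\), where the bad set is the zero set of a polynomial in the entries of \(A\) that is not identically zero, since it does not vanish at a suitably aligned \(W\). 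The zero set of a nonzero polynomial has Lebesgue measure zero. Since the invariant measure on \(G(n,k)\) is smooth and positive in every chart, with finitely many charts covering, the set of bad \(W\) is null for the invariant measure.

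\textbf{From the point to a neighborhood.} Fix a good \(W\), meaning one with \(\operatorname{rank}(P_W|_{T_xM})=r\). Locally the map \(z\mapsto T_zM\) is continuous into \(G(n,p)\), since \(M\) is \(C^1\). Rank is lower semicontinuous, because a nonzero \(r\times r\) minor stays nonzero nearby. Hence \(\operatorname{rank}(P_W|_{T_zM})\ge r\) for \(z\) near \(x\). On the other side, \(\operatorname{rank}\le\min\{p,k\}=r\) always holds, so the rank is constant and equal to \(r\) on a neighborhood \(U\). This is the reason for working with maximal rank: it is exactly the regime in which semicontinuity upgrades pointwise rank to constant rank.

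\textbf{Conclusion and main obstacle.} Applying Theorem~\ref{thm:constant-rank-projection-formula} with \(L=P_W\), and shrinking \(U\) if needed, gives \(\dim_{\times}\{P_Wx\}_{P_W(U)}=r\). The only step needing care is the measure-zero claim. I would present it through the chart and polynomial argument above, including an explicit aligned \(W\) showing the polynomial is not identically zero. For \(p\le k\), such a \(W\) is any \(k\)-plane containing \(T\). For \(p>k\), it is any \(k\)-plane inside \(T\).
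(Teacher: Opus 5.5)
Your proposal is correct and follows the same route as the paper: maximal rank of $P_W|_{T_xM}$ holds off a proper algebraic, hence null, subset of $G(n,k)$; it propagates to a neighborhood via a nonvanishing $\min\{p,k\}$-minor together with the automatic upper bound $\min\{p,k\}$; and the conclusion then follows from Theorem~\ref{thm:constant-rank-projection-formula}. You make explicit what the paper leaves implicit, namely the identification $W\cong\mathbb R^k$, the semicontinuity-plus-maximality step, and the chart argument; in that chart argument, note that the polynomial is nonzero on each chart because the good set is open and nonempty while each graph chart is open and dense, so your aligned $W$ need not itself lie in the chart.
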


\begin{proof}
Fix the tangent space
\[
T_xM.
\]
For almost every \(W\in G(n,k)\), the restriction
\[
P_W|_{T_xM}
\]
has rank
\[
\min\{p,k\}.
\]
Indeed, the set of subspaces for which this rank drops is a proper algebraic
subset of the Grassmannian, and hence has Haar measure zero.

For such a subspace \(W\), some \(\min\{p,k\}\)-minor of the matrix of
\(P_W|_{T_xM}\) is nonzero. Since the tangent spaces \(T_zM\) vary continuously
with \(z\), the same rank condition remains true for all \(z\) in a sufficiently
small neighborhood \(U\) of \(x\) in \(M\). Therefore
\[
\operatorname{rank}\bigl(P_W|_{T_zM}\bigr)=\min\{p,k\}
\qquad
(z\in U).
\]
The conclusion follows from
Theorem~\ref{thm:constant-rank-projection-formula}.
\end{proof}

\begin{remark}[Projection principles]
The results of this subsection may be summarized as follows. Non-collapsed
first-order directions project to effective directions, but collapsed
directions may produce new visible channels through higher-order terms.
Consequently, the Point-Cross dimension is not monotone under linear
projections: it may decrease by merging or annihilating independent channels,
and it may increase when a first-order direction is killed and a higher-order
branching becomes visible.

In the smooth constant-rank regime, this pathology disappears. The projected
germ is again a smooth manifold, and the Point-Cross dimension is exactly the
rank of the projected tangent space:
\[
\dim_{\times}\{Lx\}_{L(U)}
=
\operatorname{rank}\bigl(L|_{T_xM}\bigr).
\]
Thus the rank formula is the projection analogue of the smooth calibration
principle. Outside the constant-rank regime, projections should be regarded as
operations capable of changing the directional organization of a germ, rather
than merely reducing its ambient dimension.
\end{remark}
\section{Model Examples: Dispersion, Directionality, and Balance}

The aim of this section is to test the Point-Cross dimension on a sequence
of elementary but structurally discriminating examples. The preceding sections
showed that the invariant is stable under closure, local in the germ, invariant
under \(C^1\)-changes of coordinates, and governed by an exact finite-union
calculus. We now show what these principles detect in concrete configurations.

The guiding distinction is the following. The point-extended box dimension
measures local dispersion: how many small balls are needed to cover the germ
near the base point. The Point-Cross dimension measures weighted directionality:
how much one-dimensional directional mass is carried by independent effective
projective channels. These two mechanisms may coincide, but they need not.

Thus the examples below are organized according to three regimes:
\[
\text{dispersion dominates directionality,}
\]
\[
\text{directionality dominates dispersion,}
\]
and
\[
\text{dispersion and directionality are balanced.}
\]
This distinction is invisible to a single classical dimension but becomes
visible once one keeps track separately of local covering complexity and
weighted directional channels.

In the examples, the relevant probes are often canonical. For oscillating
graphs, one uses straight oblique probes, which detect the zero-crossing
structure of the oscillations, vertical or near-vertical probes, which detect
the sequence of peaks, and, when a rectifiable graph branch admits a one-sided
tangent at the base point, its arclength parametrization as a tangent probe.
For fractal coordinate frames, the canonical probes are the
coordinate axes carrying the corresponding one-dimensional fractal sets. These
canonical probes are not an additional definition. They are the natural probes
which realize, or at least sharply lower-bound, the directional contributions
appearing in the definition of \(\theta_x^A\).
\subsection{Oscillating graph germs and topologist sine variants}

We begin with the classical oscillating germs
\[
\Gamma_\alpha
=
\left\{(t,t^\alpha\sin(1/t)):0<t<t_0\right\},
\qquad \alpha\ge 0,
\]
and we work with their closed germs
\[
A_\alpha:=\overline{\Gamma_\alpha}.
\]
For \(\alpha>0\), the closure only adds the origin. For \(\alpha=0\), one obtains
the usual topologist sine closure
\[
A_0
=
\overline{\{(t,\sin(1/t)):0<t<t_0\}},
\]
which contains the vertical segment
\[
\{0\}\times[-1,1].
\]
Since the Point-Cross dimension is invariant under closure, this is the natural
closed-germ representative of the example.

The box-dimension estimates for chirp graphs are classical. They go back to
Tricot's analysis of fractal curves and chirps \cite{tricot}, and were later
developed in the study of the topologist sine graph and of generalized chirp
oscillations
\cite{AzcanKocakOrhunUreyen1999,PasicTanaka2013,KorkutVlahZubrinicZupanovic2016}.
The pointwise formulation used here is the Point-Extended Box Dimension developed
in \cite{pointbox,pointcsf}. In the present subsection, our purpose is to compare
this local dispersion exponent with the weighted directional profile naturally
visible through canonical probes.

\begin{proposition}[Point-extended box dimension of chirp germs]
\label{prop:pbox-chirp-germs}
For \(\alpha\ge0\),
\[
\dim_{\mathrm{Pbox}}\{0\}_{A_\alpha}
=
\begin{cases}
\dfrac{3-\alpha}{2}, & 0\le \alpha<1,\\[0.7em]
1, & \alpha\ge 1.
\end{cases}
\]
\end{proposition}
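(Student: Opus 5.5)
We reduce the pointwise statement to the classical upper box dimension of truncated chirp graphs. By Definition~\ref{def:pbox-metric},
\[
\dim_{\mathrm{Pbox}}\{0\}_{A_\alpha}=\lim_{r\to0^+}\dimBl\bigl(A_\alpha\cap B(0,r)\bigr).
\]
For $\alpha>0$ the truncation $A_\alpha\cap B(0,r)$ contains, and is contained in, sets of the form $\overline{\{(t,t^\alpha\sin(1/t)):0<t<c\}}$ with $c\asymp r$, since $|t^\alpha\sin(1/t)|\le t^\alpha\to0$. For $\alpha=0$ the truncation is the part of the topologist sine closure inside $B(0,r)$. This piece consists of the vertical segment $\{0\}\times(-r,r)$ together with the arcs of the graph lying in the horizontal strip $|y|<r$ over $0<t<r$. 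So in every case it suffices to compute $\dimBl$ of the closed graph piece over $(0,c)$ uniformly in small $c$, and then let $c\to0$. Monotonicity in $r$ makes the limit harmless once we show the value is independent of $c$.

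\emph{Case $\alpha\ge1$.} The lower bound $\ge1$ is immediate: the germ contains a nondegenerate connected arc accumulating at $0$, so its projection to the $t$-axis is an interval and its upper box dimension is at least one. For the upper bound, when $\alpha>1$ the function $f(t)=t^\alpha\sin(1/t)$ has derivative $O(t^{\alpha-2})$. Its variation on $(0,c)$ is $\int_0^c t^{\alpha-1}\,dt$ up to constants, which is finite. For $\alpha=1$ the variation is $\sum_k k^{-1}$ and diverges, so we do not use rectifiability there. Instead we use the standard column count. Split $(0,c)$ into columns of width $\varepsilon$. In the column at height $t$, the number of $\varepsilon$-boxes needed is at most $1+\varepsilon^{-1}\cdot\text{osc}$, and the oscillation is at most $\min(\varepsilon|f'|,2t^\alpha)$ with $|f'|\lesssim t^{\alpha-2}$. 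Summing over columns gives $N_\varepsilon\lesssim\varepsilon^{-1}\log(1/\varepsilon)$ for $\alpha=1$, and $N_\varepsilon\lesssim\varepsilon^{-1}$ for $\alpha>1$. In both cases the exponent is $1$.

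\emph{Case $0\le\alpha<1$.} This is Tricot's chirp computation, and we redo it with the same column count. Put $t_*\asymp\varepsilon^{1/2}$ as the crossover where the local wavelength $t^2$ equals $\varepsilon$.
\begin{itemize}
\item For $t>t_*$ the graph is resolved. A column at $t$ costs about $1+t^{\alpha-2}\varepsilon$ boxes, and summing over the $\varepsilon^{-1}$ columns gives $\varepsilon^{-1}\int_{t_*}^c t^{\alpha-2}\varepsilon\,dt+\varepsilon^{-1}\asymp t_*^{\alpha-1}+\varepsilon^{-1}$.
\item For $t<t_*$ each column is essentially filled up to height $t^\alpha$ (or the vertical segment when $\alpha=0$). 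This costs $\varepsilon^{-2}\int_0^{t_*}t^\alpha\,dt\asymp\varepsilon^{-2}t_*^{1+\alpha}$.
\end{itemize}
With $t_*=\varepsilon^{1/2}$ both terms are of order $\varepsilon^{-(3-\alpha)/2}$, which gives the upper bound.

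For the matching lower bound, the main obstacle is to show that the small-$t$ region really is filled. Take $t\in(\varepsilon^{1/2}/2,\varepsilon^{1/2})$. Each wavelength there is at most about $\varepsilon$, and within each wavelength the graph sweeps the full height range $\pm t^\alpha$, which is at least $c\,\varepsilon^{\alpha/2}$. So every $\varepsilon$-column in this range meets about $\varepsilon^{\alpha/2-1}$ distinct $\varepsilon$-boxes. Over the roughly $\varepsilon^{-1/2}$ columns this gives $N_\varepsilon\gtrsim\varepsilon^{-(3-\alpha)/2}$. One has to check that consecutive zeros $1/(k\pi)$ are spaced by at most $\varepsilon$ in that range, and that the peaks reach a fixed fraction of $t^\alpha$; both are elementary. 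For $\alpha=0$ this recovers the classical value $3/2$, and the vertical segment, which has dimension $1$, does not change it. Since all bounds are uniform in the truncation radius, the limit in $r$ yields $(3-\alpha)/2$.
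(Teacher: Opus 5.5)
Your proof is correct in substance, but it takes a different route from the paper. The paper computes nothing. It quotes the classical box dimension $2-\frac{\alpha+1}{\beta+1}$ of the chirp graph $t^\alpha\sin(t^{-\beta})$ (Tricot and the later chirp literature), with saturation at $1$ for $\alpha\ge\beta$, and specializes to $\beta=1$. It then transfers the value to the point germ by germ dependence and closure invariance, noting only that the vertical segment of $A_0$ cannot raise $3/2$.

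You instead rederive that classical formula by Tricot's column count. You take the crossover at $t_*=\varepsilon^{1/2}$, with filled columns below $t_*$ and resolved columns above. The matching lower bound comes from full-height sweeps in the columns with $t\asymp\varepsilon^{1/2}$, and at $\alpha=1$ you get an $\varepsilon^{-1}\log(1/\varepsilon)$ count. This makes the proposition self-contained and forces you to treat the localization $A_\alpha\cap B(0,r)$ explicitly. In particular, you correctly observe that for $\alpha=0$ the ball cuts the graph down to the strip $|y|<r$, which the paper's appeal to a global graph dimension passes over silently. The price is that a few constants need care.

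\begin{itemize}
\item \emph{First bullet.} A resolved column at $t$ costs about $1+t^{\alpha-2}$ boxes (oscillation $\varepsilon t^{\alpha-2}$ divided by $\varepsilon$), not $1+\varepsilon t^{\alpha-2}$. The sum is therefore $\varepsilon^{-1}\int_{t_*}^c t^{\alpha-2}\,dt\asymp\varepsilon^{-1}t_*^{\alpha-1}$. As you wrote it, $t_*^{\alpha-1}=\varepsilon^{-(1-\alpha)/2}$, which is not of order $\varepsilon^{-(3-\alpha)/2}$. Only with the extra factor $\varepsilon^{-1}$ do the two terms balance.
\item \emph{Lower bound, range of $t$.} The local period is about $2\pi t^2$, so on $(\varepsilon^{1/2}/2,\varepsilon^{1/2})$ a column of width $\varepsilon$ need not contain a full oscillation. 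Take $t\in(c_1\varepsilon^{1/2},c_2\varepsilon^{1/2})$ with $c_2$ a small absolute constant, or use columns of width a fixed multiple of $\varepsilon$.
\item \emph{Lower bound, case $\alpha=0$.} The bound must be run inside the truncation. Each column then contributes a sweep of height $\asymp r$ rather than $2$, giving $N_\varepsilon\gtrsim r\,\varepsilon^{-3/2}$. So drop the claim that your bounds are uniform in $r$, since the constants do depend on $r$. All you need is that for each fixed $r$ the exponent is $(3-\alpha)/2$.
\item \emph{Inner sandwich radius.} For $0<\alpha<1$ it is $c\asymp r^{1/\alpha}$, not $c\asymp r$. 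This is harmless because the exponent does not depend on $c$.
\end{itemize}
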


\begin{proof}
This is the standard box-dimension estimate for chirp graphs. For a generalized
chirp
\[
t^\alpha\sin(t^{-\beta}),
\]
the box dimension of the graph near the origin is
\[
2-\frac{\alpha+1}{\beta+1}
\]
in the fractal range \(0\le\alpha<\beta\), with saturation at \(1\) at and
beyond the borderline \(\alpha=\beta\). The graph is rectifiable in the strictly
damped regime \(\alpha>\beta\). See, for instance, Tricot~\cite{tricot},
Azcan--Ko\c{c}ak--Orhun--{\"U}reyen~\cite{AzcanKocakOrhunUreyen1999},
Pa{\v{s}}i{\'c}--Tanaka~\cite{PasicTanaka2013}, and
Korkut--Vlah--{\v Z}ubrini{\'c}--{\v Z}upanovi{\'c}
\cite{KorkutVlahZubrinicZupanovic2016}.

In the present case \(\beta=1\), this gives
\[
2-\frac{\alpha+1}{2}
=
\frac{3-\alpha}{2}
\]
for \(0\le\alpha<1\), while for \(\alpha\ge1\) the exponent saturates at \(1\).
Since \(\dim_{\mathrm{Pbox}}\{0\}_{A_\alpha}\) is a local germ invariant and is
unchanged by closure, the same formula applies to the closed germs
\(A_\alpha\). In the case \(\alpha=0\), the added vertical segment in the
topologist sine closure has box dimension \(1\), and therefore does not change
the exponent \(3/2\).
\end{proof}

We now turn to the directional side. The effective projective support of these
oscillating graph germs has already been identified in
Section~\ref{sec:effective-directions-admissible-probes}. See in particular
Example~\ref{ex:basic-effective-directions}. We refine that qualitative
description in two steps.

First, we compute the weights carried by the canonical non-shadowing probes.
These are the straight finite-slope probes, which detect zero-level and
level-crossing sequences, the vertical peak probe, which detects the sequence of
extrema in the range \(0<\alpha<1\), the vertical segment in the closed
topologist sine germ when \(\alpha=0\), and the graph itself in the rectifiable
regime \(\alpha>1\).

Second, we compare this canonical profile with the unrestricted Point-Cross
dimension. This comparison is important: for non-rectifiable oscillating germs,
Lipschitz probes may shadow increasingly small oscillatory arcs while keeping a
prescribed limiting direction. Such probes can carry more one-dimensional mass
than the canonical transverse probes.

We denote by
\[
\dim_{\times}^{\mathrm{can}}\{0\}_{A_\alpha}
\]
the largest independent directional sum obtained from the canonical probes just
described. This is not a new dimension theory. It is the canonical
non-shadowing Point-Cross profile of the chirp germ. The word ``canonical''
means that exactness is claimed only with respect to this explicit family of
non-shadowing probes, not with respect to the full class of admissible
Lipschitz probes.

\begin{proposition}[Canonical Point-Cross profile of oscillating graph germs]
\label{prop:canonical-point-cross-oscillating-graph-profile}
Let
\[
A_\alpha
=
\overline{
\left\{(t,t^\alpha\sin(1/t)):0<t<t_0\right\}
}
\subset\mathbb R^2.
\]
Then the canonical Point-Cross profile at the origin is as follows.

\begin{enumerate}
\item If \(\alpha>1\), the only effective projective direction is the horizontal
one, and the graph itself gives contribution \(1\). Hence
\[
\dim_{\times}^{\mathrm{can}}\{0\}_{A_\alpha}=1.
\]

\item If \(\alpha=1\), the effective projective directions are the lines with
slopes in \([-1,1]\). Each canonical finite-slope probe has contribution
\(1/2\), and two distinct finite slopes are independent. Hence
\[
\dim_{\times}^{\mathrm{can}}\{0\}_{A_1}=1.
\]

\item If \(0<\alpha<1\), every finite-slope direction has canonical contribution
\(1/2\), while the vertical peak probe has contribution
\[
\frac{1}{1+\alpha}.
\]
Consequently,
\[
\dim_{\times}^{\mathrm{can}}\{0\}_{A_\alpha}
=
\frac12+\frac{1}{1+\alpha}.
\]

\item If \(\alpha=0\), the closed germ contains the vertical segment
\(\{0\}\times[-1,1]\). Hence the vertical projective direction has contribution
\(1\), while every finite-slope canonical probe has contribution \(1/2\). Thus
\[
\dim_{\times}^{\mathrm{can}}\{0\}_{A_0}=\frac32.
\]
\end{enumerate}
\end{proposition}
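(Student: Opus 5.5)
The proof proceeds regime by regime. In each regime we first fix the effective projective support using Example~\ref{ex:basic-effective-directions}. We then compute the point-extended box dimension of the trace cut out by each canonical probe. Finally we maximize the independent sum.

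The structural observation is that in \(\mathbb R^2\) at most two projective directions can be independent. By Lemma~\ref{lem:pbox-trace-bound} each canonical weight is at most \(1\). So the canonical profile equals the largest value of \(\theta(\xi_1)+\theta(\xi_2)\) over two distinct effective directions, or the single best weight when only one direction exists.

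\textbf{Case \(\alpha>1\).} The support is the single horizontal class. In the spherical formulation \(\Eff_0(A_\alpha)=\{(1,0)\}\), by Example~\ref{ex:basic-effective-directions}. The graph is \(C^1\) up to \(t=0\), since \(f'(t)=\alpha t^{\alpha-1}\sin(1/t)-t^{\alpha-2}\cos(1/t)\to0\) as long as \(\alpha>2\). For \(1<\alpha\le2\) it is still rectifiable, because \(\int|f'|<\infty\) when \(\alpha>1\). The arclength parametrization is therefore an injective Lipschitz probe with limiting direction \((1,0)\), and its trace is the whole branch. That trace has \(\dim_{\mathrm{Pbox}}=1\) at the origin, as a rectifiable arc that is not a point. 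The value \(1\) follows.

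\textbf{Cases \(0\le\alpha\le1\), finite slopes.} Fix a slope \(m\) inside the effective support. The probe is the straight ray \(t\mapsto t(1,m)/\sqrt{1+m^2}\). Its trace with \(A_\alpha\) is the set of solutions of \(t^{\alpha-1}\sin(1/t)=m\), as computed in Example~\ref{ex:basic-effective-directions}. Near \(0\) these solutions form a sequence comparable to \(1/(k\pi)\), with at most two solutions per period of \(\sin\). The ray is bi-Lipschitz to a line, so Lemma~\ref{lem:local-pbox-bilipschitz-germs} applies. The box dimension of \(\{1/k\}\) equals \(1/2\), so this probe has weight \(1/2\).

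The borderline \(|m|=1\) when \(\alpha=1\) needs a separate check. There the roots are \(\pi/2+2k\pi\)-type sequences, still of the form \(\asymp 1/k\), so the weight is again \(1/2\). For \(\alpha=1\), any two distinct slopes are independent, which gives \(1/2+1/2=1\).

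\textbf{Vertical direction.} For \(0<\alpha<1\), the vertical probe is the polygonal peak probe through the maxima \(p_k=(t_k,t_k^\alpha)\) with \(t_k\asymp 1/(2k\pi)\). It is built as in Lemma~\ref{lem:radial-representatives}, and its trace is the sequence \(p_k\). The second coordinates \(t_k^{\alpha}\asymp k^{-\alpha}\) dominate, so the trace is bi-Lipschitz to \(\{k^{-\alpha}\}\). The classical formula gives \(\dim_B\{k^{-\alpha}\}=1/(1+\alpha)\). The Lipschitz property of the polygonal probe follows from the gap estimate \(t_k^\alpha-t_{k+1}^\alpha\gtrsim k^{-\alpha-1}\gg t_k-t_{k+1}\asymp k^{-2}\), which controls the horizontal drift.

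For \(\alpha=0\), the closed germ contains \(\{0\}\times[0,1]\), so the straight vertical probe has weight \(1\). The vertical direction is independent of any finite slope. Each finite slope is capped at \(1/2\), and the vertical weight exceeds \(1/2\). The best pair is therefore vertical plus one finite slope. This gives \(\tfrac12+\tfrac1{1+\alpha}\) for \(0<\alpha<1\) and \(\tfrac32\) for \(\alpha=0\).

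The main obstacle is the upper side, namely showing the canonical weights are exactly these values. The root sequences must lie on a single straight ray with the stated density, and the near-vertical peak probe must not pick up extra points of the graph. We would handle this by reducing \(\theta\) on the chosen probe to a one-dimensional sequence and invoking the \(\{k^{-p}\}\) box-dimension formula. We would not claim the unrestricted supremum, in keeping with the definition of \(\dim_{\times}^{\mathrm{can}}\).
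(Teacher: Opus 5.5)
Your regime-by-regime structure matches the paper's proof: straight finite-slope probes whose traces are level-crossing sequences of exponent $1/2$, the rectifiable graph for $\alpha>1$, the vertical segment for $\alpha=0$, and the cap of two independent directions in the plane. The step that fails as written is the vertical probe for $0<\alpha<1$.

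You join consecutive maxima $p_k=(t_k,t_k^\alpha)$ by chords and assert that the trace on $A_\alpha$ is exactly $\{p_k\}$. It is not.
\begin{itemize}
\item Since $t\mapsto t^\alpha$ is strictly concave, each chord lies strictly below the envelope and leaves $p_k$ at a nonzero angle to it.
\item The graph is tangent to the envelope at $p_k$, because $t^\alpha(\sin(1/t)-1)\le 0$ attains its maximum $0$ there. It separates from the envelope only quadratically.
\item Hence the graph lies above each chord near both endpoints and drops to $-t^\alpha$ in between.
\end{itemize}
So every chord meets $A_\alpha$ at additional points. Also, Lemma~\ref{lem:radial-representatives} does not apply to consecutive peaks: its construction first thins the sequence so that $r_{k+1}<r_k/4$. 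What actually makes your probe admissible is your gap estimate: the polygon is a graph over the $y$-axis with bounded slope $dx/dy\asymp k^{\alpha-1}$.

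The value $1/(1+\alpha)$ is still correct, but it needs an argument you do not give. There are two ways to supply it.
\begin{itemize}
\item \emph{Control the extra contacts.} A crossing forces $t^\alpha(1-\sin(1/t))$ to be at most the chord's concavity deficit $O(t_k^{\alpha+2})$. This places the crossing within $O(k^{-2-\alpha})$ of a peak, with boundedly many crossings per chord. Consecutive peaks are $\asymp k^{-1-\alpha}$ apart, so at scale $\varepsilon$ these points add only $O(\varepsilon^{-1/(2+\alpha)})$ cells. That leaves the exponent unchanged.
\item \emph{Use the envelope arc, as the paper does.} Take $\{(t,t^\alpha):0\le t\le t_0\}$ itself as the probe. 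It is rectifiable because $t^\alpha$ is monotone. Its limiting direction is vertical because $t^{1-\alpha}\to 0$. A point $(t,t^\alpha)$ lies on the graph exactly when $\sin(1/t)=1$, so the trace is precisely the peak sequence. Your bi-Lipschitz reduction to $\{k^{-\alpha}\}$ then goes through verbatim.
\end{itemize}
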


\begin{proof}
We first recall the elementary exponent of power sequences. If
\[
D_\beta:=\{n^{-\beta}:n\ge1\}\cup\{0\},
\qquad \beta>0,
\]
then
\[
\dim_{\mathrm{Pbox}}\{0\}_{D_\beta}
=
\frac{1}{1+\beta}.
\]
Indeed, the gap \(n^{-\beta}-(n+1)^{-\beta}\) is comparable to
\(n^{-\beta-1}\). The transition index at scale \(\varepsilon\) is therefore
\[
N_\varepsilon\asymp\varepsilon^{-1/(1+\beta)}.
\]
The first \(N_\varepsilon\) points and the remaining tail both require
\(\asymp\varepsilon^{-1/(1+\beta)}\) intervals, giving the claimed exponent. In
particular, the sequence \(\{1/n:n\ge1\}\) has exponent \(1/2\).

The effective projective directions are read from
\[
\frac{t^\alpha\sin(1/t)}{t}
=
t^{\alpha-1}\sin(1/t).
\]

If \(\alpha>1\), this ratio tends to \(0\), so only the horizontal projective
direction is effective. The graph is rectifiable near the origin. Indeed,
\[
\frac{d}{dt}\bigl(t^\alpha\sin(1/t)\bigr)
=
\alpha t^{\alpha-1}\sin(1/t)-t^{\alpha-2}\cos(1/t),
\]
and the singular term is integrable near \(0\) precisely when \(\alpha>1\).
Thus, by Remark~\ref{rem:rectifiable-arcs-as-probes}, the arclength
parametrization of the graph is an admissible Lipschitz probe in the horizontal
direction, and its trace has point-extended box dimension \(1\).

Let \(\alpha=1\). The possible limiting slopes are exactly the values of
\(\sin(1/t)\), namely the interval \([-1,1]\). For a fixed
\(m\in[-1,1]\), the straight probe \(y=mx\) meets the graph at the solutions of
\[
\sin(1/t)=m.
\]
These form a sequence \(t_n\asymp n^{-1}\). Hence the trace carried by this
canonical probe has point-extended box dimension \(1/2\). Since at most two
projective directions can be independent in \(\mathbb R^2\), and since two
distinct finite slopes give two independent directions, the canonical aggregate
is
\[
\frac12+\frac12=1.
\]

Let now \(0<\alpha<1\). Every finite slope \(m\in\mathbb R\) occurs, because the
equation
\[
\sin(1/t)=m t^{1-\alpha}
\]
has solutions near the zeros of \(\sin(1/t)\), and these solutions again satisfy
\(t_n\asymp n^{-1}\). Thus each finite-slope canonical probe carries contribution
\(1/2\).

The vertical direction is detected by the peaks. Let
\[
s_n=\frac{\pi}{2}+2\pi n,
\qquad
t_n=s_n^{-1},
\qquad
p_n=(t_n,t_n^\alpha).
\]
Since
\[
\frac{t_n}{t_n^\alpha}=t_n^{1-\alpha}\to0,
\]
the envelope arc
\[
E_\alpha:=\{(t,t^\alpha):0\le t\le t_0\}
\]
has limiting vertical direction at the origin. It is rectifiable, because
\(t\mapsto t^\alpha\) has integrable derivative near \(0\) for \(\alpha>0\).
Moreover, its trace on \(A_\alpha\) is, up to the origin and finitely many
initial terms, precisely the peak sequence \(\{p_n:n\ge1\}\). Hence, by
Remark~\ref{rem:rectifiable-arcs-as-probes}, its arclength parametrization is an
admissible Lipschitz vertical probe. The distance of \(p_n\) to the origin is
comparable to
\[
t_n^\alpha\asymp n^{-\alpha}.
\]
Therefore the vertical peak probe carries contribution
\[
\frac{1}{1+\alpha}.
\]
This quantity is at least \(1/2\). Since at most two projective directions can
be independent in \(\mathbb R^2\), the largest independent canonical sum is
obtained by taking the vertical direction together with any finite-slope
direction:
\[
\dim_{\times}^{\mathrm{can}}\{0\}_{A_\alpha}
=
\frac12+\frac{1}{1+\alpha}.
\]

Finally, if \(\alpha=0\), the closed germ contains the vertical segment
\(\{0\}\times[-1,1]\). The straight vertical probe therefore has contribution
\(1\). On the other hand, finite-slope straight probes meet the oscillating graph
near the zero-level sequence \(t_n\asymp n^{-1}\), and hence have canonical
contribution \(1/2\). Again, only two projective directions can be independent
in the plane, and taking the vertical direction together with any finite-slope
direction gives
\[
\dim_{\times}^{\mathrm{can}}\{0\}_{A_0}
=
1+\frac12
=
\frac32.
\]
\end{proof}

\begin{remark}[Canonical profile and transverse reading]
The canonical profile records what is seen by probes that detect the oscillatory
levels transversally. It is therefore a non-shadowing directional profile. It
does not claim optimality over all admissible Lipschitz probes. In the
oscillatory cases, unrestricted probes can follow small pieces of the graph and
thereby carry more one-dimensional mass than a transverse level probe.
\end{remark}

\begin{proposition}[Shadowing and the unrestricted Point-Cross dimension]
\label{prop:unrestricted-point-cross-chirp-shadowing}
For the closed oscillating germs \(A_\alpha\), the unrestricted Point-Cross
dimension at the origin satisfies
\[
\dim_{\times}\{0\}_{A_\alpha}
=
\begin{cases}
2, & 0\le \alpha\le 1,\\[0.4em]
1, & \alpha>1.
\end{cases}
\]
Consequently, for \(0\le\alpha\le1\), the unrestricted Point-Cross dimension is
strictly larger than the canonical profile.
\end{proposition}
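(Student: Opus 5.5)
Both values will be obtained by squeezing $\dim_{\times}$ between bounds already available: the upper bound comes from Corollary~\ref{cor:first-hierarchy-point-cross}, and the lower bound from exhibiting two projectively independent effective directions, each carrying contribution $1$. For $\alpha>1$, Example~\ref{ex:basic-effective-directions} shows that $\Eff_0(A_\alpha)=\{(1,0)\}$, so $\dim_{\mathrm{Ptan}}\{0\}_{A_\alpha}=1$ and hence $\dim_{\times}\le1$. Conversely, the rectifiable graph, parametrized by arclength (Remark~\ref{rem:rectifiable-arcs-as-probes}), is a horizontal probe whose trace has point-extended box dimension $1$, which gives equality. For $0\le\alpha\le1$, Example~\ref{ex:basic-effective-directions} together with Proposition~\ref{prop:basic-tandim}(4) (closure) gives $\dim_{\mathrm{Ptan}}=2$, so it suffices to find two independent projective directions $\xi_1,\xi_2$ with $\theta_0^{A_\alpha}(\xi_i)\ge1$. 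When $\alpha=0$, the vertical segment already supplies one direction of weight $1$, and the shadowing construction below is needed for only one further finite slope.

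The core construction is a shadowing probe. Fix a slope $m$ in the interior of the effective range, that is, $|m|<1$ if $\alpha=1$ and $m\in\mathbb R$ arbitrary if $\alpha<1$. Near each zero-crossing $t_n\asymp 1/(n\pi)$ of the line $y=mx$ with the graph, the graph makes a transverse crossing with slope of size $\asymp t_n^{\alpha-2}$, which is large. I would build a probe $\gamma$ that runs along the line $y=mx$ and, on a small window around each $t_n$, replaces a short piece of the line with a piece of the graph arc itself. That arc must have length $\ell_n$ with $\ell_n=o(t_n)$, so that the limiting direction stays $[(1,m)]$, while $\sum\ell_n$ is large enough that the included pieces carry full box dimension $1$. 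The joins are made by short polygonal connectors, as in Proposition~\ref{prop:probe-realization-effective-directions}. Injectivity holds because the windows are disjoint. The Lipschitz property holds because every piece is a rectifiable arc and we may reparametrize by arclength, whose total length is finite since the line length is finite and $\sum\ell_n$ is controlled.

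To obtain $\dim_{\mathrm{Pbox}}\ge1$ at the origin, it is enough that for every $r$ the included graph pieces in $B(0,r)$ have total length $L$ that is not negligible at every scale. A simpler route is to choose $\ell_n\asymp t_n/\log n$. Then, at scale $\varepsilon\asymp\ell_N$, the covering number is at least $\sum_{n\ge N}\ell_n/\varepsilon$, which tends to infinity like $\varepsilon^{-1+o(1)}$, so the upper box exponent of the trace in $B(0,r)$ is $1$. I would verify this with the standard bound $N_\varepsilon(\text{arc})\gtrsim \text{length}/\varepsilon$ for a connected arc of length $\ge\varepsilon$. Since the trace lies on the Lipschitz probe, Lemma~\ref{lem:pbox-trace-bound} gives the matching upper bound, so $\theta=1$. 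Applying the construction to two distinct slopes (or to the vertical segment and one slope when $\alpha=0$) produces an independent pair, and therefore $\dim_{\times}\ge2$. The strictness relative to Proposition~\ref{prop:canonical-point-cross-oscillating-graph-profile} then follows by comparing $2$ with $1$, $\tfrac12+\tfrac1{1+\alpha}<2$, and $\tfrac32$.

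The main obstacle is the geometric bookkeeping of the shadowing windows. The difficulty is to ensure that a graph piece of length $\ell_n$ around the crossing $t_n$ stays within distance $o(t_n)$ of the crossing point. Since the arc is steep, its horizontal extent is tiny, and its vertical excursion is at most $\ell_n=o(t_n)$, so this should be fine. One must also make sure the connectors do not create self-intersections with the rest of the graph. That is irrelevant for injectivity of $\gamma$ itself, but it matters for the trace: additional contacts only increase the trace, and monotonicity makes this harmless. The case $\alpha=1$ with $|m|=1$ is avoided simply by choosing interior slopes, for instance $m=\pm\tfrac12$.
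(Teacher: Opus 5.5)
There is a genuine gap in the shadowing step, and it comes from your choice of arc lengths. You place a graph window around \emph{every} crossing $t_n\asymp 1/(n\pi)$ and take $\ell_n\asymp t_n/\log n\asymp 1/(n\log n)$. But $\sum_n 1/(n\log n)=\infty$, so the curve you build has infinite length in every neighbourhood of the origin. It is then not rectifiable, admits no Lipschitz parametrization, and is not an element of $\mathcal G_0^{A_\alpha}(v_m)$. Your statement that the total length is finite because $\sum\ell_n$ is controlled contradicts this choice.

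The accompanying counting argument also fails. The bound $N_\varepsilon\ge\sum_{n\ge N}\ell_n/\varepsilon$ would be infinite here, which is impossible for a bounded planar set. In any case, consecutive crossings are only $\asymp 1/n^2$ apart, far below $\varepsilon\asymp\ell_N$. A single covering set therefore meets many of your pieces, so their lengths cannot simply be added.

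The missing observation is that there is no trade-off at all. Since $\dim_{\mathrm{Pbox}}\{0\}_E=\inf_{r>0}\dimBl(E\cap B(0,r))$ and $r$ is fixed before $\varepsilon\to0$, it suffices that, for each $r$, the localized trace contains \emph{one} non-degenerate $C^1$ arc. Such an arc alone already has upper box dimension $1$. You may therefore take the pieces as small as you like, for instance $\ell_n\le\min\{t_n^2,2^{-n}\}$. This makes the curve rectifiable, and Lemma~\ref{lem:pbox-trace-bound} then gives $\theta=1$.

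This is exactly what the paper does. It passes to a sparse subsequence with $r_{n+1}\le\min\{r_n/8,2^{-n}\}$ and takes graph arcs $G_n$ with $\mathcal H^1(G_n)+\operatorname{diam}(G_n)\le 2^{-n}$ inside cones of aperture $\eta_n\downarrow0$. It joins these arcs by straight segments lying in convex cones. It then concludes simply because every $A_\alpha\cap\Gamma_\gamma\cap B(0,r)$ contains some $G_n$.

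With summable $\ell_n$, your line-following variant also works: the windows are disjoint, the connectors have length $O(\ell_n)$, and the limiting direction is preserved since $\ell_n=o(t_n)$. The rest of your proposal matches the paper: the case $\alpha>1$, the upper bounds, the vertical segment for $\alpha=0$, and the strictness comparison. One small point: Example~\ref{ex:basic-effective-directions} only treats $\alpha>0$, so for $\alpha=0$ the upper bound $2$ should be taken from the ambient planar bound.
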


\begin{proof}
The upper bound follows from the general hierarchy
\[
\dim_{\times}\{0\}_{A_\alpha}
\le
\dim_{\mathrm{Ptan}}\{0\}_{A_\alpha}.
\]
If \(\alpha>1\), the only effective projective direction is the horizontal one,
so
\[
\dim_{\mathrm{Ptan}}\{0\}_{A_\alpha}=1.
\]
By Remark~\ref{rem:rectifiable-arcs-as-probes}, the rectifiable graph itself
gives a horizontal contribution equal to \(1\). Hence
\[
\dim_{\times}\{0\}_{A_\alpha}=1
\qquad(\alpha>1).
\]

We now treat \(0\le\alpha\le1\). In each of these cases the point-tangential
span at the origin is two-dimensional, and therefore
\[
\dim_{\times}\{0\}_{A_\alpha}\le 2.
\]
It remains to prove the lower bound \(2\).

We first show that a finite-slope projective direction can carry contribution
\(1\) under unrestricted probes. Let \(m\) be an admissible finite slope: for
\(\alpha=1\), take \(m\in(-1,1)\), while for \(0\le\alpha<1\) any
\(m\in\mathbb R\) may be used. Choose a sequence of parameters
\(u_n\downarrow0\) such that, with
\[
q_n:=\bigl(u_n,u_n^\alpha\sin(1/u_n)\bigr),
\]
one has
\[
\frac{u_n^\alpha\sin(1/u_n)}{u_n}\to m.
\]
Such a sequence exists by the description of the effective directions above.
Fix a decreasing sequence \(\eta_n\downarrow0\). Since \(q_n\to0\) and the
slopes of the vectors \(q_n\) tend to \(m\), we may recursively replace the
sequence by a subsequence such that, writing \(r_n:=\|q_n\|\),
\[
\left|\frac{u_n^\alpha\sin(1/u_n)}{u_n}-m\right|
\le \frac{\eta_n}{4},
\qquad
r_{n+1}\le \min\left\{\frac{r_n}{8},2^{-n}\right\}.
\]
In particular, \(q_n\) belongs to the cone
\[
C_n:=\left\{(x,y):x>0,
\left|\frac{y}{x}-m\right|\le \frac{\eta_n}{2}\right\},
\]
the annuli
\[
\left\{z:\frac12 r_n<\|z\|<2r_n\right\}
\]
are pairwise disjoint, and the series satisfies
\[
\sum_n r_n<\infty.
\]
This uses only the fact that the original sequence tends to the origin while its
directions tend to the prescribed slope.

For each \(n\), choose
\[
0<\rho_n<\frac13\min\{u_n,u_{n-1}-u_n,u_n-u_{n+1}\},
\]
with the convention \(u_0:=t_0\), and set
\[
J_n=[u_n-\rho_n,u_n+\rho_n]\subset(0,t_0)
\]
so small that the corresponding graph arc
\[
G_n
=
\{(t,t^\alpha\sin(1/t)):t\in J_n\}
\]
is contained in \(C_n\) and in the above annulus, and also satisfies
\[
\mathcal H^1(G_n)+\operatorname{diam}(G_n)
\le 2^{-n}.
\]
This is possible because the chirp graph is \(C^1\) on a neighborhood of each
positive parameter \(u_n\): as \(\rho_n\to0\), the arc \(G_n\) collapses to the
single point \(q_n\), both in diameter and in length.

Let \(a_n\) and \(b_n\) be the right and left endpoints of \(G_n\), respectively,
so that their first coordinates are \(u_n+\rho_n\) and \(u_n-\rho_n\). The final
curve traverses \(G_n\) from \(a_n\) to \(b_n\). Join
\(b_n\) to \(a_{n+1}\) by the straight segment \(S_n\). This segment is contained
in the slightly wider open cone
\[
D_n:=\left\{(x,y):x>0,
\left|\frac{y}{x}-m\right|< \eta_n\right\}.
\]
Indeed, \(D_n\) is convex and contains both endpoints \(b_n\) and
\(a_{n+1}\), since \(G_n\subset C_n\subset D_n\) and
\(G_{n+1}\subset C_{n+1}\subset C_n\subset D_n\). Moreover,
each \(G_n\) is a graph over the interval \(J_n\), while \(S_n\) is a graph over
the interval \([u_{n+1}+\rho_{n+1},u_n-\rho_n]\). These first-coordinate
intervals are pairwise disjoint except at consecutive endpoints. Consequently
we have
\[
S_n\cap G_n=\{b_n\},\qquad
S_n\cap G_{n+1}=\{a_{n+1}\},
\]
and
\[
S_n\cap G_k=\varnothing\quad(k\notin\{n,n+1\}),
\qquad
S_n\cap S_j=\varnothing\quad(n\ne j).
\]
Hence the infinite concatenation has no self-intersections except for the
prescribed consecutive endpoints. Moreover, since \(G_n\subset\{\|z\|<2r_n\}\) and
\(G_{n+1}\subset\{\|z\|<2r_{n+1}\}\), there is a constant \(C>0\), independent
of \(n\), such that
\[
\mathcal H^1(S_n)
\le C r_n.
\]
Consequently
\[
\sum_n \mathcal H^1(G_n)+\sum_n \mathcal H^1(S_n)
<\infty.
\]
Also, for every fixed \(N\), the whole tail made of the pieces \(G_n\) and
\(S_n\), \(n\ge N\), is contained in \(D_N\). Since \(\eta_N\to0\), the resulting
curve, obtained by adjoining the arcs, the connecting segments, and the origin,
is simple and rectifiable, accumulating only at the origin and having limiting
direction the line of slope \(m\). Choosing the arclength parametrization with
initial point at the origin, Remark~\ref{rem:rectifiable-arcs-as-probes} gives
an admissible Lipschitz probe with limiting oriented direction
\[
v_m:=\frac{(1,m)}{\sqrt{1+m^2}},
\]
and hence with limiting projective direction \([(1,m)]\). The recurrence
condition in Definition~\ref{def:admissible-lipschitz-probes} holds because the
arcs \(G_n\subset A_\alpha\) accumulate at the origin along the parametrized
tail.

Its trace on \(A_\alpha\) contains the arcs \(G_n\). Therefore, for every
\(r>0\), the localized trace
\[
A_\alpha\cap\Gamma_\gamma\cap B(0,r)
\]
contains at least one non-degenerate \(C^1\) arc, namely some \(G_n\) with \(n\)
large enough. Since a non-degenerate \(C^1\) arc has upper box dimension \(1\),
and since the localized trace is contained in the rectifiable curve
\(\Gamma_\gamma\), its localized upper box dimension is exactly \(1\). Thus, for
every \(r>0\),
\[
\dimBl\bigl(A_\alpha\cap\Gamma_\gamma\cap B(0,r)\bigr)=1.
\]
Therefore, by the local definition of point-extended box dimension,
\[
\dim_{\mathrm{Pbox}}\{0\}_{A_\alpha\cap\Gamma_\gamma}=1.
\]
Thus the trace contribution in the projective direction \([(1,m)]\) is equal to
\(1\).

For \(0<\alpha<1\), choose two distinct finite slopes \(m_1\ne m_2\). The
corresponding projective directions are independent in \(\mathbb R^2\), and each
has contribution \(1\). Hence
\[
\dim_{\times}\{0\}_{A_\alpha}\ge 1+1=2.
\]
Together with the upper bound, this gives
\[
\dim_{\times}\{0\}_{A_\alpha}=2
\qquad(0<\alpha<1).
\]

For \(\alpha=1\), choose two distinct slopes \(m_1,m_2\in(-1,1)\). The same
construction gives contribution \(1\) in each of the two independent directions,
and therefore
\[
\dim_{\times}\{0\}_{A_1}=2.
\]

Finally, for \(\alpha=0\), the closed germ already contains the vertical segment,
which gives contribution \(1\) in the vertical direction. In addition, the
shadowing construction above gives contribution \(1\) in any finite-slope
direction. These two projective directions are independent, so
\[
\dim_{\times}\{0\}_{A_0}\ge 2.
\]
The upper bound by the ambient two-dimensional tangential span gives equality.
This completes the proof.
\end{proof}

\begin{corollary}[Three directional readings of the chirp germ]
For \(0<\alpha<1\), the chirp germ separates three local quantities:
\[
\dim_{\times}^{\mathrm{can}}\{0\}_{A_\alpha}
<
\dim_{\mathrm{Pbox}}\{0\}_{A_\alpha}
<
\dim_{\times}\{0\}_{A_\alpha}.
\]
More explicitly,
\[
\frac12+\frac{1}{1+\alpha}
<
\frac{3-\alpha}{2}
<
2.
\]
At the endpoint \(\alpha=0\),
\[
\dim_{\mathrm{Pbox}}\{0\}_{A_0}
=
\dim_{\times}^{\mathrm{can}}\{0\}_{A_0}
=
\frac32
<
2
=
\dim_{\times}\{0\}_{A_0}.
\]
At the endpoint \(\alpha=1\),
\[
\dim_{\mathrm{Pbox}}\{0\}_{A_1}
=
\dim_{\times}^{\mathrm{can}}\{0\}_{A_1}
=
1
<
2
=
\dim_{\times}\{0\}_{A_1}.
\]
\end{corollary}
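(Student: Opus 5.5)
The plan is to assemble the three values from results already proved and then check the numerical inequalities. For $0<\alpha<1$, Proposition~\ref{prop:canonical-point-cross-oscillating-graph-profile}(3) gives the canonical value
\[
\dim_{\times}^{\mathrm{can}}\{0\}_{A_\alpha}=\tfrac12+\tfrac{1}{1+\alpha}.
\]
Proposition~\ref{prop:pbox-chirp-germs} gives $\dim_{\mathrm{Pbox}}\{0\}_{A_\alpha}=\tfrac{3-\alpha}{2}$. Proposition~\ref{prop:unrestricted-point-cross-chirp-shadowing} gives $\dim_{\times}\{0\}_{A_\alpha}=2$. With these substituted, the corollary reduces to elementary inequalities in $\alpha$.

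\textbf{First inequality.} I need
\[
\tfrac12+\tfrac{1}{1+\alpha}<\tfrac{3-\alpha}{2}.
\]
Since $1+\alpha>0$, I will multiply by $2(1+\alpha)$. This turns the inequality into $(1+\alpha)+2<(3-\alpha)(1+\alpha)$, that is, $3+\alpha<3+2\alpha-\alpha^2$. This is equivalent to $\alpha^2<\alpha$, which holds exactly for $0<\alpha<1$. It becomes an equality at both endpoints, which is consistent with the endpoint statements.

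\textbf{Second inequality.} The bound $\tfrac{3-\alpha}{2}<2$ is equivalent to $\alpha>-1$, so it is immediate.

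\textbf{Endpoints.}
\begin{itemize}
\item At $\alpha=0$: Proposition~\ref{prop:pbox-chirp-germs} gives $3/2$, item (4) of the canonical profile gives $3/2$, and the shadowing proposition gives $2$.
\item At $\alpha=1$: the box formula saturates at $1$, item (2) gives canonical value $1$, and the shadowing proposition gives $2$.
\end{itemize}

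The argument is a direct combination of earlier statements, so no step is a real obstacle. The one point to be careful about is sign control when clearing the denominator $1+\alpha$. I will also check that the endpoint cases are read from the correct items, namely the vertical-segment case for $\alpha=0$ and the slope interval $[-1,1]$ for $\alpha=1$, and not from the generic $0<\alpha<1$ formula.
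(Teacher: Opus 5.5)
Your proposal is correct and is essentially the paper's own proof: the three values are read off from Propositions~\ref{prop:pbox-chirp-germs}, \ref{prop:canonical-point-cross-oscillating-graph-profile} and \ref{prop:unrestricted-point-cross-chirp-shadowing}, taking the appropriate items at the endpoints, and the only real computation is the first strict inequality. The paper writes that inequality as the positivity of \(\frac{\alpha(1-\alpha)}{2(1+\alpha)}\), which is the same as your condition \(\alpha^2<\alpha\) once the positive denominator \(2(1+\alpha)\) is cleared.
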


\begin{proof}
The identities follow from
Proposition~\ref{prop:pbox-chirp-germs},
Proposition~\ref{prop:canonical-point-cross-oscillating-graph-profile}, and
Proposition~\ref{prop:unrestricted-point-cross-chirp-shadowing}. For \(0<\alpha<1\), the
strict inequality
\[
\frac12+\frac{1}{1+\alpha}
<
\frac{3-\alpha}{2}
\]
is equivalent to
\[
\frac{\alpha(1-\alpha)}{2(1+\alpha)}>0,
\]
and
\[
\frac{3-\alpha}{2}<2
\]
is immediate.
\end{proof}

\begin{remark}[Interpretation of the damping and shadowing regimes]
This family of examples is useful because it separates several mechanisms. It
also explains why the unrestricted invariant and the canonical transverse
profile should be read as complementary rather than interchangeable quantities.

For \(\alpha>1\), damping is strong enough to make the graph rectifiable near
the origin. The local germ is dimensionally one-dimensional:
\[
\dim_{\mathrm{Pbox}}\{0\}_{A_\alpha}
=
\dim_{\times}^{\mathrm{can}}\{0\}_{A_\alpha}
=
\dim_{\times}\{0\}_{A_\alpha}
=
1.
\]

At the borderline \(\alpha=1\), the point-box exponent and the canonical
Point-Cross profile both equal \(1\), but for different reasons: the graph is
not rectifiable, and the canonical finite-slope probes see only level sequences
of weight \(1/2\). The unrestricted Point-Cross dimension, however, is \(2\),
because shadowing probes can follow small oscillatory arcs in two independent
finite-slope directions.

For \(0<\alpha<1\), the canonical probes see a natural transverse directional
profile, the point-box dimension measures the full local dispersion of the
oscillations, and the unrestricted Point-Cross dimension records the maximal
directional mass available when shadowing probes are allowed.

Finally, when \(\alpha=0\), the closed topologist sine germ contains a genuine
vertical segment. The canonical profile already reaches \(3/2\), matching the
point-box exponent, but unrestricted shadowing still raises the Point-Cross
dimension to the full planar value \(2\).
\end{remark}

\begin{remark}[Saturation and canonical saturation]
The preceding computation shows that the closed chirp germs are Point-Cross
saturated at the origin:
\[
\dim_{\times}\{0\}_{A_\alpha}
=
\dim_{\mathrm{Ptan}}\{0\}_{A_\alpha}
\qquad(\alpha\ge0).
\]
Indeed, for \(0\le\alpha\le1\), both quantities are equal to \(2\), while for
\(\alpha>1\), both are equal to \(1\).

However, the canonical transverse profile is not saturated in the oscillatory
regimes. More precisely,
\[
\dim_{\times}^{\mathrm{can}}\{0\}_{A_\alpha}
<
\dim_{\mathrm{Ptan}}\{0\}_{A_\alpha}
\qquad(0\le\alpha\le1).
\]
Thus the unrestricted invariant saturates the available tangential span because
shadowing probes may recover one-dimensional mass in independent directions,
whereas the canonical profile records only the transverse directional weights
naturally visible in the germ.

In particular, for \(0<\alpha<1\), one obtains the strict chain
\[
\dim_{\times}^{\mathrm{can}}\{0\}_{A_\alpha}
<
\dim_{\mathrm{Pbox}}\{0\}_{A_\alpha}
<
\dim_{\times}\{0\}_{A_\alpha}
=
\dim_{\mathrm{Ptan}}\{0\}_{A_\alpha}.
\]
This is the main conceptual lesson of the example: local dispersion, canonical
directional visibility, and unrestricted directional saturation are distinct
phenomena.
\end{remark}
\subsection{Fractal coordinate frames and curvilinear frames}
\label{subsec:fractal-coordinate-frames-curvilinear-frames}

The preceding examples showed that oscillating graph germs may separate local
dispersion, canonical directional visibility, and unrestricted directional
saturation. We now turn to a simpler but structurally important model in which
directionality dominates dispersion.

Classical self-similar sets, Cantor sets, and fractal strings provide
one-dimensional germs carrying non-integer box exponents. See, for instance,
Moran~\cite{Moran1946}, Hutchinson~\cite{Hutchinson1981},
Falconer~\cite{Falconer2003}, Mattila~\cite{MattilaGMT}, and
Lapidus--van Frankenhuijsen~\cite{LapidusVanFrankenhuijsen2013}. Questions
concerning the behavior of box dimensions under Cartesian products, especially
for generalized Cantor sets, are also closely related to this point of view. See,
for example, Olson--Robinson--Sharples~\cite{OlsonRobinsonSharples2015},
Robinson--Sharples~\cite{RobinsonSharples2013}, and
Wei--Wen--Wen~\cite{WeiWenWen2016}.

In the present framework, one-dimensional fractal germs may be used as weights
placed along independent directions. The resulting object behaves like a local
coordinate frame whose axes carry fractal, rather than full one-dimensional,
mass.

The basic phenomenon is that the point-extended box dimension of a finite
fractal frame sees only the largest axis exponent, whereas the Point-Cross
dimension adds the exponents carried by independent projective directions.

\begin{definition}[Axial fractal frame]
\label{def:axial-fractal-frame}
Let \(x\in\mathbb R^n\), let
\[
v_1,\dots,v_k\in S^{n-1}
\]
be projectively independent directions, and let
\[
F_i\subset[0,\rho]
\qquad (1\le i\le k)
\]
be bounded sets such that
\[
0\in F_i
\qquad\text{and}\qquad
0\in\overline{F_i\setminus\{0\}}.
\]
The associated \emph{axial fractal frame} at \(x\) is
\[
\mathcal F
=
\bigcup_{i=1}^k
\bigl(x+F_i v_i\bigr)
=
\bigcup_{i=1}^k
\{x+t v_i:t\in F_i\}.
\]
We write
\[
d_i:=
\dim_{\mathrm{Pbox}}\{0\}_{F_i}.
\]
\end{definition}

\begin{proposition}[Point-Cross dimension of an axial fractal frame]
\label{prop:point-cross-fractal-frame}
Let \(\mathcal F\) be an axial fractal frame as in
Definition~\ref{def:axial-fractal-frame}. Then
\[
\Eff_x^{\mathbb P}(\mathcal F)
=
\{[v_1],\dots,[v_k]\},
\]
and
\[
\theta_x^{\mathcal F}([v_i])=d_i
\qquad(1\le i\le k).
\]
Consequently,
\[
\dim_{\times}\{x\}_{\mathcal F}
=
\sum_{i=1}^k d_i.
\]
Moreover,
\[
\dim_{\mathrm{Pbox}}\{x\}_{\mathcal F}
=
\max_{1\le i\le k} d_i.
\]
Consequently,
\[
\dimBoxCross\{x\}_{\mathcal F}
=
\sum_{i=1}^k d_i.
\]
Finally,
\[
\dim_{\mathrm{Ptan}}\{x\}_{\mathcal F}=k.
\]
\end{proposition}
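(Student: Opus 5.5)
The plan is to reduce everything to the single branches $S_i:=x+F_iv_i$, using the exact finite-union calculus of Corollary~\ref{cor:point-cross-exact-finite-union-calculus} together with the fact that each branch lies on a straight ray.

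\emph{Step 1 (effective directions).} Every point of $S_i\setminus\{x\}$ has the form $x+tv_i$ with $t>0$, so its normalized displacement is exactly $v_i$. Since $0\in\overline{F_i\setminus\{0\}}$, such points accumulate at $x$. Hence $\Tan_x(S_i)=\mathbb R_{\ge0}v_i$ and $\Eff_x(S_i)=\{v_i\}$. The finite-union property of Corollary~\ref{cor:eff-inherited-properties} then gives $\Eff_x(\mathcal F)=\{v_1,\dots,v_k\}$. Passing to projective classes yields $\Eff_x^{\mathbb P}(\mathcal F)=\{[v_1],\dots,[v_k]\}$, and these classes are pairwise distinct by projective independence.

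Taking spans, $\Span\Tan_x(\mathcal F)=\Span\{v_1,\dots,v_k\}$, which has dimension $k$. This already gives $\dim_{\mathrm{Ptan}}\{x\}_{\mathcal F}=k$, exactly as in Lemma~\ref{lem:star-tangent-cone}.

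\emph{Step 2 (weights of a single branch).} Consider the straight probe $\gamma(t)=x+tv_i$ on $[0,\rho]$. It is admissible: it is injective and Lipschitz, its limiting direction is $v_i$, and it meets $S_i$ at arbitrarily small parameters by the accumulation hypothesis on $F_i$. Its trace satisfies $S_i\cap\Gamma_\gamma=S_i$. The map $t\mapsto x+tv_i$ is an isometry of $\mathbb R$ onto the line, so Lemma~\ref{lem:local-pbox-bilipschitz-germs} gives $\dim_{\mathrm{Pbox}}\{x\}_{S_i}=d_i$. Hence $\theta_x^{S_i}(v_i)\ge d_i$.

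Conversely, every probe trace $S_i\cap\Gamma$ is contained in $S_i$, so monotonicity of $\dim_{\mathrm{Pbox}}$ gives $\theta_x^{S_i}(v_i)\le d_i$. Thus $\theta_x^{S_i}([v_i])=d_i$.

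\emph{Step 3 (weights on the union).} By Corollary~\ref{cor:point-cross-exact-finite-union-calculus},
\[
\widehat\theta_x^{\mathcal F}([v_i])=\max_j\widehat\theta_x^{S_j}([v_i]).
\]
For $j\neq i$, the class $[v_i]$ is not effective for $S_j$, because $v_i\neq\pm v_j$. Those terms therefore vanish, and $\theta_x^{\mathcal F}([v_i])=d_i$.

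\emph{Step 4 (aggregation and the combined invariant).} The full family $([v_1],\dots,[v_k])$ is admissible in $\mathcal I_x^{\mathbb P}(\mathcal F)$, and all weights are nonnegative. Every other admissible family is a subfamily of it, so it realizes the supremum, and $\dim_\times\{x\}_{\mathcal F}=\sum_i d_i$.

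For the box component, Proposition~\ref{prop:basic-pebd-recall}(4) applied to the finite union, together with $\dim_{\mathrm{Pbox}}\{x\}_{S_i}=d_i$, gives $\max_i d_i$. Since $\sum_i d_i\ge\max_i d_i$, Definition~\ref{def:point-box-cross-dimension} yields $\dimBoxCross\{x\}_{\mathcal F}=\sum_i d_i$.

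I expect no serious obstacle. The only points needing care are Step 3, where one must confirm that the distinct projective classes do not interact in the union calculus, and the verification of the recurrence condition for the straight probe. Both follow directly from projective independence and from the hypothesis $0\in\overline{F_i\setminus\{0\}}$.
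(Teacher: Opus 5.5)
Your proof is correct. It follows the paper's outline: finite-union stability for $\Eff$ and for $\dim_{\mathrm{Pbox}}$, the straight probe for the lower bound $\theta_x^{\mathcal F}([v_i])\ge d_i$, and summation over the only available independent family. The difference is in how you obtain the upper bound $\theta_x^{\mathcal F}([v_i])\le d_i$.

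The paper argues directly on $\mathcal F$. An admissible probe with limiting direction $[v_i]$ eventually lies in a narrow cone around $[v_i]$ that misses the other arms. Its trace is therefore locally contained in the $i$-th arm, and monotonicity plus germ dependence finish the argument.

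You instead compute the weight on the single arm $S_i$, where the upper bound is trivial because every trace lies in $S_i$. You then transfer it to the union through Corollary~\ref{cor:point-cross-exact-finite-union-calculus}, with the other arms contributing $0$ because $[v_i]\notin\Eff_x^{\mathbb P}(S_j)$.

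Both routes rest on the same fact. The union calculus internally shows that a probe in direction $v_i$ cannot recur on $S_j$ without making $v_i$ effective for $S_j$. Your route is more modular and reuses an already proven structural result. The paper's route is more elementary and self-contained, and it makes the geometric reason visible: the arms are separated in direction space.

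One small point: Lemma~\ref{lem:local-pbox-bilipschitz-germs} is stated for bi-Lipschitz homeomorphisms between open subsets of the same $\mathbb R^n$, so it does not literally apply to $t\mapsto x+tv_i$. It is cleaner to say directly that covering numbers are intrinsic to the metric and are therefore preserved by this isometric embedding, which is what the paper's phrase ``isometric to its parameter set'' uses.
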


\begin{proof}
Each arm is isometric to its parameter set \(F_i\), so it has point-box exponent
\(d_i\). The finite-union rule for the point-extended box dimension therefore
gives
\[
\dim_{\mathrm{Pbox}}\{x\}_{\mathcal F}
=
\max_{1\le i\le k}d_i.
\]

By finite-union stability of effective directions,
\[
\Eff_x^{\mathbb P}(\mathcal F)=\{[v_1],\dots,[v_k]\}.
\]
For a fixed \(i\), the straight probe in direction \([v_i]\) realizes the
\(i\)-th arm, hence
\[
\theta_x^{\mathcal F}([v_i])\ge d_i.
\]
Conversely, any admissible probe with limiting direction \([v_i]\) is eventually
contained in a narrow projective cone around \([v_i]\), which avoids all other
arms. By monotonicity and germ dependence, its trace has point-box exponent at
most \(d_i\). Hence
\[
\theta_x^{\mathcal F}([v_i])=d_i.
\]
Since the directions are projectively independent and no other effective
projective direction occurs, the definition of the Point-Cross dimension gives
\[
\dim_{\times}\{x\}_{\mathcal F}
=
\sum_{i=1}^k d_i.
\]
The formula for \(\dimBoxCross\) follows from
\[
\sum_i d_i\ge\max_i d_i
\]
and from the definition of \(\dimBoxCross\).

Finally, the point-tangential span is
\[
\operatorname{Span}\{v_1,\dots,v_k\},
\]
which has dimension \(k\), since the directions are projectively independent.
Thus
\[
\dim_{\mathrm{Ptan}}\{x\}_{\mathcal F}=k.
\]
\end{proof}

\begin{remark}[Directionality dominating dispersion]
The axial fractal frame is a basic model in which directionality dominates
dispersion. The point-extended box dimension satisfies a finite-union max rule:
\[
\dim_{\mathrm{Pbox}}\{x\}_{\mathcal F}
=
\max_i d_i.
\]
By contrast, the Point-Cross dimension adds the independent directional weights:
\[
\dim_{\times}\{x\}_{\mathcal F}
=
\sum_i d_i.
\]
Thus, if all \(d_i=d\), then
\[
\dim_{\mathrm{Pbox}}\{x\}_{\mathcal F}=d,
\qquad
\dim_{\times}\{x\}_{\mathcal F}=kd.
\]
The local covering complexity is that of the thickest arm, while the
Point-Cross dimension records the coexistence of \(k\) independent fractal
channels.

The frame is Point-Cross saturated precisely when
\[
\sum_{i=1}^k d_i=k,
\]
that is, when every active channel has full one-dimensional weight.
\end{remark}

\begin{example}[Power-sequence frames]
\label{ex:power-sequence-fractal-frame}
Let
\[
F_i=\{n^{-\beta_i}:n\ge1\}\cup\{0\},
\qquad
\beta_i>0,
\]
and let
\[
\mathcal F
=
\bigcup_{i=1}^k
\{x+t v_i:t\in F_i\},
\]
where \(v_1,\dots,v_k\) are projectively independent. Since
\[
\dim_{\mathrm{Pbox}}\{0\}_{F_i}
=
\frac{1}{1+\beta_i},
\]
Proposition~\ref{prop:point-cross-fractal-frame} gives
\[
\dim_{\times}\{x\}_{\mathcal F}
=
\sum_{i=1}^k
\frac{1}{1+\beta_i},
\]
whereas
\[
\dim_{\mathrm{Pbox}}\{x\}_{\mathcal F}
=
\max_{1\le i\le k}
\frac{1}{1+\beta_i}.
\]
Thus even a union of sparse sequences may carry a large Point-Cross dimension
when the sequences occupy independent projective directions.
\end{example}

\begin{example}[Cantor coordinate frames]
\label{ex:cantor-coordinate-frame}
Let \(C_i\subset[0,1]\) be compact self-similar sets satisfying the open set
condition and accumulating at \(0\). Suppose that their endpoint point-box
exponents agree with their global box dimensions, namely
\[
d_i:=\dim_{\mathrm{Pbox}}\{0\}_{C_i}=\dimBl(C_i).
\]
For projectively independent directions \(v_1,\dots,v_k\), define
\[
\mathcal C
=
\bigcup_{i=1}^k
\{x+t v_i:t\in C_i\}.
\]
Then
\[
\dim_{\times}\{x\}_{\mathcal C}
=
\sum_{i=1}^k d_i,
\qquad
\dim_{\mathrm{Pbox}}\{x\}_{\mathcal C}
=
\max_{1\le i\le k}d_i.
\]
In particular, if the \(C_i\)'s are copies of the middle-third Cantor set placed
so that \(0\) is an accumulation point, then
\[
d_i=\frac{\log 2}{\log 3},
\]
and hence
\[
\dim_{\times}\{x\}_{\mathcal C}
=
k\frac{\log 2}{\log 3},
\qquad
\dim_{\mathrm{Pbox}}\{x\}_{\mathcal C}
=
\frac{\log 2}{\log 3}.
\]
\end{example}

\begin{remark}[Relation with Cartesian products of fractals]
The preceding examples should be compared with the classical behavior of
fractal dimensions under Cartesian products. If
\[
F_1,\dots,F_k\subset\mathbb R
\]
are sufficiently regular one-dimensional fractal sets, for instance
self-similar Cantor-type sets satisfying the open set condition and having
well-defined box dimension, then one often has an additive product formula of
the form
\[
\dim_{\mathrm B}(F_1\times\cdots\times F_k)
=
\sum_{i=1}^k \dim_{\mathrm B}(F_i).
\]
This is the familiar additive behavior of Cartesian fractal dusts.

The axial fractal frame
\[
\mathcal F
=
\bigcup_{i=1}^k (x+F_i v_i)
\]
does not contain the Cartesian product
\[
F_1\times\cdots\times F_k.
\]
It is only the coordinate skeleton of such a product: each fractal set \(F_i\)
is placed on a separate independent axis, but the mixed points
\[
x+t_1v_1+\cdots+t_kv_k
\]
are not included. Consequently, its point-extended box dimension obeys the
finite-union rule
\[
\dim_{\mathrm{Pbox}}\{x\}_{\mathcal F}
=
\max_i \dim_{\mathrm{Pbox}}\{0\}_{F_i}.
\]

The Point-Cross dimension recovers the additive product-type signature at the
level of independent directions:
\[
\dim_{\times}\{x\}_{\mathcal F}
=
\sum_{i=1}^k \dim_{\mathrm{Pbox}}\{0\}_{F_i}.
\]
Thus the Point-Cross dimension detects, on the coordinate skeleton, the same
directional additivity that a Cartesian product realizes by filling the whole
coordinate grid.

Accordingly, Point-Cross dimension does not replace the classical theory of Cartesian
products of fractals. Rather, it isolates a local directional mechanism behind
product additivity. For arbitrary sets, box dimensions of Cartesian products may
fail to be exactly additive or may require separate lower and upper estimates.
The axial frame records the directional part of this mechanism: independent
fractal channels contribute additively to the Point-Cross dimension, even when
the underlying set is only a union of axes and not a Cartesian product.
\end{remark}

\begin{example}[From the Cantor cross to the Cantor dust]
\label{ex:cantor-cross-and-cantor-dust}
Let \(C\subset[0,1]\) be the middle-third Cantor set, with \(0\in C\), and let
\[
s=\frac{\log 2}{\log 3}.
\]
Consider first the Cantor cross
\[
\mathcal X_C
=
(C\times\{0\})
\cup
(\{0\}\times C)
\subset\mathbb R^2.
\]
At the origin, Proposition~\ref{prop:point-cross-fractal-frame} gives
\[
\dim_{\mathrm{Pbox}}\{0\}_{\mathcal X_C}
=
s,
\qquad
\dim_{\times}\{0\}_{\mathcal X_C}
=
2s.
\]
Thus the coordinate skeleton already records an additive directional signature:
although its local box dispersion sees only one Cantor arm, its Point-Cross
dimension adds the two independent Cantor weights.

Now consider the full Cantor dust
\[
\mathcal D_C
=
C\times C.
\]
Classically, by the product formula for this self-similar product,
\[
\dim_{\mathrm B}(\mathcal D_C)=2s.
\]
Thus the full Cartesian product realizes the same additive value by filling the
whole grid, while the Cantor cross realizes it directionally through its two
coordinate channels:
\[
\dim_{\times}\{0\}_{\mathcal X_C}
=
\dim_{\mathrm B}(\mathcal D_C)
=
2s.
\]
In contrast,
\[
\dim_{\mathrm{Pbox}}\{0\}_{\mathcal X_C}=s.
\]
The example shows that the Point-Cross dimension can detect on a coordinate
skeleton the additive signature that the Cartesian product realizes as full
spatial dispersion.
\end{example}

We now pass from straight fractal frames to curvilinear ones. Classically,
curvilinear coordinates are obtained by transporting straight coordinate axes
through a regular local change of variables. We use the same principle here:
the coordinate change is regular, but the frame itself remains singular at its
base point because several fractal arms accumulate there.

\begin{definition}[Curvilinear fractal frame]
\label{def:curvilinear-fractal-frame}
Let \(\mathcal F\) be an axial fractal frame at \(x\), and let
\[
\Phi:U\to V
\]
be a local \(C^1\)-diffeomorphism with \(x\in U\). The image germ
\[
\Phi(\mathcal F\cap U)
\]
is called a \emph{curvilinear fractal frame} at \(\Phi(x)\).
\end{definition}

\begin{remark}[Regular coordinates, singular frame]
Although the coordinate change \(\Phi\) is \(C^1\)-regular, the image
\[
\Phi(\mathcal F\cap U)
\]
is not a regular \(C^1\) submanifold at its base point whenever several arms are
present or whenever the parameter sets \(F_i\) are genuinely fractal. The
regularity of \(\Phi\) only ensures that tangent directions are transported
without collapse:
\[
[v_i]\longmapsto [D\Phi(x)v_i].
\]
The dimensional content of the frame is still read at the singular base germ,
where the independent fractal arms accumulate.
\end{remark}

\begin{corollary}[Curvilinear invariance of fractal frames]
\label{cor:curvilinear-fractal-frame}
Let \(\mathcal F\) be an axial fractal frame at \(x\), with arm exponents
\[
d_i=\dim_{\mathrm{Pbox}}\{0\}_{F_i},
\]
and let \(\Phi:U\to V\) be a local \(C^1\)-diffeomorphism with \(x\in U\). Then
\[
\dim_{\times}\{\Phi(x)\}_{\Phi(\mathcal F\cap U)}
=
\sum_{i=1}^k d_i,
\]
and
\[
\dim_{\mathrm{Pbox}}\{\Phi(x)\}_{\Phi(\mathcal F\cap U)}
=
\max_{1\le i\le k}d_i.
\]
Moreover, the projective directions of the frame are transported by the
differential:
\[
[v_i]
\longmapsto
[D\Phi(x)v_i].
\]
\end{corollary}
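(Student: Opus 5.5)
The plan is to combine three earlier results: the \(C^1\)-invariance of the Point-Cross dimension (Theorem~\ref{thm:pcd-c1-invariance}), the local bi-Lipschitz invariance of point-extended box germs (Lemma~\ref{lem:local-pbox-bilipschitz-germs}), and the computation for straight axial frames (Proposition~\ref{prop:point-cross-fractal-frame}). The frame is never reanalysed directly in the curved coordinates; every quantity is pulled back to the straight frame at \(x\).

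First I would check that \(x\in\overline{\mathcal F}\). This holds because \(0\in F_i\) for each \(i\), so \(x\in\mathcal F\), and the hypotheses of Theorem~\ref{thm:pcd-c1-invariance} are met with \(A=\mathcal F\). That theorem then gives
\[
\dim_{\times}\{\Phi(x)\}_{\Phi(\mathcal F\cap U)}=\dim_{\times}\{x\}_{\mathcal F},
\]
and Proposition~\ref{prop:point-cross-fractal-frame} identifies the right-hand side as \(\sum_i d_i\).

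For the box part I would reuse the argument in the proof of Corollary~\ref{cor:boxcross-c1-invariance}. Choose a small convex neighbourhood \(U_0\Subset U\) of \(x\) on which \(\Phi\) is bi-Lipschitz; this is possible by the mean-value estimate, since \(D\Phi(x)\) is invertible and \(D\Phi\) is continuous. The sets \(\Phi(\mathcal F\cap U)\) and \(\Phi(\mathcal F\cap U_0)\) have the same germ at \(\Phi(x)\). Germ dependence of \(\dim_{\mathrm{Pbox}}\) together with Lemma~\ref{lem:local-pbox-bilipschitz-germs} then gives
\[
\dim_{\mathrm{Pbox}}\{\Phi(x)\}_{\Phi(\mathcal F\cap U)}=\dim_{\mathrm{Pbox}}\{x\}_{\mathcal F}=\max_i d_i,
\]
where the last equality is again Proposition~\ref{prop:point-cross-fractal-frame}.

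For the transport of directions, Proposition~\ref{prop:point-cross-fractal-frame} gives \(\Eff_x^{\mathbb P}(\mathcal F)=\{[v_1],\dots,[v_k]\}\). The local \(C^1\)-diffeomorphism statement in Corollary~\ref{cor:eff-inherited-properties} shows that
\[
\Eff_{\Phi(x)}^{\mathbb P}\bigl(\Phi(\mathcal F\cap U)\bigr)=\{[D\Phi(x)v_i]\}_{i=1}^k.
\]
Lemma~\ref{lem:theta-c1-invariance} then shows that the weight carried by each image direction \([D\Phi(x)v_i]\) is again \(d_i\). This last point goes slightly beyond the literal statement but confirms its reading, namely that the weighted channels are transported intact.

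I do not expect a real obstacle here, since everything reduces to earlier results. The only point that needs care is the box-dimension step. The \(C^1\) map \(\Phi\) is only locally bi-Lipschitz, so the reduction to \(U_0\) and the germ-dependence argument must be stated explicitly rather than invoking global bi-Lipschitz invariance from Proposition~\ref{prop:basic-pebd-recall}.
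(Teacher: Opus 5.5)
Your proposal is correct and follows the paper's proof. The Point-Cross value is transported by Theorem~\ref{thm:pcd-c1-invariance} and read off from Proposition~\ref{prop:point-cross-fractal-frame}, the box value comes from local bi-Lipschitz invariance of $\dim_{\mathrm{Pbox}}$, and the directions come from the $C^1$ transport in Corollary~\ref{cor:eff-inherited-properties}. Your explicit reduction to a convex neighbourhood $U_0$ on which $\Phi$ is bi-Lipschitz spells out the step the paper compresses into ``invariant under local bi-Lipschitz changes of coordinates, hence under local $C^1$-diffeomorphisms,'' and the weight transport via Lemma~\ref{lem:theta-c1-invariance} is a harmless addition.
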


\begin{proof}
By the \(C^1\)-invariance of the Point-Cross dimension,
Theorem~\ref{thm:pcd-c1-invariance},
\[
\dim_{\times}\{\Phi(x)\}_{\Phi(\mathcal F\cap U)}
=
\dim_{\times}\{x\}_{\mathcal F}.
\]
By Proposition~\ref{prop:point-cross-fractal-frame}, this equals
\[
\sum_{i=1}^k d_i.
\]

Similarly, the point-extended box dimension is invariant under local
bi-Lipschitz changes of coordinates, hence under local \(C^1\)-diffeomorphisms.
Therefore
\[
\dim_{\mathrm{Pbox}}\{\Phi(x)\}_{\Phi(\mathcal F\cap U)}
=
\dim_{\mathrm{Pbox}}\{x\}_{\mathcal F}
=
\max_i d_i.
\]
Finally, the transport of effective projective directions under a
\(C^1\)-diffeomorphism gives
\[
\Eff_{\Phi(x)}^{\mathbb P}(\Phi(\mathcal F\cap U))
=
\{[D\Phi(x)v_1],\dots,[D\Phi(x)v_k]\}.
\]
\end{proof}

\begin{remark}[Relation with curvilinear coordinates]
Classical curvilinear coordinates are obtained by transporting coordinate axes
through a local change of variables. More explicitly, if
\[
\Phi:U\subset\mathbb R^n\to V\subset\mathbb R^n
\]
is a regular local coordinate map and \(\Phi(0)=x\), then the coordinate axes
through \(x\) are the curves
\[
\Gamma_i(t)=\Phi(t e_i).
\]
In the present setting, one starts instead from fractal subsets of the straight
coordinate axes and then transports them by the same regular change of
coordinates. The resulting arms are curvilinear, but their Point-Cross weights
are unchanged.

Thus axial fractal frames and their curvilinear images form a stable class of
examples where the Point-Cross dimension behaves additively over independent
directional channels, while the point-extended box dimension remains governed
by the largest individual arm.
\end{remark}

\begin{remark}[Toward a fractal coordinate calculus]
The preceding construction should not be read as already providing a full
calculus in fractal coordinates. There are well-developed approaches to analysis
on fractals, including Laplacians and Dirichlet forms on self-similar sets
\cite{Kigami2001}, calculus on specific fractals such as the Sierpiński gasket
\cite{NeedlemanStrichartzTeplyaevYung2004}, and \(F^\alpha\)-calculus on fractal
subsets of the real line \cite{ParvateGangal2009}. Our purpose here is different
and more elementary.

In the present paper, fractal frames are used only as local test germs for the
Point-Cross dimension: each arm supplies a weighted directional channel, and the
invariant records how these channels combine. A genuine calculus in such frames
would require additional choices, such as parametrizations, measures on the
arms, derivatives along fractal channels, and compatibility rules under changes
of coordinates. These questions are natural, but belong to a separate
development. Here we retain only the dimensional content: fractal frames provide
local coordinate skeletons on which independent fractal channels add for
\(\dim_\times\), while point-box dispersion remains governed by a max-type rule.
\end{remark}
\subsection{Dispersion versus directionality: three calibration regimes}

The preceding examples isolated several elementary mechanisms: oscillating
germs may create shadowing effects, while fractal coordinate frames separate the
maximal behavior of point-box dispersion from the additive behavior of
independent directional channels. We now compare these two quantities through
several calibration examples.

The purpose of this subsection is to collect calibration points for the relation
between local dispersion and directionality. In the broader theory, neither
quantity should be expected to dominate the other universally: depending on the
geometry of the germ, one may have
\[
\dim_{\mathrm{Pbox}}\{x\}_A
<
\dim_{\times}\{x\}_A,
\]
or
\[
\dim_{\times}\{x\}_A
<
\dim_{\mathrm{Pbox}}\{x\}_A,
\]
or a nontrivial equality between the two. The examples below should therefore be
read as calibrations of the theory: they show how the Point-Cross dimension can
distinguish between spatial filling, directional independence, and genuine
balance between both effects.

\subsubsection*{Directionality dominates: the Sierpiński carpet}

We begin with the standard Sierpiński carpet. This example is particularly
useful because it combines a genuinely planar fractal dispersion with full
one-dimensional coordinate channels at a dense set of peripheral vertices.

Let \(S\subset[0,1]^2\) be the standard Sierpiński carpet, namely the unique
nonempty compact set satisfying
\[
S
=
\bigcup_{\substack{0\le i,j\le2\\(i,j)\ne(1,1)}}
\frac{S+(i,j)}{3}.
\]
Equivalently, \(S\) is obtained from the unit square by repeatedly removing the
open middle square of each surviving square.

Let \(\mathcal V\) denote the set of all vertices of peripheral squares of the
construction, including the four vertices of the initial unit square and the
vertices of all middle squares removed at all stages.

\begin{proposition}[Dense directionality domination on the Sierpiński carpet]
\label{prop:sierpinski-carpet-directionality-dominates}
The set \(\mathcal V\) is dense in \(S\). Moreover, for every
\(x\in\mathcal V\),
\[
\dim_{\mathrm{Pbox}}\{x\}_S
=
\frac{\log 8}{\log 3},
\]
whereas
\[
\dim_{\times}\{x\}_S
=
2.
\]
In particular, on the dense subset \(\mathcal V\subset S\),
\[
\dim_{\mathrm{Pbox}}\{x\}_S
<
\dim_{\times}\{x\}_S.
\]
\end{proposition}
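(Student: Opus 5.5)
Three things have to be checked: density of $\mathcal V$, the point-box value at every $x\in\mathcal V$, and the Point-Cross value at every $x\in\mathcal V$.

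\emph{Density.} At stage $k$ the carpet is covered by $8^k$ closed squares of side $3^{-k}$. Every vertex of a surviving square lies in $S$: it is a corner of a square of every later stage, because the corner subsquares are never removed. These vertices belong to $\mathcal V$. Every point of $S$ lies in some stage-$k$ square, hence within distance $\sqrt2\,3^{-k}$ of such a vertex, so $\mathcal V$ is dense in $S$.

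\emph{Point-box value.} The upper bound is global:
\[
\dim_{\mathrm{Pbox}}\{x\}_S\le\dimBl(S)=\frac{\log8}{\log3},
\]
using the standard self-similar computation under the open set condition. For the lower bound, fix $x\in\mathcal V$ and $r>0$. I will show that $B(x,r)$ contains a scaled copy $3^{-k}(S+p)$ of $S$, namely a surviving stage-$k$ square having $x$ as a vertex. For $x$ a vertex of a surviving square $Q$ at stage $k_0$, the corner subsquare of $Q$ at $x$ survives at every later stage, so such small copies exist for all large $k$. For $x$ a vertex of a removed middle square, $x$ is simultaneously a vertex of one of the eight surviving neighbouring subsquares, which reduces to the previous case. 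Bi-Lipschitz invariance and monotonicity of $\dimBl$ then give $\dimBl(S\cap B(x,r))\ge\log8/\log3$ for every $r$, and taking the infimum over $r$ gives the claimed equality.

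\emph{Point-Cross value.} The upper bound $\dim_{\times}\{x\}_S\le\dim_{\mathrm{Ptan}}\{x\}_S\le 2$ is Proposition~\ref{prop:point-cross-basic-bounds}. For the lower bound I will use Proposition~\ref{prop:pvec-lower-bound-point-cross}, so it suffices to show $\dim_{\mathrm{Pvec}}\{x\}_S=2$, i.e.\ to exhibit one horizontal and one vertical segment germ issuing from $x$ inside $S$.

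The key observation is that the boundary of every surviving square lies in $S$. Take a surviving square $Q$ and a point $z$ on an edge of $Q$. At each later stage, $z$ lies on an edge of a peripheral (non-middle) subsquare of the square containing it, since the middle subsquare never touches the edge of its parent. A middle-square removal deletes only open squares, so $z$ is never removed. Inductively, $\partial Q\subset S$.

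Now, if $x$ is a vertex of a surviving square $Q$, the two edges of $Q$ at $x$ give two independent directions in $\operatorname{Dir}_S(x)$. If $x$ is a vertex of a removed middle square $M$, then the two edges of $M$ at $x$ lie in the boundaries of surviving neighbouring subsquares (for example, the bottom edge of $M$ is the top edge of the lower-middle subsquare), so again two independent directions are present. Proposition~\ref{prop:pvec-via-dir} then gives $\dim_{\mathrm{Pvec}}\{x\}_S=2$. The strict inequality follows because $\log8/\log3<2$.

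I expect the only delicate step to be the claim $\partial Q\subset S$, specifically the induction that edge points of surviving squares are never deleted. The argument is the one just given: the removed middle square is open and lies strictly inside its parent square, so it never meets the parent's edges. Everything else is bookkeeping.
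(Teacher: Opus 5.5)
Your proof is correct and follows the paper's argument: density via vertices of small surviving squares, the point-box value from the global carpet dimension together with nested corner copies of $S$ at $x$, and $\dim_{\times}\{x\}_S=2$ from two perpendicular boundary segments of a surviving square through $x$ (you justify the inclusion $\partial Q\subset S$ more explicitly than the paper does). The only difference is cosmetic: you obtain the lower bound from $\dim_{\mathrm{Pvec}}\{x\}_S=2$ and Proposition~\ref{prop:pvec-lower-bound-point-cross}, whereas the paper evaluates the two straight probes directly, which is exactly the argument in that proposition's proof.
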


\begin{proof}
The density of \(\mathcal V\) follows directly from the construction. Indeed,
let \(y\in S\) and let \(r>0\). Choose a surviving square \(Q\) of sufficiently
small generation such that
\[
y\in Q
\qquad\text{and}\qquad
\operatorname{diam}(Q)<r.
\]
At the next step, the middle square of \(Q\) is removed, and its vertices belong
to \(\mathcal V\cap Q\). Hence
\[
B(y,r)\cap\mathcal V\ne\varnothing.
\]
Thus \(\mathcal V\) is dense in \(S\).

The global box dimension of the standard carpet is classical: by the
self-similar construction and the open set condition,
\[
\dimBl(S)=\frac{\log8}{\log3}
\]
(see, for instance, \cite{Hutchinson1981,Falconer2003}). Hence monotonicity gives
\[
\dim_{\mathrm{Pbox}}\{x\}_S\le \frac{\log8}{\log3}
\]
for every \(x\in S\).

Conversely, let \(x\in\mathcal V\). If \(x\) is a vertex of a peripheral square
created at some finite generation, then at least one surviving basic square of
the same generation has \(x\) as a corner. If \(x\) is an initial vertex, the
unit square itself has this property. Inside the corner subsquare adjacent to
\(x\), the construction produces a nested sequence of basic copies of \(S\), all
having \(x\) as a common corner and with diameters tending to \(0\).

Consequently, for every \(r>0\) small enough, one may choose one of these nested
copies with diameter smaller than \(r\). This copy is contained in
\(S\cap B(x,r)\) and is similar to \(S\). By monotonicity and similarity
invariance of box dimension,
\[
\dim_{\mathrm{Pbox}}\{x\}_S\ge \frac{\log8}{\log3}.
\]
Thus
\[
\dim_{\mathrm{Pbox}}\{x\}_S
=
\frac{\log8}{\log3}.
\]

We now compute the Point-Cross dimension at such a vertex \(x\in\mathcal V\).
The removed squares are open, and later removals occur strictly inside surviving
basic squares. Hence the boundary of every peripheral square is contained in
\(S\). If \(x\) is a vertex of a peripheral square, then two perpendicular sides
of that square meet at \(x\) and are contained in \(S\). If \(x\) is a vertex of
the initial unit square, the same statement holds with the two adjacent sides of
the unit square.

Hence there exist two independent projective directions \(\xi_1\) and \(\xi_2\)
such that the straight probes in these directions have traces containing
non-degenerate line segments with endpoint \(x\). Each such trace has
point-extended box dimension \(1\) at \(x\), while the general one-dimensional
probe bound gives
\[
\theta_x^S(\xi_i)\le1.
\]
Therefore
\[
\theta_x^S(\xi_1)=1,
\qquad
\theta_x^S(\xi_2)=1.
\]
Since \(\xi_1\) and \(\xi_2\) are independent, the definition of the Point-Cross
dimension gives
\[
\dim_{\times}\{x\}_S\ge2.
\]

The reverse inequality follows from the general hierarchy
\[
\dim_{\times}\{x\}_S
\le
\dim_{\mathrm{Ptan}}\{x\}_S
\le2,
\]
since \(S\subset\mathbb R^2\). Therefore
\[
\dim_{\times}\{x\}_S=2.
\]
Finally,
\[
\frac{\log8}{\log3}<2,
\]
so the strict inequality follows on \(\mathcal V\).
\end{proof}

\begin{remark}[A dense set with full directional rank]
The standard Sierpiński carpet is not locally two-dimensional in the box sense:
at every peripheral vertex \(x\in\mathcal V\),
\[
\dim_{\mathrm{Pbox}}\{x\}_S
=
\frac{\log8}{\log3}<2.
\]
Nevertheless, the peripheral vertices form a dense subset of the carpet, and at
each such vertex the carpet contains two independent one-dimensional channels.
The Point-Cross dimension records these two channels and therefore reaches the
full planar value
\[
\dim_{\times}\{x\}_S=2
\qquad(x\in\mathcal V).
\]
Since \(\overline{\mathcal V}=S\), the dense-subset invariance in
Proposition~\ref{prop:pcd-locality-monotonicity-closure} also gives
\[
\dim_{\times}(\mathcal V)=\dim_{\times}(S)=2.
\]
Thus the carpet gives a canonical example where, on a dense set of points, the
local directional rank dominates the local dispersion exponent.
\end{remark}
\begin{remark}[A more technical shadowing example: the Koch curve]
\label{rem:koch-shadowing-example}
The following technical remark is not used in the proofs below, but clarifies
the gap between canonical and unrestricted Point-Cross readings.

The Sierpiński carpet gives a clean and elementary example of directionality
dominating dispersion. A more technical example is provided by the classical
Koch curve, which separates the canonical directional profile from the
unrestricted Point-Cross dimension.

Here, as in the oscillating examples above, the word ``canonical'' refers only
to the geometrically natural non-shadowing probes. The unrestricted
Point-Cross dimension remains the supremum over all admissible Lipschitz probes,
and may therefore detect additional mass through shadowing.

Let \(\mathcal K\subset\mathbb R^2\) be the classical Koch curve and let \(x\)
be a non-flat interior construction vertex. The local box dimension is the
classical self-similar exponent
\[
\dim_{\mathrm{Pbox}}\{x\}_{\mathcal K}
=
\frac{\log4}{\log3}.
\]
At first sight, the two adjacent construction sides suggest a balanced
Cantor-frame reading. More precisely, on the supporting line of each adjacent
one-sided construction side, the local trace of \(\mathcal K\) is an affine
copy of the middle-third Cantor set. Therefore the two canonical side probes
carry the weights
\[
\frac{\log2}{\log3}
\qquad\text{and}\qquad
\frac{\log2}{\log3}.
\]
Thus the canonical directional profile recovers
\[
\dim_{\times}^{\mathrm{can}}\{x\}_{\mathcal K}
=
2\frac{\log2}{\log3}
=
\frac{\log4}{\log3}.
\]

However, this canonical value is not the unrestricted Point-Cross dimension.
With the present class of admissible Lipschitz probes, one can exploit
self-similarity and shadowing. More precisely, for every \(0<d<1\) and for
each adjacent construction direction \(\xi\) issuing from \(x\), one can
construct an admissible Lipschitz probe with limiting projective direction
\(\xi\) whose trace on \(\mathcal K\) has point-extended box dimension at least
\(d\).

The idea is to choose small similar copies
\[
\mathcal K_k\subset\mathcal K
\]
contained in cones
\[
\mathcal C(x,\xi,\eta_k),
\qquad
\eta_k\downarrow0,
\]
and tending rapidly to \(x\). Inside each \(\mathcal K_k\), the probe visits the
vertices of a rich polygonal approximation. If \(\lambda_k\) is the similarity ratio of
\(\mathcal K_k\), the level-\(m_k\) polygonal approximation contains
\(\asymp4^{m_k}\) vertices at scale
\[
\lambda_k3^{-m_k},
\]
while its total polygonal length is
\[
\asymp \lambda_k\left(\frac43\right)^{m_k}.
\]
Since \(0<d<1\), the integers \(m_k\) and the ratios \(\lambda_k\) can be chosen
so that these polygonal lengths are summable, while the separated vertex sets
still realize exponent \(d\) along the relevant sequence of local scales.
Connecting the successive blocks inside cones of aperture tending to zero gives
a finite-length polygonal curve. After arclength parametrization, it is an
admissible Lipschitz probe with limiting direction \(\xi\). This construction
can be made fully rigorous by choosing the copies in disjoint annular regions
and by making the ratios \(\lambda_k\) decrease sufficiently fast. Since
\(0<d<1\) is arbitrary and any Lipschitz probe has at most one-dimensional
point-box trace, this yields
\[
\theta_x^{\mathcal K}(\xi)=1.
\]

At a non-flat interior construction vertex, two adjacent construction directions
are projectively independent. Hence
\[
\dim_{\times}\{x\}_{\mathcal K}\ge 1+1=2.
\]
The reverse inequality follows from the ambient planar bound, and therefore
\[
\dim_{\times}\{x\}_{\mathcal K}=2.
\]
Thus, for the unrestricted Point-Cross dimension, the Koch curve is not a
balanced example. It is another instance of directionality dominating
dispersion:
\[
\dim_{\mathrm{Pbox}}\{x\}_{\mathcal K}
=
\frac{\log4}{\log3}
<
2
=
\dim_{\times}\{x\}_{\mathcal K}.
\]
The equality
\[
\frac{\log4}{\log3}
=
2\frac{\log2}{\log3}
\]
belongs to the canonical side-probe reading, not to the unrestricted
Point-Cross dimension.
\end{remark}
\begin{remark}[Koch--Cesàro curves and the Point-Cross reading of similarity]
The previous remark also suggests a generative interpretation within the
classical family of Koch--Cesàro curves. In the non-degenerate range
\(0<\theta<\pi/2\), denote by \(K_\theta\) the curve obtained by replacing
each segment with a four-segment polygonal generator of opening angle
\(\theta\). If all four subsegments have the same similarity ratio \(r\),
the endpoint condition is
\[
2r(1+\cos\theta)=1,
\]
or equivalently
\[
r=\frac{1}{2(1+\cos\theta)}.
\]
The classical Koch curve corresponds to
\[
\theta=\frac{\pi}{3},
\qquad
r=\frac13.
\]

Classically, when the self-similar construction satisfies the open set
condition, the box dimension of \(K_\theta\) is the
unique number \(D_\theta\) satisfying
\[
4r^{D_\theta}=1.
\]
Thus
\[
D_\theta
=
\frac{\log4}{\log(1/r)}
=
\frac{\log4}{\log\bigl(2(1+\cos\theta)\bigr)}.
\]

The Point-Cross viewpoint gives a different reading of the same exponent. Along
each construction side, the points whose descendants remain on the same
supporting line form a binary Cantor set with similarity ratio \(r\). Hence the
corresponding Cantor weight is
\[
s_\theta
=
\frac{\log2}{\log(1/r)}
=
\frac{\log2}{\log\bigl(2(1+\cos\theta)\bigr)}.
\]
At a non-flat construction vertex \(x\), the two adjacent side probes therefore
give the canonical profile
\[
\dim_{\times}^{\mathrm{can}}\{x\}_{K_\theta}
=
s_\theta+s_\theta
=
2s_\theta
=
\frac{\log4}{\log\bigl(2(1+\cos\theta)\bigr)}.
\]

Thus the classical self-similar formula
\[
4r^{D_\theta}=1
\]
is reinterpreted locally as the sum of two independent Cantor weights:
\[
D_\theta
=
s_\theta+s_\theta.
\]
This is precisely the Point-Cross contribution: it turns the global
self-similar exponent of the curve into a local directional decomposition at a
non-flat construction vertex.

Changing the angle \(\theta\) therefore prescribes the canonical Cantor-frame
profile. For example, as \(\theta\uparrow\pi/2\) within the non-degenerate
range, one has
\[
r\to\frac12,
\qquad
s_\theta\to1,
\qquad
2s_\theta\to2.
\]
The formal self-similar exponent therefore approaches the critical planar value
\(2\). This is a statement about the canonical profile, not a claim that the
limiting construction has positive area. With unrestricted Lipschitz probes,
the same shadowing mechanism as for the classical Koch curve may force the
Point-Cross dimension to saturate at the ambient planar value \(2\) at
non-degenerate construction vertices, whenever the corresponding conical
self-similar shadowing construction is available.
\end{remark}
\subsubsection*{Dispersion dominates: a cylindrical Cantor cusp}

We now give an example where dispersion dominates directionality not merely at
an isolated point, but along a whole continuum of base points. The model is a
cuspidal product: a Cantor transverse direction contributes to the local
covering complexity, but it is collapsed into a lower-dimensional tangent span.

Let \(C\subset[0,1]\) be the standard middle-third Cantor set, so that
\(0\in C\), and set
\[
s:=\dimBl(C)=\frac{\log2}{\log3}.
\]
Fix \(a>1\) and \(t_0>0\), and define
\[
A_{a,C}
=
\{(u,t,t^a y):u\in[0,1],\ 0\le t\le t_0,\ y\in C\}
\subset\mathbb R^3.
\]
For \(u\in(0,1)\), write
\[
x_u=(u,0,0).
\]

\begin{proposition}[Dispersion dominates on a cylindrical Cantor cusp]
\label{prop:cylindrical-cantor-cusp-dispersion-dominates}
For every \(u\in(0,1)\),
\[
\dim_{\times}\{x_u\}_{A_{a,C}}=2,
\]
whereas
\[
\dim_{\mathrm{Pbox}}\{x_u\}_{A_{a,C}}=2+s.
\]
In particular,
\[
\dim_{\times}\{x_u\}_{A_{a,C}}
<
\dim_{\mathrm{Pbox}}\{x_u\}_{A_{a,C}}.
\]
\end{proposition}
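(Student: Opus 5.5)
The proof splits into the directional computation of $\dim_{\times}\{x_u\}_{A_{a,C}}$ and the dispersive computation of $\dim_{\mathrm{Pbox}}\{x_u\}_{A_{a,C}}$. In each, an upper bound is obtained from a general comparison and a lower bound from an explicit sub-germ. Throughout, fix $u\in(0,1)$ and write $A:=A_{a,C}$.

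\emph{Directional part.} For the upper bound I will use Corollary~\ref{cor:first-hierarchy-point-cross}, which gives $\dim_{\times}\{x_u\}_A\le\dim_{\mathrm{Ptan}}\{x_u\}_A$, and then show $\Span(\Tan_{x_u}(A))\subset\Span\{e_1,e_2\}$.
\begin{itemize}
\item Take a point $p=(u',t,t^ay)\in A\setminus\{x_u\}$ near $x_u$. Since $0\le y\le1$,
\[
\frac{|t^ay|}{\|(u'-u,t)\|}\le t^{a-1}\longrightarrow 0 ,
\]
because $a>1$.
\item Hence every normalized displacement $(p-x_u)/\|p-x_u\|$ has third coordinate tending to $0$. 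Every effective direction is therefore horizontal, and $\dim_{\mathrm{Ptan}}\{x_u\}_A\le 2$.
\end{itemize}
For the lower bound, note that $0\in C$, so $A$ contains the rectangle $[0,1]\times[0,t_0]\times\{0\}$.
\begin{itemize}
\item Because $0<u<1$, the segment germs from $x_u$ in the directions $e_1$ and $e_2$ lie in $A$.
\item Thus $\dim_{\mathrm{Pvec}}\{x_u\}_A\ge2$.
\item Proposition~\ref{prop:pvec-lower-bound-point-cross} then gives $\dim_{\times}\{x_u\}_A\ge 2$, hence equality.
\end{itemize}

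\emph{Dispersive upper bound.} Let $K:=[0,1]\times[0,t_0]\times C$ and define
\[
F:K\to\mathbb R^3,\qquad F(u',t,y):=(u',t,t^ay).
\]
\begin{itemize}
\item $F$ is Lipschitz on the compact set $K$, because $t\mapsto t^a$ is $C^1$ on $[0,t_0]$ and $y$ is bounded.
\item Since $A=F(K)$, we get $\dimBl(A)\le\dimBl(K)$.
\item The standard product estimate for upper box dimension gives $\dimBl(K)\le 2+\dimBl(C)=2+s$.
\end{itemize}
Monotonicity then yields $\dim_{\mathrm{Pbox}}\{x_u\}_A\le 2+s$.

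\emph{Dispersive lower bound.} This is the step I expect to be the main obstacle, because $F$ degenerates at $t=0$, which is exactly where the base point lies. I will avoid the degeneration by working on thin slabs bounded away from $t=0$. Fix $r>0$ and choose $\tau>0$ with $2\tau<r/4$, and set
\[
P_\tau:=[u-r/4,\,u+r/4]\times[\tau,2\tau]\times C .
\]
\begin{itemize}
\item $F(P_\tau)\subset A\cap B(x_u,r)$ once $r$ is small enough.
\item On $P_\tau$ the map $F$ is bi-Lipschitz: its inverse is $(u',t,z)\mapsto(u',t,z/t^a)$, which is Lipschitz because $t\ge\tau$. The bi-Lipschitz constants depend on $\tau$, which is harmless since $\tau$ is fixed for each $r$.
\item Bi-Lipschitz invariance of box dimension gives $\dimBl(A\cap B(x_u,r))\ge\dimBl(P_\tau)$.
\item $\dimBl(P_\tau)=2+s$. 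The lower bound $\ge 2+s$ follows from the product of a square with $C$, using that $C$ has equal upper and lower box dimension $s$ (its lower box dimension controls the product from below). Alternatively, one can count the $\varepsilon^{-2}\cdot\varepsilon^{-s}$ separated grid points directly.
\end{itemize}
Taking the infimum over $r$ gives $\dim_{\mathrm{Pbox}}\{x_u\}_A=2+s$. The strict inequality $2<2+s$ is immediate, since $s>0$.
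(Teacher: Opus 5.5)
Your proof is correct and follows essentially the same route as the paper. For \(\dim_{\times}\) you use the tangent-span bound \(t^{a-1}\to0\) together with the two exact coordinate segments inside \([0,1]\times[0,t_0]\times\{0\}\); for \(\dim_{\mathrm{Pbox}}\) you use the map \(F(u',t,y)=(u',t,t^ay)\), Lipschitz for the upper bound and bi-Lipschitz on slabs with \(t\) bounded away from \(0\) for the lower bound, combined with product box-dimension estimates. The differences are cosmetic: you get the directional lower bound through \(\dim_{\mathrm{Pvec}}\{x_u\}_A\ge 2\) and Proposition~\ref{prop:pvec-lower-bound-point-cross}, whose proof is exactly the straight-probe argument the paper applies directly, and you take the box upper bound globally from \(A=F(K)\) rather than on localized pieces \(F(I_r\times T_r\times C)\).
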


\begin{proof}
We first compute the tangent span. Let
\[
(u_n,t_n,t_n^a y_n)\in A_{a,C}
\]
tend to \(x_u=(u,0,0)\). Since \(C\) is bounded and \(a>1\), if \(t_n>0\), then
\[
\frac{|t_n^a y_n|}{\sqrt{|u_n-u|^2+t_n^2}}
\le
\frac{|t_n|^a\sup_{y\in C}|y|}{|t_n|}
=
O(t_n^{a-1})
\longrightarrow0.
\]
If \(t_n=0\), the third coordinate is already equal to \(0\). Hence every
effective tangent direction lies in the plane
\[
P=\operatorname{Span}(e_u,e_t).
\]
Conversely, since \(0\in C\), the set contains the rectangle
\[
[0,1]\times[0,t_0]\times\{0\}.
\]
The direction \(e_u\) is realized by the points
\[
(u+h_n,0,0)\in A_{a,C},
\qquad h_n\to0,
\]
with \(u+h_n\in[0,1]\), for which the normalized displacement tends to
\(e_u\) after taking \(h_n>0\). Similarly, the direction \(e_t\) is realized by
\[
(u,t_n,0)\in A_{a,C},
\qquad t_n\downarrow0,
\]
since \(0\in C\). Hence \(e_u,e_t\in\Eff_{x_u}(A_{a,C})\). Together
with the preceding inclusion in \(P\), this gives
\[
\dim_{\mathrm{Ptan}}\{x_u\}_{A_{a,C}}=2.
\]
The general hierarchy gives
\[
\dim_{\times}\{x_u\}_{A_{a,C}}\le2.
\]
The two straight probes in the directions \(e_u\) and \(e_t\) have traces
containing non-degenerate line segments with endpoint, or interior point,
\(x_u\). Therefore each carries weight \(1\), and the two directions are
independent. Hence
\[
\dim_{\times}\{x_u\}_{A_{a,C}}\ge2.
\]
Consequently,
\[
\dim_{\times}\{x_u\}_{A_{a,C}}=2.
\]

It remains to compute the point-box dispersion. Consider the parametrization
\[
F(u',t,y)=(u',t,t^a y),
\qquad
(u',t,y)\in[0,1]\times[0,t_0]\times C.
\]
For the upper bound, fix \(r>0\) small enough and set
\[
I_r=[u-r,u+r]\cap[0,1],
\qquad
T_r=[0,\min(t_0,r)].
\]
Then
\[
A_{a,C}\cap B(x_u,r)
\subset
F(I_r\times T_r\times C).
\]
Since \(a>1\), the function \(t\mapsto t^a\) is Lipschitz on
\([0,t_0]\), and hence \(F\) is Lipschitz on bounded subsets of
\([0,1]\times[0,t_0]\times C\). Moreover, this self-similar product satisfies
the classical box-dimension product formula
(see, for instance, \cite{Falconer2003})
\[
\dimBl(I_r\times T_r\times C)=2+s.
\]
Therefore, monotonicity and the fact that upper box dimension does not
increase under Lipschitz maps give
\[
\dimBl\bigl(A_{a,C}\cap B(x_u,r)\bigr)\le2+s.
\]
Thus, by the definition of the point-extended box dimension,
\[
\dim_{\mathrm{Pbox}}\{x_u\}_{A_{a,C}}\le2+s.
\]

For the lower bound, let \(r>0\). Choose \(\rho>0\) and \(\eta>0\) so small
that
\[
\rho<t_0,
\qquad
[u-\eta,u+\eta]\subset(0,1),
\]
and
\[
\eta^2+\rho^2+\rho^{2a}<r^2.
\]
With
\[
I=[u-\eta,u+\eta],
\qquad
T=[\rho/2,\rho],
\]
one has
\[
F(I\times T\times C)\subset A_{a,C}\cap B(x_u,r).
\]
On \(I\times T\times C\), the map \(F\) is bi-Lipschitz, because \(t\) is bounded
away from \(0\). Hence, again by the classical product formula,
\[
\dimBl F(I\times T\times C)
=
\dimBl(I\times T\times C)
=
2+s.
\]
Therefore
\[
\dimBl\bigl(A_{a,C}\cap B(x_u,r)\bigr)\ge2+s
\]
for every sufficiently small \(r>0\). Thus
\[
\dim_{\mathrm{Pbox}}\{x_u\}_{A_{a,C}}\ge2+s.
\]
Consequently,
\[
\dim_{\mathrm{Pbox}}\{x_u\}_{A_{a,C}}=2+s.
\]
Therefore
\[
\dim_{\times}\{x_u\}_{A_{a,C}}
=
2
<
2+s
=
\dim_{\mathrm{Pbox}}\{x_u\}_{A_{a,C}}.
\]
\end{proof}

\begin{remark}[Dispersion without directional freedom]
The cylindrical Cantor cusp is a local product whose transverse Cantor
complexity is collapsed into a two-dimensional tangent plane. The
point-extended box dimension still sees the full product-type complexity
\[
2+s,
\]
whereas the Point-Cross dimension is constrained by the tangent span and can
only reach
\[
2.
\]
Thus this example isolates a genuinely dispersion-dominant regime: the set has
more local covering complexity than its independent effective directions can
carry.
\end{remark}

\subsubsection*{Balanced saturation: a fat Cantor dust}

The preceding two examples showed that local dispersion and directional
complexity may dominate one another in opposite ways. The Sierpiński carpet
exhibited points where full independent one-dimensional channels force
\[
\dim_{\mathrm{Pbox}}\{x\}_A
<
\dim_{\times}\{x\}_A,
\]
whereas the cylindrical Cantor cusp exhibited a continuum of points where a
transverse Cantor complexity is visible to covering numbers but collapsed in
the tangent span, giving
\[
\dim_{\times}\{x\}_A
<
\dim_{\mathrm{Pbox}}\{x\}_A.
\]

We now give a clean equality example. The equality below is a saturation
phenomenon: both local dispersion and Point-Cross directionality reach the full
ambient planar value. The example is still fractal in its construction, but it
is thick enough, at density points, to behave locally like a two-dimensional
set. Thus this equality is not a fractal balance below the ambient dimension,
but an ambient saturation.

Let \(F\subset[0,1]\) be a fat Cantor set, that is, a compact nowhere dense set
with positive Lebesgue measure:
\[
\mathcal L^1(F)>0.
\]
Set
\[
A:=F\times F\subset\mathbb R^2.
\]
Let \(\mathcal D_F\) denote the set of Lebesgue density points of \(F\):
\[
\mathcal D_F
=
\left\{
p\in F:
\lim_{r\downarrow0}
\frac{\mathcal L^1(F\cap(p-r,p+r))}{2r}
=1
\right\}.
\]
By the Lebesgue density theorem,
\[
\mathcal L^1(F\setminus\mathcal D_F)=0.
\]
Hence
\[
\mathcal D_A:=\mathcal D_F\times\mathcal D_F
\]
satisfies
\[
\mathcal L^2\bigl(A\setminus\mathcal D_A\bigr)=0.
\]

\begin{proposition}[Balanced saturation on a fat Cantor dust]
\label{prop:fat-cantor-dust-balanced-saturation}
For every
\[
z=(p,q)\in \mathcal D_A,
\]
one has
\[
\dim_{\mathrm{Pbox}}\{z\}_A=2
\]
and
\[
\dim_{\times}\{z\}_A=2.
\]
Consequently, on the relative full-measure set \(\mathcal D_A\subset A\),
\[
\dim_{\mathrm{Pbox}}\{z\}_A
=
\dim_{\times}\{z\}_A.
\]
\end{proposition}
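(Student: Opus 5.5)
The proof treats the two equalities separately, in each case combining the ambient upper bound $2$ with a lower bound extracted from the density of $F$ at $p$ and $q$.

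\emph{Point-box dispersion.} The upper bound $\dim_{\mathrm{Pbox}}\{z\}_A\le 2$ holds because $A\subset\mathbb R^2$ and upper box dimension is at most the ambient dimension. For the lower bound we use that $z=(p,q)\in\mathcal D_F\times\mathcal D_F$. For every $r>0$, the square $Q_r:=[p-r/2,p+r/2]\times[q-r/2,q+r/2]$ lies in $B(z,r)$. Fubini gives
\[
\mathcal L^2(A\cap Q_r)=\mathcal L^1\bigl(F\cap[p-r/2,p+r/2]\bigr)\,\mathcal L^1\bigl(F\cap[q-r/2,q+r/2]\bigr)>0,
\]
since both factors are positive at density points. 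A bounded planar set $E$ of positive Lebesgue measure has $N_\varepsilon(E)\gtrsim \mathcal L^2(E)\varepsilon^{-2}$, because each set of diameter $\varepsilon$ has area at most $\pi\varepsilon^2$. Hence $\dimBl(A\cap B(z,r))\ge2$ for every $r>0$, and taking the infimum over $r$ gives $\dim_{\mathrm{Pbox}}\{z\}_A=2$. Only positivity of the measure is used here, not the full density statement.

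\emph{Point-Cross dimension.} Corollary~\ref{cor:first-hierarchy-point-cross} and Proposition~\ref{prop:point-cross-basic-bounds} give $\dim_\times\{z\}_A\le\dim_{\mathrm{Ptan}}\{z\}_A\le2$. For the lower bound we exhibit the two coordinate projective directions $[e_1],[e_2]$ with full weight $1$ each. Take the straight horizontal probe $\gamma(t)=z+te_1$ on $[0,\delta]$. Its trace is
\[
A\cap\Gamma_\gamma=\{(p+t,q):t\in[0,\delta],\ p+t\in F\}\cup\{z\}.
\]
Since $q\in F$, this is an isometric copy of $(F-p)\cap[0,\delta]$. Because $p$ is a density point, $F\cap[p,p+r]$ has positive $\mathcal L^1$-measure for every $r>0$. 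Hence points of $F$ accumulate at $p$ from the right, so recurrence holds, $\gamma\in\mathcal G_z^A(e_1)$, and $e_1\in\Eff_z(A)$. The same measure positivity gives $\dimBl(F\cap[p,p+r])\ge1$, by the one-dimensional analogue of the covering estimate above. Therefore $\dim_{\mathrm{Pbox}}\{z\}_{A\cap\Gamma_\gamma}=1$, so $\theta_z^A(e_1)=1$, and hence $\theta_z^A([e_1])=1$ by Lemma~\ref{lem:pbox-trace-bound}. The vertical probe $z+te_2$ gives $\theta_z^A([e_2])=1$ symmetrically, using $p\in F$ and the density of $F$ at $q$. Since $[e_1],[e_2]$ are projectively independent, $\dim_\times\{z\}_A\ge2$.

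\emph{Main obstacle.} The only delicate point is the one-sided accumulation in the probe argument. The probe must meet $A$ at arbitrarily small parameters, and its trace must carry positive length on the chosen side. A merely two-sided accumulation point would not guarantee this, but the density hypothesis gives positive measure on both sides $[p,p+r]$ and $[p-r,p]$, which is exactly what the argument needs. This is the reason the statement is restricted to $\mathcal D_A$. The equality of the two invariants then follows immediately, both being $2$.
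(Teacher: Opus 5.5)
Your proof is correct and follows essentially the same route as the paper: density of $F$ at $p$ and $q$ gives $A$ positive planar measure in every ball $B(z,r)$, which forces $\dim_{\mathrm{Pbox}}\{z\}_A=2$. The straight probes $z+te_1$ and $z+te_2$ have traces of positive length, so they give two projectively independent channels of weight $1$, and the ambient bound $2$ closes both equalities. You are more explicit than the paper about the Fubini product and the covering estimate $N_\varepsilon(E)\ge \mathcal L^2(E)/(\pi\varepsilon^2)$. The only quibble concerns your closing remark: your argument uses positive measure on one side of $p$ (and of $q$) only, and the projective weight would take the better orientation in any case.
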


\begin{proof}
Let \(z=(p,q)\in\mathcal D_A\). Since \(p\) and \(q\) are Lebesgue density
points of \(F\), the product has positive planar mass at every sufficiently
small scale around \(z\). Indeed, if \(r>0\) is small and \(\rho=r/2\), then the
rectangle
\[
(p-\rho,p+\rho)\times(q-\rho,q+\rho)
\]
is contained in \(B(z,r)\), and its intersection with \(A\) has Lebesgue
measure asymptotic to \((2\rho)^2\). In particular,
\[
\mathcal L^2(A\cap B(z,r))>0
\]
for every sufficiently small \(r>0\). A bounded subset of \(\mathbb R^2\) with
positive planar Lebesgue measure has upper box dimension \(2\). Since
\(A\subset\mathbb R^2\), this gives
\[
\dim_{\mathrm{Pbox}}\{z\}_A=2.
\]

We now compute the Point-Cross dimension. Since \(p\) is a density point of
\(F\), the one-sided sets \(F\cap[p,p+r)\) and \(F\cap(p-r,p]\) have positive
Lebesgue measure for all sufficiently small \(r>0\). This follows from the
two-sided density relation and the bounds that each side has measure at most
\(r\). The same statement holds at \(q\).

Consider the straight probes
\[
\gamma_1:[0,\delta]\to\mathbb R^2,
\qquad
\gamma_1(t):=z+t e_1,
\]
and
\[
\gamma_2:[0,\delta]\to\mathbb R^2,
\qquad
\gamma_2(t):=z+t e_2,
\]
with \(\delta>0\) small. Their traces contain respectively the horizontal and
vertical one-sided sets
\[
\bigl(F\cap[p,p+r)\bigr)\times\{q\},
\qquad
\{p\}\times\bigl(F\cap[q,q+r)\bigr)
\]
at every sufficiently small scale. Hence these traces have positive
one-dimensional measure in arbitrarily small neighborhoods, while each is
contained in a line segment. Their point-extended box dimensions at \(z\) are
therefore equal to \(1\). Consequently
\[
\theta_z^A([e_1])=1,
\qquad
\theta_z^A([e_2])=1.
\]
Since \([e_1]\) and \([e_2]\) are projectively independent, the
definition of the Point-Cross dimension gives
\[
\dim_{\times}\{z\}_A\ge 1+1=2.
\]

The reverse inequality follows from the general Point-Cross bound
\(\dim_{\times}\{z\}_A\le n\), here with \(n=2\). Thus
\[
\dim_{\times}\{z\}_A\le2.
\]
Consequently,
\[
\dim_{\times}\{z\}_A=2.
\]

Combining the two computations, we obtain
\[
\dim_{\mathrm{Pbox}}\{z\}_A
=
\dim_{\times}\{z\}_A
=
2
\]
for every \(z\in\mathcal D_A\).
\end{proof}

\begin{remark}[Equality by thickness]
The fat Cantor dust \(F\times F\) gives an equality example for a different
reason than the canonical Koch calibration of
Remark~\ref{rem:koch-shadowing-example}. Here the equality is not a delicate
balance below the ambient dimension. Instead, it is a saturation phenomenon:
at density points, the product has positive planar density, and so the
point-extended box dimension reaches the ambient value \(2\). At the same time,
the horizontal and vertical traces both carry full one-dimensional weight, so
the Point-Cross dimension also reaches \(2\).

Thus the equality
\[
\dim_{\mathrm{Pbox}}\{z\}_A
=
\dim_{\times}\{z\}_A
\]
comes from the agreement between planar thickness and two independent
one-dimensional channels.
\end{remark}

\begin{remark}[Relation with products and fractal frames]
This example also clarifies the relation between Cartesian products and fractal
coordinate frames. The axial frame
\[
(F\times\{q\})\cup(\{p\}\times F)
\]
already carries two independent one-dimensional channels at a density point
\((p,q)\), each of weight \(1\). The full product
\[
F\times F
\]
fills the coordinate grid between these two channels and therefore has local
box dispersion \(2\).

Thus, at density points, the Point-Cross dimension detects the same two
independent coordinate directions that generate the ambient product structure.
In this sense, the fat Cantor dust is a saturated product version of the
fractal-frame mechanism.
\end{remark}
\subsection{An infinite-dimensional outlook calibration: infinite tangential rank and finite Point-Cross mass}

We close this section with an infinite-dimensional outlook calibration. The
purpose is not to develop a full infinite-dimensional theory in this paper, but
to indicate that the Point-Cross construction naturally separates two notions
which coincide too easily in finite dimension: the number of available tangent
directions and the total weighted mass carried by those directions.

Let
\[
H=\ell^2
\]
and let \((e_k)_{k\ge1}\) be its canonical orthonormal basis. In a Hilbert
space, for the present illustrative calibration, we use the formal extension of
the finite-dimensional Point-Cross expression obtained by taking the supremum
over all finite projectively independent families of effective directions:
\[
\dim_{\times}\{x\}_A
=
\sup
\left\{
\sum_{j=1}^m \theta_x^A(\xi_j):
m\ge1,\ 
\xi_1,\dots,\xi_m
\text{ projectively independent}
\right\}.
\]
The value is now allowed to be infinite. Likewise, for this orthogonal Hilbert
model, we measure tangential rank by the Hilbert-space dimension of the closed
span of the effective tangent directions:
\[
\dim_{\mathrm{Ptan}}\{x\}_A
=
\dim_{\mathrm{Hilb}}
\overline{\operatorname{Span}\Eff_x(A)}.
\]
Here \(\dim_{\mathrm{Hilb}}\) denotes the Hilbert-space dimension, with value
allowed in \(\mathbb N\cup\{\infty\}\).
With these formal conventions, the same expected hierarchy reads
\[
0\le
\dim_{\times}\{x\}_A
\le
\dim_{\mathrm{Ptan}}\{x\}_A,
\]
but the right-hand side may be infinite.

We now construct a compact germ for which this upper bound is infinite while
the Point-Cross dimension is finite. Let
\[
(s_k)_{k\ge1}\subset(0,1]
\]
be a positive summable sequence, and let
\[
\varepsilon_k\downarrow0.
\]
For each \(k\), choose a compact set \(C_k\subset[0,1]\) such that
\[
0\in C_k,
\qquad
0\in\overline{C_k\setminus\{0\}},
\qquad
\dim_{\mathrm{Pbox}}\{0\}_{C_k}=s_k.
\]
For instance, one may take suitable endpoint Cantor sets. Set
\[
K_k:=\varepsilon_k C_k
\]
and define
\[
A
=
\{0\}
\cup
\bigcup_{k\ge1} K_k e_k
\subset \ell^2.
\]
This is an infinite fractal coordinate frame: each coordinate axis carries one
fractal channel, and the channels shrink to the origin.

The set \(A\) is compact. Indeed, any sequence in \(A\) either contains
infinitely many points in a fixed branch \(K_k e_k\), in which case compactness
of that branch gives a convergent subsequence, or it visits branches with
indices \(k\to\infty\). In the latter case, the norms are bounded by
\(\varepsilon_k\), and hence the sequence converges to \(0\).

We claim that
\[
\Eff_0^{\mathbb P}(A)
=
\{[e_k]:k\ge1\}.
\]
Each direction \([e_k]\) is clearly effective, because the branch \(K_k e_k\)
accumulates at the origin. Conversely, every non-zero point of \(A\) lies on one
coordinate branch and has normalized direction equal to some \(e_k\). Since
\[
\|e_k-e_\ell\|=\sqrt2
\qquad(k\ne\ell),
\]
a sequence of such normalized directions can converge strongly only if it is
eventually constant. Therefore no additional strong effective direction appears.

Consequently,
\[
\overline{\operatorname{Span}\Eff_0(A)}
=
\ell^2,
\]
and hence
\[
\dim_{\mathrm{Ptan}}\{0\}_A=\infty.
\]

On the other hand, the Point-Cross weights are exactly the prescribed numbers
\(s_k\). The straight probe
\[
\gamma_k(t)=t e_k
\]
meets \(A\) along the branch \(K_k e_k\), so
\[
\theta_0^A([e_k])\ge s_k.
\]
Conversely, any admissible probe with limiting direction \(e_k\) has, near the
origin, all its contacts with \(A\) on the \(k\)-th branch. Indeed, contacts with
another branch would have normalized direction \(e_\ell\), \(\ell\ne k\), which
cannot converge to \(e_k\). Therefore the trace of such a probe is locally
contained in \(K_k e_k\), and monotonicity of the point-extended box dimension
gives
\[
\theta_0^A([e_k])\le s_k.
\]
Thus
\[
\theta_0^A([e_k])=s_k.
\]

Since every finite subfamily of the coordinate directions is projectively
independent, the Point-Cross dimension is
\[
\dim_{\times}\{0\}_A
=
\sup_{\substack{F\subset\mathbb N\\ F\ \mathrm{finite}}}
\sum_{k\in F}s_k.
\]
Since the weights \(s_k\) are non-negative, this supremum over finite subsets is
exactly the extended series
\[
\sup_{\substack{F\subset\mathbb N\\ F\ \mathrm{finite}}}
\sum_{k\in F}s_k
=
\sum_{k=1}^{\infty}s_k.
\]
Therefore
\[
\dim_{\times}\{0\}_A
=
\sum_{k=1}^{\infty}s_k.
\]
In particular, choosing
\[
s_k=2^{-k}
\]
gives
\[
\dim_{\mathrm{Ptan}}\{0\}_A=\infty,
\qquad
\dim_{\times}\{0\}_A=1.
\]
Thus the germ has infinitely many independent effective tangent directions, but
only finite total Point-Cross mass.

This example is the infinite-dimensional analogue of an operator-theoretic
distinction. Let \(P_k\) denote the rank-one orthogonal projection onto
\(\mathbb R e_k\), and define the positive diagonal operator
\[
T_A
=
\sum_{k\ge1} s_k P_k.
\]
When \(\sum_k s_k<\infty\), this operator is trace-class and
\[
\operatorname{Tr}(T_A)
=
\sum_{k\ge1}s_k
=
\dim_{\times}\{0\}_A.
\]
However, if infinitely many \(s_k\) are non-zero, the support of \(T_A\) has
infinite rank:
\[
\operatorname{rank}(T_A)=\infty.
\]
The analogy is therefore precise in this orthogonal model:
\[
\dim_{\mathrm{Ptan}}\{0\}_A
\quad\text{behaves like rank,}
\]
whereas
\[
\dim_{\times}\{0\}_A
\quad\text{behaves like a trace.}
\]

In finite-dimensional geometry, rank and total directional mass are both
automatically bounded by the ambient dimension. In infinite dimension, they
separate. The point-tangential dimension records the existence of infinitely
many independent channels, while the Point-Cross dimension records the
summability of their weights. Thus the Point-Cross invariant behaves not merely
as a tangent-rank counter, but as a weighted directional trace.

\begin{table}[ht]
\centering
\small
\setlength{\tabcolsep}{3pt}
\renewcommand{\arraystretch}{1.25}
\begin{tabular}{p{0.20\textwidth}p{0.17\textwidth}p{0.17\textwidth}p{0.16\textwidth}p{0.21\textwidth}}
\toprule
Model germ & \(\dim_{\mathrm{Pbox}}\) & \shortstack{Natural\\directional\\profile} & Unrestricted \(\dim_{\times}\) & Regime detected \\
\midrule
Chirp, \(0<\alpha<1\) & \((3-\alpha)/2\) & \(\frac12+\frac{1}{1+\alpha}\) & \(2\) & three-way separation \\
Axial fractal frame & \(\max_i d_i\) & \(\sum_i d_i\) & \(\sum_i d_i\) & directional additivity \\
Sierpiński carpet vertices & \(\log 8/\log 3\) & \(\text{--}\) & \(2\) & directionality dominates \\
Koch construction vertex & \(\log 4/\log 3\) & \(\log 4/\log 3\) & \(2\) by conical shadowing & canonical balance; shadowing saturation \\
Cylindrical Cantor cusp & \(2+s\) & \(2\) & \(2\) & dispersion dominates \\
Fat Cantor dust point & \(2\) & \(2\) & \(2\) & balanced saturation \\
Hilbert coordinate frame & \(\text{not emphasized}\) & finite directional sums & \(\sum_k s_k\) & trace-like finite mass versus infinite rank \\
\bottomrule
\end{tabular}
\caption{Summary of the model regimes. The third column records the canonical or
natural directional profile when such a distinguished family of probes is being
singled out. The unrestricted Point-Cross dimension is always the supremum over
all admissible Lipschitz probes.}
\label{tab:model-regimes-summary}
\end{table}
\chapter{Comparison Principles for Point-Cross and Point-Box Dimensions}

A central theme of the theory is that \(\dim_{\times}\) should not be confused
with the point-extended box dimension
\[
\dim_{\mathrm{Pbox}}\{x\}_A.
\]
The latter measures the multiscale covering complexity of \(A\) near \(x\),
whereas the former measures the coexistence, at \(x\), of independent effective
directional channels. These two mechanisms are related, but neither should be
expected to dominate the other without additional hypotheses. For instance, a
planar cross has point-box dimension \(1\) at its crossing point, while its
Point-Cross dimension is \(2\). Conversely, high point-box complexity need not,
by itself, imply the presence of independent directional channels of comparable
total weight. Thus, any comparison between \(\dim_{\times}\) and
\(\dim_{\mathrm{Pbox}}\) must be conditional, and will require additional
hypotheses such as spatial ubiquity, thickness, non-concentration, or flatness.

\section{Conical shadowing and directional saturation}
\label{sec:conical-shadowing-saturation}

We begin with a general principle which extracts the mechanism already visible
in the oscillatory and self-similar examples.  The point is that a fixed
geometric direction may carry much more mass than what is detected by a
canonical non-shadowing probe.  If pieces of positive local complexity occur at
smaller and smaller scales inside cones whose apertures tend to zero, and if a
single rectifiable curve can visit those pieces with finite total travel, then
the full directional contribution sees this mass.

For clarity we use the following notation.  If \(x\in\mathbb R^n\),
\(v\in S^{n-1}\), and \(\eta>0\), set
\[
\mathcal C(x,v,\eta)
:=
\{x\}\cup
\left\{
y\in\mathbb R^n\setminus\{x\}:
\left\|
\frac{y-x}{\|y-x\|}-v
\right\|<\eta
\right\}.
\]
This is the oriented cone of aperture \(\eta\) around the direction \(v\).

\begin{proposition}[Directional shadowing lower bound]
\label{prop:directional-shadowing-lower-bound}
Let \(A\subset\mathbb R^n\), let \(x\in\overline A\), let
\(v\in S^{n-1}\), and set \(\xi=[v]\).  Let
\[
\gamma\in\mathcal G_x^A(v)
\]
be an admissible probe, and let
\[
E_0\subset A\cap\Gamma_\gamma
\]
be a set accumulating at \(x\).  Set
\[
E:=E_0\cup\{x\}.
\]
Then \(v\in\Eff_x(A)\), \(\xi\in\Eff_x^{\mathbb P}(A)\), and
\[
\theta_x^A(\xi)
\ge
\dim_{\mathrm{Pbox}}\{x\}_E.
\]
\end{proposition}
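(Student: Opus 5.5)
The statement is essentially a direct consequence of the definitions, and I plan to prove it by unwinding them in three short steps.

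\emph{Step 1: the direction is effective.} By Proposition~\ref{prop:probe-realization-effective-directions}, the existence of the admissible probe \(\gamma\in\mathcal G_x^A(v)\) already gives \(v\in\Eff_x(A)\). Hence \(\xi=[v]\in\Eff_x^{\mathbb P}(A)\), so \(\theta_x^A(v)\) and \(\theta_x^A(\xi)\) are both defined. Alternatively, \(E_0\) accumulates at \(x\) and lies on \(\Gamma_\gamma\), whose normalized directions converge to \(v\); this gives the same conclusion directly from Definition~\ref{def:effective-direction}.

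\emph{Step 2: compare \(E\) with the trace.} Since \(E_0\subset A\cap\Gamma_\gamma\), we have \(E\subset (A\cap\Gamma_\gamma)\cup\{x\}\). Because \(E_0\) accumulates at \(x\), the point \(x\) lies in \(\overline{E}\), so \(\dim_{\mathrm{Pbox}}\{x\}_E\) is a genuine pointwise quantity. By the finite-union rule of Proposition~\ref{prop:basic-pebd-recall}, adjoining the singleton \(\{x\}\), which has point-box dimension \(0\), does not change the value. Monotonicity then gives
\[
\dim_{\mathrm{Pbox}}\{x\}_E\le \dim_{\mathrm{Pbox}}\{x\}_{(A\cap\Gamma_\gamma)\cup\{x\}}=\dim_{\mathrm{Pbox}}\{x\}_{A\cap\Gamma_\gamma}.
\]
Closure invariance also handles this point, since \(x\in\overline{A\cap\Gamma_\gamma}\) by the recurrence condition.

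\emph{Step 3: pass to the supremum.} By Definition~\ref{def:directional-contribution},
\[
\dim_{\mathrm{Pbox}}\{x\}_{A\cap\Gamma_\gamma}\le\theta_x^A(v),
\]
and by Definition~\ref{def:projective-directional-contribution}, \(\theta_x^A(v)\le\theta_x^A(\xi)\). Chaining these inequalities with Step~2 proves the claim.

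None of these steps is a real obstacle. The only point needing care is the bookkeeping around the base point \(x\), which may or may not belong to \(A\); the singleton argument in Step~2 settles it. The substantive difficulty in applications lies elsewhere: constructing a single rectifiable \(\gamma\) that visits the shadowed pieces. That is the content of the later conical constructions, not of this proposition.
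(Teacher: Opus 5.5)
Your proposal is correct and follows essentially the same route as the paper: effectiveness of \(v\) via Proposition~\ref{prop:probe-realization-effective-directions}, then monotonicity together with the singleton/finite-union rule for \(\dim_{\mathrm{Pbox}}\), then \(\dim_{\mathrm{Pbox}}\{x\}_{A\cap\Gamma_\gamma}\le\theta_x^A(v)\le\theta_x^A(\xi)\). The only cosmetic difference is that you adjoin \(\{x\}\) to the trace, whereas the paper removes it from \(E\) and compares \(E_0\) with \(A\cap\Gamma_\gamma\); the two are equivalent.
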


\begin{proof}
Since \(\gamma\in\mathcal G_x^A(v)\),
Proposition~\ref{prop:probe-realization-effective-directions} gives
\(v\in\Eff_x(A)\), hence \(\xi=[v]\in\Eff_x^{\mathbb P}(A)\).
Since \(E_0\subset A\cap\Gamma_\gamma\), monotonicity gives
\[
\dim_{\mathrm{Pbox}}\{x\}_{A\cap\Gamma_\gamma}
\ge
\dim_{\mathrm{Pbox}}\{x\}_{E_0}.
\]
Because \(E=E_0\cup\{x\}\) and a singleton has point-extended box dimension
\(0\), the finite-union property gives
\[
\dim_{\mathrm{Pbox}}\{x\}_{E_0}
=
\dim_{\mathrm{Pbox}}\{x\}_E.
\]
Taking the supremum over all admissible probes in the oriented direction \(v\)
gives
\[
\theta_x^A(v)
\ge
\dim_{\mathrm{Pbox}}\{x\}_E.
\]
Since the projective contribution dominates the oriented one, we obtain
\[
\theta_x^A(\xi)
\ge
\theta_x^A(v)
\ge
\dim_{\mathrm{Pbox}}\{x\}_E.
\]
\end{proof}

\begin{corollary}[Conical shadowing criterion]
\label{cor:conical-block-shadowing-lower-bound}
Let \(A\subset\mathbb R^n\), let \(x\in\overline A\), let
\(v\in S^{n-1}\), and set \(\xi=[v]\).  Assume that there are nonempty sets
\[
E_j\subset A,
\qquad j\ge1,
\]
tail radii \(R_J\downarrow0\), apertures \(\alpha_J\downarrow0\), an injective
Lipschitz curve
\[
\gamma:[0,L]\to\mathbb R^n,
\qquad
\gamma(0)=x,
\]
for some \(L>0\), and numbers \(\tau_J\in(0,L]\) with
\(\tau_J\downarrow0\), such that, for every \(J\ge1\),
\[
\bigcup_{j\ge J}E_j
\subset
\gamma((0,\tau_J])
\subset
B(x,2R_J)\cap\mathcal C(x,v,\alpha_J).
\]
Set
\[
E:=\{x\}\cup\bigcup_{j\ge1}E_j.
\]
Then \(v\in\Eff_x(A)\), \(\xi\in\Eff_x^{\mathbb P}(A)\),
\(\gamma\in\mathcal G_x^A(v)\), and therefore
\[
\theta_x^A(\xi)
\ge
\dim_{\mathrm{Pbox}}\{x\}_E.
\]
In particular, if such conical shadowing schemes exist for every \(0<d<1\),
with associated sets \(E=E^{(d)}\) satisfying
\[
\dim_{\mathrm{Pbox}}\{x\}_{E^{(d)}}\ge d,
\]
then
\[
\theta_x^A(\xi)=1.
\]
\end{corollary}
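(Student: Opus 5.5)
The plan is to reduce the corollary to Proposition~\ref{prop:directional-shadowing-lower-bound}. Everything then comes down to checking that the given curve \(\gamma\) is an admissible probe in \(\mathcal G_x^A(v)\) and that \(E_0:=\bigcup_{j\ge1}E_j\) is a subset of \(A\cap\Gamma_\gamma\) accumulating at \(x\).

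\textbf{Step 1: \(\gamma\) is admissible.} By hypothesis \(\gamma\) is injective, Lipschitz and satisfies \(\gamma(0)=x\), so two conditions of Definition~\ref{def:admissible-lipschitz-probes} remain: the limiting direction and recurrence.

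For the limiting direction, fix \(J\). For \(0<t\le\tau_J\) we have \(\gamma(t)\ne x\) by injectivity, and \(\gamma(t)\in\mathcal C(x,v,\alpha_J)\). Since \(\gamma(t)\neq x\), this means
\[
\left\|\frac{\gamma(t)-x}{\|\gamma(t)-x\|}-v\right\|<\alpha_J .
\]
Because \(\tau_J\downarrow0\) and \(\alpha_J\downarrow0\), the normalized direction tends to \(v\) as \(t\to0^+\).

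For recurrence, let \(\eta\in(0,L]\) and pick \(J\) with \(\tau_J\le\eta\). Then \(E_J\) is nonempty and contained in \(A\cap\gamma((0,\tau_J])\subset A\cap\gamma((0,\eta])\). Hence \(\gamma\in\mathcal G_x^A(v)\). Proposition~\ref{prop:probe-realization-effective-directions} then gives \(v\in\Eff_x(A)\) and \(\xi\in\Eff_x^{\mathbb P}(A)\).

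\textbf{Step 2: \(E_0\) lies on the trace and accumulates at \(x\).} Taking \(J=1\) gives \(E_0\subset A\cap\gamma((0,\tau_1])\subset A\cap\Gamma_\gamma\). Every \(E_J\) is nonempty and lies in \(B(x,2R_J)\setminus\{x\}\), with \(R_J\to0\), so \(E_0\) accumulates at \(x\). Proposition~\ref{prop:directional-shadowing-lower-bound} applied to \(E_0\) now yields \(\theta_x^A(\xi)\ge\dim_{\mathrm{Pbox}}\{x\}_E\).

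\textbf{Step 3: saturation.} Suppose such schemes exist for every \(0<d<1\). Step 2 gives \(\theta_x^A(\xi)\ge d\) for each such \(d\), hence \(\theta_x^A(\xi)\ge1\). The reverse inequality \(\theta_x^A(\xi)\le 1\) follows from Lemma~\ref{lem:pbox-trace-bound} together with the definition of the projective contribution. Therefore \(\theta_x^A(\xi)=1\).

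The only point requiring care, and not a real obstacle, is the limiting-direction step: one must note that \(\gamma(t)\neq x\) for \(t>0\), so that membership in the cone gives the strict angular bound rather than the trivial alternative \(y=x\).
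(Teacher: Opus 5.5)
Your proposal is correct and follows essentially the same route as the paper. You show \(\gamma\in\mathcal G_x^A(v)\) via the cone condition and the nonempty blocks, deduce from \(E_J\subset B(x,2R_J)\setminus\{x\}\) that \(\bigcup_j E_j\) accumulates at \(x\), apply the directional shadowing lower bound, and saturate using \(\theta_x^A(\xi)\le 1\). Your explicit remark that injectivity forces \(\gamma(t)\neq x\) for \(t>0\), so that cone membership gives the strict angular bound, makes precise a step the paper leaves implicit.
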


\begin{proof}
The conical tail condition implies
\[
\frac{\gamma(t)-x}{\|\gamma(t)-x\|}\to v
\qquad\text{as }t\to0^+.
\]
Indeed, given \(\varepsilon>0\), choose \(J\) such that \(\alpha_J<\varepsilon\).
Then, for every \(0<t\le\tau_J\), one has
\[
\gamma(t)\in\mathcal C(x,v,\alpha_J),
\]
and therefore
\[
\left\|
\frac{\gamma(t)-x}{\|\gamma(t)-x\|}-v
\right\|
<\varepsilon.
\]
The nonempty blocks give the
recurrence condition: given \(0<\eta\le L\), choose \(J\) with
\(\tau_J\le\eta\). Since \(E_J\) is nonempty, choosing \(y_J\in E_J\) gives
\[
y_J\in A\cap\gamma((0,\tau_J])\subset A\cap\gamma((0,\eta]).
\]
Hence \(\gamma\in\mathcal G_x^A(v)\), so
Proposition~\ref{prop:probe-realization-effective-directions} gives
\(v\in\Eff_x(A)\) and \(\xi\in\Eff_x^{\mathbb P}(A)\).  Moreover, choosing
\(y_J\in E_J\) gives \(y_J\in B(x,2R_J)\setminus\{x\}\), hence
\(y_J\to x\). Thus \(E_0:=\bigcup_{j\ge1}E_j\) accumulates at \(x\). Applying
Proposition~\ref{prop:directional-shadowing-lower-bound} to
\(
E_0
\)
gives the stated lower bound.  If the construction exists for all \(0<d<1\),
then \(\theta_x^A(\xi)\ge d\) for all such \(d\). The general bound
\(\theta_x^A(\xi)\le1\) gives equality.
\end{proof}

\begin{corollary}[Finite-rank shadowing saturation]
\label{cor:finite-rank-shadowing-saturation}
Let \(A\subset\mathbb R^n\), let \(x\in\overline A\), and let
\[
\xi_1,\dots,\xi_k\in\Eff_x^{\mathbb P}(A)
\]
be projectively independent.  For each \(i\), choose an oriented effective
representative \(v_i\in\xi_i\cap\Eff_x(A)\).  Assume that, for every \(0<d<1\) and every
\(i\in\{1,\dots,k\}\), the direction \(v_i\) admits a conical shadowing scheme
as in Corollary~\ref{cor:conical-block-shadowing-lower-bound}, with associated
set \(E_i^{(d)}\) satisfying
\[
\dim_{\mathrm{Pbox}}\{x\}_{E_i^{(d)}}\ge d.
\]
Then
\[
\theta_x^A(\xi_i)=1
\qquad(i=1,\dots,k),
\]
and consequently
\[
\dim_{\times}\{x\}_A\ge k.
\]
In particular, if
\[
\dim_{\mathrm{Ptan}}\{x\}_A=k,
\]
then
\[
\dim_{\times}\{x\}_A
=
\dim_{\mathrm{Ptan}}\{x\}_A
=
k.
\]
\end{corollary}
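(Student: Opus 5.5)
The plan is to reduce everything to Corollary~\ref{cor:conical-block-shadowing-lower-bound} applied direction by direction, followed by the definition of \(\dim_{\times}\) and the hierarchy of Proposition~\ref{prop:point-cross-basic-bounds}.

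First, fix \(i\in\{1,\dots,k\}\) and \(0<d<1\). By hypothesis, \(v_i\) admits a conical shadowing scheme with associated set \(E_i^{(d)}\) satisfying \(\dim_{\mathrm{Pbox}}\{x\}_{E_i^{(d)}}\ge d\). Corollary~\ref{cor:conical-block-shadowing-lower-bound} then gives \(\theta_x^A(\xi_i)\ge d\), where we use \([v_i]=\xi_i\). Since \(d\) is arbitrary, \(\theta_x^A(\xi_i)\ge1\). The reverse bound \(\theta_x^A(\xi_i)\le1\) comes from Lemma~\ref{lem:pbox-trace-bound} together with Definition~\ref{def:projective-directional-contribution}. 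Hence \(\theta_x^A(\xi_i)=1\). Alternatively, the final clause of that corollary yields this equality directly.

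Second, the family \((\xi_1,\dots,\xi_k)\) is projectively independent and each \(\xi_i\in\Eff_x^{\mathbb P}(A)\). In particular \(k\le n\), so the family belongs to \(\mathcal I_x^{\mathbb P}(A)\). Definition~\ref{def:point-cross-point} therefore gives
\[
\dim_{\times}\{x\}_A\ge\sum_{i=1}^k\theta_x^A(\xi_i)=k.
\]

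Finally, if \(\dim_{\mathrm{Ptan}}\{x\}_A=k\), then Proposition~\ref{prop:point-cross-basic-bounds} (equivalently Corollary~\ref{cor:first-hierarchy-point-cross}) gives \(\dim_{\times}\{x\}_A\le k\). Combined with the lower bound, this squeezes both quantities to \(k\).

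The only point requiring care is checking that the shadowing hypotheses really feed into the corollary with the oriented representative \(v_i\) and not merely the projective class. This holds because the projective contribution dominates every oriented one, so I expect no genuine obstacle. The argument is a direct assembly of earlier results.
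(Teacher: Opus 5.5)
Your proposal is correct and follows the paper's proof essentially step for step: apply Corollary~\ref{cor:conical-block-shadowing-lower-bound} for each \(i\) and each \(d<1\), let \(d\uparrow1\) against the bound \(\theta_x^A(\xi_i)\le1\), sum over the admissible independent family, and close with \(\dim_{\times}\{x\}_A\le\dim_{\mathrm{Ptan}}\{x\}_A\). Your additional remark that projective independence forces \(k\le n\), so that the family lies in \(\mathcal I_x^{\mathbb P}(A)\), is a detail the paper leaves implicit.
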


\begin{proof}
By Corollary~\ref{cor:conical-block-shadowing-lower-bound}, for each
\(i\in\{1,\dots,k\}\) and every \(0<d<1\), one has
\[
\theta_x^A(\xi_i)\ge d.
\]
Letting \(d\uparrow1\) and using \(\theta_x^A(\xi_i)\le1\), we obtain
\[
\theta_x^A(\xi_i)=1
\qquad(i=1,\dots,k).
\]
Since \(\xi_1,\dots,\xi_k\) are projectively independent, they form an
admissible family in the aggregation defining the Point-Cross dimension. Hence
\[
\dim_{\times}\{x\}_A
\ge
\sum_{i=1}^k\theta_x^A(\xi_i)
=
k.
\]
If \(\dim_{\mathrm{Ptan}}\{x\}_A=k\), the opposite inequality follows from the
general hierarchy
\[
\dim_{\times}\{x\}_A
\le
\dim_{\mathrm{Ptan}}\{x\}_A.
\]
Thus equality holds throughout.
\end{proof}

\begin{remark}[Conceptual role of the shadowing criterion]
Proposition~\ref{prop:directional-shadowing-lower-bound} gives the abstract
transfer principle: the point-box mass of a trace seen by an admissible probe
becomes a directional Point-Cross contribution.  Corollary~\ref{cor:conical-block-shadowing-lower-bound}
then supplies a usable geometric sufficient condition: conical blocks visited
with finite shadowing length produce such a trace.  The point of the criterion
is therefore to separate the reusable mechanism from the technical constructions
needed in individual examples.
\end{remark}

\subsection*{Retrospective link with the model examples}

The conical shadowing criterion isolates the common mechanism behind the
unrestricted shadowing computations in the model examples of the preceding
chapter.  For the oscillating graph germs of
Proposition~\ref{prop:unrestricted-point-cross-chirp-shadowing}, the selected
small arcs \(G_n\) occur at smaller and smaller scales inside cones whose
apertures tend to zero.  After passing to the shadowing subsequence used in
that proposition, these arcs are visited by a single rectifiable admissible
probe.  The abstract criterion therefore transfers the point-box mass carried
by the shadowed arcs into the corresponding directional contribution.  In
particular, when the shadowed trace has point-extended box dimension \(1\), it
gives the full directional weight
\[
\theta_x^A(\xi)=1
\]
in the prescribed finite-slope direction \(\xi\).

Likewise, in the Koch construction discussed in
Remark~\ref{rem:koch-shadowing-example}, one chooses small similar copies
inside cones of decreasing aperture.  Along a sufficiently rapidly decreasing
sequence of scales, these copies can be shadowed by a rectifiable probe with
finite total travel.  The copies then provide the conical blocks required by
Corollary~\ref{cor:conical-block-shadowing-lower-bound}, and their
one-dimensional self-similar complexity is converted into a full directional
contribution.  This also clarifies the distinction between the canonical
non-shadowing reading and the unrestricted Point-Cross reading: the latter may
convert hidden self-similar mass into a full directional weight by shadowing.

Thus the criterion turns the concrete shadowing constructions of the examples
into a reusable principle.  It separates the geometric input
\[
\text{blocks in narrowing cones plus finite shadowing travel}
\]
from the dimensional output
\[
\text{positive or full directional Point-Cross weight}.
\]
This form is useful whenever the main task is to identify thick directional
blocks and, when needed, a branch-supported shadowing mechanism.  The criterion
therefore records the reusable local mechanism without tying the present paper
to any particular extreme class of examples.

\section{Spatialization and product-grid lower bounds}
\label{sec:spatialization-product-grid-lower-bounds}

The counting mechanism used in this section is classical in box-counting
geometry: separated product grids produce additive lower exponents.  The
purpose here is not to reprove product formulae, but to reinterpret this
standard mechanism as a spatialization principle, converting independent
Point-Cross channels into genuine point-box lower bounds.

The preceding section explains how conical shadowing can produce genuine
Point-Cross weights.  We now address a different question: when does such
local directional richness force largeness for the point-extended box
dimension?  The distinction is essential.  A large Point-Cross value at a
single point does not, by itself, imply a comparable point-box dimension.  For
instance, if
\[
A=([-1,1]\times\{0\})\cup(\{0\}\times[-1,1]),
\]
then at the crossing point one has
\[
\dim_{\times}\{0\}_A=2,
\]
because two independent one-dimensional channels are active, whereas
\[
\dim_{\mathrm{Pbox}}\{0\}_A=1.
\]
The two channels coexist at one point, but they do not generate a
two-dimensional family of separated spatial cells.
Thus the false principle is
\[
\dim_{\times}\{x\}_A\ge d
\quad\Longrightarrow\quad
\dim_{\mathrm{Pbox}}\{x\}_A\ge d.
\]
The correct principle is instead
\[
\boxed{
\text{directional richness spatialized across scales}
\Longrightarrow
\text{point-box lower bounds}.
}
\]
In box-counting terms, spatialization means that independent directional
choices are realized as many separated cells near the point, at arbitrarily
small scales.  We first isolate this mechanism in a purely covering-theoretic
form.

\begin{definition}[Lower spatial \(d\)-ubiquity]
\label{def:lower-spatial-d-ubiquity}
Let \(A\subset\mathbb R^n\), let \(x\in\overline A\), and let
\(d\in[0,n]\).  We say that \(A\) has \emph{lower spatial \(d\)-ubiquity at
\(x\)} if, for every \(\rho_0>0\), every \(0<\rho<\rho_0\), and every
\(\eta>0\), there exist scales
\[
\varepsilon_m\downarrow0,
\qquad
0<\varepsilon_m<\rho,
\]
and finite sets
\[
F_m\subset A\cap B(x,\rho)
\]
such that \(F_m\) is \(\varepsilon_m\)-separated and
\[
\#F_m
\ge
\left(\frac{\rho}{\varepsilon_m}\right)^{d-\eta}
\]
for all \(m\) large enough.  Equivalently, one may call this lower spatial
\(d\)-filling at \(x\).
\end{definition}

This condition says that the germ of \(A\) near \(x\) occupies at least
\(\varepsilon^{-(d-o(1))}\) distinct cells at arbitrarily small scales, inside
every sufficiently small ball around \(x\).  It is therefore a box-counting
notion of local filling, not a topological assertion about containing an open
set.

\begin{proposition}[Spatial ubiquity implies a point-box lower bound]
\label{prop:spatial-ubiquity-implies-pbox-lower-bound}
Let \(A\subset\mathbb R^n\), let \(x\in\overline A\), and let \(d\in[0,n]\).
If \(A\) has lower spatial \(d\)-ubiquity at \(x\), then
\[
\dim_{\mathrm{Pbox}}\{x\}_A\ge d.
\]
\end{proposition}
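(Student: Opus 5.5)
The plan is to work directly from Definition~\ref{def:pbox-metric}, which writes the point-extended box dimension as an infimum over radii of local upper box dimensions:
\[
\dim_{\mathrm{Pbox}}\{x\}_A=\inf_{r>0}\dimBl\bigl(A\cap B(x,r)\bigr).
\]
It therefore suffices to show that \(\dimBl(A\cap B(x,r))\ge d\) for every sufficiently small \(r>0\). By monotonicity in \(r\) we may restrict to \(r<\rho_0\) for a fixed \(\rho_0\). For such an \(r\) we take \(\rho:=r\) and fix an arbitrary \(\eta>0\). Lower spatial \(d\)-ubiquity (Definition~\ref{def:lower-spatial-d-ubiquity}) then supplies scales \(\varepsilon_m\downarrow0\) and \(\varepsilon_m\)-separated finite sets \(F_m\subset A\cap B(x,r)\) with \(\#F_m\ge (r/\varepsilon_m)^{d-\eta}\) for all large \(m\).

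The key step is to convert separation into a lower bound on covering numbers. A set of diameter strictly less than \(\varepsilon_m\) contains at most one point of an \(\varepsilon_m\)-separated set. To avoid the boundary case of diameter exactly \(\varepsilon_m\), we cover at scale \(\varepsilon_m/2\), which gives
\[
N_{\varepsilon_m/2}\bigl(A\cap B(x,r)\bigr)\ge \#F_m\ge r^{d-\eta}\,\varepsilon_m^{-(d-\eta)}.
\]
Taking logarithms and dividing by \(\ln(2/\varepsilon_m)\), the factor \(r^{d-\eta}\) is a constant that disappears in the limit, and \(\ln(2/\varepsilon_m)/\ln(1/\varepsilon_m)\to1\). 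Along the sequence \(\varepsilon_m/2\to0\) we therefore get
\[
\limsup_{\varepsilon\to0}\frac{\ln N_\varepsilon(A\cap B(x,r))}{\ln(1/\varepsilon)}\ge d-\eta.
\]
Since \(\eta>0\) is arbitrary, this yields \(\dimBl(A\cap B(x,r))\ge d\).

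To finish, we take the infimum over small \(r\), which gives \(\dim_{\mathrm{Pbox}}\{x\}_A\ge d\). The case \(d-\eta\le0\) is trivial by nonnegativity. No step is a real obstacle here. The only care needed is the factor-of-two adjustment between the separation and covering conventions, and checking that the \(r\)-dependent constant does not affect the exponent. Both are routine, since the definition uses a \(\limsup\) and any subsequence of scales witnessing large covering numbers suffices.
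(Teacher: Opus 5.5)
Your proposal is correct and follows essentially the same route as the paper. Both arguments convert the $\varepsilon_m$-separated sets $F_m$ into covering lower bounds at a comparable scale (you use $\varepsilon_m/2$, the paper uses $\varepsilon_m/3$), note that the $\rho$-dependent constant vanishes in the logarithmic ratio along the sequence of scales, and conclude by letting $\eta\to 0$ and taking the infimum over radii.
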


\begin{proof}
Fix \(\rho>0\) and \(\eta>0\), and choose \(\rho_0>\rho\).  By lower spatial
\(d\)-ubiquity, there exist scales \(\varepsilon_m\downarrow0\) and
\(\varepsilon_m\)-separated sets
\[
F_m\subset A\cap B(x,\rho)
\]
such that
\[
\#F_m
\ge
\left(\frac{\rho}{\varepsilon_m}\right)^{d-\eta}.
\]
Since \(F_m\) is \(\varepsilon_m\)-separated, every covering set allowed in
\(N_{\varepsilon_m/3}\), namely every set of diameter at most
\(\varepsilon_m/3\), contains at most one point of \(F_m\).  Hence
\[
N_{\varepsilon_m/3}(A\cap B(x,\rho))
\ge
\#F_m
\ge
\left(\frac{\rho}{\varepsilon_m}\right)^{d-\eta}.
\]
Set \(\delta_m:=\varepsilon_m/3\).  Then \(\delta_m\downarrow0\), and the
preceding estimate is an estimate at the particular scales
\(\varepsilon=\delta_m\):
\[
N_{\delta_m}(A\cap B(x,\rho))
\ge
\left(\frac{\rho}{3\delta_m}\right)^{d-\eta}.
\]
Taking the limsup along this sequence, and using that the full limsup over
all \(\varepsilon\downarrow0\) dominates every sequential limsup, we obtain
\[
\limsup_{\varepsilon\downarrow0}
\frac{\log N_{\varepsilon}(A\cap B(x,\rho))}{\log(1/\varepsilon)}
\ge
\limsup_{m\to\infty}
\frac{(d-\eta)\log(\rho/(3\delta_m))}{\log(1/\delta_m)}
=
d-\eta.
\]
Since \(\eta>0\) is arbitrary, the local upper box exponent in \(B(x,\rho)\)
is at least \(d\).  Since \(\rho>0\) is arbitrary, taking the infimum over
radii in the definition of \(\dim_{\mathrm{Pbox}}\{x\}_A\) gives the result.
\end{proof}

The preceding proposition identifies the target: to prove a point-box lower
bound, one must produce separated sets of the required cardinality.  We now
give a concrete geometric criterion showing how such separated sets arise from
independent thick channels arranged in a product-like way.

\begin{definition}[Product-grid realization]
\label{def:product-grid-realization}
Let \(A\subset\mathbb R^n\), let \(x\in\overline A\), and let
\[
v_1,\dots,v_k\in S^{n-1}
\]
be linearly independent directions.  Let
\[
d_1,\dots,d_k\in[0,1].
\]
We say that \(A\) admits a \emph{product-grid realization of type}
\((v_i,d_i)_{i=1}^k\) at \(x\) if there exist constants \(C\ge1\) and
\(c>0\) such that, for every \(\rho_0>0\), every \(0<\rho<\rho_0\), and every
\(\eta>0\), there are scales
\[
\varepsilon_m\downarrow0,
\qquad
0<\varepsilon_m<\rho,
\]
and finite sets
\[
P_{i,m}\subset[-\rho,\rho],
\qquad i=1,\dots,k,
\]
satisfying
\[
\#P_{i,m}
\ge
\left(\frac{\rho}{\varepsilon_m}\right)^{d_i-\eta}
\qquad(i=1,\dots,k),
\]
and such that the grid
\[
G_m
:=
\left\{
 x+
 \sum_{i=1}^k t_i v_i:
 t_i\in P_{i,m}
\right\}
\]
is contained in \(A\cap B(x,C\rho)\) and is \(c\varepsilon_m\)-separated.
\end{definition}

Here the product condition is literal: choices made independently in the
\(k\) channels are combined inside \(A\).  This is precisely the feature
missing from a finite cross.

\begin{remark}[Coordinate skeletons versus product-grid spatialization]
\label{rem:coordinate-skeletons-vs-product-grids}
This definition should be compared with the fractal coordinate frames of
Subsection~\ref{subsec:fractal-coordinate-frames-curvilinear-frames}.  An axial
fractal frame is a \emph{coordinate skeleton}: it places one-dimensional fractal
germs on independent axes, but it does not contain the mixed points obtained by
combining the different coordinates.  For this reason its point-extended box
dimension obeys a finite-union max rule, whereas its Point-Cross dimension adds
the independent directional weights.

A product-grid realization is stronger.  It is a genuine spatialization of
those independent channels: Definition~\ref{def:product-grid-realization}
requires enough separated mixed cells in the ambient space to force a lower
point-box exponent equal to the sum of the channel weights.  Thus the model
examples isolate the \emph{directional} additive mechanism, while the present
section explains when such a directional skeleton is upgraded to actual spatial
box complexity.  The Cantor cross and the Cantor dust are the guiding model:
the former carries the additivity directionally, the latter realizes it as a
full product grid.
\end{remark}

\begin{theorem}[Product-grid lower bound]
\label{thm:product-grid-lower-bound}
Let \(A\subset\mathbb R^n\), let \(x\in\overline A\), and assume that \(A\)
admits a product-grid realization of type
\[
(v_i,d_i)_{i=1}^k
\]
at \(x\).  Then
\[
\dim_{\mathrm{Pbox}}\{x\}_A
\ge
\sum_{i=1}^k d_i.
\]
\end{theorem}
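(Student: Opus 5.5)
The plan is to reduce the theorem to Proposition~\ref{prop:spatial-ubiquity-implies-pbox-lower-bound}: a product-grid realization will be shown to yield lower spatial \(d\)-ubiquity at \(x\) with \(d:=\sum_{i=1}^k d_i\), up to the harmless constants \(C\) and \(c\). Since that proposition is stated with separation exactly \(\varepsilon_m\) and with the ball \(B(x,\rho)\), I will either rescale or, more simply, rerun its short counting argument directly with the constants \(C,c\) inserted. I expect the second route to be cleaner.

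First I count the grid. The map \((t_1,\dots,t_k)\mapsto x+\sum_i t_iv_i\) is injective because the \(v_i\) are linearly independent. Hence
\[
\#G_m=\prod_{i=1}^k\#P_{i,m}\ge\left(\frac{\rho}{\varepsilon_m}\right)^{\sum_i(d_i-\eta)}=\left(\frac{\rho}{\varepsilon_m}\right)^{d-k\eta}.
\]
Injectivity is also implied by the \(c\varepsilon_m\)-separation hypothesis, since distinct parameter tuples must give distinct points. Because \(G_m\) is \(c\varepsilon_m\)-separated, every set of diameter at most \(c\varepsilon_m/2\) contains at most one point of \(G_m\). Since \(G_m\subset A\cap B(x,C\rho)\), this gives
\[
N_{c\varepsilon_m/2}\bigl(A\cap B(x,C\rho)\bigr)\ge\left(\frac{\rho}{\varepsilon_m}\right)^{d-k\eta}.
\]

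Next I take logarithms along the scales \(\delta_m:=c\varepsilon_m/2\downarrow0\). Because \(\log(\rho/\varepsilon_m)/\log(1/\delta_m)\to1\) for fixed \(\rho\), the upper box dimension satisfies \(\dimBl(A\cap B(x,C\rho))\ge d-k\eta\). Letting \(\eta\downarrow0\) removes the \(k\eta\) loss.

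To conclude, I need the bound for every radius. Given \(r>0\), I choose \(\rho<\min(\rho_0,r/C)\); the definition allows any \(\rho_0\). Monotonicity then gives \(\dimBl(A\cap B(x,r))\ge d\), and taking the infimum over \(r\) in Definition~\ref{def:pbox-metric} proves the theorem.

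I expect no serious obstacle. The only care points are tracking \(k\eta\) instead of \(\eta\), and making sure the radius enlargement by \(C\) is absorbed by choosing \(\rho\) small relative to \(r\).
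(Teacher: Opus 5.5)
Your argument is correct and is essentially the paper's proof: the paper also counts $\#G_m=\prod_i\#P_{i,m}\ge(\rho/\varepsilon_m)^{D-k\eta'}$ using linear independence, but it packages the constants $C,c$ into a verification of lower spatial $D$-ubiquity (taking $\rho=R/C$ and $\delta_m=\min\{1,c\}\varepsilon_m$) and then invokes Proposition~\ref{prop:spatial-ubiquity-implies-pbox-lower-bound}, whereas you inline that separated-set counting directly. One small correction to your aside: separation of the \emph{set} $G_m$ does not imply injectivity of the parametrization, since coinciding tuples simply give one point, so the product count genuinely rests on the linear independence of the $v_i$, which is what your main line uses.
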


\begin{proof}
Set
\[
D:=\sum_{i=1}^k d_i.
\]
If \(D=0\), the statement is immediate.  Assume \(D>0\).
We show that the product-grid hypothesis implies lower spatial \(D\)-ubiquity
at \(x\).

Fix \(R_0>0\), \(0<R<R_0\), and \(\eta>0\). Choose \(\eta'>0\) such that
\(k\eta'<\eta\), and set
\[
\rho:=R/C.
\]
By the product-grid realization, applied with this \(\rho\) and any
\(\rho_0>\rho\), there exist scales \(\varepsilon_m\downarrow0\) and finite
sets \(P_{i,m}\subset[-\rho,\rho]\) such that
\[
\#P_{i,m}
\ge
\left(\frac{\rho}{\varepsilon_m}\right)^{d_i-\eta'}.
\]
The corresponding grids satisfy
\[
G_m\subset A\cap B(x,C\rho)=A\cap B(x,R)
\]
and are \(c\varepsilon_m\)-separated.  Since the directions
\(v_1,\dots,v_k\) are linearly independent, the affine map
\((t_1,\dots,t_k)\mapsto x+\sum_i t_i v_i\) is injective.  Therefore
\[
\#G_m
=
\prod_{i=1}^k \#P_{i,m}
\ge
\prod_{i=1}^k
\left(\frac{\rho}{\varepsilon_m}\right)^{d_i-\eta'}
=
\left(\frac{\rho}{\varepsilon_m}\right)^{D-k\eta'}.
\]
Put \(a:=\min\{1,c\}\) and \(\delta_m:=a\varepsilon_m\).  Then
\(\delta_m\downarrow0\), \(0<\delta_m<R\), and \(G_m\) is
\(\delta_m\)-separated.  Moreover,
\[
\#G_m
\ge
\left(\frac{a}{C}\right)^{D-k\eta'}
\left(\frac{R}{\delta_m}\right)^{D-k\eta'}.
\]
Since \(R/\delta_m\to+\infty\) and \(\eta-k\eta'>0\), the constant factor is
absorbed for all sufficiently large \(m\), and hence
\[
\#G_m
\ge
\left(\frac{R}{\delta_m}\right)^{D-\eta}.
\]
Thus, after discarding finitely many initial scales, the sets \(F_m:=G_m\)
satisfy the defining conditions of lower spatial \(D\)-ubiquity for the chosen
radius \(R\) and tolerance \(\eta\). Since \(R_0\), \(R\), and \(\eta\) were
arbitrary, \(A\) has lower spatial \(D\)-ubiquity at \(x\).  Since
\(v_1,\dots,v_k\) are linearly
independent and \(d_i\in[0,1]\), one has \(D\le k\le n\).  Hence
Proposition~\ref{prop:spatial-ubiquity-implies-pbox-lower-bound} applies and
gives
\[
\dim_{\mathrm{Pbox}}\{x\}_A
\ge
D
=
\sum_{i=1}^k d_i.
\]
\end{proof}

Thus the theorem should be read as a box-dimensional bridge theorem: its
hypothesis is the existence of a product grid inside \(A\), not a bare
Point-Cross lower bound by itself.

In many applications, the separation of the grid does not need to be assumed
separately.  It follows from the linear independence of the directions.

\begin{lemma}[Separation of coordinate grids]
\label{lem:separation-coordinate-grids}
Let \(v_1,\dots,v_k\in\mathbb R^n\) be linearly independent.  Then there
exists \(c_v>0\) such that, for all scalars \(a_1,\dots,a_k\),
\[
\left\|
\sum_{i=1}^k a_i v_i
\right\|
\ge
c_v \max_{1\le i\le k}|a_i|.
\]
Consequently, if each \(P_i\subset\mathbb R\) is \(\varepsilon\)-separated,
then the grid
\[
\left\{
\sum_{i=1}^k t_i v_i:
 t_i\in P_i
\right\}
\]
is \(c_v\varepsilon\)-separated.
\end{lemma}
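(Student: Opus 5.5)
The plan has two steps: first a compactness argument giving the norm inequality, then the separation statement as a direct consequence of it.

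For the norm inequality, I would define the map
\[
\Lambda:\mathbb R^k\to\mathbb R^n,\qquad \Lambda(a_1,\dots,a_k):=\sum_{i=1}^k a_i v_i .
\]
This map is linear, and it is injective because \(v_1,\dots,v_k\) are linearly independent. Next, consider the set
\[
Q:=\Bigl\{a\in\mathbb R^k:\max_{1\le i\le k}|a_i|=1\Bigr\},
\]
the unit sphere of the sup-norm. It is compact, and \(0\notin Q\). The function \(a\mapsto\|\Lambda a\|\) is continuous on \(Q\) and never vanishes there, by injectivity. Hence it attains a strictly positive minimum on \(Q\), and I set \(c_v\) equal to this minimum.

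The inequality for arbitrary \(a\neq0\) then follows by homogeneity: apply the bound to \(a/\max_i|a_i|\in Q\) and rescale. The case \(a=0\) is trivial. Alternatively, one could take \(c_v\) to be the smallest singular value of the matrix with columns \(v_i\), combined with \(\|a\|_2\ge\max_i|a_i|\). I would mention this only as a remark, since the compactness argument is cleaner.

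For separation, take two distinct grid points \(\sum_i t_iv_i\) and \(\sum_i t_i'v_i\). If they are distinct as points, then the coefficient tuples differ, so \(t_j\neq t_j'\) for some index \(j\). Since both \(t_j\) and \(t_j'\) lie in the \(\varepsilon\)-separated set \(P_j\), we get \(|t_j-t_j'|\ge\varepsilon\). Applying the inequality to \(a_i:=t_i-t_i'\) gives
\[
\Bigl\|\sum_i (t_i-t_i')v_i\Bigr\|\ \ge\ c_v\max_i|t_i-t_i'|\ \ge\ c_v\varepsilon .
\]

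There is no serious obstacle. The only point needing care is the positivity of the minimum, which rests on injectivity together with the compactness of \(Q\). It is also worth noting that \(c_v\) depends only on the family \((v_i)\), and not on \(\varepsilon\) or on the sets \(P_i\). This is what allows the lemma to supply the constant \(c\) in Definition~\ref{def:product-grid-realization} uniformly in \(m\).
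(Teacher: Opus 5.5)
Your proposal is correct and follows essentially the same argument as the paper: injectivity of \(a\mapsto\sum_i a_iv_i\), compactness of the sup-norm unit sphere to get a positive minimum \(c_v\), homogeneity, and then a coordinate in which two distinct grid points differ by at least \(\varepsilon\). The singular-value alternative you mention is also valid, via \(\|a\|_2\ge\max_i|a_i|\).
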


\begin{proof}
Consider the linear map
\[
T:\mathbb R^k\to\mathbb R^n,
\qquad
T(a_1,\dots,a_k)=\sum_{i=1}^k a_i v_i.
\]
Since \(v_1,\dots,v_k\) are linearly independent, \(T\) is injective.  The unit
sphere of \((\mathbb R^k,\|\cdot\|_\infty)\) is compact, and \(\|T(a)\|>0\) on
that sphere.  Hence
\[
c_v:=\min_{\|a\|_\infty=1}\|T(a)\|>0.
\]
By homogeneity, \(\|T(a)\|\ge c_v\|a\|_\infty\) for all \(a\in\mathbb R^k\).
If two grid points are distinct, then at least one coordinate differs by at
least \(\varepsilon\), and the estimate gives the stated separation.
\end{proof}

Combining the lemma with Theorem~\ref{thm:product-grid-lower-bound} gives a
practical criterion: if \(A\) contains, at arbitrarily small scales, a
coordinate product of thick one-dimensional sets along independent directions,
then the point-box dimension is bounded below by the sum of the
one-dimensional exponents.

\begin{corollary}[Axial product lower bound]
\label{cor:axial-product-lower-bound}
Let \(A\subset\mathbb R^n\), let \(x\in\overline A\), let
\(v_1,\dots,v_k\in S^{n-1}\) be linearly independent, and let
\(d_1,\dots,d_k\in[0,1]\).  Suppose that, for every \(\rho_0>0\), every
\(0<\rho<\rho_0\), and every \(\eta>0\), there exist scales
\(\varepsilon_m\downarrow0\), with \(0<\varepsilon_m<\rho\), and
\(\varepsilon_m\)-separated sets
\[
P_{i,m}\subset[-\rho,\rho],
\qquad i=1,\dots,k,
\]
such that
\[
\#P_{i,m}
\ge
\left(\frac{\rho}{\varepsilon_m}\right)^{d_i-\eta}
\qquad(i=1,\dots,k),
\]
and
\[
\left\{
 x+
 \sum_{i=1}^k t_i v_i:
 t_i\in P_{i,m}
\right\}
\subset A.
\]
Then
\[
\dim_{\mathrm{Pbox}}\{x\}_A
\ge
\sum_{i=1}^k d_i.
\]
\end{corollary}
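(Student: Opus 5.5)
The plan is to reduce the corollary directly to Theorem~\ref{thm:product-grid-lower-bound} by checking that the stated hypotheses yield a product-grid realization of type \((v_i,d_i)_{i=1}^k\) at \(x\), in the sense of Definition~\ref{def:product-grid-realization}. Two requirements of that definition are missing from the corollary's hypotheses. The first is that the grid lies in a ball \(A\cap B(x,C\rho)\) for some fixed constant \(C\ge1\). The second is that it is \(c\varepsilon_m\)-separated for some fixed constant \(c>0\). The cardinality bounds on \(\#P_{i,m}\) and the inclusion of the grid in \(A\) are given verbatim, so only these two constants need to be supplied.

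For the separation, I will invoke Lemma~\ref{lem:separation-coordinate-grids}. Each \(P_{i,m}\) is \(\varepsilon_m\)-separated and the \(v_i\) are linearly independent, so the grid \(\{\sum_i t_iv_i : t_i\in P_{i,m}\}\) is \(c_v\varepsilon_m\)-separated. Translating by \(x\) preserves this, so I will take \(c:=c_v\), which depends only on \(v_1,\dots,v_k\) and not on \(\rho\), \(\eta\) or \(m\).

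For the localization, each \(t_i\in[-\rho,\rho]\) and each \(\|v_i\|=1\), so the triangle inequality gives
\[
\Bigl\|\sum_{i=1}^k t_iv_i\Bigr\|\le k\rho .
\]
Choosing \(C:=k+1\) then places the grid strictly inside \(B(x,C\rho)\), whether that ball is taken open or closed. Combined with the assumed inclusion in \(A\), this gives \(G_m\subset A\cap B(x,C\rho)\).

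With these choices every clause of Definition~\ref{def:product-grid-realization} holds for all \(\rho_0\), \(\rho<\rho_0\) and \(\eta>0\). Theorem~\ref{thm:product-grid-lower-bound} then yields \(\dim_{\mathrm{Pbox}}\{x\}_A\ge\sum_i d_i\). I do not expect a genuine obstacle. The only point needing care is that \(C\) and \(c\) must be uniform in \(\rho\), \(\eta\) and \(m\), and both choices above are.
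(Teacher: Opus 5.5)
Your proposal is correct and is essentially the paper's proof. The paper also gets separation from Lemma~\ref{lem:separation-coordinate-grids} with $c=c_v$, localizes the grid by the triangle inequality, and concludes with Theorem~\ref{thm:product-grid-lower-bound}. Your choice $C=k+1$ is slightly safer than the paper's $C=\max\{1,k\}$. With an open ball $B(x,C\rho)$ and $k=1$, a grid point with $t_1=\pm\rho$ lies exactly on the boundary of $B(x,\rho)$, and your choice avoids that.
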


\begin{proof}
By Lemma~\ref{lem:separation-coordinate-grids}, the grids are
\(c_v\varepsilon_m\)-separated.  Since \(v_i\in S^{n-1}\) and
\(P_{i,m}\subset[-\rho,\rho]\), they are contained in \(B(x,k\rho)\).  Hence
\(A\) admits a product-grid realization of type \((v_i,d_i)_{i=1}^k\) at
\(x\), with \(C=\max\{1,k\}\) and \(c=c_v\).  The conclusion follows from
Theorem~\ref{thm:product-grid-lower-bound}.
\end{proof}

\begin{remark}[Abstract spatialization]
The product-grid condition is the concrete affine model of a more general
principle.  One may replace the affine grids above by injective maps
\[
\Psi_m:P_{1,m}\times\cdots\times P_{k,m}
\longrightarrow A\cap B(x,C\rho)
\]
whose images are separated and whose factors satisfy the same cardinality
bounds.  Such a condition is best understood as a \emph{combinatorial
spatialization} of a weighted directional packet: the directions label the
channels whose weights are being spatialized, while the proof of the point-box
lower bound uses only product cardinality and separation.  To connect this
abstract formulation back to Point-Cross, one should require the weights to
satisfy
\[
0\le \alpha_i\le \theta_x^A(\xi_i).
\]
We therefore do not introduce a separate spatialized Point-Cross dimension
here. The present section only records the lower-bound mechanism needed for
product-like and cellular examples.
\end{remark}

This result should be contrasted with a finite cross.  In a cross, each branch
may be thick in its own direction, but the set contains only the union
\[
\bigcup_{i=1}^k \{x+t v_i:t\in P_i\},
\]
not the product grid
\[
\left\{
 x+
 \sum_{i=1}^k t_i v_i:
 t_i\in P_i
\right\}.
\]
Consequently, the number of occupied \(\varepsilon\)-cells is additive at the
level of branches, not multiplicative at the level of spatial positions.  This
is why the Point-Cross dimension can be large while the point-box dimension
remains smaller.

The product-grid criterion captures the opposite situation.  Independent
directional choices are realized jointly.  At scale \(\varepsilon\), the number
of cells is then
\[
\prod_{i=1}^k
\left(\frac{\rho}{\varepsilon}\right)^{d_i}
=
\left(\frac{\rho}{\varepsilon}\right)^{\sum_i d_i},
\]
and the exponent becomes the sum of the directional exponents.

The conceptual distinction can therefore be summarized as follows:
\[
\text{Point-Cross richness}
\quad=
\quad
\text{independent weighted channels at a germ},
\]
whereas
\[
\text{point-box largeness}
\quad=
\quad
\text{many separated spatial cells at small scales}.
\]
The bridge between the two is precisely spatialization: abstract combinatorial
spatialization in the general language, and product-grid realization in the
concrete affine model.

\begin{remark}[A measured counterpart: Alberti representations]
The spatialization principle above has a natural measured analogue in the
theory of Alberti representations, originating in Alberti's work on BV
derivatives \cite{Alberti1993} and developed in the modern Lipschitz
differentiability setting by Bate \cite{Bate2015}.  In that setting, a measure is
decomposed along families of curve fragments with prescribed directional
behaviour, and several independent families express a measured form of
directional richness.  The present framework is different: it is pointwise,
box-counting, and probe-based, and it does not require a background measure.
Nevertheless, the analogy is useful.  In both settings, independent directions
become dimensionally meaningful only when they carry enough mass, or enough
separated cells, across scales.

We use this analogy only as a guiding perspective here.  No precise bridge
between quantitative Point-Cross structures and Alberti-type decompositions is
needed for the pointwise, box-counting calibration developed in this paper.
\end{remark}

This gives the box-dimensional meaning of filling in the present framework: not
merely the coexistence of several directions at a point, but the multiscale
realization of their independent choices as separated spatial cells.  Thus a
higher point-box exponent is explained not by a single crossing, but by the
spatial repetition and product-like organization of independent choices across
scales.

\section{Covering control and exact point-box calibration}
\label{sec:covering-control-exact-pbox-calibration}

The previous section gave lower bounds for the point-extended box dimension
from spatial ubiquity and product-grid realization.  These results explain how
directional richness can become box-dimensional once it is spatialized into
many separated cells.  We now record the complementary upper-bound mechanism.

The point is deliberately simple.  There is no general upper bound of
\[
\dim_{\mathrm{Pbox}}\{x\}_A
\]
in terms of
\[
\dim_{\times}\{x\}_A
\]
alone.  Directional information does not prevent additional non-directional
covering complexity from being present near \(x\).  Upper bounds for
\(\dim_{\mathrm{Pbox}}\) require genuine pointwise upper-box covering
control, or a comparable flatness assumption.  Thus the correct calibration
scheme is
\[
\text{spatial lower bound}
+
\text{covering upper bound}
\Longrightarrow
\text{exact point-box dimension}.
\]

\begin{definition}[Pointwise upper-box covering control]
\label{def:pointwise-upper-box-covering-control}
Let \(A\subset\mathbb R^n\), let \(x\in\overline A\), and let
\(d\in[0,n]\).  We say that \(A\) has \emph{pointwise upper-box covering control of
exponent \(d\) at \(x\)} if, for every sufficiently small \(R>0\) and every
\(\eta>0\), there exist constants
\[
C_{R,\eta}>0,
\qquad
\varepsilon_{R,\eta}>0,
\]
such that, for every \(0<\varepsilon<\varepsilon_{R,\eta}\),
\[
N_\varepsilon(A\cap B(x,R))
\le
C_{R,\eta}
\left(\frac{R}{\varepsilon}\right)^{d+\eta}.
\]
\end{definition}

Here \(N_\varepsilon\) denotes the covering number with admissible covering
sets of diameter at most \(\varepsilon\).  The factor \(R\) is included to make
the estimate scale-invariant. For fixed \(R\), it does not affect the
logarithmic exponent.

\begin{remark}[Relation with pointwise Assouad dimension]
The pointwise upper-box covering control in
Definition~\ref{def:pointwise-upper-box-covering-control} is
closely related to pointwise Assouad-type dimensions
\cite{Fraser2020,KaenmakiRutar2023}.  For a compact set
\(K\subset\mathbb R^n\), the pointwise Assouad dimension at \(x\in K\) is
defined through estimates of the form
\[
N_r(K\cap B(x,R))
\le
C\left(\frac{R}{r}\right)^s
\]
valid uniformly for all sufficiently small pairs of scales
\(0<r\le R<\rho\).

This is stronger than the pointwise upper-box covering control used here.  In
Definition~\ref{def:pointwise-upper-box-covering-control}, the outer radius \(R\) is
fixed first, and the constant is allowed to depend on \(R\).  Thus our
condition is a pointwise upper-box control, not an Assouad-type worst-scale
condition.

Consequently, an upper bound
\[
\dim_{\mathrm A}(K,x)\le d
\]
implies the pointwise upper-box covering control hypothesis of exponent \(d\)
used below.  Hence
\[
\dim_{\mathrm{Pbox}}\{x\}_K\le d.
\]
The converse implication is not expected in general, since pointwise Assouad
dimension controls the worst local behaviour uniformly over all pairs of
scales, whereas the point-extended box dimension fixes the outer ball before
letting the inner scale tend to zero.
\end{remark}

\begin{proposition}[Covering-control upper bound]
\label{prop:covering-control-upper-bound}
Let \(A\subset\mathbb R^n\), let \(x\in\overline A\), and let \(d\in[0,n]\).
If \(A\) has pointwise upper-box covering control of exponent \(d\) at \(x\), then
\[
\dim_{\mathrm{Pbox}}\{x\}_A\le d.
\]
\end{proposition}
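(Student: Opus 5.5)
The plan is to unwind Definition~\ref{def:pbox-metric} directly. There, $\dim_{\mathrm{Pbox}}\{x\}_A$ is the infimum over $R>0$ of the localized upper box dimensions $\dimBl(A\cap B(x,R))$. It therefore suffices to show that $\dimBl(A\cap B(x,R))\le d$ for every sufficiently small $R>0$. The infimum over all radii is then at most $d$, because the localized dimension is nondecreasing in $R$, so small radii control the infimum.

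Fix such an $R$ and an arbitrary $\eta>0$. For every $0<\varepsilon<\min\{\varepsilon_{R,\eta},1\}$, the covering-control hypothesis gives
\[
\ln N_\varepsilon(A\cap B(x,R))\le \ln C_{R,\eta}+(d+\eta)\ln R+(d+\eta)\ln(1/\varepsilon).
\]
Next, divide by $\ln(1/\varepsilon)>0$ and let $\varepsilon\to0$. The two terms $\ln C_{R,\eta}$ and $(d+\eta)\ln R$ are constants, since $R$ and $\eta$ are fixed, so after division they tend to $0$. Hence
\[
\limsup_{\varepsilon\to0}\frac{\ln N_\varepsilon(A\cap B(x,R))}{\ln(1/\varepsilon)}\le d+\eta .
\]
Since $\eta>0$ was arbitrary, this gives $\dimBl(A\cap B(x,R))\le d$. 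Taking the infimum over the small radii then yields $\dim_{\mathrm{Pbox}}\{x\}_A\le d$.

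There is no real obstacle here. The only points needing care are bookkeeping ones. First, the constants in the hypothesis must depend only on $(R,\eta)$ and not on $\varepsilon$, which is exactly what Definition~\ref{def:pointwise-upper-box-covering-control} provides. Second, the hypothesis holds only for small $R$, and this suffices because the infimum is governed by small radii. Third, one should note that $A\cap B(x,R)$ is bounded, so the covering numbers are finite and the logarithms are well defined.
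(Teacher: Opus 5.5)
Your proof is correct and matches the paper's argument: fix a small $R$ and $\eta$, take logarithms in the covering bound, divide by $\ln(1/\varepsilon)$ so that the terms depending on $C_{R,\eta}$ and $R$ vanish, let $\eta\to0$, and take the infimum over radii. One small simplification: monotonicity in $R$ is not needed, since the bound at a single admissible radius already controls the infimum.
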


\begin{proof}
Fix a sufficiently small \(R>0\) and let \(\eta>0\).  By pointwise upper-box
covering control, for all sufficiently small \(\varepsilon>0\),
\[
N_\varepsilon(A\cap B(x,R))
\le
C_{R,\eta}
\left(\frac{R}{\varepsilon}\right)^{d+\eta}.
\]
Taking logarithms gives
\[
\frac{
\log N_\varepsilon(A\cap B(x,R))
}{
\log(1/\varepsilon)
}
\le
\frac{\log C_{R,\eta}}{\log(1/\varepsilon)}
+
(d+\eta)
\frac{\log(R/\varepsilon)}{\log(1/\varepsilon)}.
\]
Letting \(\varepsilon\downarrow0\), the first term tends to \(0\), while
\[
\frac{\log(R/\varepsilon)}{\log(1/\varepsilon)}
\longrightarrow 1.
\]
Hence
\[
\limsup_{\varepsilon\downarrow0}
\frac{
\log N_\varepsilon(A\cap B(x,R))
}{
\log(1/\varepsilon)
}
\le d+\eta.
\]
Since \(\eta>0\) is arbitrary, the local upper box exponent inside
\(B(x,R)\) is at most \(d\).  Taking the infimum over sufficiently small
radii \(R\) in the definition of \(\dim_{\mathrm{Pbox}}\{x\}_A\), we obtain
\[
\dim_{\mathrm{Pbox}}\{x\}_A\le d.
\]
\end{proof}

The preceding assumption is purely metric.  It can be verified from more
geometric hypotheses.  The following elementary form is often enough.

\begin{corollary}[Finite Lipschitz-patch upper bound]
\label{cor:finite-lipschitz-patch-upper-bound}
Let \(A\subset\mathbb R^n\), let \(x\in\overline A\), and let
\(k\in\{0,\dots,n\}\).  Assume that, for every sufficiently small \(R>0\),
the set \(A\cap B(x,R)\) is contained in a finite union
\[
\Gamma_{1,R}\cup\cdots\cup\Gamma_{M_R,R},
\]
where each \(\Gamma_{j,R}\) is contained in the Lipschitz image of a bounded
subset of \(\mathbb R^k\), with image diameter at most \(C_RR\).  The
corresponding Lipschitz constants are allowed to depend on \(R\).  Then
\[
\dim_{\mathrm{Pbox}}\{x\}_A\le k.
\]
\end{corollary}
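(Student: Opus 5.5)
The plan is to bypass Proposition~\ref{prop:covering-control-upper-bound}'s uniform form and work directly with the definition of \(\dim_{\mathrm{Pbox}}\{x\}_A\) as an infimum over radii of \(\dimBl(A\cap B(x,R))\). It suffices to find, for one sufficiently small \(R>0\), the bound
\[
\dimBl\bigl(A\cap B(x,R)\bigr)\le k ,
\]
since the infimum over \(R\) is then at most \(k\). Equivalently, one can observe that for fixed \(R\) the covering-control estimate of Definition~\ref{def:pointwise-upper-box-covering-control} holds with exponent \(k\). Here \(\eta\) may even be taken to be zero, and the constant may depend on \(R\), which that definition allows. Then Proposition~\ref{prop:covering-control-upper-bound} applies verbatim.

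Fix such an \(R\) and write \(\Gamma_{j,R}\subset f_j(Q_j)\), where \(Q_j\subset\mathbb R^k\) is bounded and \(f_j\) is \(L_j\)-Lipschitz. The basic estimate is the classical one. Enclose \(Q_j\) in a cube of side \(S_j\). Subdivide that cube into at most \((\lceil S_j\sqrt k\,L_j/\varepsilon\rceil)^k\) subcubes of diameter at most \(\varepsilon/L_j\). The images under \(f_j\) of their intersections with \(Q_j\) have diameter at most \(\varepsilon\) and cover \(f_j(Q_j)\supset\Gamma_{j,R}\). Hence
\[
N_\varepsilon(\Gamma_{j,R})\le C_j\,\varepsilon^{-k}\qquad(0<\varepsilon<1).
\]
For \(k=0\), each \(Q_j\) is a point, since \(\mathbb R^0\) is a point, so \(N_\varepsilon\le 1\).

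Next use subadditivity of covering numbers and monotonicity. From \(A\cap B(x,R)\subset\bigcup_{j\le M_R}\Gamma_{j,R}\) we get
\[
N_\varepsilon\bigl(A\cap B(x,R)\bigr)\le \sum_{j=1}^{M_R}C_j\,\varepsilon^{-k}=C_R'\,\varepsilon^{-k},
\]
with \(M_R\) finite. Taking logarithms and letting \(\varepsilon\downarrow0\) gives \(\dimBl(A\cap B(x,R))\le k\), and the infimum over \(R\) finishes the proof.

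I do not expect a real obstacle. The only point needing care is bookkeeping: all constants (\(M_R\), the Lipschitz constants, the sizes of the domains, and \(C_R\)) depend on \(R\). They must stay fixed while \(\varepsilon\to0\), which is harmless because \(R\) is frozen before the limit in \(\varepsilon\). The hypothesis \(\operatorname{diam}\le C_RR\) is not even needed for the exponent, only for the scale-invariant form \((R/\varepsilon)^k\) if one insists on routing through Proposition~\ref{prop:covering-control-upper-bound}.
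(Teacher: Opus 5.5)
Your proposal is correct and follows the paper's argument: for a fixed \(R\) you cover each Lipschitz patch by \(O_R(\varepsilon^{-k})\) sets of diameter at most \(\varepsilon\), sum over the finitely many patches, and read off the exponent, which is exactly what the paper does before invoking Proposition~\ref{prop:covering-control-upper-bound} (whose proof is the logarithm-and-infimum computation you carry out directly). The differences are cosmetic: you write out the cube-subdivision estimate that the paper only asserts, and you correctly observe that the diameter bound \(C_RR\) plays no role in the exponent.
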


\begin{proof}
For fixed \(R\), each such Lipschitz image can be covered by at most
\[
C'_{R}\left(\frac{R}{\varepsilon}\right)^k
\]
sets of diameter at most \(\varepsilon\), for all sufficiently small
\(\varepsilon>0\).  Since only finitely many such patches are needed for each
fixed \(R\), we get
\[
N_\varepsilon(A\cap B(x,R))
\le
C''_{R}
\left(\frac{R}{\varepsilon}\right)^k.
\]
Thus \(A\) has pointwise upper-box covering control of exponent \(k\) at
\(x\), and Proposition~\ref{prop:covering-control-upper-bound} applies.
\end{proof}

The possible dependence of \(M_R\), \(C_R\), and of the Lipschitz constants on
\(R\) is harmless here: the radius \(R\) is fixed first, and only then does one
let \(\varepsilon\downarrow0\) in the pointwise box exponent.

More generally, the same conclusion holds whenever \(A\cap B(x,R)\) admits a
multiscale covering by \(k\)-dimensional patches with total covering number
bounded by \(O_{R,\eta}((R/\varepsilon)^{k+\eta})\).  This is the form needed
in non-smooth or cellular examples.

Combining the lower-bound mechanism of the previous section with the
upper bound from pointwise upper-box covering control gives the basic
calibration theorem.

\begin{theorem}[Exact point-box calibration]
\label{thm:exact-point-box-calibration}
Let \(A\subset\mathbb R^n\), let \(x\in\overline A\), and let
\(d\in[0,n]\).  Assume that

\begin{enumerate}
\item \(A\) has lower spatial \(d\)-ubiquity at \(x\).
\item \(A\) has pointwise upper-box covering control of exponent \(d\) at \(x\).
\end{enumerate}

Then
\[
\dim_{\mathrm{Pbox}}\{x\}_A=d.
\]
\end{theorem}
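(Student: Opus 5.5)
The proof is a direct sandwich of the two one-sided results already established, so the plan is simply to invoke them in turn and combine the inequalities.

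For the lower inequality, hypothesis (1) is exactly the hypothesis of Proposition~\ref{prop:spatial-ubiquity-implies-pbox-lower-bound}. Applying it with the same \(d\in[0,n]\) gives
\[
\dim_{\mathrm{Pbox}}\{x\}_A\ge d .
\]
No extra work is needed here. The separated-set counting argument, with covering sets of diameter \(\varepsilon_m/3\), was already carried out in that proof.

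For the upper inequality, hypothesis (2) is precisely the assumption of Proposition~\ref{prop:covering-control-upper-bound}. That proposition yields
\[
\dim_{\mathrm{Pbox}}\{x\}_A\le d .
\]
Combining the two inequalities gives the stated equality.

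There is no genuine obstacle. The only point to check is that both propositions use the same definition of \(\dim_{\mathrm{Pbox}}\{x\}_A\) from Definition~\ref{def:pbox-metric}, namely the infimum over radii of the localized upper box dimension. They do.

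There is a minor bookkeeping difference in the radii. Spatial ubiquity is assumed for every \(0<\rho<\rho_0\), whereas covering control is assumed only for sufficiently small \(R\). This is harmless, because the point-extended box dimension is a limit as \(r\to0^+\) of a quantity that is nondecreasing in \(r\). Only small radii therefore matter in both bounds, and both bounds hold for all sufficiently small radii.
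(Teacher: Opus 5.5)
Your proposal is correct and is exactly the paper's proof: the lower bound comes from Proposition~\ref{prop:spatial-ubiquity-implies-pbox-lower-bound}, the upper bound from Proposition~\ref{prop:covering-control-upper-bound}, and the two inequalities are combined. Your bookkeeping remark about the radii is also right, since only small radii enter the infimum defining \(\dim_{\mathrm{Pbox}}\{x\}_A\).
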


\begin{proof}
By Proposition~\ref{prop:spatial-ubiquity-implies-pbox-lower-bound},
lower spatial \(d\)-ubiquity gives
\[
\dim_{\mathrm{Pbox}}\{x\}_A\ge d.
\]
By Proposition~\ref{prop:covering-control-upper-bound}, pointwise upper-box
covering control of exponent \(d\) gives
\[
\dim_{\mathrm{Pbox}}\{x\}_A\le d.
\]
Therefore
\[
\dim_{\mathrm{Pbox}}\{x\}_A=d.
\]
\end{proof}

In particular, if the lower spatial ubiquity is produced by a product-grid
realization, the calibration reads as follows.

\begin{corollary}[Product-grid calibration]
\label{cor:product-grid-calibration}
Let \(A\subset\mathbb R^n\), let \(x\in\overline A\), and suppose that
\(A\) admits a product-grid realization of type
\[
(v_i,d_i)_{i=1}^k
\]
at \(x\).  Set
\[
D:=\sum_{i=1}^k d_i.
\]
If, in addition, \(A\) has pointwise upper-box covering control of exponent
\(D\) at \(x\), then
\[
\dim_{\mathrm{Pbox}}\{x\}_A=D.
\]
\end{corollary}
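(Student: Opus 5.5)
The plan is to reduce the statement to Theorem~\ref{thm:exact-point-box-calibration}, with \(d:=D=\sum_{i=1}^k d_i\). Once the two hypotheses of that theorem are verified with this exponent, it gives \(\dim_{\mathrm{Pbox}}\{x\}_A=D\) at once. The upper-box half is already assumed in the statement, because \(A\) is supposed to have pointwise upper-box covering control of exponent \(D\) at \(x\). So the only work is the lower-bound half, and the first thing I will check is the range condition \(D\in[0,n]\). The directions \(v_1,\dots,v_k\) are linearly independent, so \(k\le n\). Each \(d_i\) lies in \([0,1]\), so \(0\le D\le k\le n\).

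For the lower half, I will show that a product-grid realization of type \((v_i,d_i)_{i=1}^k\) gives lower spatial \(D\)-ubiquity at \(x\) in the sense of Definition~\ref{def:lower-spatial-d-ubiquity}. This is exactly what the proof of Theorem~\ref{thm:product-grid-lower-bound} establishes before it invokes Proposition~\ref{prop:spatial-ubiquity-implies-pbox-lower-bound}. Concretely, given \(R<R_0\) and \(\eta>0\), I would take \(\eta'\) with \(k\eta'<\eta\) and set \(\rho=R/C\). The grids \(G_m\subset A\cap B(x,R)\) are \(c\varepsilon_m\)-separated, and by injectivity of \((t_i)\mapsto x+\sum t_iv_i\) their cardinality is the product \(\prod_i\#P_{i,m}\ge(\rho/\varepsilon_m)^{D-k\eta'}\). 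After rescaling to \(\delta_m=\min\{1,c\}\varepsilon_m\), the constant factor is absorbed for large \(m\), which leaves \(\#G_m\ge(R/\delta_m)^{D-\eta}\).

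The case \(D=0\) needs separate attention, because the proof of Theorem~\ref{thm:product-grid-lower-bound} treats it as trivial rather than through ubiquity. In that case I would bypass the calibration theorem entirely. Proposition~\ref{prop:covering-control-upper-bound} gives \(\dim_{\mathrm{Pbox}}\{x\}_A\le 0\), and nonnegativity gives the reverse inequality.

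An even shorter route avoids restating ubiquity: combine Theorem~\ref{thm:product-grid-lower-bound} (giving \(\ge D\)) with Proposition~\ref{prop:covering-control-upper-bound} (giving \(\le D\)). I would present this version as the main argument and mention the calibration theorem only as its conceptual form. No step is a genuine obstacle. The one point to watch is that the exponent in the covering control is exactly \(D\), the same sum appearing in the product-grid bound, so the two inequalities match.
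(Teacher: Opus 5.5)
Your main argument is exactly the paper's proof: Theorem~\ref{thm:product-grid-lower-bound} gives \(\dim_{\mathrm{Pbox}}\{x\}_A\ge D\), and Proposition~\ref{prop:covering-control-upper-bound} gives \(\dim_{\mathrm{Pbox}}\{x\}_A\le D\). Your extra remarks are correct but add nothing the two cited results do not already supply. These are the check \(0\le D\le k\le n\), the separate treatment of \(D=0\), and the unpacking through lower spatial \(D\)-ubiquity and Theorem~\ref{thm:exact-point-box-calibration}.
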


\begin{proof}
The product-grid lower bound gives
\[
\dim_{\mathrm{Pbox}}\{x\}_A\ge D.
\]
Pointwise upper-box covering control gives
\[
\dim_{\mathrm{Pbox}}\{x\}_A\le D.
\]
Hence equality holds.
\end{proof}

This corollary is the precise point at which the two mechanisms meet.  The
product grid supplies enough separated cells to force the lower exponent
\(D\), while pointwise upper-box covering control prevents any additional box
complexity from appearing beyond \(D\).

A final refinement links this calibration back to the Point-Cross dimension.
Before stating it, we name the non-excess assumption which says that a chosen
packet of independent directions already accounts for the whole Point-Cross
mass.

\begin{definition}[Non-excess of directional mass]
\label{def:non-excess-directional-mass}
Let \(A\subset\mathbb R^n\), let \(x\in\overline A\), and let
\[
\xi_1,\dots,\xi_k\in\Eff_x^{\mathbb P}(A)
\]
be projectively independent directions.  We say that this packet is
\emph{Point-Cross exhaustive at \(x\)} if
\[
\dim_{\times}\{x\}_A
=
\sum_{i=1}^k \theta_x^A(\xi_i).
\]
Equivalently, no projectively independent family of effective directions has
total Point-Cross weight larger than the one carried by this packet.
\end{definition}

\begin{corollary}[Exact calibration with Point-Cross]
\label{cor:exact-calibration-with-point-cross}
Let \(A\subset\mathbb R^n\), let \(x\in\overline A\), and let
\[
\xi_1,\dots,\xi_k\in\Eff_x^{\mathbb P}(A)
\]
be a Point-Cross exhaustive packet at \(x\).  Set
\[
d_i:=\theta_x^A(\xi_i),
\qquad
D:=\sum_{i=1}^k d_i.
\]
Assume that, for some oriented representatives \(v_i\in\xi_i\), the directions
\(v_1,\dots,v_k\) produce a product-grid realization of type
\[
(v_i,d_i)_{i=1}^k
\]
at \(x\).  Assume moreover that \(A\) has pointwise upper-box covering control of
exponent \(D\) at \(x\).  Then
\[
\dim_{\mathrm{Pbox}}\{x\}_A
=
\dim_{\times}\{x\}_A
=
D.
\]
\end{corollary}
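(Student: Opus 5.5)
The argument combines two earlier results: the product-grid calibration of Corollary~\ref{cor:product-grid-calibration}, and the exhaustiveness hypothesis of Definition~\ref{def:non-excess-directional-mass}. No new estimate is needed; the work is bookkeeping about which quantities coincide.

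First, the point-box side. The hypotheses supply a product-grid realization of type \((v_i,d_i)_{i=1}^k\) at \(x\), with \(d_i=\theta_x^A(\xi_i)\), together with pointwise upper-box covering control of exponent \(D=\sum_i d_i\). For Definition~\ref{def:product-grid-realization} to apply we need \(d_i\in[0,1]\). This holds by the general bound \(0\le\theta_x^A(\xi)\le1\) of Lemma~\ref{lem:pbox-trace-bound}, passed to projective classes. We also need the \(v_i\) to be linearly independent. Projective independence of the \(\xi_i\) gives this for any choice of representatives, by the convention in Definition~\ref{def:directional-aggregates}. Corollary~\ref{cor:product-grid-calibration} then yields \(\dim_{\mathrm{Pbox}}\{x\}_A=D\). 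If one prefers to see the two halves separately, Theorem~\ref{thm:product-grid-lower-bound} gives the lower bound \(\ge D\) and Proposition~\ref{prop:covering-control-upper-bound} gives the upper bound \(\le D\).

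Second, the Point-Cross side. By Definition~\ref{def:non-excess-directional-mass}, the packet is Point-Cross exhaustive, so \(\dim_{\times}\{x\}_A=\sum_{i=1}^k\theta_x^A(\xi_i)=D\) directly. For coherence, the inequality \(\dim_{\times}\{x\}_A\ge D\) also follows from Definition~\ref{def:point-cross-point}, since the \(\xi_i\) form an admissible independent family in \(\mathcal I_x^{\mathbb P}(A)\). Exhaustiveness is exactly what supplies the reverse inequality. Chaining the two equalities gives \(\dim_{\mathrm{Pbox}}\{x\}_A=\dim_{\times}\{x\}_A=D\).

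The only step requiring care is matching hypotheses. The grid exponents must be literally the weights \(\theta_x^A(\xi_i)\). Both the product-grid definition and the upper-box control must be stated for the same exponent \(D\). The oriented representatives \(v_i\) may be chosen arbitrarily within \(\xi_i\), because the grid definition only uses linear independence and the value \(\theta_x^A(\xi_i)\) does not depend on orientation. I expect no genuine obstacle beyond checking the degenerate case \(D=0\). There the product-grid lower bound is trivial, and covering control of exponent \(0\) gives \(\dim_{\mathrm{Pbox}}\{x\}_A=0\). Both sides then vanish, so the conclusion still holds.
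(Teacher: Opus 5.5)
Your proposal is correct and matches the paper's proof: exhaustiveness gives \(\dim_{\times}\{x\}_A=D\) directly, and Corollary~\ref{cor:product-grid-calibration} (the product-grid lower bound combined with the covering-control upper bound) gives \(\dim_{\mathrm{Pbox}}\{x\}_A=D\). Your extra checks on \(d_i\in[0,1]\), on linear independence of the representatives, and on the case \(D=0\) are harmless, though redundant, since the product-grid realization is itself a hypothesis.
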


\begin{proof}
Since the packet is Point-Cross exhaustive at \(x\),
\[
\dim_{\times}\{x\}_A
=
\sum_{i=1}^k\theta_x^A(\xi_i)
=
\sum_{i=1}^k d_i
=
D.
\]
On the other hand, the product-grid calibration gives
\[
\dim_{\mathrm{Pbox}}\{x\}_A=D.
\]
Therefore
\[
\dim_{\mathrm{Pbox}}\{x\}_A
=
\dim_{\times}\{x\}_A
=
D.
\]
\end{proof}

The Point-Cross exhaustiveness assumption should not be confused with a
general theorem.  It is a non-excess condition: it says that all effective
Point-Cross weight has already been accounted for by the spatialized
independent channels.  In practice, such an upper bound may come from a
point-tangential rank constraint, from a structural description of the
effective directions, or from a local flatness theorem.

Thus the calibration picture is:
\[
\text{spatialized Point-Cross channels}
\Longrightarrow
\dim_{\mathrm{Pbox}}\{x\}_A\ge D,
\]
while
\[
\text{pointwise upper-box covering control}
\Longrightarrow
\dim_{\mathrm{Pbox}}\{x\}_A\le D.
\]
When both mechanisms meet at the same exponent, the point-box dimension is
exactly determined.  If, in addition, the same channels exhaust the
Point-Cross aggregate, then the two pointwise dimensions coincide.

This is the rigorous form of the slogan
\[
\text{directional richness plus spatial organization gives dimension,}
\]
whereas pointwise upper-box covering control determines how much dimension is
actually present.

\begin{remark}[Relation with classical box-counting estimates]
The covering estimates in this section should be read as localized
repackagings of standard box-counting arguments, rather than as new product
formulae in isolation.  Lower spatial ubiquity is the pointwise form of the
usual separated-set lower estimate for upper box dimension, while pointwise
upper-box covering control is the corresponding local upper covering
estimate.  The finite Lipschitz-patch corollary is likewise the familiar
fact that a
finite union of Lipschitz images of bounded subsets of \(\mathbb R^k\) carries
at most \(k\)-dimensional box complexity \cite{Falconer2003,MattilaGMT}.

Similarly, product-grid realization is a local separated-grid version of the
classical counting mechanism behind product inequalities for box dimensions.
The global product theory is subtle: equalities require additional regularity
or synchronization of scales, and strict inequalities may occur
\cite{Falconer2003,FalconerHowroyd1996,RobinsonSharples2013,
OlsonRobinsonSharples2015,WeiWenWen2016}.  The contribution here is therefore
not the isolated covering-counting lemma.  It is the pointwise calibration
principle: classical separated-cell counting is coupled to the Point-Cross
decomposition.  The Point-Cross exhaustiveness hypothesis specifies when the
spatialized directional channels account for all of the Point-Cross mass,
while pointwise upper-box covering control ensures that no additional point-box
complexity is present.
\end{remark}

This completes the structural comparison between the directional and dispersive
layers of the theory.  The Point-Cross dimension records the weighted
coexistence of independent local channels, whereas the point-extended box
dimension records the spatial realization of these channels as separated
multiscale cells.  The two invariants therefore coincide only when directional
richness is sufficiently spatialized and no additional covering complexity is
present.  This is the precise sense in which the present theory separates, and
then reconnects, directionality and dispersion at the point level.
\clearpage
\phantomsection
\addcontentsline{toc}{chapter}{Concluding Outlook}
\chapter*{Concluding Outlook}

The present paper develops the finite-dimensional Euclidean theory of the
Point-Cross dimension. Its purpose was not to exhaust all possible extensions,
but to isolate a pointwise directional layer of dimension and to relate it
carefully to local box dispersion. Several natural directions remain open.

A first perspective concerns geometric measure theory. Classical GMT studies
rectifiability, tangent measures, blow-ups, projections, and slicing through
measure-theoretic or set-theoretic tools. The Point-Cross viewpoint is
complementary: it keeps the base point fixed and asks how much weighted
directional complexity is carried by independent effective channels through the
same germ. This suggests possible interactions with quantitative rectifiability,
local projection theory, tangent-cone analysis, and stratification problems.

A second connection concerns Alberti representations and decomposability
bundles. These theories describe, in a measured and almost-everywhere sense, how
a measure decomposes along families of curve fragments with prescribed
directions. The Point-Cross dimension is different in nature: it is pointwise,
set-theoretic, and based on local box complexity along admissible probes.
Nevertheless, both frameworks express the same underlying principle: directions
become dimensionally meaningful only when they carry enough mass across scales.

A third testing ground is provided by Kakeya-type and Peano-type
configurations. Kakeya-type sets exhibit extreme directional richness while
maintaining unexpectedly small spatial size. Peano-type fillings, in a
different way, force one to separate the dimension of a parameter space from the
directional and spatial structure of the image germ. The theory developed here
does not attempt to solve either class of problems. It only provides a local
language in which the presence of many directions, the weight carried by those
directions, and their spatialization into box-counting largeness can be
separated.

These finite-dimensional stress tests lead naturally to a fourth direction:
infinite-dimensional geometry. In finite dimension, directional rank and total
Point-Cross mass are both bounded by the ambient dimension. In
infinite-dimensional Hilbert spaces, however, these quantities
can separate: a germ may have infinitely many independent tangent directions
while the sum of their Point-Cross weights remains finite. This suggests an
operator-theoretic analogy in which point-tangential dimension behaves like a
rank, while Point-Cross dimension behaves like a trace.

Finally, the examples of fractal coordinate frames point toward a possible
fractal directional calculus. In this paper, fractal arms are used only as local
test germs: each arm supplies a weighted directional channel, and independent
channels add in the Point-Cross aggregate. A genuine calculus on such frames
would require additional structures, such as measures, parametrizations,
derivatives along fractal channels, and compatibility rules under changes of
coordinates.

These perspectives are not needed for the results proved here. They indicate,
however, that the separation between directionality and dispersion is not merely
a technical refinement of local dimension theory. It is a structural principle
which may interact with several existing branches of analysis, geometry, and
fractal theory.

\bibliographystyle{unsrt}


\end{document}